\def\myarabic#1{\normalfont(\roman{#1})}
\newlist{theoremlist}{enumerate}{1}
\setlist[theoremlist]{label=\myarabic{theoremlisti},ref={\myarabic{theoremlisti}},itemindent=0pt,labelindent=0pt,
leftmargin=*,noitemsep}
\renewcommand{\p@theoremlisti}{\perh@ps{\thetheorem}}
\protected\def\perh@ps#1#2{\textup{#1#2}}
\newcommand{\itemrefperh@ps}[2]{\textup{#2}}
\newcommand{\itemref}[1]{\begingroup\let\perh@ps\itemrefperh@ps\ref{#1}\endgroup}
\newtheorem{theorem}{Theorem}[section]
\newtheorem{lemma}[theorem]{Lemma}
\newtheorem{proposition}[theorem]{Proposition}
\newtheorem{corollary}[theorem]{Corollary}
\newtheorem{conjecture}[theorem]{Conjecture}
\newtheorem{question}[theorem]{Question}
\theoremstyle{definition}
\newtheorem{remark}[theorem]{Remark}
\newtheorem{problem}[theorem]{Problem}
\theoremstyle{definition}
\newtheorem{definition}[theorem]{Definition}
\newtheorem{example}[theorem]{Example}
\newtheorem{notation}[theorem]{Notation}
\def\<{{\langle}}
\def\>{{\rangle}}
\def\la{{\lambda}}
\def\det{{ \operatorname{det}}}
\def\sh{{ \operatorname{sh}}}
\def\Conv{ \operatorname{Conv}}
\def\wt{\operatorname{wt}}
\def\Racc{R^\circ}
\def\Rich_#1^#2{\Racc_{#1,#2}}
\def\Richcl_#1^#2{R_{#1,#2}}
\def\bv{{\mathbf{v}}}
\def\bx{{\mathbf{x}}}
\def\by{{\mathbf{y}}}
\def\xrasim{\xrightarrow{\sim}}
\def\H{{\mathbb{H}}}
\def\Q{{\mathbb{Q}}}
\def\Pio{\Pi^\circ}
\def\HOMFLY{\operatorname{HOMFLY}}
\def\DC{{\Dscr_3}}
\numberwithin{equation}{section}
\def\FLY{HOMFLY\xspace}
\def\Richafftp_#1^#2{{\Rcal_{#1,#2}^{>0}}}
\def\Richaff_#1^#2{\accentset{\circ}{\mathcal{R}}_{#1,#2}}
\def\area{\operatorname{area}}
\def\dinv{\operatorname{dinv}}
\def\Dyck{\operatorname{Dyck}}
\def\Dyckxx(#1,#2){\Dyck_{#1,#2}}
\def\Pit_#1{\Xcal^\circ_{#1}}
\def\KR{{\operatorname{KR}}}
\def\top{{\operatorname{top}}}
\def\PKR{\Pcal_\KR}
\def\SL{\operatorname{SL}}
\def\Povar_#1{\accentset{\circ}{\Pi}_{#1}}
\def\Povarcl_#1{\Pi_{#1}}
\def\RPovar_#1{\accentset{\circ}{\Pi}^\R_{#1}}
\def\RPovarcl_#1{\Pi^\R_{#1}}
\def\Povtp_#1{\Pi_{#1}^{>0}}
\def\Povtnn_#1{\Pi_{#1}^{\geq0}}
\def\Gr{\operatorname{Gr}}
\def\KLR_#1^#2{R_{#1,#2}(q)}
\crefname{figure}{Figure}{Figures}
\crefname{notation}{Notation}{Notations}
\def\Ecal{\mathcal{E}}\def\Pcal{\mathcal{P}}\def\Rcal{\mathcal{R}}\def\Xcal{\mathcal{X}}
\def\C{\mathbb{C}}
\def\R{\mathbb{R}}
\def\N{\mathbb{N}}
\def\Z{\mathbb{Z}}
\def\Q{\mathbb{Q}}
\def\<{{\langle}}
\def\>{{\rangle}}
\def\la{{\lambda}}
\def\det{{ \operatorname{det}}}
\def\sh{{ \operatorname{sh}}}
\def\Conv{ \operatorname{Conv}}
\def\wt{\operatorname{wt}}
\def\SL{\operatorname{SL}}
\def\Gr{\operatorname{Gr}}
\def\Z{{\mathbb Z}}
\def\R{{\mathbb R}}
\def\Gr{{\rm Gr}}
\def\id{{\operatorname{id}}}
\newcounter{todobackgr}[section]
\newcounter{todofigure}[section]
\DeclareRobustCommand{\cev}[1]{%
  \mathpalette\do@cev{#1}%
}
\newcommand{\do@cev}[2]{%
  \fix@cev{#1}{+}%
  \reflectbox{$\m@th#1\vec{\reflectbox{$\fix@cev{#1}{-}\m@th#1#2\fix@cev{#1}{+}$}}$}%
  \fix@cev{#1}{-}%
}
\newcommand{\fix@cev}[2]{%
  \ifx#1\displaystyle
    \mkern#23mu
  \else
    \ifx#1\textstyle
      \mkern#23mu
    \else
      \ifx#1\scriptstyle
        \mkern#22mu
      \else
        \mkern#22mu
      \fi
    \fi
  \fi
}
\def\Rich_#1^#2{\vec R^\circ_{#1,#2}}
\def\LRich_#1^#2{\cev R^\circ_{#1,#2}}
\def\RichL_#1^#2{\LRich_{#1}^{#2}}
\def\Rtp_#1^#2{\vec R_{#1,#2}^{>0}}
\def\LRtp_#1^#2{\cev R_{#1,#2}^{>0}}
\def\Rtnn_#1^#2{\vec R_{#1,#2}^{\geq0}}
\def\LRtnn_#1^#2{\cev R_{#1,#2}^{\geq0}}
\def\PR_#1^#2{\accentset{\circ}{\Pi}_{#1,#2}}%
\def\PRtp_#1^#2{\Pi_{#1,#2}^{>0}}%
\def\PRtnn_#1^#2{\Pi_{#1,#2}^{\geq0}}%
\def\PRcl_#1^#2{\Pi_{#1,#2}}%
\def\PRR_#1^#2{\accentset{\circ}{\Pi}_{#1,#2}^\R}%
\def\PRRcl_#1^#2{\Pi_{#1,#2}^\R}%
\def\bv{{\mathbf{v}}}
\def\bx{{\mathbf{x}}}
\def\by{{\mathbf{y}}}
\def\hjmap{\kappa}
\def\hjmp_#1{\hjmap_{#1}}
\def\Uom_#1{U^{\diamond,-}_{#1}}
\def\xrasim{\xrightarrow{\sim}}
\def\Richaff_#1^#2{\accentset{\circ}{\mathcal{R}}_{#1}^{#2}}
\def\Povar_#1{\accentset{\circ}{\Pi}_{#1}}
\def\Povarcl_#1{\Pi_{#1}}
\def\RPovar_#1{\accentset{\circ}{\Pi}^\R_{#1}}
\def\RPovarcl_#1{\Pi^\R_{#1}}
\def\Povtp_#1^#2{\Pi_{#1,#2}^{>0}}
\def\Povtnn_#1{\Pi_{#1}^{\geq0}}
\def\Star_#1{\operatorname{Star}_{#1}}
\def\Startnn_#1{\operatorname{Star}^{\geq0}_{#1}}
\def\Link{\operatorname{Lk}}
\def\Lkx_#1{\Link_{#1}}
\def\Lkxx_#1^#2{\accentset{\circ}{\Link}_{#1}^{#2}}
\def\Lktxx_#1^#2{\Link^{>0}_{#1,#2}}
\def\Starxx_#1^#2{\operatorname{Star}_{#1,#2}}
\def\Startxx_#1^#2{\operatorname{Star}^{\geq0}_{#1,#2}}
\def\sctnn_#1{\sc^{\geq0}_{#1}}
\def\sctp_#1^#2{\sc^{>0}_{#1,#2}}
\def\eps{\varepsilon}
\def\Seps_#1{S_{#1}}
\def\Lktpe_#1^#2{\Link^{>0}_{#1,#2}}
\def\Lktnne_#1{\Link^{\geq0}_{#1}}
\def\Lktp_#1^#2{\Link^{>0}_{#1,#2}}
\def\Lktnn_#1{\Link^{\geq0}_{#1}}
\def\sc{Z}
\def\sco_#1^#2{\accentset{\circ}{\sc}_{#1,#2}}
\def\sccl_#1^#2{\sc_{#1}^{#2}}
\def\Y{\mathcal{Y}}
\def\Yo_#1{\accentset{\circ}{\Y}_{#1}}
\def\Ycl_#1{\Y_{#1}}
\def\Ytp_#1{\Y_{#1}^{>0}}
\def\strg(#1){\normg{#1}}
\def\normg#1{\|#1\|}
\def\int{{\operatorname{init}}}
\def\DOM^#1_#2{\Delta^{#1 \omega_i}_{#2 \omega_i}}
\def\DOMr^#1_#2{\Delta^{#1 \omega_r}_{#2 \omega_r}}
\def\DOMir^#1_#2{\Delta^{#1 \omega_{i_r}}_{#2 \omega_{i_r}}}
\def\lw{\line{w}}
\def\sv{s^{\mathbf{v}}}
\newcommand*{\smallcap}{{\mathbin{\scalebox{0.5}{\ensuremath{\cap}}}}}%
\def\RLtp_#1^#2{\cev R_{#1,#2}^{>0}}
\def\Rsf_#1^#2{\Rich_{#1}^{#2}(K)}
\def\LRsf_#1^#2{\LRich_{#1}^{#2}(K)}
\def\pre{{\,\operatorname{pre}}}
\def\tpre_#1^#2{\vec\twistop^\pre_{#1,#2}}
\def\Ltpre_#1^#2{\cev\twistop^\pre_{#1,#2}}
\def\tpreL_#1^#2{\Ltpre_{#1}^{#2}}
\def\twistop{\tau}
\def\twist_#1^#2{\vec\twistop_{#1,#2}}
\def\twistL_#1^#2{\cev\twistop_{#1,#2}}
\def\Sn{\mathfrak{S}_\N}
\def\sv_#1{s^{\bv}_{#1}}
\def\capBWB(#1){(#1)^{\smallcap w}}
\def\capBWoB(#1){(#1)^{\smallcap w_0}}
\def\capBWBnopar(#1){#1^{\smallcap w}}
\def\Pio_#1^#2{\Pi^\circ_{#1,#2}}
\def\chmnrR_#1{\vec\Delta_{#1}}
\def\chmnrL_#1{\cev\Delta_{#1}}
\def\Yo{y_0}
\def\H{\mathbf{H}_{q,t}}
\def\Dpm_#1{\Delta^{\pm}_{#1}}
\def\Dmp_#1{\Delta^{\mp}_{#1}}
\def\MS{\operatorname{MS}}
\def\TMSR_#1^#2{\vec \tau^{\,\MS}_{#1,#2}}
\def\TMSL_#1^#2{\cev \tau^{\,\MS}_{#1,#2}}
\def\Vlax_#1{V(\la)_{#1}}
\newcommand*\bigcdot{\mathpalette\bigcdot@{.4}}
\newcommand*\bigcdot@[2]{\mathbin{\vcenter{\hbox{\scalebox{#2}{$\m@th#1\bullet$}}}}}
\def\Ztnn{\Z^{\geq}}
\def\Ztp{\Z^{>}}
\newcommand{\xMapsto}[2][]{\ext@arrow 0599{\Mapstofill@}{#1}{#2}}
\def\Mapstofill@{\arrowfill@{\Mapstochar\Relbar}\Relbar\Rightarrow}
\def\DAHA_#1{\ddot{\mathbf{H}}_{#1;q,t}}
\def\Heckeq_#1{\mathbf{H}_{#1;q^{-1}}}
\def\Hecketi_#1{\mathbf{H}_{#1;t}}
\def\Heckeqcomm_#1{\mathbf{H}'_{#1;q^{-1}}}
\def\DASHA_#1{\mathbf{S}\ddot{\mathbf{H}}_{#1;q,t}}
\def\tt{t^{\frac12}}
\def\tti{t^{-\frac12}}
\def\BSk{\operatorname{BSk}}
\def\XSym{\mathbf{X}}
\def\tXSym{\tilde{\mathbf{X}}}
\def\Ht{\tilde H}
\def\la{\lambda}
\def\bx{{\mathbf{x}}}
\def\EHA{\mathcal{E}_{q,t}}
\def\EHAop{\mathcal{E}}
\def\La{\Lambda}
\def\Laq{\La_q}
\def\Laqt{\La_{q,t}}
\def\LaqtN{\La_{\N;q,t}}
\def\SH_#1{\DASHA_#1}
\def\DASHAp_#1{\DASHA_{#1}^+}
\def\SHp_#1{\DASHA_{#1}^+}
\def\DASHApinf{\DASHA_\infty^+}
\def\DASHAinf{\DASHA_\infty}
\def\SHpf{\DASHA_\infty^+}
\def\Curve{C}
\def\DC{D_{\Curve}}
\def\DCn{\DC^{(\N)}}
\def\DX_#1{D_{#1}}
\def\DXn_#1{D^{(\N)}_{#1}}
\def\DAS{D} %
\def\DASXn_#1{\DAS^{(\N)}_{#1}}
\def\DASXnf_#1{\DAS_{#1}}
\def\Ann{{\mathbb{A}}}
\def\Tor{{\mathbb{T}}}
\def\Disk{{\mathbb{D}}}
\def\PTor{{\Tor-\Disk}} %
\def\Surf{{\mathbb{S}}}
\def\Skop{\operatorname{Sk}}
\def\Sk{\Skop_{q^{-1}}}
\def\Skti{\Skop_{t}}
\def\Skpti{\Skop^+_{t}}
\def\SkptiMS{\Skop^{+;\operatorname{MS}}_{t}}
\def\SkpPTor{\Skop^+_{q,t}(\PTor)}
\def\SkPTor{\Skop_{q,t}(\PTor)}
\def\Skp{\Sk^+}
\def\Skpx_#1{\Skop^+_{#1}}
\def\BSkop{\operatorname{BSk}}
\def\BSkn{\BSkop_{\N;q,t}}
\def\Skn{\Skop_{\N;q,t}}
\def\EHAp{\EHA^+}
\def\EHApqi{\EHAop^+_{q,q^{-1}}}
\def\HOMFLY{\operatorname{HOMFLY}}
\def\PHOML{\Pcal^{\HOMFLY}_{\Link}}
\def\PHOMLC{\Pcal^{\HOMFLY}_{\LC}}
\def\PEHAC{\Pcal^{\Ecal}_{\Curve}}
\def\PEHATC{\widehat{\Pcal}^{\Ecal}_{\Curve}}
\def\PEHAX_#1{\Pcal^{\Ecal}_{#1}}
\def\PEHATX_#1{\widehat{\Pcal}^{\Ecal}_{#1}}
\def\KR{\operatorname{KR}}
\def\PKR{\Pcal^{\KR}}
\def\PKRLC{\Pcal^{\KR}_{\LC}}
\def\PKRX_#1{\Pcal^{\KR}_{#1}}
\def\Negut{Neguţ\xspace}
\def\Cseg_#1{\Curve_{#1}} %
\def\Csegp_#1{\Curve'_{#1}} %
\def\point{{\bm{p}}}
\def\ehact#1{\cdot #1}
\def\dahact#1{\cdot #1}
\def\Link{L}
\def\LC{\Link_\Curve}
\def\LX_#1{\Link_{#1}}
\def\qq{q^{1/2}}
\def\tt{t^{1/2}}
\def\qqi{q^{-1/2}}
\def\tti{t^{-1/2}}
\def\Cq{\C(\qq)}
\def\Cqt{\C(\qq,\tt)}
\def\Caq{\C(a,\qq)}
\def\Caqt{\C(a,\qq,\tt)}
\def\Cvq{\C(v,\qq)}
\def\Cv{\C(v)}
\def\FC{F_{\Curve}}
\def\FX_#1{F_{#1}}
\def\introsubsec#1{\subsection{#1}}
\def\nseg{\operatorname{k}}
\def\PathC{\Pcal_{\Curve}}
\def\PathX_#1{\Pcal_{#1}}
\def\rectX{m}
\def\rectY{n}
\def\rectXY{\rectX,\rectY}
\def\rectXp{m_0}
\def\rectYp{n_0}
\def\rectXYp{\rectXp,\rectYp}
\def\gam{\gamma_{\N;t}}
\def\writheop{\operatorname{w}}
\def\wop{\writheop}
\def\wC{\writheop(\Curve)}
\def\bop{\operatorname{b}}
\def\bC{\bop(\Curve)}
\def\wX(#1){\writheop(#1)}
\def\Curveproj{\overline{\Curve}}
\def\Csegproj_#1{\overline{\Curve}_{#1}}
\def\taup{\tau_+}
\def\taum{\tau_-}
\def\pvec{\bm{r}} %
\def\PV{\pvec}
\def\PVp{\pvec'}
\def\eps{\epsilon}
\def\TPV{\Tor^\ast_{\PV}} %
\def\RPV{\R^{2,\ast}_{\PV}} %
\def\XPV{X_{\PV}}
\def\YPV{Y_{\PV}}
\def\TPVp{\Tor^\ast_{\PVp}} %
\def\RPVp{\R^{2,\ast}_{\PVp}} %
\def\psiPV{\psi_{\PV}^{(\N)}}
\def\psiPVp{\psi_{\PVp}^{(\N)}}
\def\BraidGroup{\mathcal{B}}
\def\Cproj{\overline{\Curve}}
\def\Cprojp{\overline{\Curve'}}
\def\PAL{piecewise almost linear\xspace}
\def\Sop{\bm{e}}
\def\S{\Sop_{\N}}
\def\SX_#1{\S_{[#1,\N]}}
\def\SX_#1{\Sop_{[#1,\N]}}
\def\SXX_#1^#2{\Sop_{[#1,#2]}}
\def\SNX_#1{\mathfrak{S}_{[#1,\N]}}
\def\SNXX_#1^#2{\mathfrak{S}_{[#1,#2]}}
\def\ehagen{u}
\def\ehagenb{D}
\def\bx{{\bm{x}}}
\def\by{{\bm{y}}}
\def\bz{{\bm{z}}}
\def\dop{\operatorname{d}}
\def\sign{\operatorname{sign}}
\def\BZ{\mathbf{Z}}       %
\def\BZx{\mathbf{Z}^{>}} %
\def\BZxy{\mathbf{Z}^{+}} %
\def\BZp{\mathbf{Z}^{+}} %
\def\BZast{\mathbf{Z}^{\ast}} %
\def\BZgeq{\mathbf{Z}^{\geq}} %
\def\Pgen{P^{(\N)}} %
\def\Pgenm{P^{(\N-1)}} %
\def\Pgenf{P} %
\def\expcoef{c}
\def\unkn{%
\begin{tikzpicture}[baseline=(ZUZU.base),scale=0.2]\coordinate(ZUZU) at (0,-0.6);
\draw[line width=0.7pt] (0,0) circle (1cm);
  \end{tikzpicture}
}
\def\Tr{\operatorname{Tr}}
\def\Tro{\omega\Tr}
\def\ucb{D_{\Curve}}
\def\AS{{\operatorname{AL}}}
\def\ux_#1{\ehagen_{#1}}
\def\ubx_#1{D_{#1}}
\def\uasx_#1{\ehagen^\AS_{#1}}
\def\W^#1_#2{#2^{#1}} %
\def\WC^#1{\Curve^{#1}} %
\def\TC{T^\Ann_{\Curve}}
\def\TX_#1{T^\Ann_{#1}}
\def\X{X_1}
\def\Y{Y_1}
\def\Poincare{Poincar\'e\xspace}
\def\Interval{[0,1]}
\def\I{\Interval}
\def\lw{2pt}
\def\LPlus{
\scalebox{0.5}{
\begin{tikzpicture}[xscale=0.4,yscale=0.6,baseline=(Z.base)]
\coordinate(Z) at (0,-0.4);
\draw[line width=\lw,->,>={latex}] (1,-1)--(-1,1);
\draw[line width=4*\lw,white] (-1,-1)--(1,1);
\draw[line width=\lw,->,>={latex}] (-1,-1)--(1,1);
\end{tikzpicture}
}
}
\def\LMinus{
\scalebox{0.5}{
\begin{tikzpicture}[xscale=0.4,yscale=0.6,baseline=(Z.base)]
\coordinate(Z) at (0,-0.4);
\draw[line width=\lw,->,>={latex}] (-1,-1)--(1,1);
\draw[line width=4*\lw,white] (1,-1)--(-1,1);
\draw[line width=\lw,->,>={latex}] (1,-1)--(-1,1);
\end{tikzpicture}
}
}
\def\rc{10}
\def\LZero{
\scalebox{0.5}{
\begin{tikzpicture}[xscale=0.4,yscale=0.6,baseline=(Z.base)]
\coordinate(Z) at (0,-0.4);
\draw[line width=\lw,->,>={latex},rounded corners=\rc] (-1,-1)--(0,0)--(-1,1);
\draw[line width=\lw,->,>={latex},rounded corners=\rc] (1,-1)--(0,0)--(1,1);
\end{tikzpicture}
}
}
\def\lw{2pt}
\def\posfrB{0.47}
\def\posfrA{0.75}
\def\lwfrout{3.5*\lw}
\def\lwfrin{3*\lw}
\def\epsA{0.02}
\def\epsB{0.02}
\def\frscl{0.8}
\def\LPlusfr{
\scalebox{\frscl}{
\begin{tikzpicture}[xscale=0.4,yscale=0.6,baseline=(Z.base)]
\coordinate(Z) at (0,-0.4);
\draw[line width=\lwfrout] (\posfrA,-\posfrA)--(-\posfrB,\posfrB);
\draw[line width=\lwfrin,white] (\posfrA-\epsA,-\posfrA+\epsA)--(-\posfrB+\epsB,\posfrB-\epsB);
\draw[line width=\lw,->,>={latex}] (1,-1)--(-1,1);
\draw[line width=4*\lw,white] (-1,-1)--(1,1);
\draw[line width=\lwfrout] (-\posfrA,-\posfrA)--(\posfrB,\posfrB);
\draw[line width=\lwfrin,white] (-\posfrA+\epsA,-\posfrA+\epsA)--(\posfrB-\epsB,\posfrB-\epsB);
\draw[line width=\lw,->,>={latex}] (-1,-1)--(1,1);
\end{tikzpicture}
}
}
\def\LMinusfr{
\scalebox{\frscl}{
\begin{tikzpicture}[xscale=0.4,yscale=0.6,baseline=(Z.base)]
\coordinate(Z) at (0,-0.4);
\draw[line width=\lwfrout] (-\posfrA,-\posfrA)--(\posfrB,\posfrB);
\draw[line width=\lwfrin,white] (-\posfrA+\epsA,-\posfrA+\epsA)--(\posfrB-\epsB,\posfrB-\epsB);
\draw[line width=\lw,->,>={latex}] (-1,-1)--(1,1);
\draw[line width=4*\lw,white] (1,-1)--(-1,1);
\draw[line width=\lwfrout] (\posfrA,-\posfrA)--(-\posfrB,\posfrB);
\draw[line width=\lwfrin,white] (\posfrA-\epsA,-\posfrA+\epsA)--(-\posfrB+\epsB,\posfrB-\epsB);
\draw[line width=\lw,->,>={latex}] (1,-1)--(-1,1);
\end{tikzpicture}
}
}
\def\rc{9}
\def\xsh{0.1cm}
\def\epsA{0.02}
\def\epsB{0.02}
\def\posfrBx{0.47}
\def\posfrAx{0.75}
\def\LZerofr{
\scalebox{\frscl}{
\begin{tikzpicture}[xscale=0.4,yscale=0.6,baseline=(Z.base)]
  \coordinate(Z) at (0,-0.4);
  \begin{scope}[xshift=-\xsh]
    \draw[line width=\lwfrout,rounded corners=\rc] (-\posfrAx,-\posfrAx)--(0,0)--(-\posfrBx,\posfrBx);
    \draw[line width=\lwfrin,white,rounded corners=\rc] (-\posfrAx+\epsA,-\posfrAx+\epsA)--(0,0)--(-\posfrBx+\epsB,\posfrBx-\epsB);
    \draw[line width=\lw,->,>={latex},rounded corners=\rc] (-1,-1)--(0,0)--(-1,1);
\end{scope}
  \begin{scope}[xshift=\xsh]
    \draw[line width=\lwfrout,rounded corners=\rc] (\posfrAx,-\posfrAx)--(0,0)--(\posfrBx,\posfrBx);
    \draw[line width=\lwfrin,white,rounded corners=\rc] (\posfrAx-\epsA,-\posfrAx+\epsA)--(0,0)--(\posfrBx-\epsB,\posfrBx-\epsB);
    \draw[line width=\lw,->,>={latex},rounded corners=\rc] (1,-1)--(0,0)--(1,1);
  \end{scope}
\end{tikzpicture}
}
}
\def\vertclw{0.2}
\def\looseclw{1}
\def\posclwA{0.2}
\def\posclwB{0.6}
\def\epsclw{0.03}
\def\xclw{1.2}
\def\yclw{1}
\def\yyclw{2.2}
\def\clwscl{\frscl}
\def\Lcclwfr{
\scalebox{\clwscl}{
\begin{tikzpicture}[xscale=0.4,yscale=0.6,baseline=(Z.base)]
  \coordinate(Z) at (0,0.6);
  \draw[line width=\lwfrout,looseness=\looseclw] (\xclw,\yclw) to[out=-90,in=-90] (0,\yclw+\vertclw) --(0,\yyclw-\posclwB);
  \draw[line width=\lwfrin,looseness=\looseclw,white] (\xclw,\yclw) to[out=-90,in=-90] (0,\yclw+\vertclw) --(0,\yyclw-\posclwB-\epsclw);
  \draw[line width=\lw,->,>={latex},looseness=\looseclw] (\xclw,\yclw) to[out=-90,in=-90] (0,\yclw+\vertclw) --(0,\yyclw);  
  \draw[line width=4*\lw,looseness=\looseclw,white] (0,0) -- (0,\yclw-\vertclw) to[out=90,in=90] (\xclw,\yclw);
  \draw[line width=\lwfrout,looseness=\looseclw] (0,\posclwA) -- (0,\yclw-\vertclw) to[out=90,in=90] (\xclw,\yclw);
  \draw[line width=\lwfrin,white,looseness=\looseclw] (0,\posclwA+\epsclw) -- (0,\yclw-\vertclw) to[out=90,in=90] (\xclw,\yclw);
  \draw[line width=\lw,looseness=\looseclw] (0,0) -- (0,\yclw-\vertclw) to[out=90,in=90] (\xclw,\yclw);
\end{tikzpicture}
}
}
\def\Lstraightfr{
\scalebox{\clwscl}{
\begin{tikzpicture}[xscale=0.4,yscale=0.6,baseline=(Z.base)]
  \coordinate(Z) at (0,0.6);
  \draw[line width=\lwfrout,looseness=\looseclw] (0,\posclwA) -- (0,\yyclw-\posclwB);
  \draw[line width=\lwfrin,white,looseness=\looseclw] (0,\posclwA+\epsclw) -- (0,\yyclw-\posclwB-\epsclw);
  \draw[line width=\lw,->,>={latex},looseness=\looseclw] (0,0) -- (0,\yyclw);
\end{tikzpicture}
}
}
\def\BaseStringA{
\scalebox{0.5}{
\begin{tikzpicture}[xscale=0.4,yscale=0.6,baseline=(Z.base)]
\coordinate(Z) at (0,-0.4);
\draw[line width=\lw,->,>={latex}] (1,-1)--(-1,1);
\draw[line width=4*\lw,white] (-1,-1)--(1,1);
\draw[line width=\lw,->,>={latex}] (-1,-1)--(1,1);
\node[scale=1.3,anchor=west](A) at (1,-1) {$\ast$};
\end{tikzpicture}
}
}
\def\BaseStringB{
\scalebox{0.5}{
\begin{tikzpicture}[xscale=0.4,yscale=0.6,baseline=(Z.base)]
\coordinate(Z) at (0,-0.4);
\draw[line width=\lw,->,>={latex}] (-1,-1)--(1,1);
\draw[line width=4*\lw,white] (1,-1)--(-1,1);
\draw[line width=\lw,->,>={latex}] (1,-1)--(-1,1);
\node[scale=1.3,anchor=west](A) at (1,-1) {$\ast$};
\end{tikzpicture}
}
}
\def\DClimEHA{D_C}
\def\DXlimEHA_#1{D_{#1}}
\def\DTXlimEHA_#1{\tilde D_{#1}}
\def\DASXlimEHA_#1{D_{#1}}
\def\ucbp{D_{\Curve'}}
\def\ucbas_#1{\ehagenb_{#1}}
\def\Xbraid{X}
\def\Ybraid{Y}
\def\Tbraid{T}
\def\brpt{\point}
\def\Diske{\Disk}
\def\BraidGroupP{\BraidGroup^+}
\def\MS{{\operatorname{MS}}}
\def\PMS_#1{W^{\Ann}_{#1;q}}
\def\PMSti_#1{W^{\Ann}_{#1;t^{-1}}}
\def\WMSTor_#1{W^{\Tor}_{#1;q}}
\def\WMSPTor_#1{W^{\PTor}_{#1;q}}
\def\WMSTorti_#1{W^{\Tor}_{#1;t^{-1}}}
\def\WMSPTorti_#1{W^{\PTor}_{#1;t^{-1}}}
\def\qi{q^{-1}}
\def\SQA{\text{\hypertarget{SQA}{\textcolor{blue}{\fbox{A}}}}}
\def\SQB#1{\text{\hypertarget{SQB#1}{\textcolor{blue}{\fbox{B#1}}}}}
\def\SQC#1{\text{\textcolor{blue}{\fbox{C#1}}}} %
\def\sqa{\protect\hyperref[fig:giant]{\text{\normalfont\textcolor{blue}{\fbox{A}}}}\xspace}
\def\sqb#1{\protect\hyperref[fig:giant]{\text{\normalfont\textcolor{blue}{\fbox{B#1}}}}\xspace}
\def\sqc#1{\protect\hyperref[fig:giant]{\text{\normalfont\textcolor{blue}{\fbox{C#1}}}}\xspace}
\def\actonintro{\text{act on $1$}}
\def\acton{\text{\begin{tabular}{c}
                   act\\[-5pt] on $1$
                 \end{tabular}}}
\def\Curvex{\Curve_{\bx}}
\def\Curvexo{\Curve_{\bx_0}}
\def\ti{t^{-1}}
\def\Peha{P^{\EHAop}}
\def\ttonemqi{\frac1{\tt(1-q)}}
\def\mul{\delta_{q,t}}
\def\mulqqi{\delta_{q,\qi}}
\def\WCx^#1{\Curvex^{#1}} 
\def\coaxial{coaxial almost torus\xspace}
\def\N{N} %
\def\br{\beta}
\def\brh{\hat\beta}
\def\PtsAbove{A}
\def\PtsAboveC{\PtsAbove_\Curve}
\def\PtsAboveCweak{\PtsAbove'_\Curve}
\def\ab{b}
\def\AB{\bm{b}}
\def\ABC{\AB_\Curve}
\def\SC{S_\Curve}
\def\cox{\operatorname{cox}}
\def\coxC{\operatorname{cox}(\Curve)}
\def\EPS{\bm{\eps}}
\def\EPSC{\EPS_{\Curve}}
\def\epsC_#1{\eps_{#1}}
\def\JM{\ell}
\def\betacox{\beta^{\cox}}
\def\betacoxC{\betacox_\Curve}
\def\sh{\operatorname{sh}}
\def\cshift{\rho}
\def\tx{\tilde x}
\def\SYT{\operatorname{SYT}}
\def\tphi{\tilde\phi}
\def\vrap{\psi_v}
\def\figref#1(#2){Figure~\hyperref[#1]{\ref*{#1}(#2)}}
\def\piC{\pi_\Curve}
\def\Ht{\tilde H}
\def\pol{\operatorname{pol}}
\def\BD{{\mathbf{D}}}
\begin{document}

\title{Monotone links in DAHA and EHA}

\author{Pavel Galashin}
\address{Department of Mathematics, University of California, Los Angeles, 520 Portola Plaza,
Los Angeles, CA 90025, USA}
\email{\href{mailto:galashin@math.ucla.edu}{galashin@math.ucla.edu}}

\author{Thomas Lam}
\address{Department of Mathematics, University of Michigan, 2074 East Hall, 530 Church Street, Ann Arbor, MI 48109-1043, USA}
\email{\href{mailto:tfylam@umich.edu}{tfylam@umich.edu}}
\thanks{P.G.\ was supported by an Alfred P. Sloan Research Fellowship and by the National Science Foundation under Grants No.~DMS-1954121 and No.~DMS-2046915. T.L.\ was supported by Grants No.~DMS-1464693 and No.~DMS-1953852 from the National Science Foundation.}

\subjclass[2020]{
Primary: %
16T30. %
Secondary:
  57K14, %
  05E05, %
  13F60, %
  14M15. %
}

\keywords{
Elliptic Hall algebra, double affine Hecke algebra, skein algebra, Coxeter link, convexity, Dyck path, Schur positivity.
}

\date{\today}

\begin{abstract}
We define \emph{monotone links} on a torus, obtained as projections of curves in the plane whose coordinates are monotone increasing. 
Using the work of Morton--Samuelson, to each monotone link we associate elements in the double affine Hecke algebra and the elliptic Hall algebra.  In the case of torus knots (when the curve is a straight line), we recover symmetric function operators appearing in the rational shuffle conjecture.  

We show that the class of monotone links viewed as links in $\mathbb R^3$ coincides with the class of Coxeter links, studied by Oblomkov--Rozansky in the setting of the flag Hilbert scheme.  When the curve satisfies a convexity condition, we recover positroid links that we previously studied. In the convex case, we conjecture that the associated symmetric functions are Schur positive, extending a recent conjecture of Blasiak--Haiman--Morse--Pun--Seelinger, and we speculate on the relation to Khovanov--Rozansky homology. 

Our constructions satisfy a skein recurrence where the base case consists of piecewise almost linear curves.
We show that convex piecewise almost linear curves give rise to algebraic links. 
\end{abstract}

\numberwithin{equation}{section}

\maketitle
\section{Introduction}
Cherednik~\cite{Cherednik_Jones}, reinterpreting earlier work of Aganagic and Shakirov~\cite{AS15}, constructed elements in the \emph{double affine Hecke algebra (DAHA)} associated to an $(m,n)$-torus knot, with the aim of constructing triply-graded link homology.  This construction was expanded on in the influential work of Gorsky and \Negut \cite{GN}, who further constructed elements in the \emph{elliptic Hall algebra (EHA)} using the work of Schiffmann and Vasserot~\cite{SV11,SV13}.  These objects appear at the intersection of beautiful conjectures relating link homology, compactified Jacobians, Hilbert schemes, rational Cherednik algebras, symmetric functions, cluster algebras, and braid varieties; see 
\cite{GNR,GM1,CheDan1,ORS,OY,ObRo,GORS,FPST,STWZ,GL_qtcat,CGGS,GLSBS1,GLSBS2,CGGLSS}. 

There are multiple indications that these algebraic constructions extend beyond the case of torus knots.  For instance, some of the conjectures mentioned above extend to the case of algebraic knots and links~\cite{CheDan2,ORS,KiTs}. In another direction, Oblomkov--Rozansky~\cite{ObRo} related the homology of \emph{Coxeter links} (which include torus knots and torus links) to the space of sections of a line bundle on a Hilbert scheme of points in $\C^2$. 

In this work, we study DAHA, EHA, and symmetric function invariants for the class of monotone links which includes torus knots and torus links. A \emph{monotone link} is a projection (under the map $\R^2\to\R^2/\Z^2$) of a curve $\Curve$ from $(0,0)$ to $(\rectXY)$ with both coordinates monotone increasing; see \cref{sec:intro:links}. 
Our construction depends on viewing monotone links as links in a (thickened) torus. It turns out that viewing them as links in $\R^3$ instead, one recovers precisely the Coxeter links of~\cite{ObRo}; see \cref{sec:positroid_links,sec:cox_links}.

In our earlier works~\cite{GL_qtcat,GL_cat_combin,GL_plabic_links}, we studied positroid links associated to the positroid stratification of the Grassmannian and discovered that the associated combinatorics involves Dyck paths under an arbitrary \emph{convex curve}. This surprising convexity property of positroid links was a starting point for this work.  
  Around the same time, Blasiak--Haiman--Morse--Pun--Seelinger~\cite{BHMPS} constructed symmetric functions in the context of the \emph{shuffle conjecture}~\cite{HHLRU,BGLX,GN,CaMe,Mellit_rat} and independently conjectured that under a convexity assumption, these symmetric functions are Schur positive.

We showed in~\cite{GL_cat_combin} that for each \emph{repetition-free} positroid link, the Euler characteristic of the associated positroid variety equals the number of Dyck paths below a convex curve $\Curve$, and that any convex curve appears in this way. Our proof relied on a Dyck path recurrence involving three curves passing above, below, and through a lattice point, respectively. One of our main results is a lift of this Dyck path recurrence to the $q,t$-level; see \cref{thm:intro:skein}.

Our main conjecture (\cref{conj:intro:Schur_pos}) states that convex monotone links produce Schur positive symmetric functions.  We show that under the specialization $t=\qi$, we recover the HOMFLY polynomial of the link, and we give an explicit formula for our symmetric function when $t = 1$.  We further conjecture that our invariants recover  Khovanov--Rozansky link homology in the case of algebraic links.  For the case of torus links with multiple components, we expect that our conjecture coincides with that of Cherednik--Danilenko \cite{CheDan2} who more generally studied iterated torus cables.

To produce our invariants, we apply the construction of Morton--Samuelson \cite{MS17,MS21}, who connected the skein algebra of the torus and of the punctured torus to the DAHA and the EHA.  At the heart of the technical challenge is the choice of a location for the puncture when lifting our links to the punctured torus; see~\eqref{eq:intro_WC_PTor}.

\introsubsec{Overview}\label{sec:intro_overview}
Let $\rectX,\rectY$ be positive integers. A \emph{monotone curve} (or simply a \emph{curve}) is the graph of a strictly increasing continuous function $f:[0,\rectX]\to[0,\rectY]$ satisfying $f(0)=0$ and $f(\rectX)=\rectY$. 
Given a curve $\Curve$, all of our constructions depend only on the sets of lattice points in $[0,\rectX]\times[0,\rectY]$ which are strictly above, strictly below, and on $\Curve$.

 We compute the image of a curve $\Curve$ inside the commutative diagram in \cref{fig:big}, where:
\begin{itemize}
\item $\Disk$ is a (two-dimensional) disk, $\Ann$ is an annulus, $\Tor$ is a torus, and $\PTor$ is a punctured torus;
\item $\Sk(\Surf)$ is the skein algebra of a surface $\Surf$;
\item $\Laq$ and $\Laqt$ are the algebras of symmetric functions over $\Cq$ and $\Cqt$;
\item $\DASHA_\N$ is the spherical DAHA;
\item $\EHA$ is the EHA;
\item the superscript ${}^+$ denotes the positive part of the corresponding algebra.
\end{itemize}
The full background on the above objects is given in the main body of the paper. Some examples of symmetric functions in $\Laqt$ associated to curves can be found in \cref{tab:small_curves}. We now explain our results in more detail.

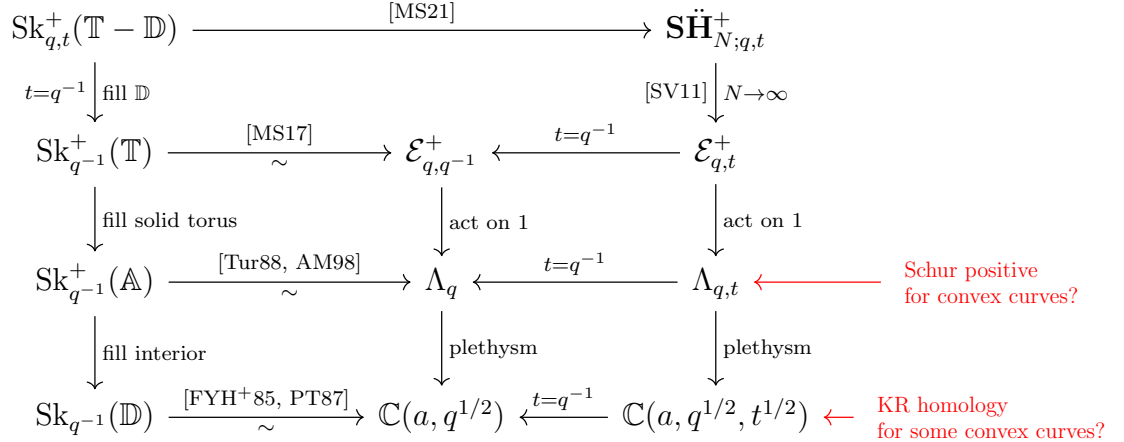
\begin{figure}
$\hspace{0.71in}\displaystyle\begin{tikzcd}[column sep=1.0em,row sep=-0.0em]
\SkpPTor 
\arrow[rrrr,"\text{\cite{MS21}}"] 
\arrow[dd,"\text{fill $\Disk$}","t=\qi"']
&&
 && \DASHAp_\N \arrow[dd,"\N\to\infty","\text{\cite{SV11}}"']\\
&& 
\strut
&&
 \\
\Skp(\Tor) 
\arrow[dd,"\text{fill solid torus}"] 
\arrow[rr,"\text{\cite{MS17}}","\sim"'] 
&& \EHApqi\arrow[dd,"\actonintro"]%
&& 
\EHAp \arrow[ll,"t=\qi"']\arrow[dd,"\actonintro"]%
\\
& 
\strut \qquad\ \  \strut
 && 
\strut 
& \\
\Skp(\Ann)  
\arrow[rr,"\text{\cite{Turaev,AiMo}}","\sim"'] 
\arrow[dd,"\text{fill interior}"]
&& \Laq 
\arrow[dd,"\text{plethysm}"]
 && \Laqt
\arrow[dd,"\text{plethysm}"] 
\arrow[ll,"t=\qi"'] 
&
\scalebox{0.7}{\textcolor{red}{\begin{tabular}{l}
                               Schur positive \\ for convex curves?
                             \end{tabular}}}
\arrow[red]{l}
\\ 
& 
\strut
 && 
\strut
& \\
\Sk(\Disk)  
\arrow[rr,"\text{\cite{HOMFLY,PT}}","\sim"']
 && \Caq && \Caqt \arrow[ll,"t=\qi"'] 
&
\scalebox{0.7}{\textcolor{red}{\begin{tabular}{l}
                               KR homology \\ for some convex curves?
                             \end{tabular}}}
\arrow[red]{l}
\end{tikzcd}$
  \caption{\label{fig:big} A commutative diagram. See \cref{fig:giant} for a description of each map.}
\end{figure}

\introsubsec{Links}\label{sec:intro:links}
Given a curve $\Curve$, let $(0,0)=\point_0,\point_1,\dots,\point_k=(\rectX,\rectY)$ be the lattice points on $\Curve$ listed from left to right. For $i=1,2,\dots,k$, let $\Curve_i$ be the part of $\Curve$ connecting $\point_{i-1}$ to $\point_i$. The curves $\Curve_1,\Curve_2,\dots,\Curve_k$ are called the \emph{lattice segments} of~$\Curve$, and we write $\Curve=[\Cseg_1\Cseg_2\cdots\Cseg_k]$. We denote $\nseg(\Curve):=k$. We say that $\Curve$ is \emph{primitive} if $\nseg(\Curve)=1$, i.e., if $\Curve$ passes through no lattice points in the interior of $[0,\rectX]\times[0,\rectY]$.

Let $\Tor=\R^2/\Z^2$ be the torus and $\pi:\R^2\to\Tor$ be the quotient map. To a curve $\Curve$, we associate a link $\LC$ in $\Tor\times\Interval$, whose \emph{link diagram} is drawn on $\Tor$. Suppose first that $\Curve$ is primitive. Consider the projection $\Curveproj:=\pi(\Curve)$ of $\Curve$ to $\Tor$. Thus, $\Curveproj$ is a curve of homology class $(\rectXY)$. To convert it into a link diagram of $\LC$, for each self-intersection of $\Curveproj$ involving projections of points $(x_1,y_1),(x_2,y_2)\in[0,\rectX]\times[0,\rectY]$ with $x_1<x_2$, we draw the segment containing $(x_1,y_1)$ below the segment containing $(x_2,y_2)$.

Alternatively, if a primitive curve $\Curve$ is a plot of an increasing function $f:[0,\rectX]\to[0,\rectY]$ then the link $\LC$ is a concatenation of two curves in $\Tor\times\Interval$: the curve $\left(\pi(x,f(x)),x/\rectX\right)$, $x\in[0,\rectX]$, and the vertical line segment connecting $((0,0),1)$ to $((0,0),0)$.

The point $\pi(0,0)=\pi(\rectXY)\in\Tor$  is called the \emph{corner} of $\Curve$.

If $\Curve=[\Cseg_1\Cseg_2\cdots\Cseg_k]$ is not necessarily primitive, the link $\LC$ will have $k$ components $\Link_{\Cseg_1},\Link_{\Cseg_2},\dots,\Link_{\Cseg_k}$. The link diagrams of $\Link_{\Cseg_1},\Link_{\Cseg_2},\dots,\Link_{\Cseg_k}$ are drawn on top of each other so that for $i<j$, $\Link_{\Cseg_i}$ is drawn below $\Link_{\Cseg_j}$.

\introsubsec{Skein of a punctured torus}\label{sec:intro_skein}
Given an oriented surface $\Surf$, the \emph{skein algebra} $\Skti(\Surf)$ is the algebra of linear combinations of (isotopy classes of oriented) links inside $\Surf\times \Interval$ 
 subject to the \emph{\FLY skein relation}\footnote{Strictly speaking, we consider framed links, for which there is an extra framing change relation involving another variable $v$; see~\eqref{eq:Sk_dfn}. All our links are assumed to have blackboard framing; see \cref{sec:skein_of_a_surface}.} 
\begin{equation}\label{eq:intro_FLY_skein}
\LPlus-\LMinus=\left(\tt-\tti\right) \LZero.
\end{equation}
The multiplication in $\Skti(\Surf)$ is given by ``stacking the links on top of each other in the $\Interval$ direction,'' i.e., by applying the homeomorphism $(\Surf\times [0,1])\cup(\Surf\times[1,2])=\Surf\times[0,2]\cong\Surf\times[0,1]$.

\begin{figure}
  \setlength{\tabcolsep}{3.8pt}
\begin{tabular}{cccc}
  \includegraphics[width=0.2\textwidth]{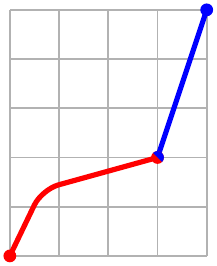}
&
  \includegraphics[width=0.2\textwidth]{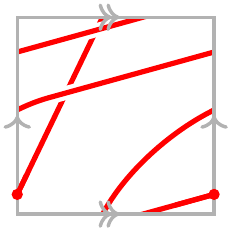}
&
  \includegraphics[width=0.2\textwidth]{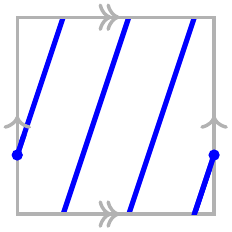}
&
  \includegraphics[width=0.2\textwidth]{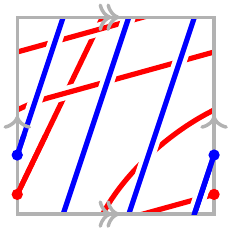}
\\
(a) $\Curve=[\textcolor{red}{\Curve_1}\textcolor{blue}{\Curve_2}]$ & 
(b) $\textcolor{red}{\WC^\Tor_1}$ & 
(c) $\textcolor{blue}{\WC^\Tor_2}$ & 
(d) $\WC^\Tor=\textcolor{red}{\WC^\Tor_1}\cdot \textcolor{blue}{\WC^\Tor_2}$
\end{tabular}
  \caption{\label{fig:mon-curve-proj} Projecting a curve to the torus (\cref{sec:intro_skein}). In the projection, the red and blue segments of the curve are slightly shifted up for clarity.}
\end{figure}

Given a curve $\Curve=[\Cseg_1\Cseg_2\cdots\Cseg_k]$, the link $\LC$ may be naturally viewed as an element of $\Skti(\Tor)$ which we denote $\WC^\Tor$, and we have
\begin{equation*}%
  \WC^\Tor=\W^\Tor_{\Cseg_1}\cdot \W^\Tor_{\Cseg_2}\cdots \W^\Tor_{\Cseg_k}.
\end{equation*}
\noindent For example, in \cref{fig:mon-curve-proj}, $\WC^\Tor=\textcolor{red}{\WC^\Tor_1}\cdot \textcolor{blue}{\WC^\Tor_2}$ is obtained by drawing $\textcolor{blue}{\WC^\Tor_2}$ on top of $\textcolor{red}{\WC^\Tor_1}$.

Let $\PTor$ be the \emph{punctured torus}, obtained by removing a small disk $\Disk$ from $\Tor$. Our main construction associates an element of $\Skti(\PTor)$ to a curve $\Curve$. More precisely, we will work with 
\begin{equation}\label{eq:intro:SkPTor}
  \SkPTor:=\Skti(\PTor)\otimes \C(\qq).
\end{equation}
 Fix a small $\eps>0$ and suppose that $\PTor$ is obtained by removing a ball of radius $\eps/2$ around $(0,0)$ from $\Tor$. Let $\Curve$ be a primitive curve, and let $\Curve_+:=\Curve+(-\eps,\eps)$ and $\Curve_-:=\Curve+(\eps,-\eps)$. When projecting $\Curve_{\pm}$ to $\Tor$, the puncture in $\PTor$ will be slightly below the corner of $\Curve_+$ and slightly above the corner of $\Curve_-$. Viewing the links $\LX_{\Curve_\pm}$ as elements of the skein $\Skti(\PTor)$, we obtain elements $\W^{\PTor}_{\Curve_\pm}\in\Skti(\PTor)$. We define
\begin{equation}\label{eq:intro_WC_PTor}
  \WC^{\PTor}:=\frac{1}{1-q} (\W^{\PTor}_{\Curve_+} - q\W^{\PTor}_{\Curve_-})\quad \in \SkPTor.
\end{equation}

If $\Curve=[\Cseg_1\Cseg_2\cdots\Cseg_k]$ is not necessarily primitive, we set
\begin{equation}\label{eq:intro_WC_PTor_product}
  \WC^{\PTor}:=\W^{\PTor}_{\Cseg_1}\cdot   \W^{\PTor}_{\Cseg_2}\cdots \W^{\PTor}_{\Cseg_k}.
\end{equation}

\begin{remark}
There is a natural algebra homomorphism $\SkPTor \to\Sk(\Tor)$ obtained by setting $t=\qi$ and filling in the puncture $\Disk$. For any curve $\Curve$, this map sends $\WC^{\PTor}\mapsto \WC^\Tor$.
\end{remark}

\begin{remark}
Recall that $\Ann$ denotes the annulus. Then $\Ann\times \Interval$ is homeomorphic to the solid torus, and thus we have a natural action of $\Sk(\Tor)$ on $\Sk(\Ann)$ by identifying $\Tor$ with the boundary of the solid torus. It is well known (see \cref{thm:Tur}) that the skein of $\Ann$ may be identified with the algebra $\Laq$ of symmetric functions. Thus, each curve $\Curve$ gives rise to a symmetric function $\WC^\Tor\cdot 1\in\Laq$ which is the $t=\qi$-specialization of the symmetric function $\FC\in\Laqt$ of interest to us, defined in~\eqref{eq:intro_FC_dfn}.
\end{remark}

\introsubsec{DAHA and EHA elements}
Morton and Samuelson~\cite[Theorem~5.7]{MS21} (see also \cite[Theorem~5.10]{BCMN}) construct a map from the skein of $\PTor$ to the EHA. This map is obtained by constructing, for each $\N=1,2,\dots$, a map from the skein of $\PTor$ to the spherical part of the DAHA $\DAHA_\N$, and then taking a limit as $\N\to\infty$.  We compute the images of $\WC^\PTor$ inside the DAHA and the EHA under the maps of~\cite{MS21}.

By definition, $\DAHA_\N$ is a $\Cqt$-algebra generated by elements $X_i^{\pm1},Y_i^{\pm1}$ for $i=1,\dots,\N$ and $T_i^{\pm1}$ for $i=1,\dots,\N-1$ satisfying certain relations (see \cref{sec:DAHA_backgr}). The \emph{spherical DAHA} $\DASHA_\N$ is defined by $\DASHA_\N:=\S\DAHA_\N\S$, where $\S\in\DAHA_\N$ is a symmetrizing idempotent satisfying $\S^2=\S$ and $T_i\S=\S T_i=\tti\S$ for all $i$; see~\eqref{eq:S_dfn}.

Denote
\begin{equation}\label{eq:gamma_mul_dfn}
  \gam:=1-t^\N \quad\text{and}\quad  \mul:=\ttonemqi.
\end{equation}

\begin{definition}\label{dfn:intro:DCn}
For a curve $\Curve$, let $\PathC$ be the highest up-right lattice path from $(0,0)$ to $(\rectX,\rectY)$ staying weakly below $\Curve$. We construct an element $\DCn\in\DASHA_\N$ obtained by traversing $\PathC$ from $(0,0)$ to $(\rectX,\rectY)$, and taking the product of the following elements: 
\begin{itemize}
\item $\gam \S$ for each lattice point of $\Curve\cap\PathC$ other than $(\rectX,\rectY)$; 
\item $\S$ for the lattice point $(\rectX,\rectY)$;
\item $\Y$ for each up step of $\PathC$;
\item $\Y\X\Y^{-1}$ for each right step of $\PathC$.
\end{itemize}
\end{definition}
\begin{example}\label{ex:intro_DXn}
For the three curves $\Curve_+,\Curve_-,\Curve_0$ shown in \cref{fig:2x2}, we have $\PathX_{\Curve_+}=RURU$, $\PathX_{\Curve_-}=RRUU$, and $\PathX_{\Curve_0}=RU*RU$, where $R$ and $U$ indicate right and up steps and $*$ indicates the places where the curve passes through a lattice point other than $(0,0)$ and $(\rectXY)$. We have
\begin{equation*}%
  \DXn_{\Curve_+}=\gam \S\Y \X \Y \X  \S,\quad 
  \DXn_{\Curve_-}=\gam \S \Y\X \X \Y  \S,\quad 
  \DXn_{\Curve_0}=\gam^2 \S \Y\X \S \Y\X  \S.
\end{equation*}
\end{example}

\begin{figure}
  \setlength{\tabcolsep}{2pt}
\def\wid{0.15}
\begin{tabular}{cc|cc|cc}
\includegraphics[width=\wid\textwidth]{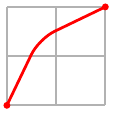}
&
\includegraphics[width=\wid\textwidth]{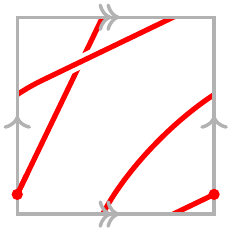}
&
\includegraphics[width=\wid\textwidth]{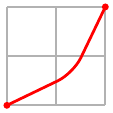}
&
\includegraphics[width=\wid\textwidth]{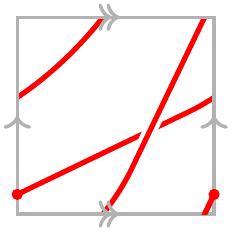}
&
\includegraphics[width=\wid\textwidth]{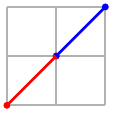}
&
\includegraphics[width=\wid\textwidth]{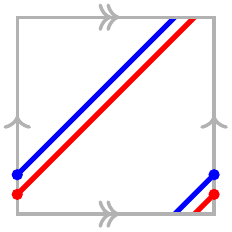}
\\
$\Curve_+$ & $\WC^\Tor_+$ &
$\Curve_-$ & $\WC^\Tor_-$ &
$\Curve_0$ & $\WC^\Tor_0$ 
\end{tabular}
\caption{\label{fig:2x2}Monotone curves from $(0,0)$ to $(2,2)$ and their projections to the torus.}
\end{figure}

 We are ready to state our first main result. 
\begin{theorem}\label{thm:intro_curve_to_DASHA}
Let $\Curve$ be a curve. The image of $\WC^\PTor$ under the map 
\begin{equation}\label{eq:intro_Sk_to_DASHA}
  \SkpPTor\to\DASHAp_\N
\end{equation}
 of~\cite{MS21} is given by
\begin{equation}\label{eq:intro_Sk_to_DASHA_image}
  \mul^{\nseg(\Curve)} \DCn.
\end{equation}
\end{theorem}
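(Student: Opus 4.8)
The plan is to prove \cref{thm:intro_curve_to_DASHA} first for primitive curves and then reduce the general case to it, exploiting that the Morton--Samuelson map~\eqref{eq:intro_Sk_to_DASHA} is an algebra homomorphism. For the reduction, write $\Curve=[\Curve_1\Curve_2\cdots\Curve_k]$ with each lattice segment $\Curve_i$ primitive, so that $\nseg(\Curve)=k=\sum_i\nseg(\Curve_i)$. By~\eqref{eq:intro_WC_PTor_product} and the homomorphism property, the image of $\WC^\PTor$ is the product of the images of $\W^{\PTor}_{\Curve_1},\dots,\W^{\PTor}_{\Curve_k}$. On the algebraic side I would check that $\DCn=D^{(\N)}_{\Curve_1}\cdots D^{(\N)}_{\Curve_k}$: the highest path $\PathC$ passes through each $\point_i$ and restricts to the highest path $\Pcal_{\Curve_i}$ below the segment $\Curve_i$, so it is their concatenation, while in the product each interior point $\point_i$ meets the closing idempotent $\S$ of $D^{(\N)}_{\Curve_i}$ and the opening idempotent $\gam\S$ of $D^{(\N)}_{\Curve_{i+1}}$, which collapse to $\gam\S$ by $\S^2=\S$ --- exactly the factor \cref{dfn:intro:DCn} prescribes at $\point_i$. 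Since also $\mul^{\nseg(\Curve)}=\prod_i\mul^{\nseg(\Curve_i)}$, the theorem for $\Curve$ follows once it is known for each primitive $\Curve_i$.

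So let $\Curve$ be primitive. Then $\nseg(\Curve)=1$ and, by \cref{dfn:intro:DCn,ex:intro_DXn}, $\DCn=\gam\,\S\,w\,\S$, where $w$ is the word read off $\PathC$ by recording $\Y$ at each up step and $\Y\X\Y^{-1}$ at each right step; also $\WC^\PTor=\tfrac1{1-q}\bigl(\W^{\PTor}_{\Curve_+}-q\,\W^{\PTor}_{\Curve_-}\bigr)$ by~\eqref{eq:intro_WC_PTor}. I would first compute the images of the two shifted links $\W^{\PTor}_{\Curve_+}$ and $\W^{\PTor}_{\Curve_-}$ by resolving the projected monotone curve into a path-ordered product along $\PathC$: applying the \FLY skein relation~\eqref{eq:intro_FLY_skein} at the self-crossings of $\pi(\Curve_\pm)$ rewrites each link as a monomial in the elementary horizontal and vertical strands, which Morton--Samuelson send to $\X$ and $\Y$ respectively, the conjugation in $\Y\X\Y^{-1}$ recording the height at which a right step is taken as its horizontal strand is pulled past the vertical strands below it. Because $\Curve_+$ and $\Curve_-$ project to curves that are isotopic away from the corner, both produce the same bulk word $w$, and the images of $\W^{\PTor}_{\Curve_+}$ and $\W^{\PTor}_{\Curve_-}$ can differ only in how the strand threads the corner $\pi(0,0)$.

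The crux of the argument, and the point where the location of the puncture is decisive (as flagged after~\eqref{eq:intro_WC_PTor}), is the corner computation. For $\Curve_+$ the puncture sits just below the corner and for $\Curve_-$ just above it, so $\W^{\PTor}_{\Curve_+}$ and $\W^{\PTor}_{\Curve_-}$ differ by sliding the closing strand across the puncture. I would show that the particular combination $\W^{\PTor}_{\Curve_+}-q\,\W^{\PTor}_{\Curve_-}$ is tuned so that the coefficient $q$ cancels the DAHA commutation relation picked up in this slide, leaving precisely a small loop around the puncture whose image in $\DASHAp_\N$ is $\tti\,\gam\,\S\,w\,\S$: the idempotents $\S$ appear because the loop closes through the marked point, the scalar $\gam=1-t^\N$ is the value of a loop encircling the puncture in $\DASHAp_\N$ (computed using $T_i\S=\tti\S$ and $\S^2=\S$), and the factor $\tti$ is the framing normalization incurred at the corner. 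Dividing by $1-q$ as in~\eqref{eq:intro_WC_PTor} then yields $\tfrac{\tti\,\gam}{1-q}\,\S\,w\,\S=\mul\,\gam\,\S\,w\,\S=\mul\,\DCn$, which is~\eqref{eq:intro_Sk_to_DASHA_image} for primitive $\Curve$, since $\mul=\ttonemqi$ and $\nseg(\Curve)=1$ by~\eqref{eq:gamma_mul_dfn}.

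I expect the main obstacle to be exactly this corner computation: controlling how the non-commuting generators $\X$ and $\Y$ interact with the marked point, and pinning down that the designed combination $\W^{\PTor}_{\Curve_+}-q\,\W^{\PTor}_{\Curve_-}$ produces the idempotent-decorated loop value $\tti\,\gam\,\S(\cdot)\S$ and nothing more. By contrast, the reduction to primitive curves is formal given that~\eqref{eq:intro_Sk_to_DASHA} is multiplicative together with $\S^2=\S$, and the extraction of the bulk word $w$ from the self-crossings of the projected curve is a routine application of the skein relation~\eqref{eq:intro_FLY_skein}.
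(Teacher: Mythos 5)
Your overall architecture matches the paper's: reduce to primitive curves using \eqref{eq:intro_WC_PTor_product}, \eqref{eq:intro:prod} and the multiplicativity of the map, then analyze the primitive case by comparing $\W^\PTor_{\Curve_+}$ and $\W^\PTor_{\Curve_-}$ at the corner. The reduction step is correct and is exactly what the paper does. The problem is that the primitive case --- which you yourself flag as ``the crux'' --- is asserted rather than proved, and the one structural claim you make on the way there is not correct as stated.

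Concretely: the map \eqref{eq:composite_Skp_to_DASHA} first sends a link in the punctured torus to a tangle in $\Skn(\Tor,\ast)$ by inserting the identity $\N$-strand braid together with the base string into the puncture. Neither $\W^\PTor_{\Curve_+}$ nor $\W^\PTor_{\Curve_-}$ individually becomes ``a monomial in $\X$ and $\Y$'' under this map: each is a closed curve that must be merged with the $\N$ inserted strands via the skein relation, and doing so produces a sum of terms, not a single word sandwiched by idempotents. This merging is the only place where $\N$ enters, and it is where the factor $\gam=1-t^{\N}$ actually comes from. The paper's mechanism is a telescoping argument: one interpolates from $\W^\PTor_{\Curve_+}$ to $q\,\W^\PTor_{\Curve_-}$ by switching, one at a time, the $\N$ crossings of the curve with the inserted identity braid near the corner (the two endpoints of the interpolation are identified using the base-string relation in \eqref{eq:FLY_skein_t}, which is the sole source of the coefficient $q$); each switch contributes a resolution braid $Z^\PTor_i$, each $Z^\PTor_i$ is a conjugate of $Z^\PTor_1$ by $T$-generators, so after sandwiching with $\S$ and applying \eqref{eq:ST=TS=S} the $\N$ terms form a geometric series $1+t+\cdots+t^{\N-1}$, and $\left(\tti-\tt\right)(1+t+\cdots+t^{\N-1})=\tti\gam$, whence the image of $\W^\PTor_{\Curve_+}-q\W^\PTor_{\Curve_-}$ is $\tti\,\DCn$ and division by $1-q$ gives $\mul\DCn$. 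Your proposal replaces all of this with the statements that $\gam$ ``is the value of a loop encircling the puncture'' and that the combination is ``tuned so that'' the commutation terms cancel; these are plausible heuristics for why the answer has the shape it does, but they are not a derivation, and without the telescoping computation the identification of the image with $\mul^{\nseg(\Curve)}\DCn$ remains unproved.
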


The element $\WC^{\PTor}$ belongs to the positive part $\SkpPTor$ of $\SkPTor$ defined in \cref{sec:skein_PTor},
 while the elements $\DCn\in\DASHA_\N$ belong to the positive part $\DASHAp_\N$ of $\DASHA_\N$ defined in~\eqref{eq:DASHAp_EHAp_dfn}.  By~\cite[Theorem~4.6]{SV11}, the positive part $\EHAp$ of the EHA can be realized as a limit of $\DASHAp_\N$ as $\N\to\infty$.
\begin{theorem}\label{thm:intro_limit_exists}
Let $\Curve$ be a curve. The elements $\DCn\in\DASHAp_\N$ for $\N=1,2,\dots$ give rise to a well-defined limiting element
  \begin{equation}\label{eq:intro_limit_exists}
    \lim_{\N\to\infty} \DCn=\DClimEHA \in\EHAp.
  \end{equation}  
\end{theorem}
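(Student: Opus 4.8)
The plan is to derive the existence of the limit from \cref{thm:intro_curve_to_DASHA} together with the Morton--Samuelson construction of the map to the elliptic Hall algebra, rather than by manipulating the words $\DCn$ directly. The key observation is that $\WC^\PTor\in\SkpPTor$ is a single element, \emph{independent of} $\N$, and that the scalar $\mul^{\nseg(\Curve)}$ appearing in \cref{thm:intro_curve_to_DASHA} lies in $\Cqt$ and likewise does not depend on $\N$. Hence the convergence of the sequence $\DCn$ is equivalent to the convergence of $\mul^{\nseg(\Curve)}\DCn$, which by \cref{thm:intro_curve_to_DASHA} is precisely the image of the fixed element $\WC^\PTor$ under the map $\SkpPTor\to\DASHAp_\N$.

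Next I would invoke the fact that the map to $\EHAp$ of \cite{MS21} is, by construction, obtained as the $\N\to\infty$ limit of the maps $\SkpPTor\to\DASHAp_\N$, taken with respect to the stabilization maps through which $\EHAp$ is realized as a limit of the $\DASHAp_\N$ in \cite[Theorem~4.6]{SV11}. Concretely, this means the maps $\SkpPTor\to\DASHAp_\N$ are compatible with stabilization, so that for \emph{any} fixed $x\in\SkpPTor$ the images of $x$ in $\DASHAp_\N$ stabilize to the image of $x$ in $\EHAp$. Applying this to $x=\WC^\PTor$ and combining with \cref{thm:intro_curve_to_DASHA}, the sequence $\mul^{\nseg(\Curve)}\DCn$ converges in $\EHAp$; dividing by the nonzero $\N$-independent scalar $\mul^{\nseg(\Curve)}$ then produces the desired limit $\DClimEHA=\lim_{\N\to\infty}\DCn\in\EHAp$.

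The main obstacle, and what makes the statement more than a formal restatement of \cref{thm:intro_curve_to_DASHA}, is the genuine $\N$-dependence of the coefficients of $\DCn$ through $\gam=1-t^\N$. A more hands-on alternative would be to argue stabilization directly from \cref{dfn:intro:DCn}: the factors $\S\Y$ and $\S\,\Y\X\Y^{-1}$ attached to the up and right steps of $\PathC$ are images of fixed skein elements and converge under the stabilization maps, and a product of convergent sequences is convergent, so everything reduces to controlling the scalar factors $\gam\,\S=(1-t^\N)\S$ attached to the lattice points of $\Curve\cap\PathC$ in the limit. Verifying that this $t^\N$-dependence is absorbed correctly and does not destroy convergence is the crux; in the structural argument above it is handled automatically, since it is built into the compatibility of the Morton--Samuelson maps with the Schiffmann--Vasserot realization of $\EHAp$. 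For this reason I expect the cleanest write-up to route the whole argument through that compatibility, using \cref{thm:intro_curve_to_DASHA} only to identify the image of $\WC^\PTor$ in $\DASHAp_\N$ with $\mul^{\nseg(\Curve)}\DCn$.
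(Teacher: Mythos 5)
Your structural argument has a genuine gap. The Morton--Samuelson limit map to $\EHAp$ is constructed on \emph{their} positive part $\SkptiMS(\PTor)$, the subalgebra generated by the elements $\WMSPTorti_\bx$; but the paper's $\SkpPTor$ is defined as the subalgebra generated by $\WC^\PTor$ for \emph{all} monotone curves, and it strictly contains $\SkptiMS(\PTor)\otimes\C(\qq)$ in general (see \cref{sec:skein_PTor}). So the ``compatibility with stabilization for any fixed $x\in\SkpPTor$'' that you invoke is not available from \cite{MS21}: showing that the images $\mul^{\nseg(\Curve)}\DCn$ of these new generators are compatible with the stabilization maps $\DASHAp_\N\to\DASHAp_{\N-1}$ of \cite[Proposition~4.1]{SV11} is precisely the content of \cref{thm:intro_limit_exists}, so the structural route is circular. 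Note also that it is not even immediate from \cref{dfn:intro:DCn} that $\DCn$ lies in the positive part $\DASHAp_\N=\<\Pgen_\bx\>$, which is the only place where the stabilization maps are defined.

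Your ``hands-on alternative'' correctly isolates the crux but leaves it unresolved, and as stated it cannot work factor by factor: the individual factors $\S\Y$, $\S\Y\X\Y^{-1}$, $\gam\S$ do not lie in $\DASHAp_\N$, so one cannot speak of their convergence under the stabilization maps; only the full product stabilizes. The paper's actual proof proceeds differently: it first establishes the finite-$\N$ skein relation $\DXn_{\Curve_+}=qt\,\DXn_{\Curve_-}+\DXn_{\Curve_0}$ via the symmetrizer recurrence~\eqref{eq:symmetrizer_recurrence}, which expresses an arbitrary $\DCn$ as a linear combination, with the $\N$-independent coefficients $1$ and $qt$, of products of almost linear elements $\DASXn_{\rectXYp}$ (\cref{rmk:intro:skein}). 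It then proves \cref{thm:indep_on_n} --- using the polynomial representation and the explicit computation $\DASXn_{d,0}\cdot 1=(-t)^d e_d\left[(1-\ti)\XSym_\N\right]$ together with the $\SL_2(\Z)$-equivariance of \cref{cor:g_DC=D_gC} --- to show that each almost linear element is a combination of the generators $\Pgen_\bx$ with coefficients independent of $\N$. Only then does \cite[Theorem~4.6]{SV11} yield the limit (\cref{cor:limit_almost_linear}), and hence the theorem. If you want to salvage your approach, you would need to supply exactly this missing stabilization input for the elements $\DCn$ themselves rather than appeal to \cite{MS21}.
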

\noindent An explicit algorithm for expressing $\DClimEHA$ in terms of the standard generators of $\EHAp$ is given in \cref{sec:skein_relation}. An immediate consequence of \cref{dfn:intro:DCn} (using that $\S^2=\S$) is that for any curve $\Curve=[\Cseg_1\Cseg_2\cdots\Cseg_k]$, the elements $\DCn$ and therefore $\DClimEHA$ satisfy
  \begin{equation}\label{eq:intro:prod}
    \DCn=\DXn_{\Curve_1}\cdot \DXn_{\Curve_2}\cdots \DXn_{\Curve_k} \quad\text{and}\quad 
\DClimEHA=\DXlimEHA_{\Curve_1}\cdot \DXlimEHA_{\Curve_2}\cdots \DXlimEHA_{\Curve_k},
  \end{equation}
where the products are taken in $\DASHAp_\N$ and $\EHAp$, respectively.

\introsubsec{Skein relation and piecewise almost linear curves}\label{sec:intro:PAL}

It turns out that the elements $\DCn$ and $\DC$ satisfy the following skein relation.
\begin{theorem}\label{thm:intro:skein}
  Consider curves $\Curve_+,\Curve_-,\Curve_0$ which pass above, below, and through some lattice point $\point$, respectively, and agree outside a small neighborhood of $\point$. Then
  \begin{equation}\label{eq:intro:skein}
    \DXn_{\Curve_+}=qt\DXn_{\Curve_-}+\DXn_{\Curve_0} \quad\text{and}\quad
    \DXlimEHA_{\Curve_+}=qt\DXlimEHA_{\Curve_-}+\DXlimEHA_{\Curve_0}.
  \end{equation}
\end{theorem}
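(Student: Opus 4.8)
The plan is to localize both identities near the lattice point $\point$, reduce them to a single commutation relation in the DAHA, and then pass to the elliptic Hall algebra by letting $\N\to\infty$. Write $\point=(a,b)$. Since all of our constructions depend only on the integers $\lfloor f(i)\rfloor$ and the three curves agree away from $\point$, the highest paths $\PathX_{\Curve_+},\PathX_{\Curve_-},\PathX_{\Curve_0}$ of \cref{dfn:intro:DCn} coincide except in the two columns adjacent to $\point$: the path reaches height $\lfloor f(a)\rfloor$, which equals $b$ for $\Curve_+$ and $\Curve_0$ (so the path turns up-then-right at $\point$) and equals $b-1$ for $\Curve_-$ (so the path turns right-then-up just below $\point$). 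Reading off the words prescribed by \cref{dfn:intro:DCn} and using the product formula \eqref{eq:intro:prod}, I would show that the three elements have a common left factor $A$ and a common right factor $B$ and differ only through the local words
\begin{equation*}
\DXn_{\Curve_+}=A\,\bigl(\Y\cdot\Y\X\Y^{-1}\bigr)\,B,\qquad
\DXn_{\Curve_-}=A\,\bigl(\Y\X\Y^{-1}\cdot\Y\bigr)\,B,\qquad
\DXn_{\Curve_0}=A\,\bigl(\Y\cdot\gam\S\cdot\Y\X\Y^{-1}\bigr)\,B,
\end{equation*}
the factor $\gam\S$ in the last word being the contribution of $\point$, which lies on $\Curve_0$ but not on $\Curve_\pm$.

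Since $A$ and $B$ are common to all three curves, the first identity in \eqref{eq:intro:skein} will follow once I establish the single commutation relation
\begin{equation}\label{eq:skein:core}
\Y\,\X=qt\,\X\,\Y+\gam\,\S\,\Y\,\X
\end{equation}
in $\DAHA_\N$. Indeed, left-multiplying \eqref{eq:skein:core} by $\Y$ and right-multiplying by $\Y^{-1}$ yields $\Y\cdot\Y\X\Y^{-1}=qt\,\Y\X\Y^{-1}\cdot\Y+\Y\cdot\gam\S\cdot\Y\X\Y^{-1}$, i.e.\ the desired relation among the three local words; multiplying this on the left by $A$ and on the right by $B$ then produces $\DXn_{\Curve_+}=qt\,\DXn_{\Curve_-}+\DXn_{\Curve_0}$.

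Establishing \eqref{eq:skein:core} is where the real work lies, and I expect it to be the main obstacle. I would prove it directly from the defining relations of $\DAHA_\N$: moving $\X=X_1$ past $\Y=Y_1$ using the cross-relations produces a factor of $q$ together with a product of the generators $T_i$, and these collapse against the symmetrizer via $T_i\S=\tti\S$ and $\S^2=\S$, the residual scalar being precisely $\gam=1-t^\N$. As a consistency check, for $\N=1$ one has $\S=1$ and $\gam=1-t$, so \eqref{eq:skein:core} reads $\Y\X=qt\,\X\Y+(1-t)\Y\X$, i.e.\ the rank-one relation $\Y\X=q\,\X\Y$; carrying out the analogous bookkeeping with the full symmetrizer for general $\N$ is the delicate point.

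Finally, the coefficients $qt$ and $1$ appearing in \eqref{eq:intro:skein} are independent of $\N$, and by \cref{thm:intro_limit_exists} the limits $\lim_{\N\to\infty}\DXn_{\Curve}=\DXlimEHA_{\Curve}$ exist in $\EHAp$ for each of $\Curve_+,\Curve_-,\Curve_0$. Taking $\N\to\infty$ in the first identity of \eqref{eq:intro:skein} therefore yields the second, completing the proof.
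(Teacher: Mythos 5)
Your reduction is structurally the same as the paper's: you identify the common factors $A,B$, the three local words $\Y\cdot\Y\X\Y^{-1}$, $\Y\X\Y^{-1}\cdot\Y$, $\Y\cdot\gam\S\cdot\Y\X\Y^{-1}$, and the passage to the limit at the end is fine. The gap is that your proposed core relation $\Y\X=qt\,\X\Y+\gam\,\S\,\Y\X$ is \emph{false} in $\DAHA_\N$ for $\N\geq3$. Using $\Y\X\Y^{-1}=qL\X$ with $L:=T_1\cdots T_{\N-2}T_{\N-1}^2T_{\N-2}\cdots T_1$ (this is~\eqref{eq:YXYi}), your relation is equivalent, after cancelling $q\X\Y$ on the right, to $L=t+\gam\S L$ in the subalgebra generated by $T_1,\dots,T_{\N-1}$. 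Since $\S T_i=\tti\S$, one gets $\S L=t^{1-\N}\S$, so the right-hand side equals $t+(t^{1-\N}-t)\S$, which lies in the two-dimensional span of $1$ and $\S$. For $\N=3$ one has $L=T_1T_2^2T_1=(\tti-\tt)T_1T_2T_1+(\tti-\tt)T_1+1$, whose coefficient on $T_2$ in the $T_w$-basis is zero, whereas $\S$ has a nonzero $T_2$-coefficient; so the two sides differ. Your consistency checks at $\N=1$ (and $\N=2$) pass only because there the finite Hecke algebra is spanned by $1$ and $\S$, which is exactly what fails in general.

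What is true --- and what the paper actually proves --- is the identity after left multiplication by the partial symmetrizer $\SX_2$. One has $A\Y=A\Y\SX_2$ because $\SX_2$ commutes with $X_1$, $Y_1$ and $\S$ and is absorbed by the factor $\gam\S$ sitting inside $A$; then the recurrence~\eqref{eq:symmetrizer_recurrence} with $k=1$ gives $\SX_2 L=-t(1-t^{-\N})\S+t\SX_2$, from which $\SX_2\left(L-t-\gam\S L\right)=0$ follows, and sandwiching by $A\Y(\cdot)\X B$ yields the skein relation. So your argument is repaired by replacing the core relation with its $\SX_2$-multiplied version and invoking~\eqref{eq:symmetrizer_recurrence}; as stated, the ``single commutation relation'' you plan to derive from the defining relations does not hold, and no bookkeeping with $T_i\S=\tti\S$ alone can produce it, since the needed symmetrizer sits to the \emph{left} of the local word inside $A$ rather than inside the relation itself.
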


This relation allows one to express any element $\DClimEHA$ in terms of elements associated to \emph{piecewise almost linear} curves, defined as follows. 
\begin{definition}\label{dfn:intro_AL}
 A primitive curve $\Curve$ from $(0,0)$ to $(\rectXY)$ is called \emph{$(\rectXY)$-almost linear}, or simply \emph{almost linear}, if it passes just above the diagonal connecting $(0,0)$ and $(\rectXY)$. (That is, it passes above each of the $\gcd(\rectXY)-1$ lattice points in the interior of the diagonal, and above/below all lattice points which are strictly below/above the diagonal.)
\end{definition}

As we explain in \cref{sec:shuffle}, for $C$ an almost linear curve, the operators $D_C$  are exactly the symmetric function operators appearing in various versions of the shuffle conjecture.

\begin{definition}\label{dfn:intro:PAL}
  A curve $\Curve=[\Cseg_1\Cseg_2\cdots\Cseg_k]$ is called \emph{piecewise almost linear} if each 
  lattice segment $\Curve_i$ of $\Curve$ is almost linear.
\end{definition}
\begin{remark}\label{rmk:intro:skein}
Applying~\eqref{eq:intro:skein} repeatedly, we may express the element $\ucb$ for an arbitrary curve $\Curve$ as a linear combination of elements of the form $\ucbp$, where $\Curve'$ is a \PAL curve. 
 Combining this with~\eqref{eq:intro:prod} allows one to express an arbitrary element $\ucb$ in terms of products of elements $\ubx_{\Curve'_i}$ corresponding to almost linear curves $\Curve'_i$.
\end{remark}

\introsubsec{Symmetric functions}
The EHA $\EHAp$ acts on the ring $\Laqt$ of symmetric functions over $\Cqt$ as described in \cref{sec:EHA_action}. We will be particularly interested in the action $\ucb\ehact1$ of $\ucb$ on $1\in\Laqt$.  The DAHA admits a polynomial representation described in \cref{sec:DAHA_polyrep}. For $\N=1,2,\dots$, the element $\DCn\dahact1$ is a symmetric polynomial in $x_1,\dots,x_\N$ obtained from the symmetric function $\ucb\ehact1$ by setting $x_{\N+1}=x_{\N+2}=\cdots=0$.

It is convenient to consider the plethystically transformed (cf. \cref{sec:relations}) symmetric functions
\begin{equation}\label{eq:intro_FC_dfn}
  \FC:=(\ucb\ehact1) \left[\frac{\XSym}{1-t}\right] \in\Laqt,
\end{equation}
where $\XSym:=x_1+x_2+\dots \in\Laqt$. Using another plethystic substitution, we define the \emph{EHA superpolynomial} %
\begin{equation}\label{eq:intro_PEHAC_dfn}
  \PEHAC(a,q,t):=\FC\left[a-a^{-1}\right].
\end{equation}
In the notation of \cref{thm:intro:skein}, we therefore get
\begin{equation}\label{eq:intro:skein_F}
  \FX_{\Curve_+}=qt\FX_{\Curve_-}+\FX_{\Curve_0} \quad\text{and}\quad \PEHAX_{\Curve_+}=qt\PEHAX_{\Curve_-}+\PEHAX_{\Curve_0}.
\end{equation}
As explained in \cref{rmk:q_t_symm}, the symmetric function $\FC$ is $q,t$-symmetric (i.e., unchanged under swapping $q$ and $t$). 
Note that~\eqref{eq:intro:prod} does not extend to symmetric functions: for $\Curve=[\Cseg_1\Cseg_2\dots\Cseg_k]$, knowing the symmetric functions
  $\ubx_{\Curve_1}\ehact1,\ubx_{\Curve_2}\ehact1,\dots,\ubx_{\Curve_k}\ehact1$
  does not allow one to reconstruct
  \begin{equation*}%
    \ucb\ehact1=\ubx_{\Curve_1} \ehact{( \ubx_{\Curve_2} \ehact{( \cdots \ubx_{\Curve_k} \ehact{ 1  } \cdots  )}  )}.
  \end{equation*}

\introsubsec{Convex curves}
The following class of curves is motivated by our results on cohomology of open positroid varieties~\cite{GL_qtcat,GL_cat_combin}.
\begin{definition}
We say that a curve $\Curve$ is \emph{convex} if it is a plot of a (weakly) convex function $f:[0,\rectX]\to[0,\rectY]$.
\end{definition}
We also consider the following slightly more general class of curves.
\begin{definition}\label{dfn:intro_Z_convex}
A set $\PtsAbove\subset\R^2$ is called \emph{$\Z$-convex} if $\Conv(\PtsAbove)\cap\Z^2=\PtsAbove\cap \Z^2$. For a curve $\Curve$ passing through lattice points $(0,0)=\point_0,\point_1,\dots,\point_k=(\rectX,\rectY)$, let $\PtsAboveC$ be the set of all lattice points in $[0,\rectX]\times[0,\rectY]$ strictly above $\Curve$ together with the set of (non-lattice) points $\{\point_i+(-\eps,\eps)\mid i=0,1,\dots,k\}$, where $\eps>0$ is small. We say that $\Curve$ is \emph{$\Z$-convex} if $\PtsAboveC$ is a $\Z$-convex set.
\end{definition}
For example, the curves in \figref{fig:convex-examples}(a,b,d) are $\Z$-convex while the curves in \figref{fig:convex-examples}(c,e) are not.

It is clear that any convex curve is $\Z$-convex, but the converse need not hold. For instance, an almost linear curve (\cref{dfn:intro_AL}) from $(0,0)$ to $(\rectXY)$ is always $\Z$-convex but it is convex only if $\gcd(\rectXY)=1$. It is not hard to check that a \PAL curve $\Curve=[\Cseg_1\Cseg_2\dots\Cseg_k]$, where each $\Cseg_i$ is $(m_i,n_i)$-almost linear, is $\Z$-convex if and only if
\begin{equation}\label{eq:Z_convex_geq}
  \frac{m_1}{n_1}\geq\frac{m_2}{n_2}\geq\cdots\geq\frac{m_k}{n_k}.
\end{equation}

\begin{figure}
 \includegraphics[width=1.0\textwidth]{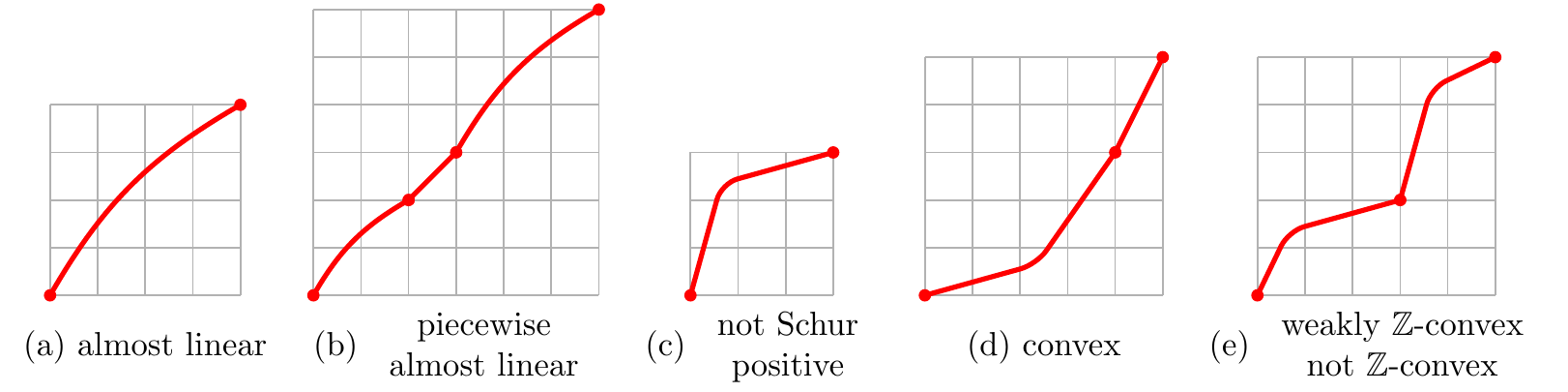}
  \caption{\label{fig:convex-examples} Examples of monotone curves. See also \cref{tab:small_curves}.}
\end{figure}

The following conjecture extends~\cite[Conjecture~7.1.1]{BHMPS} in view of \cref{rem:BHMPS}.
\begin{conjecture}\label{conj:intro:Schur_pos}
  For a $\Z$-convex curve $\Curve$, the formal power series
  \begin{equation}\label{eq:intro_Schur_pos}
    \frac1{(1-t)^{\nseg(\Curve)-1}} \FC
  \end{equation}
  is $q,t$-Schur positive.
\end{conjecture}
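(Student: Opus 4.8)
The plan is to reduce \cref{conj:intro:Schur_pos} to the case of \PAL curves and, for those, to the framework of Blasiak--Haiman--Morse--Pun--Seelinger~\cite{BHMPS}. By \cref{dfn:intro:PAL} and~\eqref{eq:Z_convex_geq}, a $\Z$-convex \PAL curve is exactly a concatenation $\Curve=[\Cseg_1\cdots\Cseg_k]$ of almost linear segments whose slopes $m_1/n_1\geq\cdots\geq m_k/n_k$ weakly decrease. For such a curve, \eqref{eq:intro:prod} expresses $\ucb\ehact1$ as the iterated action $\ubx_{\Curve_1}\ehact{(\ubx_{\Curve_2}\ehact{(\cdots\ubx_{\Curve_k}\ehact{1}\cdots)})}$ of the almost linear operators, which are precisely the operators of the rational shuffle conjecture (\cref{sec:shuffle}). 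The first goal is therefore to match $\tfrac{1}{(1-t)^{k-1}}\FC$ with the symmetric function of~\cite[Conjecture~7.1.1]{BHMPS} under the dictionary of \cref{rem:BHMPS}, so that the $\Z$-convex \PAL case of \cref{conj:intro:Schur_pos} becomes literally (an extension of) their conjecture.

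Second, I would pass from an arbitrary $\Z$-convex curve to a $\Z$-convex \PAL curve. The tool is the skein recurrence \eqref{eq:intro:skein_F}: choosing a lattice point $\point$ at which $\Curve$ fails to be almost linear, the three curves $\Curve_+,\Curve_-,\Curve_0$ passing above, below, and through $\point$ satisfy $\FX_{\Curve_+}=qt\,\FX_{\Curve_-}+\FX_{\Curve_0}$. Iterating (\cref{rmk:intro:skein}) straightens every lattice segment to its almost linear representative and terminates at \PAL curves. To stay inside the conjecture one must check that $\point$ can always be chosen so that the curves produced remain $\Z$-convex; the natural choice is an extreme lattice point of the region $\PtsAboveC$ of \cref{dfn:intro_Z_convex}, which should not alter $\Conv(\PtsAboveC)\cap\Z^2$ for the curves passing to either side of $\point$.

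The hard part is that this reduction does not respect the normalization. Since $\Curve_0$ passes through $\point$ it gains a lattice segment, so $\nseg(\Curve_0)=\nseg(\Curve_\pm)+1$, and rewriting \eqref{eq:intro:skein_F} for $G_\Curve:=\tfrac{1}{(1-t)^{\nseg(\Curve)-1}}\FC$ gives
\[
G_{\Curve_+}=qt\,G_{\Curve_-}+(1-t)\,G_{\Curve_0}.
\]
The factor $1-t$ is not $q,t$-Schur positive, and no $\nseg$-dependent power of $(1-t)$ can be absorbed to make the recurrence monotone for the Schur order, so the skein calculus alone cannot propagate positivity from the \PAL base case. I expect this to be the central obstacle, and its resolution to lie outside the recurrence. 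Concretely, I would use \cref{thm:intro_curve_to_DASHA} to write $\FC$ directly from the explicit element $\DCn$ of \cref{dfn:intro:DCn} --- a product of $\S$, $\Y$ and $\Y\X\Y^{-1}$ along the highest up-right path below $\Curve$ --- and expand $\DCn\ehact1$ in the polynomial representation as a catalanimal in the style of~\cite{BHMPS}. The $\Z$-convexity hypothesis, reformulated through \cref{dfn:intro_Z_convex}, should translate into the positivity hypothesis on the catalanimal under which an LLT-type (hence Schur) positivity holds; such a stronger positivity, unlike naive Schur positivity, has a chance of being preserved along the straightening moves. The specializations $t=\qi$ (HOMFLY) and $t=1$ (the explicit formula) provide consistency checks but not positivity, and one should note that even the \PAL base case rests on~\cite[Conjecture~7.1.1]{BHMPS}, which is itself open; thus the most one can presently hope to establish unconditionally is the equivalence of \cref{conj:intro:Schur_pos} with a suitable extension of that conjecture.
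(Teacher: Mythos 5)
The statement you are addressing is a \emph{conjecture} in the paper: the authors give no proof of it, only computational evidence (the enumeration in \cref{ex:intro_percentage}, the worked examples in \cref{sec:intro_example} and \cref{tab:small_curves}) and the remark that it extends~\cite[Conjecture~7.1.1]{BHMPS} via \cref{rem:BHMPS} and \cref{prop:DCshuffle}. Your proposal, as you yourself concede in its final sentence, is likewise not a proof: the base case you reduce to is itself an open conjecture, and the reduction step does not preserve positivity. So there is no proof to compare against, and your write-up should be read as a (largely accurate) analysis of why the conjecture is hard rather than as an argument establishing it.

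That said, two of your observations are worth recording. First, your computation that the normalized series $G_\Curve=\tfrac{1}{(1-t)^{\nseg(\Curve)-1}}\FC$ satisfies $G_{\Curve_+}=qt\,G_{\Curve_-}+(1-t)\,G_{\Curve_0}$ is correct (since $\nseg(\Curve_0)=\nseg(\Curve_\pm)+1$), and it correctly identifies why the skein recurrence~\eqref{eq:intro:skein_F} --- the paper's main structural tool --- cannot by itself propagate Schur positivity: the coefficient $1-t$ is not Schur positive, and \cref{cor:non_recursive} likewise produces alternating signs. This is consistent with the paper's own stance of leaving the statement conjectural. Second, a small correction to your framing: the case covered by~\cite[Conjecture~7.1.1]{BHMPS} is essentially the \emph{primitive} case (where the prefactor is trivial and $F_C=\omega(D_{\AB_C,\EPS_C}\cdot 1)$ by \cref{prop:DCshuffle}), not the $\Z$-convex \PAL case; the paper's conjecture extends BHMPS precisely by allowing curves through lattice points, which is exactly where the $(1-t)^{\nseg(\Curve)-1}$ normalization and the difficulty you identified enter. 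Your suggestion that a stronger LLT-type positivity for the associated catalanimals might be stable under the straightening moves is a reasonable speculation, but it is not developed in the paper and remains unproven.
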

\noindent Here $\frac1{1-t}=1+t+t^2+\cdots$ is viewed as a formal power series. See \cref{ex:intro_percentage}, \cref{sec:intro_example}, \cref{fig:convex-examples}, and \cref{tab:small_curves} for examples. 

\begin{remark}
In fact, computational evidence (see \cref{ex:intro_percentage}) suggests that the class of $\Z$-convex curves can be slightly enlarged, as follows. In the notation of \cref{dfn:intro_Z_convex}, let $\PtsAboveCweak$ be the set of all lattice points in $[0,\rectX]\times[0,\rectY]$ strictly above $\Curve$ together with $\{\point_0+(-\eps,\eps),\point_k+(-\eps,\eps)\}$. We say that $\Curve$ is \emph{weakly $\Z$-convex} if $\PtsAboveCweak$ is $\Z$-convex. We conjecture that the formal power series~\eqref{eq:intro_Schur_pos} is Schur positive for all weakly $\Z$-convex curves. 
\end{remark}
\begin{example}\label{ex:intro_percentage}
For $1\leq \rectXY\leq 7$, there are $24,319$ monotone curves. Among them:
\begin{enumerate}
\item $6,781$ ($27\%$) have Schur positive formal power series~\eqref{eq:intro_Schur_pos};
\item $4,257$ ($17\%$) are weakly $\Z$-convex;
\item $3,313$ ($13\%$) are $\Z$-convex;
\end{enumerate}
and we have inclusions $(1)\supset(2)\supset(3)$.
\end{example}

\begin{remark}\label{rmk:intro_torus_link_knot}
Let $\Curve$ be a straight line segment from $(0,0)$ to $(\rectX,\rectY)$.  Then $\LC$ is the \emph{$(\rectX,\rectY)$-torus link}.  If $\gcd(\rectX,\rectY)=1$, then $\LC$ has a single component, and is the \emph{$(\rectX,\rectY)$-torus knot}.  If instead $\Curve$ is the almost linear curve from $(0,0)$ to $(\rectXY)$ then $\LC$ is a knot which we call the \emph{$(\rectXY)$-almost torus knot}.  This knot is algebraic and corresponds to a plane curve singularity with Puiseaux exponents $(n,m,m+1)$ studied in~\cite{Piontkowski,GMO}; see \cref{sec:multitorus} for further discussion.
\end{remark}
The following class includes the \emph{coaxial torus links} considered in~\cite[Section~17b)]{EN}.
\begin{definition}
  When $\Curve$ is a \PAL curve, we call $\LC$ a \emph{\coaxial link}. If $\Curve$ is in addition $\Z$-convex, then $\LC$ is called a \emph{$\Z$-convex \coaxial link}. %
\end{definition}

Recall that an \emph{algebraic link} is an intersection of a small $3$-sphere (inside $\C^2\cong\R^4$) with the zero set $\{(x,y)\in\C^2\mid f(x,y)=0\}$ of solutions to a polynomial equation.  We prove the following in \cref{sec:multitorus}.
\begin{proposition}\label{prop:intro_algebraic}
$\Z$-convex \coaxial links are algebraic. %
\end{proposition}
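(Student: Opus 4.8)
The plan is to realize every $\Z$-convex \coaxial link as the link of an isolated plane curve singularity, combining the algebraicity of its individual components with the Eisenbud--Neumann splice calculus for coaxial torus links~\cite[Section~17b]{EN}. Write $\Curve=[\Cseg_1\Cseg_2\cdots\Cseg_k]$ with each $\Cseg_i$ being $(m_i,n_i)$-almost linear, so that by~\eqref{eq:Z_convex_geq} the $\Z$-convexity hypothesis is precisely the slope inequality $m_1/n_1\geq m_2/n_2\geq\cdots\geq m_k/n_k$.

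First I would record the individual pieces: by \cref{rmk:intro_torus_link_knot}, each component $\Link_{\Cseg_i}$ is algebraic, being the link of the singularity with Puiseaux exponents $(n_i,m_i,m_i+1)$, an iterated torus knot whose splice diagram I would write down explicitly. Next I would analyze the embedding of $\LC$ into $S^3$ induced by the standard embedding of $\Tor\times\Interval$ as a tubular neighborhood of the Heegaard torus: the stacking convention ($\Link_{\Cseg_i}$ below $\Link_{\Cseg_j}$ for $i<j$) places the components on nested coaxial tori, so that $\LC$ is a coaxial torus link in the sense of~\cite[Section~17b]{EN}. From this description I would assemble its splice diagram --- a single central node, coming from the Heegaard torus, with $k$ arms, the $i$-th arm carrying the iterated torus data of $\Link_{\Cseg_i}$ --- and compute the pairwise linking numbers, which for nested coaxial curves of slopes $m_i/n_i$ and $m_j/n_j$ should match the intersection multiplicity $\min(m_in_j,\,m_jn_i)$ of the corresponding algebraic branches.

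The crux is the Eisenbud--Neumann criterion: a graph link is algebraic exactly when it is \emph{positive}, i.e.\ when every edge determinant of its splice diagram is positive. Here the linking number alone is insufficient, since $\min(m_in_j,\,m_jn_i)$ is symmetric and does not see the nesting order; what matters is the signed edge determinant across the central node between consecutive arms, which I expect to be governed by $m_in_j-m_jn_i$. The decreasing-slope condition $m_1/n_1\geq\cdots\geq m_k/n_k$ is then exactly what forces each such determinant to be nonnegative, and together with the positivity already present along every arm (the algebraicity of the individual $\Link_{\Cseg_i}$) this should yield the full positivity of the diagram, hence algebraicity of $\LC$.

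The main obstacle is twofold. First, the bookkeeping matching the topological stacking data to the splice weights: one must verify that the nesting order dictated by the stacking convention agrees with the order in which algebraic branches are naturally arranged, so that the \emph{ordered} coaxial link --- not merely its components and linking numbers --- coincides with the algebraic one; it is precisely here that \eqref{eq:Z_convex_geq} must be shown to be equivalent to, rather than merely implied by, edge-determinant positivity. Second, the boundary case of equal slopes $m_i/n_i=m_{i+1}/n_{i+1}$, where the determinant degenerates to zero and one must check that the two arms can be legitimately combined (their branches share a tangent cone and are separated only by distinct leading coefficients) while keeping the diagram algebraic and still equal to $\LC$. A complementary, more hands-on route would build the defining polynomial directly as $f=\prod_i f_i$, with each $f_i$ a deformation of $y^{n_i}-x^{m_i}$ realizing the $(m_i,n_i)$-almost torus knot and its leading coefficient chosen to encode the slope ordering; the delicate step there is confirming that the link $f^{-1}(0)\cap S^3_\eps$ is isotopic to $\LC$ with its coaxial stacking.
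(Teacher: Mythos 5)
Your overall strategy --- realize $\LC$ as a graph link via an Eisenbud--Neumann splice diagram and invoke the positivity/algebraicity criterion, with the slope inequality~\eqref{eq:Z_convex_geq} supplying the needed edge-determinant positivity --- is exactly the route the paper takes. But there is a genuine gap in how you set up the splice diagram. A single central node with $k$ arms describes a \emph{Seifert} link, and a \coaxial link whose components have two or more distinct slopes $p/q$ is not a Seifert link: each distinct slope contributes its own Seifert-fibered piece, hence its own node. The correct diagram (see \cref{fig:splice}) is a \emph{chain} of slope-nodes, one per distinct value of $m_i/n_i$, joined by the vertical $p$- and $q$-weighted edges, with an extra cabling node (weights $d_jpq+1$, $1$, $d_j$) hanging off for each non-primitive segment, as dictated by \cref{lemma:cabling}. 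The edge determinants you need to check live on the edges \emph{between} these nodes --- both the chain edges joining consecutive slope-nodes and the edges to the cabling nodes --- not ``between consecutive arms'' of one node; on a star-shaped diagram there simply is no edge determinant encoding $m_in_j-m_jn_i$, so the verification you sketch cannot be carried out on the diagram you propose.

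Relatedly, the equal-slope degeneration you flag as an obstacle is not resolved in your writeup, and it dissolves once the diagram is built correctly: all segments with the same slope $p/q$ attach to the \emph{same} node (either directly as arrowheads when $d_j=1$ or through their own cabling nodes), so no edge with vanishing determinant ever appears and the strict inequality $xy>\prod_i z_i$ of \cite[Theorem~9.4]{EN} can be checked edge by edge, where it does follow from~\eqref{eq:Z_convex_geq} together with the positivity of the cabling weights. Your secondary route (building $f=\prod_i f_i$ directly) is plausible but is left entirely unexecuted --- the isotopy between $f^{-1}(0)\cap S^3_\eps$ and the stacked link $\LC$ is the whole content there. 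As written, the proposal identifies the right tools but does not produce a correct diagram on which the determinant computation could be performed, so the proof is incomplete.
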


In \cref{cor:non_recursive}, we give an explicit formula expressing the element $\DClimEHA$ for an arbitrary $\Z$-convex curve $\Curve$ in terms of $\Z$-convex \PAL curves. In particular, the invariants we associate to $\Z$-convex links are expressed (with predictable alternating signs) in terms of invariants of algebraic links. We do not have an analog of this result where ``$\Z$-convex'' is replaced with ``weakly $\Z$-convex.'' See \figref{fig:convex-examples}(e).

\subsection{Khovanov--Rozansky homology}
To a link $\Link$, one can associate a link invariant 
 $\PKR(\Link;a,q,t)$  which is a Laurent polynomial in $a,\qq,\tt$ encoding the graded dimensions of \emph{Khovanov--Rozansky (KR) homology} of $L$; see~\cite{KR1,KR2,KhoSoe}.\footnote{More precisely, the link invariant $\PKRLC(a,q,t)$ differs from the triply-graded \Poincare series encoding the dimensions of KR homology by a monomial in $a,\qq,\tt$.} When the odd KR homology of $\Link$ vanishes, $\PKR(\Link;a,q,t)$ becomes a Laurent polynomial in $a,q,t$ (involving no half-integer powers of $q,t$).

It is well known that at $t=\qi$ (more precisely, at $\tt=-\qqi$), $\PKR(\Link;a,q,t)$ specializes to the \emph{\FLY polynomial} $\PHOML(a,q)$ of $L$. We show that the same holds for the superpolynomial $\PEHAC(a,q,t)$ defined in~\eqref{eq:intro_PEHAC_dfn}.
\begin{proposition}\label{prop:tqi}
For any curve $\Curve$, we have 
\begin{equation}\label{eq:intro_PEHAC=HOMFLY}
  \mulqqi^{\nseg(\Curve)}\cdot \PEHAC(a,q,t=\qi)=a^{\wC} \cdot  \PHOMLC(a,q),%
\end{equation}
where $\wC\in\Z$ is the \emph{writhe} of the braid $\br^\Ann_\Curve$ associated to $\Curve$ in \cref{dfn:TC}.
\end{proposition}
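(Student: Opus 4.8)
The plan is to trace $\PEHAC(a,q,t)$ through the left column of \cref{fig:big} and reduce the identity to a computation in the HOMFLY skein of the solid torus. First I would observe that the specialization $\tt=-\qqi$ turns the \FLY skein relation~\eqref{eq:intro_FLY_skein} into $\LPlus-\LMinus=(\qq-\qqi)\LZero$, which is the genuine HOMFLY skein relation; hence each $\Skti(\Surf)$ specializes at $t=\qi$ to the HOMFLY skein algebra of $\Surf$ over $\Caq$, the framing variable playing the role of $a$. In particular, the bottom row of \cref{fig:big} becomes the HOMFLY-polynomial functor.

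Next I would identify the $t=\qi$ specialization of the symmetric function $\FC$. By the remarks in \cref{sec:intro_skein}, the map $\SkPTor\to\Sk(\Tor)$ filling the puncture and setting $t=\qi$ sends $\WC^{\PTor}\mapsto\WC^\Tor$, and $\WC^\Tor\cdot 1\in\Laq\cong\Skp(\Ann)$ is exactly $\FC|_{t=\qi}$. Since $\PEHAC=\FC[a-a^{-1}]$ by~\eqref{eq:intro_PEHAC_dfn} and the plethysm $[a-a^{-1}]$ does not involve $t$, the specialization commutes with it, giving
\begin{equation*}
  \PEHAC(a,q,t=\qi)=\bigl(\WC^\Tor\cdot 1\bigr)[a-a^{-1}].
\end{equation*}
This reduces the proposition to a statement purely about HOMFLY skein theory: $\WC^\Tor\cdot 1$ is the class of the link $\LC$ inside the solid torus $\Skp(\Ann)$, and I must show that applying $[a-a^{-1}]$ and multiplying by $\mulqqi^{\nseg(\Curve)}$ recovers $a^{\wC}\,\PHOMLC(a,q)$.

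The geometric input is the lower-left map of \cref{fig:big}: filling the interior embeds the solid torus standardly into $S^3$ and carries the annular class $\WC^\Tor\cdot 1$ to the HOMFLY polynomial of $\LC$ regarded as a link in $\R^3$, landing in $\Sk(\Disk)\cong\Caq$. Under the identifications $\Skp(\Ann)\cong\Laq$ of~\cite{Turaev,AiMo} and $\Sk(\Disk)\cong\Caq$ of~\cite{HOMFLY,PT}, I would show this fill-interior evaluation is realized by the plethysm $[a-a^{-1}]$. The cleanest way to pin down the accompanying scalar is to present $\LC$ as the closure of the braid $\br^\Ann_\Curve$ of \cref{dfn:TC} and invoke the Jones--Ocneanu (Hecke-algebra) trace formula for the HOMFLY polynomial of a braid closure; there the exponent sum of $\br^\Ann_\Curve$ is the writhe $\wC$ and contributes the framing factor $a^{\wC}$ that converts the blackboard framing of the diagram on $\Tor$ into the zero framing used for $\PHOMLC$.

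Finally I would match the scalar. From~\eqref{eq:gamma_mul_dfn} one computes $\mulqqi=\bigl(\tt(1-q)\bigr)^{-1}\big|_{\tt=-\qqi}=(\qq-\qqi)^{-1}$, so that $\mulqqi^{\nseg(\Curve)}=(\qq-\qqi)^{-\nseg(\Curve)}$; since $\LC$ has exactly $\nseg(\Curve)$ components $\Link_{\Cseg_1},\dots,\Link_{\Cseg_{\nseg(\Curve)}}$, I expect this to be precisely the normalization relating the bare plethysm $[a-a^{-1}]$ to the usual HOMFLY evaluation, one factor of $(\qq-\qqi)^{-1}$ per component. I anticipate this last bookkeeping to be the main obstacle: keeping the framing and normalization conventions consistent across the skein of $\Tor$, the symmetric-function side, and the HOMFLY polynomial — concretely, verifying that fill-interior is the plethysm $[a-a^{-1}]$ rather than a rescaled variant, that the per-component normalization is exactly $\mulqqi^{\nseg(\Curve)}$, and that the framing correction is precisely $a^{\wC}$ for the braid $\br^\Ann_\Curve$ of \cref{dfn:TC}.
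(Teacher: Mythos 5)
Your proposal is correct and takes essentially the same route as the paper: the paper's proof of this proposition is precisely a chase through the commutative diagram of \cref{fig:giant}, whose left column and bottom row you reconstruct by hand (the content of squares \sqb1, \sqc1, \sqc2 together with~\eqref{eq:L=HOMFLY} and the writhe formula~\eqref{eq:wC}). The one point needing care is your intermediate claim that $\WC^\Tor\cdot 1$ equals $\FC|_{t=\qi}$ on the nose: under the isomorphism~\eqref{eq:Skp_Ann_to_Laq_twisted} the image of $\WC^\Ann$ is $\left(\qqi-\qq\right)^{-\nseg(\Curve)}\FC|_{t=\qi}[\XSym(1-\qi)]$, so the plethysm $[\XSym(1-\qi)]$ and the factor $\mulqqi^{\nseg(\Curve)}$ must be tracked explicitly --- which is exactly the bookkeeping you flag at the end, and which the paper packages into the definition of the vertical maps~\eqref{eq:Laq_to_Caq}--\eqref{eq:Laqt_to_Caqt}.
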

\noindent  An explicit formula for $\wC$ in terms of lattice points below $\Curve$ is given in~\eqref{eq:wC}. Here $\PHOML(a,q)$ has slightly non-standard normalization; see~\eqref{eq:FLY_dfn}.

A natural question arises: what is the relationship between $\PEHAC(a,q,t)$ and $\PKRLC(a,q,t)$? For arbitrary curves, it is easy to find examples where these two polynomials disagree: see \figref{fig:convex-examples}(c) and \cref{ex:non_convex_positroid}. However, the two polynomials appear to be very closely related for the class of $\Z$-convex curves. The following question has motivated most of our results.
\begin{question}\label{que:intro:KR}
For which $\Z$-convex curves $\Curve$ do we have
  \begin{equation}\label{eq:intro:KR}
   \PEHAC(a,q,t)= \left(1-t\right)^{\nseg(\Curve)-1} \PKRLC(a,q,t)
 \end{equation}
 (up to multiplication by an explicit monomial in $a,\qq,\tt$)?
\end{question}
\noindent Note that if~\eqref{eq:intro:KR} holds for $\Curve$ then the odd KR homology of $\LC$ vanishes. We refer to \cref{ex:non_unimodal} for an example of a convex curve $\Curve$ (discovered by A.~Mellit and brought to our attention by M.~Mazin) for which~\eqref{eq:intro:KR} does not hold. In particular, this example disproves~\cite[Conjecture~7.1(ii,iv)]{GL_cat_combin}.

\begin{remark}
  It was shown in~\cite[Corollary~1.4]{MeHog} (see also~\cite{Mellit_rat,ElHo,Mellit_torus}) that torus links have vanishing odd KR homology. Recall from \cref{rmk:intro_torus_link_knot} that torus links are special cases of links $\LC$ when $\Curve$ is a straight line segment from $(0,0)$ to $(\rectXY)$. It would be interesting to verify directly using the results of~\cite{MeHog} that~\eqref{eq:intro:KR} holds for torus links.  This case is also covered by the conjectures of Cherednik--Danilenko \cite{CheDan2}.
\end{remark}

\begin{remark}
It would be interesting to interpret the skein relation \eqref{eq:intro:skein} in terms of KR homology.  We note the similarity to the results of Gorsky and \Negut (for example \cite[Theorem 5.3]{GN_AH}) on traces in the affine Hecke category, which is related to annular, or affine KR homology. See also \cref{rmk:skein_diagrammatic}.
\end{remark}

\begin{conjecture}
Equation~\eqref{eq:intro:KR} holds for convex curves $\Curve$ such that the link $\LC$ is algebraic.
\end{conjecture}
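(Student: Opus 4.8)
The plan is to prove \eqref{eq:intro:KR} by a two–sided geometric identification on Hilbert schemes, with the skein recursion of \cref{thm:intro:skein} serving as the combinatorial shadow of a filtration and with convexity supplying the cohomology vanishing that collapses everything to an honest dimension count. I would treat the two sides of \eqref{eq:intro:KR} separately and then match them.

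First I would realize the left-hand side $\PEHAC$ geometrically. By \cref{thm:intro_limit_exists} the element $\DClimEHA$ lives in $\EHAp$, and via the Schiffmann--Vasserot construction \cite{SV11} the EHA acts on the (localized equivariant) K-theory of the Hilbert schemes of points on $\C^2$. I would express $\DClimEHA\ehact 1$ as an explicit K-theory class assembled from the geometric operators attached to the generators $\Y$, $\Y\X\Y^{-1}$, and the idempotent $\gam\S$ of \cref{dfn:intro:DCn}; the plethystic transforms defining $\FC$ and $\PEHAC$ then recast this class as a bigraded character of a coherent sheaf. The goal of this step is to present $\PEHAC$, up to the explicit monomial in $a,\qq,\tt$, as a (signed) Euler characteristic of a line bundle on a flag Hilbert scheme built from $\Curve$.

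Second I would invoke Oblomkov--Rozansky \cite{ObRo}: since, by the results of \cref{sec:cox_links}, the link $\LC$ viewed in $\R^3$ is a Coxeter link, its KR homology $\PKRLC$ is computed by the global sections of a line bundle on the \emph{same} flag Hilbert scheme. The crux of the matching is to check that the line bundle and the grading data produced by the EHA computation coincide with those of \cite{ObRo}, and that the factor $(1-t)^{\nseg(\Curve)-1}$ accounts exactly for the reduced-versus-unreduced normalization discrepancy coming from the extra components $\Link_{\Cseg_i}$. The specialization at $t=\qi$ is already controlled: \cref{prop:tqi} together with the known reduction of KR homology to the HOMFLY polynomial pins down the monomial ambiguity and serves as a consistency check, while for straight-line curves the torus-link case provides independent input (vanishing of odd KR homology \cite{MeHog} and the expected agreement with \cite{CheDan2}).

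The convexity hypothesis enters precisely to control cohomology, and this is where I expect the main obstacle. For an algebraic link the GORS/ORS picture predicts that the relevant sheaf has cohomology concentrated in a single even degree, so that the Euler characteristic computed by $\PEHAC$ equals the true dimension of global sections and no odd KR homology appears. The difficulty is that \eqref{eq:intro:skein} (equivalently \eqref{eq:intro:skein_F}) is a \emph{linear} three-term relation with coefficient $qt$, whereas the homological statement relating the KR complexes of $\Curve_+,\Curve_-,\Curve_0$ is an exact triangle, i.e.\ a mapping cone. Showing that this triangle degenerates to the linear relation --- equivalently, that the connecting maps vanish --- is the heart of the matter, and it is exactly here that convexity must be used to force the requisite parity/odd-vanishing. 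A secondary obstruction is that the intermediate curves $\Curve_-,\Curve_0$ produced by the recursion need not give algebraic or even convex links, so the induction would have to be organized on a wider class (e.g.\ the weakly $\Z$-convex curves of \cref{ex:intro_percentage}), with algebraic base cases furnished by the $\Z$-convex \PAL curves of \cref{prop:intro_algebraic}. Finally, since the statements of \cite{ObRo} and the surrounding ORS-type conjectures are themselves only partially established, an unconditional proof would require either proving the relevant cases of those conjectures or bypassing them by a direct computation of $\PKRLC$ for the algebraic links at hand and comparing it with the explicit EHA formula of \cref{thm:intro_limit_exists}.
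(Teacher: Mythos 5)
This statement is a \emph{conjecture}; the paper offers no proof of it, so there is nothing on the paper's side to compare your argument against. What you have written is a research program, not a proof, and you acknowledge as much in your final paragraph. The honest assessment is that each of the three obstacles you name is a genuine, unresolved gap, so the proposal does not establish the statement. Concretely: (i) the identification of $\PKRLC$ with sections of a line bundle on a flag Hilbert scheme via Oblomkov--Rozansky, and the GORS/ORS-type parity predictions you invoke to kill odd cohomology, are themselves open in the generality needed here, so the argument is conditional from the outset; (ii) the passage from the exact triangle relating the KR complexes of $\Curve_+,\Curve_-,\Curve_0$ to the linear relation~\eqref{eq:intro:skein_F} requires the connecting maps to vanish, and no mechanism is given by which convexity forces this --- note that the paper's \cref{ex:non_unimodal} exhibits a convex curve for which~\eqref{eq:intro:KR} fails and odd KR homology does not vanish, so any such mechanism must genuinely use algebraicity of $\LC$ and not convexity alone; (iii) the induction via the skein recursion does not close, since the intermediate curves $\Curve_-,\Curve_0$ can leave the class of convex curves with algebraic links, and enlarging to weakly $\Z$-convex curves only worsens the problem because the paper explicitly notes it has no analogue of \cref{cor:non_recursive} in that generality.

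One further point worth flagging: the normalization issues are subtler than ``reduced versus unreduced.'' The factor $(1-t)^{\nseg(\Curve)-1}$ in~\eqref{eq:intro:KR} sits on the KR side as stated in \cref{que:intro:KR}, whereas \cref{conj:intro:Schur_pos} divides $\FC$ by $(1-t)^{\nseg(\Curve)-1}$; reconciling these with the monomial ambiguity in $a,\qq,\tt$ and with the writhe correction of \cref{prop:tqi} is a bookkeeping step you would need to carry out explicitly before the $t=\qi$ consistency check can pin anything down. As it stands, the proposal is a reasonable map of where a proof might come from, but none of its three pillars is load-bearing yet.
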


\introsubsec{An example}\label{sec:intro_example}
  Let $(\rectX,\rectY)=(2,2)$, and consider the convex curves $\Curve_+$, $\Curve_-$, $\Curve_0$ shown in \cref{fig:2x2}; see also \cref{fig:TC}. Thus, $\Link_+:=\Link_{\Curve_+}$ is the (positive) trefoil knot, $\Link_-:=\Link_{\Curve_-}$ is the unknot, and $\Link_0:=\Link_{\Curve_0}$ is the (positive) Hopf link. The associated 
symmetric functions $\FC$ and normalized EHA superpolynomials $\PEHATC(a,q,t) := \PEHAC(a,q,t)/(a-a^{-1})$ are 
given by
\begin{align}%
\label{eq:intro_ex_FC}
  \FX_{\Curve_+}&=s_{11}+(q+t) s_2, &\FX_{\Curve_-}&=s_2, &\FX_{\Curve_0}&=s_{11}+(q+t-qt) s_2;\\
\label{eq:intro_ex_PEHAC}
  \PEHATX_{\Curve_+}&=a^3\cdot\left(\frac{q + t}{a^{2}} - \frac{1}{a^{4}}\right), &\PEHATX_{\Curve_-}&=a^1\cdot 1, &\PEHATX_{\Curve_0}&=a^3\cdot \left(\frac{q + t - qt}{a^{2}} - \frac{1}{a^{4}}\right).
\end{align}
(See \cref{ex:DC_and_Sym} for the full computation.) We see that the skein relation~\eqref{eq:intro:skein_F} holds in this case. 
 All of the above expressions are $q,t$-symmetric. In addition, $\FX_{\Curve_+},\FX_{\Curve_-}$ are $q,t$-Schur positive, and so is
\begin{equation*}%
  \frac1{1-t} \FX_{\Curve_0}=(1+t+t^2+t^3+\cdots) s_{11} + (q+t+t^2+t^3+\cdots) s_{2},
\end{equation*}
in agreement with \cref{conj:intro:Schur_pos}. Finally, computing KR homology of $L_+,L_-,L_0$ (see e.g. \cite[Theorem~1.2]{MeHog}, \cite[Section~1.2]{Mellit_torus}, or \cite[Section~3.5]{GL_qtcat}), one checks that indeed the odd KR homology of $L_+$, $L_-$, and $L_0$ vanishes, and~\eqref{eq:intro:KR} holds for these links. We have $\wX(\Curve_+)=3$, $\wX(\Curve_-)=1$, $\wX(\Curve_0)=3$, in agreement with~\eqref{eq:intro_PEHAC=HOMFLY}.

\introsubsec{Comparisons and specializations} Various special cases of our operators $\DC$ have appeared before in the literature, and we list some comparisons in \cref{sec:relations}.  The operator $\DC$ simplifies considerably when $t=1$, and we show in \cref{prop:t1} that
\begin{equation*}%
  F_C|_{t = 1} = \sum_{\Pcal} q^{\area(\PathC) - \area(\Pcal)} h_{\Pcal},
\end{equation*}
summed over lattice paths $\Pcal$ weakly below $C$, such that $\Pcal$ passes through all the lattice points that $C$ passes through.  Here, $h_{\Pcal}$ denotes a complete homogeneous symmetric function and $\area(\PathC) - \area(\Pcal)$ is a statistic on lattice paths to be defined in \cref{sec:relations}.

\introsubsec{Structure of the paper} We give background on DAHA and EHA in \cref{sec:DAHA,sec:EHA}, respectively. Along the way, we analyze the operators $\DCn$ associated to almost linear curves. In \cref{sec:skein_relation}, we prove the skein relation (\cref{thm:intro:skein}) and use it to prove \cref{thm:intro_limit_exists}. We also give a non-recursive formula (\cref{cor:non_recursive}) expressing an arbitrary $\Z$-convex curve in terms of $\Z$-convex \PAL curves. In \cref{sec:skein}, we discuss the skein-theoretic formalism of~\cite{MS17,MS21} and prove \cref{thm:intro_curve_to_DASHA}. In \cref{sec:diagram}, we describe the maps in the diagram in \cref{fig:big}, the image of a curve $\Curve$ under these maps, and check that the diagram commutes. %
In \cref{sec:relations}, we explain how our objects are related to the objects studied in~\cite{BHMPS,BGLX,Negut,GL_cat_combin,ObRo}. In particular, we identify monotone links in an annulus with Coxeter links in  \cref{prop:Coxeter=monotone} and discuss the $t=1$ specialization of our results in \cref{sec:specialization_t=1}. 
In \cref{sec:multitorus}, we review the results of~\cite{EN} on algebraic links and prove \cref{prop:intro_algebraic}.  Finally, we list some %
examples in \cref{sec:examples}.

\subsection*{Acknowledgments}
We are grateful to Misha Mazin, Eugene Gorsky, and Jos\'e Simental for stimulating discussions. We are also grateful to Paul Wedrich and Peter Samuelson for their comments on the first version of the manuscript.

\section{From curves to DAHA}\label{sec:DAHA}
In this section, we review the background on the DAHA and associate an element $\DXn_{\Curve} \in \DASHA_\N$ in the spherical DAHA to a monotone curve $\Curve$. 
\subsection{Double affine Hecke algebra}\label{sec:DAHA_backgr}
Let $\N\geq1$. The \emph{double affine Hecke algebra (DAHA)} introduced by Cherednik~\cite{Cherednik_book} is the $\Cqt$-algebra generated by $X_i^{\pm1},Y_i^{\pm1}$ for $i=1,2,\dots,\N$, and $T_i$ for $i=1,2,\dots,\N-1$ subject to the following relations:
\begin{gather}
\label{eq:T_i_relns}
  (T_i+\tt)(T_i-\tti) = 0,\quad T_iT_{i+1}T_i=T_{i+1}T_iT_{i+1}, \\
  T_iT_k=T_kT_i \quad\text{if $|i-k|>1$}, \\
  X_jX_k=X_kX_j \quad\text{and}\quad Y_jY_k=Y_kY_j \quad\text{for all $1\leq j,k\leq \N$},\\
  T_iX_iT_i=X_{i+1},\quad  T_i^{-1}Y_iT_i^{-1}=Y_{i+1},\\
  T_iX_k=X_kT_i \quad\text{and}\quad T_iY_k=Y_kT_i \quad\text{if $|i-k|>1$},\\
  Y_1X_1X_2\cdots X_\N = qX_1X_2\cdots X_\N Y_1,\\
  X_1^{-1}Y_2=Y_2X_1^{-1}T_1^{-2}.
\end{gather}
\begin{notation}
In the above relations, we follow the conventions of~\cite{SV11,MS21}, which differ from those of~\cite{Cherednik_book,GN} by substituting $\tt\mapsto \tti$.
\end{notation}
\begin{notation}\label{not:T_i}
Following~\cite{MS21} (cf. \cref{not:MS21}), we represent each $T_i$ by a \emph{negative} crossing $\LMinus$ in our figures. Thus,~\eqref{eq:T_i_relns} is consistent with~\eqref{eq:intro_FLY_skein}.
\end{notation}

Let $\Sn$ denote the symmetric group on $\N$ letters. For a permutation $w\in\Sn$, let $\ell(w)$ be the number of inversions in $w$. Given a reduced word $w=s_{i_1}s_{i_2}\cdots s_{i_{\ell(w)}}$, where $s_i=(i,i+1)$ are simple transpositions, we let $T_w:=T_{i_1}T_{i_2}\cdots T_{i_{\ell(w)}}$; the result depends only on $w$.

The symmetrizing idempotent is given by
\begin{equation}\label{eq:S_dfn}
  \S :=\frac{\sum_{w\in\Sn} t^{-\ell(w)/2} T_w}{\sum_{w\in\Sn} t^{-\ell(w)}}.
\end{equation}
 The symmetrizer $\S$ satisfies
\begin{equation}\label{eq:ST=TS=S}
  T_i\S =\S T_i=\tti \S  \quad\text{and}\quad \S^2=\S .
\end{equation}
The \emph{spherical DAHA} is defined by 
\begin{equation*}%
  \DASHA_\N:=\S \DAHA_\N \S.
\end{equation*}

For $1\leq i\leq j\leq \N$, let $\SNXX_i^j:=\{w\in \Sn\mid w(k)=k\text{ for $k<i$ or $k>j$}\}$, and set
\begin{equation}\label{eq:SX_k_dfn}
  \SXX_i^j:=\frac{\sum_{w\in\SNXX_i^j} t^{-\ell(w)/2} T_w}{\sum_{w\in\SNXX_i^j} t^{-\ell(w)}}.
\end{equation}
Thus, $\SX_1=\S$. The partial symmetrizers $\SX_k$ satisfy~\eqref{eq:ST=TS=S} for all $i=k,k+1,\dots,\N-1$, and can be computed inductively using the following well-known recurrence (see e.g.~\cite[Section~1.2]{ElHo} and \cref{rmk:skein_diagrammatic}):
\begin{equation}\label{eq:symmetrizer_recurrence}
\SX_{k+1} T_k\cdots T_{\N-1} T_{\N-1}\cdots T_k = -t(1-t^{-\N-1+k}) \SX_k + t\SX_{k+1},\quad \SX_{\N}=1.
\end{equation}

\subsection{Polynomial representation and \texorpdfstring{$\SL_2(\Z)$}{SL\_2(Z)}-action}\label{sec:DAHA_polyrep}
The DAHA $\DAHA_\N$ admits a polynomial representation $\phi_\N$ on $\Cqt[x_1,\dots,x_\N]$. For a polynomial $f(x_1,\dots,x_\N)$, set
\begin{align*}
  (\cshift\cdot f)(x_1,\dots,x_{\N-1},x_\N)&:=f(qx_\N,x_1,\dots,x_{\N-1}),\\
  (s_i\cdot f)(\dots,x_i,x_{i+1},\dots)&:=f(\dots,x_{i+1},x_i,\dots).
\end{align*}
The operator $\phi_\N(X_i)$ acts by multiplication by $\qi x_i$, and $T_i,Y_i$ correspond to the following operators; cf.~\cite[Lemma~4.2]{SV11}:
\begin{align}
\label{eq:T_act}
  \phi_\N(T_i) &= \tti s_i +\frac{\tti-\tt}{x_i/x_{i+1} - 1} (s_i - 1),\\
\label{eq:Y_act}
  \phi_\N(Y_i)&=t^{(\N-1)/2}\phi_\N(T_i\cdots T_{\N-1}) \cshift \phi_\N(T_1^{-1}\cdots T_{i-1}^{-1}).
\end{align}
\begin{notation}
We usually omit $\phi_\N$ from the notation and write $D\cdot f$ in place of $\phi_\N(D)\cdot f$ for $D\in\DAHA_\N$ and $f\in\Cqt[x_1,\dots,x_\N]$.
\end{notation}
\begin{remark}\label{rmk:polyrep_rescaling}
Let $\phi'_\N$ be the representation defined in~\cite[Lemma~4.2]{SV11} and~\cite[Section~1.4]{SV13}. Then we have $\phi_\N(T_i)=\phi'_\N(T_i)$, $\phi_\N(X_i)=\qi\phi'_\N(X_i)$, and $\phi_\N(Y_i)=t^{(\N-1)/2}\phi'_\N(Y_i)$. This rescaling is consistent\footnote{The formula for $\phi'_\N(T_i)$ in~\cite[Section~1.4]{SV13} appears to contain a typo: $\tt s_i$ should be replaced with $\tti s_i$. Additionally, below~\cite[Equation~(1.12)]{SV13}, the authors set $y_1=1$ as opposed to $y_1=\qi$. However, in order for $P_{1,0}^\infty$ to act by multiplication by $p_1$ as claimed in~\cite[Proposition~1.4]{SV13}, one needs to set $y_1=\qi$.} with the rescaling $P_{i,j}^\N\mapsto y_1^iy_2^jP_{i,j}^\N$ in~\cite[Equation~(1.12)]{SV13}, with $y_1=q^{-1}$ and $y_2=t^{(\N-1)/2}$.
\end{remark}

Consider the braid group $\BraidGroup_3$ with generators $\taup,\taum$ and relations $\taup \taum^{-1}\taup=\taum^{-1}\taup\taum^{-1}$. We have an 
action of $\BraidGroup_3$ by automorphisms on $\DAHA_\N$ defined by\footnote{Note that both~\cite[Section~2.1]{SV11} and~\cite[Equation~(13)]{GN} contain the same typo in the formulas~\eqref{eq:taup_dfn}--\eqref{eq:taum_dfn}: their $T_i$ should be $T_1$.}
\begin{align}%
\label{eq:taup_dfn}  \taup(T_i)&=T_i, &\taup(X_i)&=X_i, &\taup(Y_i)&=Y_iX_i(T_{i-1}^{-1}\cdots T_{1}^{-1})(T_{1}^{-1}\cdots T_{i-1}^{-1});\\
\label{eq:taum_dfn}  \taum(T_i)&=T_i, &\taum(Y_i)&=Y_i, &\taum(X_i)&=X_iY_i(T_{i-1}\cdots T_{1})(T_{1}\cdots T_{i-1}).
\end{align}
In particular,
\begin{equation*}%
  \taup(\X)=\X,\quad \taup(\Y)=\Y\X,\quad \taum(\X)=\X\Y,\quad \taum(\Y)=\Y,\quad \taup(\S )=\taum(\S )=\S .
\end{equation*}

The operators $\taup,\taum$ preserve $\DASHA_\N$, and under the identification
\begin{equation}\label{eq:taupm_to_SL2}
  \taup \mapsto \begin{pmatrix}
1 & 1\\ 0 & 1
  \end{pmatrix},\quad \taum\mapsto \begin{pmatrix}
1 & 0\\ 1 & 1
  \end{pmatrix},
\end{equation}
we obtain an action of $\SL_2(\Z)$ on $\DASHA_\N$. See e.g.~\cite[Proposition~5.2]{MS21} for an explicit check of the  $\SL_2(\Z)$ relation $(\taup\taum^{-1}\taup)^4=\id$.\footnote{We thank Jos\'e Simental for discussions regarding the relation $(\taup\taum^{-1}\taup)^4=\id$.}

For any polynomial $f(x_1,\dots,x_\N)$, the polynomial $\S\cdot  f$ is symmetric in $x_1,\dots,x_\N$. Thus, the polynomial representation of $\DAHA_\N$ restricts to an action of $\DASHA_\N$ on the space of $\LaqtN$ of symmetric polynomials in $\N$ variables over $\Cqt$.

We denote
\begin{equation*}%
  \Ztnn:=\{x\in\Z\mid x\geq0\},\quad \Ztp:=\{x\in\Z\mid x>0\}, \quad\text{and}
\end{equation*}
\begin{equation*}%
  \BZ:=\Z^2,\quad \BZast:=\BZ\setminus\{(0,0)\},\quad \BZgeq:=\Ztnn\times \Z,\quad \BZx:=\Ztp\times\Z,\quad \BZxy:=\BZx\sqcup (\{0\}\times\Ztnn).
\end{equation*}

For $d\geq1$, let
\begin{equation}\label{eq:Pgen_dfn}
  \Pgen_{d,0}:=q^d\S \sum_{i=1}^\N X_i^d \S \quad\text{and}\quad \Pgen_{0,d}:=\S \sum_{i=1}^\N Y_i^d \S.
\end{equation}
For an arbitrary vector $(\rectXY)\in\BZast$ with $\gcd(\rectXY)=d$, choose an element $g\in\SL_2(\Z)$ sending $(d,0)$ to $(\rectXY)$ and set
\begin{equation}\label{eq:SL2_DAHA_Pgen}
  \Pgen_{\rectXY}:=g \Pgen_{d,0}.
\end{equation}
The resulting element does not depend on the particular choice of $g$. %
By~\cite[Proposition~2.5]{SV11}, the elements $\Pgen_{\bx},\bx\in\BZast$ generate $\DASHA_\N$ as a $\Cqt$-algebra.

\subsection{Comparing two $\SL_2(\Z)$-actions}
The group $\SL_2(\Z)$ acts on $\DASHA_\N$, and it also acts linearly on $\R^2$ (and thus on curves in $\R^2$). The goal of this section is to show that these two actions are compatible with \cref{dfn:intro:DCn}; see \cref{cor:g_DC=D_gC}.

Choose a nonzero vector $\PV\in\R_{\geq0}^2\setminus\{0\}$. Consider the \emph{$\PV$-punctured torus} $\TPV:=\Tor\setminus\{\eps\PV\}$ for some small $\eps>0$.
In other words, $\TPV=\RPV/\Z^2$, where
$\RPV=\R^2\setminus \{\point+\eps\PV\mid \point\in\BZ\}$. 
 Since $\TPV$ is homotopy equivalent to a wedge of two circles, its fundamental group $\pi_1(\TPV,0)$ (with loops based at $0\in\TPV$) is the free product $\Z*\Z$. We choose two particular generators $\XPV,\YPV\in\pi_1(\TPV,0)$ such that the loop representing $\XPV$ (resp., $\YPV$) lifts to a path in $\RPV$ from $(0,0)$ to $(1,0)$ (resp., from $(0,0)$ to $(0,1)$) passing below (resp., to the left of) the 
point $\eps\PV$ 
 as shown in \figref{fig:puncture-XY}(middle). Since $\pi_1(\TPV,0)$ is freely generated by $\XPV,\YPV$, we can define 
 a group homomorphism%
\begin{equation*}%
  \psiPV: \pi_1(\TPV,0)\to \DAHA_\N^\times:\qquad \XPV\mapsto \X,\quad \YPV\mapsto \Y.
\end{equation*}

Let $\Curve:[0,1]\to\RPV$ be an arbitrary curve with $\Curve(0),\Curve(1)\in\Z^2$. Applying the projection $\R^2\to\Tor$, we get a curve $\Cproj:[0,1]\to \TPV$ with $\Curve(0)=\Curve(1)=0$, which therefore represents an element of $\pi_1(\TPV,0)$, also denoted $\Cproj$. We will be interested in the element $\psiPV(\Cproj)\in\DAHA_\N$.

We have a homomorphism $\BraidGroup_3\to\SL_2(\Z)$ defined by~\eqref{eq:taupm_to_SL2}. Let $\BraidGroupP_3\subset\BraidGroup_3$ be the monoid generated by $\taup,\taum$. 
In the following lemma, $g\in\BraidGroupP_3$ is simultaneously treated as an automorphism of $\DAHA_\N$ as well as a linear map $\R^2\to\R^2$ (induced by the image of $g$ in $\SL_2(\Z)$ which we also denote by $g$). 
\begin{lemma}\label{lemma:SL2_action_curves}
Suppose that $g\in\BraidGroupP_3$, and let $\Curve:[0,1]\to\RPV$ be a curve with $\Curve(0),\Curve(1)\in\Z^2$. Let $\PVp:=g(\PV)\in\R_{\geq0}^2\setminus\{0\}$ and $\Curve':=g\circ\Curve:[0,1]\to\RPVp$. Then the action of $g$ on $\DAHA_\N$ sends $\psiPV(\Cproj)$ to $\psiPVp(\Cprojp)$.
\end{lemma}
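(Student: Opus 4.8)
The plan is to reduce the statement first to the case where $g$ is one of the two monoid generators $\taup,\taum$ of $\BraidGroupP_3$, and then to a check on the free generators $\XPV,\YPV$ of $\pi_1(\TPV,0)$. To justify the first reduction I would record that the statement is multiplicative in $g$. Suppose it holds for $g_1$ and for $g_2$, allowing arbitrary puncture vectors. Given a curve $\Curve$ in $\RPV$, applying the statement to $g_1$ gives $g_1(\psiPV(\Cproj))=\psi^{(\N)}_{g_1(\PV)}(\overline{g_1\circ\Curve})$, and then applying it to $g_2$ with the curve $g_1\circ\Curve$ gives $g_2\bigl(\psi^{(\N)}_{g_1(\PV)}(\overline{g_1\circ\Curve})\bigr)=\psi^{(\N)}_{g_2g_1(\PV)}(\overline{g_2g_1\circ\Curve})$. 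Composing these, and using that $\BraidGroupP_3\to\operatorname{Aut}(\DAHA_\N)$ is a monoid homomorphism, yields the statement for $g_2g_1$. Since every element of $\BraidGroupP_3$ is a product of the generators $\taup,\taum$, it suffices to treat $g\in\{\taup,\taum\}$.

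For the geometric input I would note that any $g\in\SL_2(\Z)$ maps $\RPV$ homeomorphically onto $\RPVp$: it sends each puncture $\point+\eps\PV$ to $g\point+\eps\PVp$, and $g\point$ ranges over all of $\Z^2$. This homeomorphism is equivariant for the $\Z^2$-action, since translation by $v$ on the source corresponds to translation by $gv$ on the target, so it descends to a basepoint-preserving homeomorphism $\TPV\xrasim\TPVp$, hence to a group isomorphism $g_*\colon\pi_1(\TPV,0)\xrasim\pi_1(\TPVp,0)$ with $g_*(\Cproj)=\Cprojp$ by construction. Because $\psiPV$, $\psiPVp$, the automorphism $g$, and $g_*$ are all group homomorphisms, the desired identity $g(\psiPV(\gamma))=\psiPVp(g_*(\gamma))$ is multiplicative in $\gamma$, so it is enough to verify it for $\gamma=\XPV$ and $\gamma=\YPV$.

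The remaining base case is the geometric heart of the argument, and I expect it to be the main obstacle. Here I would compute $g_*(\XPV)$ and $g_*(\YPV)$ explicitly as words in $\XPVp,\YPVp$ for $g=\taup$ and $g=\taum$, and match them against the automorphism formulas recorded after~\eqref{eq:taum_dfn}, namely $\taup(\X)=\X$, $\taup(\Y)=\Y\X$, $\taum(\X)=\X\Y$, $\taum(\Y)=\Y$. For example, for $g=\taup=\smat{1&1\\0&1}$, the lift of $\XPV$ from $(0,0)$ to $(1,0)$ is carried to a path again from $(0,0)$ to $(1,0)$ which, as $\taup$ preserves the $y$-coordinate, still passes below the puncture $\eps\PVp$ for small $\eps$ and hence represents $\XPVp$, matching $\taup(\X)=\X$; while the lift of $\YPV$ from $(0,0)$ to $(0,1)$ is carried to a path from $(0,0)$ to $(1,1)$ which decomposes, up to homotopy rel endpoints in $\RPVp$, into a $\YPVp$-segment followed by an $\XPVp$-segment, giving $\psiPVp(g_*(\YPV))=\Y\X=\taup(\Y)$. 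The delicate point throughout is the bookkeeping of which side of each relevant puncture the transformed path travels: one must take $\eps$ small and track the crossings of the fundamental-domain walls to confirm that the image paths are homotopic to the claimed standard words, in the correct concatenation order, rather than to words differing by an extra loop around a puncture. The case $g=\taum=\smat{1&0\\1&1}$ is entirely analogous.
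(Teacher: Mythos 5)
Your proposal takes essentially the same route as the paper's proof: reduce by multiplicativity to the monoid generators $\taup,\taum$ and, since all maps involved are group homomorphisms, to the free generators $\XPV,\YPV$ of $\pi_1(\TPV,0)$, then verify these base cases by tracking the images of the standard lifted paths relative to the punctures. The only difference is presentational — the paper carries out that final verification pictorially in \cref{fig:puncture-XY}, whereas you describe it in prose.
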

\begin{proof}
  The linear map $g:\R^2\to\R^2$ restricts to a homeomorphism $\RPV\xrasim\RPVp$ and to a bijection $\Z^2\xrasim \Z^2$. Therefore composition with $g$ gives rise to a group isomorphism $g_\ast: \pi_1(\TPV,0)\xrasim \pi_1(\TPVp,0)$. Our goal is to show that the group homomorphism $\psiPV$ coincides with $\psiPVp \circ g_\ast$.

  It suffices to consider the cases $g=\taup,\taum$, and only to check the equality 
$\psiPV(Z)=\psiPVp(g_\ast(Z))$
 when $Z=\XPV,\YPV$ is a generator of $\pi_1(\TPV,0)$. This is done in \cref{fig:puncture-XY}.
\end{proof}

\begin{figure}
  \includegraphics[width=0.8\textwidth]{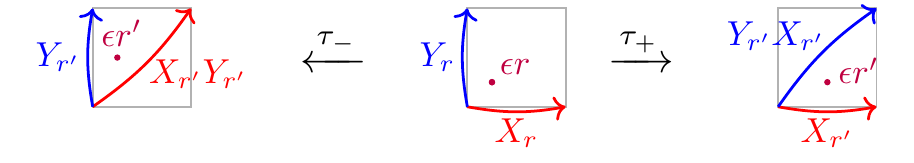}
  \caption{\label{fig:puncture-XY}The action of $\taup,\taum$ on $\pi_1(\TPV,0)$ agrees with the action~\eqref{eq:taup_dfn}--\eqref{eq:taum_dfn} on $\DASHA_\N$. See \cref{lemma:SL2_action_curves}.}
\end{figure}

Let $\Curve=[\Cseg_1\Cseg_2\cdots\Cseg_k]$ be a curve from $(0,0)$ to $(\rectXY)\in(\Ztp)^2$. Choosing $\PV:=(1,0)$ and $\eps>0$ small enough, we see that 
 whenever $\Curve$ passes through a lattice point $\point\neq(\rectXY)$, it passes above the point $\point+\eps\PV$. Thus, we find that the element $\DCn$ constructed in \cref{dfn:intro:DCn} is given by
\begin{equation*}%
  \DCn=\gam \S  \cdot \psiPV(\Csegproj_1)\cdot \gam \S  \cdot \psiPV(\Csegproj_2)\cdots \gam \S \cdot \psiPV(\Csegproj_k) \cdot \S .
\end{equation*}
Since the maps $\taup,\taum$ send monotone curves to monotone curves and $\S$ to $\S$, \cref{lemma:SL2_action_curves} implies the following result.
\begin{corollary}\label{cor:g_DC=D_gC}
  If $g\in\BraidGroupP_3$, then for any curve $\Curve$, we have
  \begin{equation*}%
    g (\DCn)= \DXn_{g(\Curve)}.
  \end{equation*}
\end{corollary}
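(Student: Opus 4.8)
The plan is to deduce the statement directly from \cref{lemma:SL2_action_curves}, combined with the $\psi$-description of $\DCn$ established just above. Recall that, with the choice $\PV=(1,0)$ and $\eps>0$ small, the element attached to $\Curve=[\Cseg_1\Cseg_2\cdots\Cseg_k]$ was written as
\begin{equation*}
  \DCn=\gam\S\cdot\psiPV(\Csegproj_1)\cdot\gam\S\cdot\psiPV(\Csegproj_2)\cdots\gam\S\cdot\psiPV(\Csegproj_k)\cdot\S.
\end{equation*}
First I would apply the automorphism $g$ to both sides. Since $g$ is an algebra homomorphism it distributes over the product, since $\gam=1-t^\N$ is a scalar it is fixed, and since $g$ is a composition of $\taup,\taum$ with $\taup(\S)=\taum(\S)=\S$, every factor $\gam\S$ is preserved. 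Thus the only terms that transform nontrivially are the $\psiPV(\Csegproj_i)$.

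Here \cref{lemma:SL2_action_curves} enters: applying it to each lattice segment $\Cseg_i$ (which runs between the lattice points $\point_{i-1},\point_i$ and lifts into $\RPV$ by the ``passes above'' condition) yields $g(\psiPV(\Csegproj_i))=\psiPVp(\overline{g\circ\Cseg_i})$, where $\PVp=g(\PV)\in\R_{\geq0}^2\setminus\{0\}$. Substituting gives
\begin{equation*}
  g(\DCn)=\gam\S\cdot\psiPVp(\overline{g\circ\Cseg_1})\cdot\gam\S\cdots\gam\S\cdot\psiPVp(\overline{g\circ\Cseg_k})\cdot\S.
\end{equation*}
To finish I would identify the right-hand side with $\DXn_{g(\Curve)}$. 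Because $\taup,\taum$ (hence $g$) send monotone curves to monotone curves and act on $\R^2$ by a lattice-preserving linear automorphism, $g(\Curve)$ is again monotone, its on-curve lattice points are $g(\point_0),\dots,g(\point_k)$ in left-to-right order, and its lattice segments are exactly $g\circ\Cseg_1,\dots,g\circ\Cseg_k$.

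The one point requiring care — and the step I expect to be the main obstacle — is that the $\psi$-description preceding the statement was derived for the specific puncture direction $(1,0)$, whereas the expression above for $g(\DCn)$ uses the direction $\PVp=g((1,0))$. I would therefore verify that the same derivation applies verbatim with $(1,0)$ replaced by $\PVp$, which reduces to checking that $\PVp$ is a valid puncture direction for $g(\Curve)$: that $g(\Curve)$ passes on the puncture side of $g(\point)+\eps\PVp$ for each interior lattice point $\point\neq(\rectX,\rectY)$. Since $g$ acts as an orientation-preserving homeomorphism of $\Tor$ carrying $\point+\eps(1,0)$ to $g(\point+\eps(1,0))=g(\point)+\eps\PVp$ and preserving the side of the curve on which the puncture lies, this validity is inherited from the validity of $(1,0)$ for $\Curve$, and the homotopy class of each projected segment in $\pi_1(\TPVp,0)$, expressed in the generators $\XPVp,\YPVp$, is unchanged. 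Granting this, the displayed product is precisely the $\psi$-description of $\DXn_{g(\Curve)}$, and the corollary follows.
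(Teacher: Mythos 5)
Your proof is correct and follows essentially the same route as the paper: write $\DCn$ as the product $\gam\S\cdot\psiPV(\Csegproj_1)\cdots\gam\S\cdot\psiPV(\Csegproj_k)\cdot\S$ with $\PV=(1,0)$, apply $g$ factor by factor using \cref{lemma:SL2_action_curves} and $\taup(\S)=\taum(\S)=\S$, and identify the result with $\DXn_{g(\Curve)}$. The only difference is that you make explicit the verification that $\PVp=g((1,0))$ remains a valid puncture direction for $g(\Curve)$ (so that the $\psiPVp$-description still yields the lattice-path element of \cref{dfn:intro:DCn}), a point the paper leaves implicit; your homeomorphism argument for it is sound.
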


\subsection{Almost linear elements}\label{sec:almost_linear_elts}
For an almost linear curve $\Curve$ from $(0,0)$ to $(\rectXY)$ (cf. \cref{dfn:intro_AL}), denote 
\begin{equation*}%
  \DASXn_{\rectXY}:=\DXn_{\Curve}.
\end{equation*}
A natural question arises: how is $\DASXn_{\rectXY}$ expressed in the generators $\Pgen_\bx$ of $\DASHA_\N$ defined in~\eqref{eq:SL2_DAHA_Pgen}? The goal of this subsection is to prove the following result.
\begin{theorem}\label{thm:indep_on_n}
  For all $\rectXY,\N\geq1$, the element $\DASXn_{\rectXY}$ is expressed as a $\Cqt$-linear combination of products of generators $\Pgen_\bx$ with coefficients that do not depend on $\N$.
\end{theorem}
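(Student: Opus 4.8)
The plan is to exploit the two compatible $\SL_2(\Z)$-actions (on $\DASHA_\N$ and on curves) to collapse the statement to a single one-parameter family of elements, and then to settle that family by an explicit affine Hecke computation in which the factor $\gam=1-t^\N$ cancels all $\N$-dependence. First I would record the equivariance of the generators: directly from \eqref{eq:SL2_DAHA_Pgen} one has $g(\Pgen_\bx)=\Pgen_{g\bx}$ for every $g\in\SL_2(\Z)$ and $\bx\in\BZast$, while the automorphisms $\taup,\taum$ are given by the $\N$-\emph{independent} formulas \eqref{eq:taup_dfn}--\eqref{eq:taum_dfn}. Consequently the property ``is a $\Cqt$-combination of products of $\Pgen_\bx$ with $\N$-independent coefficients'' is preserved under applying $\taup$ or $\taum$, since such an automorphism sends $\sum_\alpha c_\alpha\prod_j\Pgen_{\bx_{\alpha,j}}$ to $\sum_\alpha c_\alpha\prod_j\Pgen_{g\bx_{\alpha,j}}$ with the \emph{same} scalars $c_\alpha$. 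By \cref{cor:g_DC=D_gC} the same operators carry $\DASXn_{\rectXY}$ to $\DASXn_{g(\rectXY)}$, provided $\taup,\taum$ send almost linear curves to almost linear curves; I would verify this from the invariance of the quantity $bm-an$ (whose sign records whether a lattice point $(a,b)$ lies strictly above, on, or strictly below the segment of direction $(m,n)$), noting that under $\taum\colon(a,b)\mapsto(a,a+b)$ one has $(a+b)m-a(m+n)=bm-an$, and under $\taup\colon(a,b)\mapsto(a+b,b)$ one has $b(m+n)-(a+b)n=bm-an$.

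Second, I would use this equivariance to reduce to one base direction for each $d=\gcd(\rectX,\rectY)$. The positive monoid $\BraidGroupP_3$ acts on $\{(a,b):a,b\ge1\}$ exactly as the Stern--Brocot/Euclidean tree, so the forward orbit of $(d,d)$ is precisely the set of all $(\rectX,\rectY)$ with $\gcd(\rectX,\rectY)=d$ and $\rectX,\rectY\ge1$. Hence it suffices to treat $\Curve$ diagonal, where \cref{dfn:intro:DCn} gives $\DASXn_{d,d}=\gam\,\S(\Y\X)^d\S$. Since $\Y\X=\taup(\Y)$ and $\taup(\S)=\S$, we have $\DASXn_{d,d}=\taup\!\left(\gam\,\S\Y^d\S\right)$, so by equivariance once more the whole theorem reduces to the following \emph{core claim}: for each $d\ge1$ the element $E_d:=\gam\,\S Y_1^d\S=(1-t^\N)\,\S\Y^d\S$ is an $\N$-independent $\Cqt$-combination of products of the $\Pgen_\bx$.

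Third, I would prove the core claim by relating the single-strand power $\S Y_1^d\S$ to the symmetrized power sum $\Pgen_{0,d}=\S\bigl(\sum_{i=1}^\N Y_i^d\bigr)\S$ of \eqref{eq:Pgen_dfn}. The key inputs are that symmetric polynomials in $Y_1,\dots,Y_\N$ are central for the finite Hecke subalgebra (hence commute with $\S$), together with the rank-one relations $Y_{i+1}=T_i^{-1}Y_iT_i^{-1}$, $Y_iT_i^{-1}=T_iY_{i+1}$, and $T_i^{-2}=1+(\tt-\tti)T_i^{-1}$, combined with $\S T_i^{-1}=T_i^{-1}\S=\tt\S$ from \eqref{eq:ST=TS=S}. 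These let me peel one strand at a time: in degree $1$ they give $\S Y_{i+1}\S=t\,\S Y_i\S$, whence $\Pgen_{0,1}=\sum_i\S Y_i\S=\tfrac{1-t^\N}{1-t}\,\S Y_1\S$ and therefore $E_1=(1-t)\,\Pgen_{0,1}$, with the factor $1-t^\N$ cancelling exactly. In general the same manipulation writes $\S Y_{i+1}^d\S$ as $t\,\S Y_i^d\S$ plus an $\N$-independent combination of symmetrized monomials $\S Y_i^aY_{i+1}^b\S$ with $a+b\le d$ and $b\ge1$; summing over $i$ (the leading terms contribute $\sum_{i}t^{i-1}=\tfrac{1-t^\N}{1-t}$) and using centrality to convert the lower monomials into products of $\Pgen_{0,a}$ with $a<d$, I expect a triangular identity of the form $\Pgen_{0,d}=\tfrac{1-t^\N}{1-t}\,\S Y_1^d\S+(\text{products of }\Pgen_{0,a},\ a<d)$. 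Multiplying by $1-t^\N$ and invoking the inductive hypothesis on the lower-degree terms then yields the $\N$-independent expansion of $E_d$.

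The main obstacle is this last step: controlling the mixed correction terms $\S Y_i^aY_{i+1}^b\S$ and verifying that, after summation over $i$ and conversion to the generators $\Pgen_{0,a}$, the residual $\N$-dependence collapses into a single factor $\tfrac{1-t^\N}{1-t}$ that is killed by the prefactor $\gam=1-t^\N$. The degree-$1$ computation exhibits the mechanism cleanly, but for $d\ge2$ the ``adjacent-pair'' monomials do not immediately assemble into symmetric functions, and one must organize the induction so that lower-order $\Pgen_{0,a}$ contributions do not reintroduce spurious factors such as $\tfrac{1-t^{\N a}}{1-t^a}$. I expect this bookkeeping, rather than any conceptual difficulty, to be the crux, and it is precisely where the explicit symmetrizer recurrence \eqref{eq:symmetrizer_recurrence} can be used to make all coefficients completely explicit and to confirm the cancellation.
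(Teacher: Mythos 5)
Your overall architecture coincides with the paper's: use $\BraidGroupP_3$-equivariance of both the elements $\DASXn_{\rectXY}$ (via \cref{cor:g_DC=D_gC}, plus the check that $\taup,\taum$ preserve almost linearity, which your $bm-an$ invariant handles correctly) and of the generators $\Pgen_\bx$ (built into \eqref{eq:SL2_DAHA_Pgen}) to reduce to one base direction per $d=\gcd(\rectXY)$, then compute that base case with manifestly $\N$-independent coefficients. Your equivariance step and your $d=1$ computation ($\S Y_{i+1}\S = t\,\S Y_i\S$, hence $\gam\,\S\Y\S=(1-t)\Pgen_{0,1}$) are correct.

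The gap is the core claim for $d\geq 2$, which you leave open, and the structure you anticipate for it is wrong. For $d=2$, transporting \eqref{eq:ex_DASXn_22} by $\taup^{-1}$ gives
\begin{equation*}
\gam\,\S\Y^2\S=\tfrac12(1-t^2)\Pgen_{0,2}+\tfrac12(1-t)^2\Pgen_{0,1}^2,
\qquad\text{i.e.}\qquad
\Pgen_{0,2}=\frac{2(1-t^\N)}{1-t^2}\,\S\Y^2\S-\frac{1-t}{1+t}\,\Pgen_{0,1}^2,
\end{equation*}
so the leading coefficient is $\frac{2(1-t^\N)}{1-t^2}$, not the $\frac{1-t^\N}{1-t}$ predicted by your ``triangular identity.'' The reason is that the adjacent-pair corrections $\S Y_iY_{i+1}\S$ cannot be converted into products of $\Pgen_{0,a}$ with $a<d$ alone: since $2\sum_{i<j}\S Y_iY_j\S=\Pgen_{0,1}^2-\Pgen_{0,2}$, re-expressing them reintroduces $\Pgen_{0,2}$ itself and destroys triangularity. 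For general $d$ you are forced to solve a full $p(d)\times p(d)$ linear system relating the atoms $\S Y_1^{a_1}Y_2^{a_2}\cdots\S$ to the products $\Pgen_{0,\la}$, with non-uniform $t$-power weights produced by the peeling; proving that this system is invertible and that multiplication by $\gam$ cancels all $\N$-dependence \emph{is} the theorem, not deferred bookkeeping.

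This is precisely what the paper's choice of base direction avoids. Working with $(d,0)$ rather than $(0,d)$, relation \eqref{eq:YXYi} writes $\DASXn_{d,0}=\gam\,\S\Y\X^d\Y^{-1}\S=q^d\gam\,\S D\S$ with $D$ a word in the $T_i$ and $X_j$ only. Any such $\S D\S$ acts on symmetric polynomials as multiplication by $\S D\S\cdot 1$, and $\Pgen_{k,0}$ acts as multiplication by $p_k$; faithfulness of the polynomial representation of $\DASHAp_\N$ then identifies the expansion of $\DASXn_{d,0}$ in products of $\Pgen_{k,0}$ with the expansion of the symmetric function $(-t)^d e_d[(1-\ti)\XSym_\N]$ (\cref{prop:Dd0}) in products of $p_k$, whose coefficients are the classical $\N$-independent quantities of \cref{prop:h_d_p_la_EC2}. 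If you insist on the route through $\gam\,\S\Y^d\S$, you need a substitute for this multiplication-operator argument, since the $Y_i$ do not act by multiplication and the $\Pgen_{0,a}$ act diagonally on Macdonald polynomials rather than multiplicatively.
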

The above coefficients are expressed in the language of symmetric functions; see~\cite{EC2} for background and notation.

First, we would like to understand the relationship between the elements $\DASXn_{\rectXY}\in\DASHA_\N$ and the $\SL_2(\Z)$-action. For $d\geq1$, define
\begin{equation}\label{eq:DASXn_d0}
  \DASXn_{d,0}:=\gam \S \cdot \Y \X^{d}\Y^{-1}\cdot \S .
\end{equation}
\begin{lemma}
  Let $\rectXY\geq1$ and $d:=\gcd(\rectXY)$.
  Then there exists $g\in\BraidGroupP_3$ such that the corresponding element of $\SL_2(\Z)$ sends $(d,0)$ to $(\rectXY)$. For any such $g$, 
 we have %
\begin{equation}\label{eq:SL2_almost_linear}
  g \DASXn_{d,0} = \DASXn_{\rectXY}.
\end{equation}
\end{lemma}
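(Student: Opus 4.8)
The plan is to deduce the identity \eqref{eq:SL2_almost_linear} from the compatibility of the two $\SL_2(\Z)$-actions recorded in \cref{cor:g_DC=D_gC}. Write $U:=\left(\begin{smallmatrix}1&1\\0&1\end{smallmatrix}\right)$ and $L:=\left(\begin{smallmatrix}1&0\\1&1\end{smallmatrix}\right)$ for the images of $\taup,\taum$ under \eqref{eq:taupm_to_SL2}. For the existence of $g$ I would set $m':=m/d$, $n':=n/d$, so that $\gcd(m',n')=1$ and $m',n'\geq1$, and run the Euclidean algorithm on the column $(m',n')^{\mathsf T}$: applying $U^{-1}$ when the first coordinate exceeds the second and $L^{-1}$ otherwise keeps both coordinates nonnegative and strictly decreases their sum, terminating at $(1,0)^{\mathsf T}$. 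Reversing this reduction produces a word $g$ in $U,L$ with $g\cdot(1,0)^{\mathsf T}=(m',n')^{\mathsf T}$, hence $g\cdot(d,0)^{\mathsf T}=(\rectXY)^{\mathsf T}$; lifting the letters $U,L$ to $\taup,\taum$ gives the required $g\in\BraidGroupP_3$.

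For the identity itself, I would first reinterpret $\DASXn_{d,0}$ as the invariant of a curve. Let $\Curve_0$ be the horizontal curve from $(0,0)$ to $(d,0)$ running just above the $x$-axis, hence just above the interior lattice points $(1,0),\dots,(d-1,0)$. This curve passes through no lattice point other than its endpoints, its associated path is $\PathX_{\Curve_0}=R^d$, and since each of the $d$ right steps contributes $\Y\X\Y^{-1}$, \cref{dfn:intro:DCn} gives $\DXn_{\Curve_0}=\gam\S\,(\Y\X\Y^{-1})^d\,\S=\gam\S\,\Y\X^d\Y^{-1}\,\S=\DASXn_{d,0}$, using $(\Y\X\Y^{-1})^d=\Y\X^d\Y^{-1}$. (A minor point is that \cref{dfn:intro:DCn} and \cref{cor:g_DC=D_gC} were phrased for endpoints in $(\Ztp)^2$; the same derivation, via the fundamental-group formalism of \cref{lemma:SL2_action_curves} with $\PV=(1,0)$, applies verbatim to the degenerate endpoint $(d,0)$.) Applying \cref{cor:g_DC=D_gC} then yields $g\,\DASXn_{d,0}=g\,\DXn_{\Curve_0}=\DXn_{g(\Curve_0)}$.

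It remains to identify $\DXn_{g(\Curve_0)}$ with $\DASXn_{\rectXY}=\DXn_{\Curve}$, where $\Curve$ is the almost linear curve from $(0,0)$ to $(\rectXY)$. Both $\Curve$ and $g(\Curve_0)$ are primitive monotone curves from $(0,0)$ to $(\rectXY)$, so by \cref{dfn:intro:DCn} each invariant equals $\gam\S\,(\text{word along }\PathC)\,\S$ and depends only on the highest up-right lattice path lying weakly below the curve. Hence it suffices to show that $g(\Curve_0)$, like $\Curve$, runs just above the diagonal segment from $(0,0)$ to $(\rectXY)$, since two primitive monotone curves running just above the same diagonal share the same weakly-below staircase path, and therefore the same invariant. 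Now $g(1,0)=(m',n')$ is parallel to $(\rectXY)$, so the linear map $g$ sends the $x$-axis to the line through $(0,0)$ and $(\rectXY)$; and because the shears $U,L$ preserve, for monotone curves, the relation ``a lattice point lies (strictly) below the curve,'' $g$ carries $\Curve_0$ to a curve running just above that diagonal. The preservation is the computation I would spell out: for $U\colon(x,y)\mapsto(x+y,y)$ and a point $(a,b)$ weakly below $\Curve_0=\operatorname{graph}(f)$, the abscissa $a'$ on $U(\Curve_0)$ over $x=a+b$ satisfies $a'+f(a')=a+b\le a+f(a)$, forcing $a'\le a$ and hence $f(a')=a+b-a'\ge b$, so $(a+b,b)$ lies weakly below $U(\Curve_0)$; the strict case and the case of $L$ are analogous.

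I expect the main obstacle to be precisely this last geometric identification: verifying that the shears $U,L$ preserve the above/below relation for monotone curves and thus map the horizontal almost linear curve $\Curve_0$ to a curve running just above the diagonal to $(\rectXY)$, all while accommodating the degenerate bounding box of $\Curve_0$. Once this is in place, the algebraic identity is immediate from \cref{cor:g_DC=D_gC}, and the assertion holds for \emph{any} admissible $g$ since $\DASXn_{\rectXY}$ does not depend on the choice of $g$.
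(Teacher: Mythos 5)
Your proposal is correct and follows essentially the same route as the paper: reinterpret $\DASXn_{d,0}$ as $\DXn_{\Curve_{d,0}}$ for the horizontal curve just above the $x$-axis, then apply \cref{cor:g_DC=D_gC}. The only difference is that you explicitly verify that the shears carry $\Curve_{d,0}$ to the $(\rectXY)$-almost linear curve (preservation of the above/below relation), a step the paper leaves implicit in ``the result now follows''; your verification of that step is sound.
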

\begin{proof}

  First, it is well known (Euclid's algorithm) that the monoid $\BraidGroupP_3$ contains an element $g$ sending $(d,0)$ to $(\rectXY)$. Let $g\in\BraidGroupP_3$ be any such element. 

Let $\Curve_{d,0}$ be a curve  from $(0,0)$ to $(d,0)$ proceeding monotonously from left to right slightly above the $x$ axis. We extend \cref{dfn:intro:DCn} to $\Curve_{d,0}$, with the lattice path $\PathX_{\Curve_{d,0}}$ consisting of $d$ right steps. The resulting element $\DXn_{\Curve_{d,0}}$ coincides with the element $\DASXn_{d,0}$ given by~\eqref{eq:DASXn_d0}. The result now follows from \cref{cor:g_DC=D_gC}.
\end{proof}

Let $\XSym_\N := x_1+\cdots+x_\N$.
 We start by computing the action of $\DASXn_{d,0}$ on $1\in\Laqt$.
\begin{proposition}\label{prop:Dd0}
For $d,\N\geq1$, we have %
\begin{equation}\label{eq:DASXn_vs_h_d}
 \DASXn_{d,0}\cdot 1=(-t)^d e_d\left[(1-\ti)\XSym_\N\right].
\end{equation}
\end{proposition}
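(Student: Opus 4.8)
The plan is to reduce $\DASXn_{d,0}\cdot 1$ to $\gam\,\S\,x_\N^{d}$ by elementary manipulations in the polynomial representation, and then to evaluate the Hecke symmetrizer on a single power $x_\N^{d}$ through a generating-function identity. First I would record how $\S,\X,\Y$ act on constants. Since $T_i\cdot f=\tti f$ whenever $s_i f=f$, we get $T_w\cdot 1=t^{-\ell(w)/2}$, so the normalization in~\eqref{eq:S_dfn} gives $\S\cdot 1=1$; the same computation shows that $\S$ is the identity on symmetric polynomials. As $X_1$ acts by $q^{-1}x_1$, formula~\eqref{eq:Y_act} gives $\Y\cdot 1=t^{(\N-1)/2}T_1\cdots T_{\N-1}\,\cshift\cdot 1=1$ and hence $\Y^{-1}\cdot 1=1$. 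The key point is the operator identity $\S\,\Y=\S\,\cshift$, which follows from~\eqref{eq:Y_act} and the relation $\S T_i=\tti\S$ of~\eqref{eq:ST=TS=S}: applying the latter repeatedly yields $\S T_1\cdots T_{\N-1}=t^{-(\N-1)/2}\S$, cancelling the prefactor $t^{(\N-1)/2}$. Applying $\DASXn_{d,0}=\gam\,\S\,\Y\X^{d}\Y^{-1}\S$ of~\eqref{eq:DASXn_d0} to $1$, using $\S\cdot1=\Y^{-1}\cdot1=1$, then $\X^{d}\cdot1=q^{-d}x_1^{d}$, and finally $\S\Y=\S\cshift$ together with $\cshift\cdot x_1^{d}=(qx_\N)^{d}$, all powers of $q$ cancel and I obtain
\begin{equation}
\DASXn_{d,0}\cdot 1=\gam\,\S\,x_\N^{d}.
\end{equation}

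It then remains to prove $\gam\,\S\,x_\N^{d}=(-t)^{d}e_d\bigl[(1-\ti)\XSym_\N\bigr]$ for $d\geq 1$. I would treat all $d$ at once by forming $\S\,\tfrac{1}{1-zx_\N}=\sum_{d\geq0}(\S x_\N^{d})\,z^{d}$ and establishing the closed form
\begin{equation}
\gam\,\S\,\frac{1}{1-zx_\N}=\prod_{i=1}^{\N}\frac{1-tzx_i}{1-zx_i}+\gam-1.
\tag{$\star$}
\end{equation}
Since the plethystic generating function reads $\sum_{d\geq0}(-t)^{d}e_d[(1-\ti)\XSym_\N]\,z^{d}=\prod_{i=1}^{\N}\frac{1-tzx_i}{1-zx_i}$, extracting the coefficient of $z^{d}$ from~($\star$) for $d\geq1$ (the constant $\gam-1$ contributes only to $z^{0}$) gives exactly the claim.

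To prove~($\star$) I would use the coset factorization of the symmetrizer. Let $v_j:=s_{\N-j}s_{\N-j+1}\cdots s_{\N-1}$ ($0\le j\le\N-1$) be the minimal-length representatives of the left cosets of $\SNXX_1^{\N-1}$ in $\Sn$; then $\S=\tfrac{1}{[\N]_{t^{-1}}}\bigl(\sum_{j=0}^{\N-1}t^{-j/2}T_{v_j}\bigr)\,\SXX_1^{\N-1}$, where $[\N]_{t^{-1}}$ is the $t^{-1}$-integer, and $\tfrac{1}{1-zx_\N}$ is fixed by $\SXX_1^{\N-1}$ as it is symmetric in $x_1,\dots,x_{\N-1}$. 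A short induction on $j$, where each step is the one-variable computation $T_i\bigl(\tfrac{h}{1-zx_{i+1}}\bigr)=\tti\,\tfrac{(1-tzx_{i+1})\,h}{(1-zx_i)(1-zx_{i+1})}$ for $h$ free of $x_i,x_{i+1}$, yields
\begin{equation}
T_{v_j}\,\frac{1}{1-zx_\N}=t^{-j/2}\,\frac{\prod_{i=\N-j+1}^{\N}(1-tzx_i)}{\prod_{i=\N-j}^{\N}(1-zx_i)}.
\end{equation}
Summing over $j$ and then proving~($\star$) by induction on $\N$ (peeling off $x_1$ and using $\prod_{i\geq1}\tfrac{1-tzx_i}{1-zx_i}-t\prod_{i\geq2}\tfrac{1-tzx_i}{1-zx_i}=\tfrac{1-t}{1-zx_1}\prod_{i\geq2}\tfrac{1-tzx_i}{1-zx_i}$ for the inductive step) completes the argument.

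The formal reduction of the first paragraph is routine; the real content, and the main obstacle, is the symmetric-function identity~($\star$). Computing the Hecke symmetrizer on the single power $x_\N^{d}$ is a Hall--Littlewood-type calculation, and essentially all of the work lies in the bookkeeping of the coset representatives $v_j$ and in checking that the one-variable crossing computation telescopes correctly through both inductions — first in $j$ to produce the product formula for $T_{v_j}$, and then in $\N$ to collapse the resulting sum into the product on the right-hand side of~($\star$).
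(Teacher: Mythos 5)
Your proposal is correct, and its first half — reducing $\DASXn_{d,0}\cdot 1$ to $\gam\,\S\cdot x_\N^{d}$ via $\S\cdot 1=\Y^{-1}\cdot 1=1$, $\X^{d}\cdot 1=q^{-d}x_1^{d}$, and $\S\Y=\S\cshift$ — is exactly the paper's first step. For the remaining evaluation the two arguments rest on the same two ingredients (the factorization of $\S$ through $\SXX_1^{\N-1}$ over the minimal coset representatives, and the local computation of a single $T_i$ acting on a function of one variable), but they are organized differently. The paper works at a fixed degree $d$ and inducts on the number of variables $M$, using the recursion $A_\N=\tt A_{\N-1}T_{\N-1}+(1-t)t^{\N-1}$, the explicit expansion of $T_{\N-1}\cdot x_\N^{d}$, and the addition formula $e_d[Y+Z]=\sum_i e_{d-i}[Y]e_i[Z]$. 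You instead package all degrees into the generating function $\gam\,\S\cdot\tfrac{1}{1-zx_\N}$ and prove the product formula $(\star)$. I checked that $(\star)$ is correct — including the constant-term correction $\gam-1=-t^{\N}$, needed because $\gam\,\S\cdot 1=\gam$ while the product has constant term $1$ — that your formula for $T_{v_j}\cdot\tfrac{1}{1-zx_\N}$ is right, and that the resulting sum telescopes via $P_{j+1}-tP_j=\tfrac{(1-t)P_j}{1-zx_{\N-j}}$, where $P_j:=\prod_{i=\N-j+1}^{\N}\tfrac{1-tzx_i}{1-zx_i}$, yielding $P_\N-t^{\N}$ directly (so the separate induction on $\N$ you mention at the end is not even needed once the $T_{v_j}$ formula is in hand). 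What the generating-function route buys is a cleaner final step: the telescoping replaces the paper's bookkeeping with $e_d[Y+Z]$, and $(\star)$ makes visible that the whole computation is a Hall--Littlewood-type evaluation of the symmetrized geometric series.
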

Here $e_d\in\Laqt$ denotes the $d$-th elementary symmetric function, and the square brackets denote plethystic substitution; see \cref{sec:relations}. 
\begin{proof}
We have 
\begin{equation*}%
   \DASXn_{d,0}\cdot 1 
= \gam \S \Y \X^d \Y^{-1} \S\cdot 1
= \gam \S \Y \X^d \Y^{-1} \cdot 1 
= \gam \S \Y \X^d \cdot 1 
= \gam \S \Y \cdot q^{-d} x_1^d.
\end{equation*}
Applying~\eqref{eq:Y_act} and then~\eqref{eq:ST=TS=S}, we find
\begin{equation*}%
     \DASXn_{d,0}\cdot 1 
= \gam \S t^{(\N-1)/2}T_1\cdots T_{\N-1} \cdot x_d^d
= \gam \S \cdot x_d^d.
\end{equation*}

By~\eqref{eq:SX_k_dfn}, we have
\begin{equation*}%
  \S=\SXX_1^\N = \frac{1-t^{-1}}{1-t^{-\N}} \left(\sum_{i=1}^\N t^{-\frac{N-i}2}T_i\cdots T_{\N-1}   \right) \SXX_1^{\N-1}.
\end{equation*}
Observe that $\gam\frac{1-t^{-1}}{1-t^{-\N}}=(1-t)t^{\N-1}$. Thus, $\gam \S = A_N  \SXX_1^{\N-1}$, where we set
\begin{equation*}%
  A_M:=(1-t)t^{M-1} \left(\sum_{i=1}^M t^{-\frac{M-i}2}T_i\cdots T_{M-1}   \right) \quad\text{for $1\leq M\leq \N$.}
\end{equation*}
(For $i=M$, the term $t^{-\frac{M-i}2}T_i\cdots T_{M-1}$ is equal to $1$.) Thus, $A_\N = \tt A_{\N-1} T_{\N-1} + (1-t)t^{\N-1}$. Since $\SXX_1^{\N-1}\cdot x_\N=x_\N$, we get
\begin{equation}\label{eq:gam_SXX}
  \gam \SXX_1^\N\cdot x_\N^d = A_\N \SXX_1^{\N-1}\cdot x_\N^d = A_\N\cdot x_\N^d = \tt A_{\N-1} T_{\N-1}\cdot x_\N^d + (1-t)t^{\N-1}x_\N^d.
\end{equation}
We now prove by induction on $M=1,2,\dots,\N$ that 
\begin{equation}\label{eq:AM_cdot_xMd}
  A_M\cdot x_M^d = (-t)^d e_d\left[\tXSym_M\right],
\end{equation}
where $\tXSym_M := \XSym_M(1-\ti)$. The desired result follows from~\eqref{eq:AM_cdot_xMd} with $M=\N$.

 Denote $\tx_i:=x_i(1-t^{-1})$. By a simple computation as in~\cite[Section~3.8]{Bergeron}, we see that 
\begin{equation}\label{eq:plethysm_1_var}
  (-t)^de_d[\tx_i]=(1-t)x_i^d.
\end{equation}
 From here, the base case follows since for $M=1$, both sides of~\eqref{eq:AM_cdot_xMd} become equal to $(1-t)x_1^d$.

For the induction step, observe that 
\begin{equation}\label{eq:T_N-1_x_N}
  T_{\N-1} x_\N^d = \tti x_{\N-1}^d + (\tti-\tt) \left(x_\N^d+\sum_{i=1}^{d-1}x_{\N-1}^ix_\N^{d-i} \right).
\end{equation}
Each $T_i$, $1\leq i< \N-1$ treats $x_\N$ as a constant (although note that $T_i\cdot P=\tti P$ for any polynomial $P$ such that $s_i\cdot P=P$), and thus $A_{\N-1}x_\N^d=(1-t^{\N-1})x_\N^d$. Combining this with~\eqref{eq:gam_SXX} and~\eqref{eq:T_N-1_x_N}, we get
\begin{equation*}%
  A_\N\cdot x_\N^d = (-t)^d e_d \left[\tXSym_{\N-1}\right] + (1-t) x_\N^d + \sum_{i=1}^{d-1} (-t)^i e_i \left[\tXSym_{\N-1}\right]  (1-t) x_\N^{d-i}.
\end{equation*}
By~\eqref{eq:plethysm_1_var}, we have $(1-t)x_\N^k=(-t)^k e_k \left[\tx_\N\right]$ for all $k\geq1$. Substituting, we find
\begin{equation*}%
  A_\N\cdot x_\N^d = (-t)^d \left( e_d \left[\tXSym_{\N-1}\right] + e_d \left[\tx_\N\right] + \sum_{i=1}^{d-1}  e_i \left[\tXSym_{\N-1}\right]  e_{d-i}\left[\tx_\N\right]\right) = (-t)^d e_d\left[\tXSym_\N\right]
\end{equation*}
using $e_d[Y+Z] = \sum_{i=0}^d e_{d-i}[Y] e_i[Z]$; see~\cite[Equation~(1.64)]{HagBook}.
\end{proof} 

Recall that for a partition $\la=(\la_1,\la_2,\dots,\la_r)$, we have the power sum symmetric function $p_\la:=p_{\la_1}p_{\la_2}\cdots p_{\la_r}$. Let $|\la|$ be the number of boxes in $\la$, $\ell(\la)$ be the number of nonzero parts of $\la$, and $m_i$ be the number of parts of $\la$ equal to $i$. We set $\eps_\la:=(-1)^{m_2+m_4+\cdots}=(-1)^{|\la|-\ell(\la)}$  and $z_\la:=1^{m_1}m_1!2^{m_2}m_2!\cdots$ as in~\cite[Section~7.7]{EC2}. We now express the right-hand side of~\eqref{eq:DASXn_vs_h_d} in terms of power sum symmetric functions.
\begin{proposition}[{\cite[Proposition~7.7.6]{EC2}}]\label{prop:h_d_p_la_EC2}
For all $d\geq1$, 
\begin{equation*}%
  e_d[\XSym(1-\ti)]= \sum_{\la\vdash d} p_\la \eps_\la z_\la^{-1}\prod_i (1-t^{-\la_i}).
\end{equation*}
\end{proposition}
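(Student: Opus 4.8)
The plan is to treat the identity as a purely plethystic restatement of the classical power-sum expansion of $e_d$, reducing it to that expansion together with the rule governing how the substitution $\XSym\mapsto\XSym(1-\ti)$ acts on power sums.

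First I would record the two ingredients. The map $f\mapsto f[\XSym(1-\ti)]$ is the $\Cqt$-algebra endomorphism of $\Laqt$ determined by its values on power sums, and $p_k[\XSym(1-\ti)]=(1-t^{-k})\,p_k$; this follows from multiplicativity of plethysm, $p_k[AB]=p_k[A]\,p_k[B]$, together with $p_k[\XSym]=p_k$ and $p_k[1-\ti]=1-t^{-k}$. The second ingredient is the classical expansion of $e_d$ in the power-sum basis,
\[
  e_d=\sum_{\la\vdash d}\eps_\la z_\la^{-1}p_\la ,
\]
equivalently the coefficient of $u^d$ in the generating function $\sum_{d\ge0}e_d u^d=\exp\bigl(\sum_{k\ge1}\tfrac{(-1)^{k-1}}{k}p_k u^k\bigr)$; see~\cite[Sec.~7.7]{EC2}.

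The proof is then immediate: since $p_\la=p_{\la_1}\cdots p_{\la_{\ell(\la)}}$ and the substitution is multiplicative, each $p_\la$ maps to $\prod_i(1-t^{-\la_i})\,p_\la$, so applying $f\mapsto f[\XSym(1-\ti)]$ to the displayed expansion term by term yields exactly
\[
  e_d[\XSym(1-\ti)]=\sum_{\la\vdash d}\eps_\la z_\la^{-1}p_\la\prod_i(1-t^{-\la_i}),
\]
which is the claim. Equivalently, and perhaps more transparently, one can substitute directly in the generating function to get $\sum_d e_d[\XSym(1-\ti)]\,u^d=\prod_{k\ge1}\exp\bigl(\tfrac{(-1)^{k-1}}{k}(1-t^{-k})p_k u^k\bigr)$ and extract the coefficient of $u^d$; the term indexed by a partition $\la\vdash d$ with $m_i$ parts equal to $i$ is $\prod_i\tfrac1{m_i!}\bigl(\tfrac{(-1)^{i-1}}{i}(1-t^{-i})p_i\bigr)^{m_i}$, and one reads off $\prod_i i^{-m_i}(m_i!)^{-1}=z_\la^{-1}$, $\prod_i((-1)^{i-1})^{m_i}=(-1)^{|\la|-\ell(\la)}=\eps_\la$, and $\prod_i(1-t^{-i})^{m_i}=\prod_j(1-t^{-\la_j})$, giving the same answer.

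There is no genuine obstacle here; the only thing to watch is the sign and normalization bookkeeping, i.e.\ matching $\prod_i((-1)^{i-1})^{m_i}$ with $\eps_\la$ and $\prod_i i^{-m_i}(m_i!)^{-1}$ with $z_\la^{-1}$, which is routine from the definitions $\eps_\la=(-1)^{|\la|-\ell(\la)}$ and $z_\la=\prod_i i^{m_i}m_i!$. As a consistency check I would verify $d=2$: $e_2=\tfrac12(p_1^2-p_2)$ maps to $\tfrac12(1-\ti)^2p_1^2-\tfrac12(1-t^{-2})p_2$, which matches the $\la=(1,1)$ and $\la=(2)$ terms on the right.
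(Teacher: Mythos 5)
Your proof is correct and is exactly the intended argument: the paper offers no proof beyond the citation, and the cited \cite[Proposition~7.7.6]{EC2} is precisely the power-sum expansion $e_d=\sum_{\la\vdash d}\eps_\la z_\la^{-1}p_\la$, which you then combine with the standard rule $p_k[\XSym(1-\ti)]=(1-t^{-k})p_k$ and multiplicativity of plethysm. Both your term-by-term substitution and your generating-function check are sound, so there is nothing further to add.
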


Finally, for a partition $\la=(\la_1,\la_2,\dots,\la_r)$ of $d$, write $\Pgen_{d,0;\la}:=\Pgen_{\la_1,0}\Pgen_{\la_2,0}\cdots \Pgen_{\la_r,0}$. (The elements $\Pgen_{k,0}$ pairwise commute for different $k$.)
\begin{proposition}\label{prop:DASXn_vs_Pgen}
  For all $d,\N\geq1$, the element $\DASXn_{d,0}$ is expressed in $(\Pgen_{k,0})_{k\geq0}$ with the same coefficients as 
 $(-t)^d e_d\left[\XSym(1-\ti)\right]$ 
is expressed in $(p_k)_{k\geq0}$.
  Explicitly, we have %
\begin{equation}\label{eq:DASXn_vs_Pgen}
  \DASXn_{d,0}=\sum_{\la\vdash d} \expcoef^{d}_{\la}\Pgen_{d,0;\la}, \quad\text{where}\quad 
\expcoef^{d}_{\la}=(-t)^d \eps_\la z_\la^{-1}\prod_i (1-t^{-\la_i}).
\end{equation}
\end{proposition}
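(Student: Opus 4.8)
The plan is to prove the algebra identity~\eqref{eq:DASXn_vs_Pgen} by showing that both of its sides act as the \emph{same multiplication operator} on the space $\LaqtN$ of symmetric polynomials, and then upgrading this operator equality to an equality in $\DASHA_\N$ via faithfulness of the polynomial representation. First I would record the action of the horizontal generators. Since $\X=X_1$ acts by multiplication by $\qi x_1$, and since $\S$ fixes every symmetric polynomial (because $\phi_\N(T_i)\cdot f=\tti f$ whenever $s_i\cdot f=f$ by~\eqref{eq:T_act}, so $\S\cdot f=f$), a one-line computation shows that $\Pgen_{k,0}=q^k\S\sum_i X_i^k\S$ acts on $\LaqtN$ as multiplication by the power sum $p_k$. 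Hence $\Pgen_{d,0;\la}\cdot 1=p_\la$, and the right-hand side $\sum_{\la\vdash d}\expcoef^d_\la\,\Pgen_{d,0;\la}$ acts as multiplication by $\sum_{\la\vdash d}\expcoef^d_\la\,p_\la$, which by \cref{prop:h_d_p_la_EC2} equals $(-t)^d e_d[(1-\ti)\XSym_\N]$. By \cref{prop:Dd0} this symmetric function is precisely $\DASXn_{d,0}\cdot 1$, so both sides agree on $1$, and the right-hand side is manifestly a multiplication operator.

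The crux is then to show that $\DASXn_{d,0}$ \emph{itself} acts as a multiplication operator on $\LaqtN$. Granting this, its multiplier is forced to equal $\DASXn_{d,0}\cdot 1=(-t)^d e_d[(1-\ti)\XSym_\N]$, and so the two sides act identically. Because $\LaqtN$ is a commutative ring acting on itself by multiplication, the endomorphisms commuting with all multiplications by symmetric functions are exactly the multiplication operators; equivalently, it suffices to prove that $\DASXn_{d,0}$ commutes with each $\Pgen_{k,0}$ (for $1\le k\le\N$ these generate $\LaqtN$ as an algebra). This is the one genuinely nontrivial point. Both $\DASXn_{d,0}=\gam\S\Y\X^d\Y^{-1}\S$ and $\Pgen_{k,0}$ carry $\Z^2$-degree on the horizontal axis, and the $\Pgen_{a,0}$ generate a commutative ``horizontal'' subalgebra (as already used in the paper). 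The task is to certify that $\DASXn_{d,0}$ genuinely \emph{lies in} this horizontal subalgebra, rather than merely sharing its degree with it. I would obtain this either from the geometric description of $\DASXn_{d,0}$ as the image of the horizontal almost linear curve to $(d,0)$ — which lands in the symmetric-function subalgebra under the identifications of \cref{cor:g_DC=D_gC} together with the skein/annulus picture of \cite{MS17,MS21} — or by a direct induction on $d$ using the defining relation $Y_1X_1\cdots X_\N=qX_1\cdots X_\N Y_1$ to commute $\Y^{\pm1}$ past the symmetric power sums $\sum_i X_i^k$.

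Finally I would conclude by faithfulness. The polynomial representation of $\DAHA_\N$ over $\Cqt$ is faithful~\cite{Cherednik_book}, and every $D\in\DASHA_\N$ satisfies $\S D\S=D$ while $\S$ acts as the identity on $\LaqtN$; hence if $D$ annihilates $\LaqtN$ then $D\cdot g=D\cdot(\S g)=0$ for all $g\in\Cqt[x_1,\dots,x_\N]$, forcing $D=0$. Thus the action of $\DASHA_\N$ on $\LaqtN$ is faithful, and the operator equality established above promotes to the algebra identity $\DASXn_{d,0}=\sum_{\la\vdash d}\expcoef^d_\la\,\Pgen_{d,0;\la}$ with the claimed $\N$-independent coefficients $\expcoef^d_\la=(-t)^d\eps_\la z_\la^{-1}\prod_i(1-t^{-\la_i})$. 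Note that routing through the multiplication-operator property (rather than through linear independence of the $p_\la$) is what makes the argument uniform in $\N$, including the regime $d>\N$ where the $p_\la$ satisfy relations in $\N$ variables.

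I expect the main obstacle to be the middle step: verifying that $\DASXn_{d,0}$ is a multiplication operator. Extending the computation of \cref{prop:Dd0} from $1$ to a general symmetric polynomial is blocked by the fact that symmetric functions are not eigenvectors of $\Y^{-1}$ alone, so one is compelled to exploit the horizontal-subalgebra/commutativity structure instead of a brute-force evaluation.
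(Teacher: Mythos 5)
Your overall architecture --- evaluate both sides on $1$, show both sides act on $\LaqtN$ as multiplication operators, and conclude by faithfulness --- matches the paper's proof, and your treatment of $\Pgen_{k,0}$, of the right-hand side of~\eqref{eq:DASXn_vs_Pgen}, and of the faithfulness step are all correct. The gap is exactly where you flag it: you never actually prove that $\DASXn_{d,0}$ acts as a multiplication operator, and neither of your two proposed routes works as stated. Route (b) cannot succeed in the form you describe: the relation $Y_1X_1\cdots X_\N=qX_1\cdots X_\N Y_1$ governs the interaction of $\Y$ with the full product $X_1\cdots X_\N$, not with power sums, and in fact $\Y$ does \emph{not} commute with $\S\sum_iX_i^k\S$ --- in the EHA the commutator $[\ehagen_{0,1},\ehagen_{k,0}]$ is a nonzero multiple of $\ehagen_{k,1}$, which is precisely how the non-horizontal generators arise. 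What commutes is the sandwiched conjugate $\S\Y\X^d\Y^{-1}\S$ as a whole, and establishing that by ``commuting $\Y^{\pm1}$ past the power sums'' would amount to redoing the structure theory of~\cite{SV11}. Route (a) is also not straightforward: identifying the image of the $(d,0)$-curve with an element of the annular (horizontal) subalgebra is obstructed by the position of the puncture (cf.\ \cref{fig:stuck}, where $\W^{\PTor}_{\Curve_-}$ fails to be a decoration of the base curve), and it would route through \cref{thm:intro_curve_to_DASHA} and the commutativity of square \sqa, which the paper proves much later and, in part, \emph{using} the present proposition.

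The paper closes this gap with a short direct argument you could substitute for your middle step: by~\cite[Equation~(2.10)]{SV11} one has $\Y\X\Y^{-1}=qT_1\cdots T_{\N-2}T_{\N-1}^2T_{\N-2}\cdots T_1\X$, so $\Y\X^d\Y^{-1}$ is a product of $T_i$'s and $X_j$'s; and for $f$ symmetric and $g$ arbitrary, $T_i\cdot(fg)=f\,(T_i\cdot g)$ (immediate from~\eqref{eq:T_act}) and $X_j\cdot(fg)=f\,(X_j\cdot g)$. Hence $\S D\S$, for any product $D$ of $T_i$'s and $X_j$'s, is $\LaqtN$-linear as an operator on $\LaqtN$ and is therefore multiplication by $\S D\S\cdot 1$. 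This bypasses entirely the need to locate $\DASXn_{d,0}$ inside the horizontal subalgebra before the formula is known --- which, as you yourself observe, is essentially the statement being proved.
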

\begin{proof}
We have~\cite[Equation~(2.10)]{SV11}
\begin{equation}\label{eq:YXYi}%
  \Y\X\Y^{-1}=qT_1\cdots T_{\N-2}T_{\N-1}^2T_{\N-2}\cdots T_1 \X.
\end{equation}
Thus, $\DASXn_{d,0}=q^d\gam \S D \S$, where $D$ is a product of $T_i$-s and $\X$-s.  It is clear from~\eqref{eq:T_act} that for $f\in\C[x_1,\dots,x_\N]^{\Sn}$ and $g\in\C[x_1,\dots,x_\N]$, $T_i\cdot (fg)=fT_i\cdot g$. Since 
$X_j\cdot (fg)=\qi x_jfg=fX_j\cdot g$, we see that for any operator $D$ which is a product of $T_i$-s and $X_j$-s, $\S D\S$ acts on $\C[x_1,\dots,x_\N]^{\Sn}$ as the operator of multiplication by $\S D\S\cdot 1$. It follows that $\DASXn_{d,0}$ acts by multiplication by $(-t)^d e_d\left[(1-\ti)\XSym_\N\right]$ while $\Pgen_{k,0}$ defined in~\eqref{eq:Pgen_dfn} acts by multiplication by $p_k(x_1,\dots,x_\N)$. It remains to note that the polynomial representation of $\DASHAp_\N$ on $\C[x_1,\dots,x_\N]^{\Sn}$ is faithful; cf.~\cite[Section~1.4]{SV13}.
\end{proof}

\begin{example}
Let $\rectX=\rectY=1$. We find %
\begin{equation}\label{eq:ex_DASXn_11}
  (-t)^1e_1\left[\XSym_\N(1-\ti)\right]=(1-t)p_1, \quad\text{and thus}\quad 
\DASXn_{1,1} = (1-t) \Pgen_{1}.
\end{equation}
Next, let $\rectX=\rectY=2$. Using \cref{prop:h_d_p_la_EC2}, we find
\begin{equation*}%
  (-t)^2e_2\left[\XSym_\N(1-\ti)\right]=\frac12(1-t^2)p_2+\frac12 (1-t)^2 p_{11}.
\end{equation*}
Thus, \cref{prop:DASXn_vs_Pgen} yields
\begin{equation}\label{eq:ex_DASXn_22}
  \DASXn_{2,2} = \frac12(1-t^2)\Pgen_{2,2}+\frac12 (1-t)^2 (\Pgen_{1,1})^2.
\end{equation}
\end{example}

\begin{proof}[Proof of \cref{thm:indep_on_n}]
 By~\eqref{eq:SL2_DAHA_Pgen} and~\eqref{eq:SL2_almost_linear}, for $\rectXY,\N\geq1$ with $d:=\gcd(\rectXY)$, we have
\begin{equation}\label{eq:DASXn_vs_Pgen_rectXY}
    \DASXn_{\rectXY}=\sum_{\la\vdash d} \expcoef^{d}_{\la}\Pgen_{\rectXY;\la}.
\end{equation}
Here, the coefficients $\expcoef^d_\la$ are given in~\eqref{eq:DASXn_vs_Pgen}, and for $\la=(\la_1,\la_2,\dots,\la_r)$, we set $\Pgen_{\rectXY;\la}:=\Pgen_{\la_1\rectXp,\la_1\rectYp}\Pgen_{\la_2\rectXp,\la_2\rectYp}\cdots \Pgen_{\la_r\rectXp,\la_r\rectYp}$, where $(\rectXYp):=\frac1d(\rectXY)$.%
\end{proof}

\section{From DAHA to EHA}\label{sec:EHA}
In this section, we review the background on the \emph{elliptic Hall algebra (EHA)} $\EHA$, introduced in~\cite{BuSc}. We then explain how the elements $\DCn\in\DASHAp_\N$ give rise to elements of $\EHAp$ in the limit $\N\to\infty$.

\subsection{Generators and relations}
For a vector $\bx=(\rectXY)\in\Z^2$, we write $\dop(\bx):=\gcd(\rectXY)$, and we call $\bx$ \emph{primitive} if $\dop(\bx)=1$. By definition, $\EHA$ is an algebra over $\Cqt$ generated by elements $\ehagen_\bx$, $\bx\in\Z^2$, subject to the following relations (see~\cite[Section~1.5]{SV11} and~\cite[Section~2.1]{MS21}).
\begin{itemize}
\item If $\bx,\bx'$ belong to the same line in $\Z^2$ then $[\ehagen_\bx,\ehagen_{\bx'}]=0$.
\item Assume that $\bx$ is primitive and that the triangle with vertices $0,\bx,\bx+\by$ has no interior lattice points. Then
  \begin{equation}\label{eq:EHA_comm}
    [\ehagen_\by,\ehagen_\bx]=\epsilon_{\bx,\by} \frac{\theta_{\bx+\by}}{\alpha_1},
  \end{equation}
  where $\epsilon_{\bx,\by}:=\sign(\det(\bx\,\by))$ and $\theta_{\bz},\bz\in\Z^2$ is defined by
  \begin{equation*}%
    \sum_{i=0}^\infty \theta_{i\bz_0} z^i = \exp \left(\sum_{i=1}^\infty \alpha_i\ehagen_{i\bz_0}z^i\right).
  \end{equation*}
\noindent
  Here, $\bz_0\in\Z^2$ is primitive and
  \begin{equation}\label{eq:EHA_alpha_dfn}
\alpha_i=\frac1i (1-q^{-i})(1-t^{-i})(1-(qt)^{i}).
  \end{equation}
\end{itemize}
It follows directly from the above relations that $\SL_2(\Z)$ acts on $\EHA$ by $g \cdot \ehagen_\bx = \ehagen_{g \cdot \bx}$. 
\begin{example}
  When $\bx+\by$ is primitive, we have $\theta_{\bx+\by}=\alpha_1$ and~\eqref{eq:EHA_comm} becomes
\begin{equation}\label{eq:EHA_comm_prim}
      [\ehagen_\by,\ehagen_\bx]=\epsilon_{\bx,\by} \ehagen_{\bx+\by}.
\end{equation}
On the other hand, letting $\bx=(1,0)$ and $\by=(1,2)$, we have
\begin{equation}\label{eq:EHA_comm_22}
  \ehagen_{2,2}=\frac{2qt}{(1+q)(1+t)(1+qt)} [\ehagen_{1,2},\ehagen_{1,0}] - \frac{(1-q)(1-t)(1-qt)}{(1+q)(1+t)(1+qt)} \ehagen_{1,1}^2.
\end{equation}
\end{example}

\subsection{Taking the limit}
Define the \emph{positive parts} $\DASHAp_\N\subset\DASHA_\N$ (resp., $\EHAp\subset\EHA$) to be the subalgebras generated by the elements $\Pgen_\bx$ (resp., $\ehagen_\bx$) for $\bx\in\BZxy$:
\begin{equation}\label{eq:DASHAp_EHAp_dfn}
  \DASHAp_\N:=\<\Pgen_\bx\mid \bx\in\BZxy\> \quad\text{and}\quad \EHAp:=\<\ehagen_\bx\mid \bx\in\BZp\>.
\end{equation}

By~\cite[Proposition~4.1]{SV11}, we have a surjective algebra map $\DASHAp_\N\to\DASHAp_{\N-1}$, $\Pgen_\bx\mapsto\Pgenm_\bx$ for all $\bx\in\BZp$. Let $\DASHApinf$ be the corresponding projective limit, generated by $\{\Pgenf_\bx\mid\bx\in\BZp\}$.

\begin{theorem}[{\cite[Theorem~4.6]{SV11}}]
  The assignment
  \begin{equation}\label{eq:eha_daha_iso}
    \ehagen_\bx\mapsto \frac1{q^{\dop(\bx)}-1} \Pgenf_\bx
  \end{equation}
  induces an isomorphism $\EHAp\xrasim \DASHApinf$.
\end{theorem}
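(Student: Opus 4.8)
The plan is to build the assignment in \eqref{eq:eha_daha_iso} into an algebra homomorphism $\Phi\colon\EHAp\to\DASHApinf$ and then to prove that $\Phi$ is surjective and injective. Since $\EHAp$ is presented by the collinearity relations $[\ehagen_\bx,\ehagen_{\bx'}]=0$ together with the commutators \eqref{eq:EHA_comm}, the first task is to check that the scaled elements $\frac{1}{q^{\dop(\bx)}-1}\Pgenf_\bx\in\DASHApinf$ obey these same relations. I would verify this at finite $\N$ inside the polynomial (Fock) representation $\phi_\N$ of \cref{sec:DAHA_polyrep}: collinearity reduces to the commuting of the $\Pgen_{d,0}$ (which act by multiplication by the power sums $p_d$, see the proof of \cref{prop:DASXn_vs_Pgen}) transported along lines by $\SL_2(\Z)$, while for \eqref{eq:EHA_comm} one computes the bracket $[\Pgen_\by,\Pgen_\bx]$ in $\DASHA_\N$ and checks that, after dividing by $(q^{\dop(\bx)}-1)(q^{\dop(\by)}-1)$, it matches the image of the right-hand side of \eqref{eq:EHA_comm} up to an error carried by the finite-rank factor $1-t^\N$ (cf.\ the appearance of $\gam$ and of $e_d[(1-\ti)\XSym_\N]$ in \cref{prop:Dd0}); this error stabilizes to $0$ in the projective limit. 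The $\SL_2(\Z)$-equivariance is what makes this feasible: the action $g\cdot\ehagen_\bx=\ehagen_{g\bx}$ on $\EHA$ matches the action of \cref{cor:g_DC=D_gC} and \eqref{eq:SL2_almost_linear} on the $\Pgen_\bx$, so one only needs a bounded set of base configurations, e.g.\ $\bx=(1,0)$ together with $\by=(0,1)$ and $\by=(1,2)$ as in \eqref{eq:EHA_comm_prim}--\eqref{eq:EHA_comm_22}.

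Surjectivity is then immediate. By \cite[Proposition~2.5]{SV11} the elements $\Pgen_\bx$, $\bx\in\BZast$, generate $\DASHA_\N$ for each $\N$, so the $\Pgenf_\bx$, $\bx\in\BZp$, generate $\DASHApinf$; since these generators all lie in the image of $\Phi$, the map is onto.

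For injectivity I would run a graded-dimension comparison. Both algebras carry a $\Z^2$-grading placing $\ehagen_\bx$ (resp.\ $\Pgenf_\bx$) in degree $\bx$, with respect to which $\Phi$ is homogeneous, and every graded component is finite-dimensional; it therefore suffices to prove $\dim\EHAp[\bx]=\dim\DASHApinf[\bx]$ in each degree $\bx\in\BZp$. The left-hand dimension comes from the PBW basis of the elliptic Hall algebra of \cite{BuSc}. For the right-hand side one produces a spanning set of matching cardinality: the $\Pgen_{d,0}$ act on $\Laqt$ by the algebraically independent operators $p_d$, and transporting this by $\SL_2(\Z)$ yields, via a triangularity argument in a monomial filtration of the Fock module $\Laqt$, the needed upper bound on $\dim\DASHApinf[\bx]$. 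Equality of dimensions forces $\Phi$ to be an isomorphism; alternatively, one may identify both sides with a common shuffle-algebra presentation.

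The main obstacle is injectivity. The homomorphism and surjectivity steps are essentially formal given the results already available in the excerpt, whereas injectivity requires genuinely bounding the size of $\DASHApinf$: at finite $\N$ the spherical DAHA satisfies only a deformation of the Hall-algebra relations, so the crux is to show that no extra relations survive the limit---equivalently, that the stabilized Fock representation separates the candidate PBW basis---so that the two graded dimensions agree exactly.
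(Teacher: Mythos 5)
This theorem is not proved in the paper: it is imported verbatim from Schiffmann--Vasserot \cite{SV11} (their Theorem~4.6), so there is no in-paper argument to compare yours against. Judged on its own terms, your sketch does follow the broad strategy of the actual proof in \cite{SV11}: realize $\DASHApinf$ as a stable limit of the spherical DAHAs, check the Burban--Schiffmann relations for the rescaled generators, get surjectivity from \cite[Proposition~2.5]{SV11}, and settle injectivity by playing a PBW basis of $\EHAp$ against the (stabilized, faithful) polynomial representation. Two points, however, need repair before this could be called a proof.

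First, the reduction of the relation check to ``a bounded set of base configurations'' via $\SL_2(\Z)$-equivariance overclaims. The relations \eqref{eq:EHA_comm} form an infinite family even modulo the $\SL_2(\Z)$-action (after normalizing $\bx=(1,0)$, the admissible $\by$ still range over an infinite set, and the right-hand side involves the elements $\theta_{\bx+\by}$ of every divisibility $\dop(\bx+\by)$), and moreover $\SL_2(\Z)$ does not preserve the positive parts $\EHAp$ and $\DASHApinf$, so the equivariance must be exploited at the level of the full algebras $\EHA\to\DASHAinf$, as \cite{SV11} in fact do. Second, and more seriously, your dimension count points the wrong way: once $\Phi$ is a surjective graded homomorphism, the inequality $\dim\DASHApinf[\bx]\leq\dim\EHAp[\bx]$ is automatic, so producing a spanning set of $\DASHApinf[\bx]$ of the ``right'' cardinality adds nothing. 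Injectivity requires the reverse inequality, i.e.\ that the images under $\Phi$ of a PBW basis of $\EHAp[\bx]$ remain linearly independent in the limit. That is exactly the separation statement you defer to the final sentence (``the stabilized Fock representation separates the candidate PBW basis''), and it is the genuine content of the theorem --- in \cite{SV11} it is established via the shuffle-algebra realization and the action on Macdonald polynomials, not by a cardinality comparison. As written, the argument assumes what it needs to prove at precisely the step you yourself identify as the crux.
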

In view of~\eqref{eq:eha_daha_iso}, we denote%
\begin{equation}\label{eq:Peha_dfn}
  \Peha_{\bx}:=(q^{\dop(\bx)}-1)\ehagen_\bx.%
\end{equation}

According to \cite[Theorem~3.1]{SV11}, the isomorphism~\eqref{eq:eha_daha_iso} is the restriction of a $\Z^2$-graded $\SL_2(\Z)$-equivariant surjective algebra homomorphism $\EHA\to \DASHAinf$. In particular, the isomorphism~\eqref{eq:eha_daha_iso} commutes with the action of $\BraidGroupP_3$.
Applying \cref{thm:indep_on_n}, we obtain the following.
\begin{corollary}\label{cor:limit_almost_linear}
For each $\rectXY\geq1$, the sequence $\DASXn_{\rectXY}$, $\N=1,2,\dots$, gives rise to a well-defined limit $\DASXnf_{\rectXY}\in \SHpf$. 
\end{corollary}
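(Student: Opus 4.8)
The plan is to read the claim off directly from the explicit expansion supplied by \cref{thm:indep_on_n}. Recall that $\SHpf=\DASHApinf$ is, by construction, the projective limit of the tower of surjections $\DASHAp_\N\to\DASHAp_{\N-1}$ of \cite[Proposition~4.1]{SV11}, which act on generators by $\Pgen_\bx\mapsto\Pgenm_\bx$ for every $\bx\in\BZp$. Thus an element of $\SHpf$ is precisely a sequence $(a_\N)_{\N\geq1}$ with $a_\N\in\DASHAp_\N$ that is compatible under these structure maps, and so to prove the corollary I only need to check that the projection $\DASHAp_\N\to\DASHAp_{\N-1}$ carries $\DASXn_{\rectXY}$ to $\DAS^{(\N-1)}_{\rectXY}$.

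First I would invoke formula~\eqref{eq:DASXn_vs_Pgen_rectXY}, which writes
\[
  \DASXn_{\rectXY}=\sum_{\la\vdash d}\expcoef^{d}_{\la}\,\Pgen_{\rectXY;\la},
  \qquad d=\gcd(\rectXY),
\]
where the coefficients $\expcoef^d_\la$ of~\eqref{eq:DASXn_vs_Pgen} are manifestly independent of $\N$, and each factor $\Pgen_{\la_i\rectXp,\la_i\rectYp}$ carries a label $\la_i(\rectXYp)$ that is likewise independent of $\N$ (here $(\rectXYp)=\tfrac1d(\rectXY)$, so $\rectXp\geq1$ forces these labels into $\BZx\subset\BZp$). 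Since the structure map of the projective system preserves labels — it replaces each $\Pgen_\bx$ by $\Pgenm_\bx$ with the same $\bx$ — applying it termwise to the displayed expansion yields the same expression with every $\Pgen_\bx$ replaced by $\Pgenm_\bx$, which is precisely $\DAS^{(\N-1)}_{\rectXY}$ by the same formula at level $\N-1$. Hence the sequence $(\DASXn_{\rectXY})_{\N\geq1}$ is compatible and defines the desired limiting element $\DASXnf_{\rectXY}\in\SHpf$; membership in the \emph{positive} part is guaranteed by the observation above that all labels lie in $\BZp$, so that each $\DASXn_{\rectXY}$ already lies in $\DASHAp_\N$.

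There is essentially no further obstacle: the entire content of the corollary has been absorbed into \cref{thm:indep_on_n}, whose two outputs — the $\N$-independence of the coefficients and the label-preserving form of the expansion in the fixed generators $\Pgen_\bx$ — are exactly what compatibility with the projective system requires. The only point deserving a moment's care is that one must use this \emph{uniform} expansion rather than, say, the polynomial-representation identity of \cref{prop:Dd0}: the latter lives in a space of symmetric functions whose number of variables changes with $\N$, and so it does not by itself exhibit the coherence under the structure maps that the projective limit demands.
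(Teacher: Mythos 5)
Your argument is correct and is essentially the paper's own proof: the corollary is deduced immediately from \cref{thm:indep_on_n} by observing that the expansion~\eqref{eq:DASXn_vs_Pgen_rectXY} has $\N$-independent coefficients in the generators $\Pgen_\bx$, which are sent to $\Pgenm_\bx$ under the structure maps of the projective system, so the sequence is compatible. The additional remarks about membership in the positive part and about why one should not argue via the polynomial representation are sensible but not needed beyond what the paper already records.
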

\noindent We continue to denote by $\ucbas_{\rectXY}\in\EHAp$ the image of $\DASXnf_{\rectXY}$ under the identification $\SHpf\cong\EHAp$.

\subsection{Action on symmetric functions}\label{sec:EHA_action}
We follow the exposition of~\cite{SV13}; cf. \cref{rmk:polyrep_rescaling}. Consider the automorphism $\theta$ of $\DASHAp_\N$ given by
\begin{equation*}%
  \theta(\Pgen_{0,d})=\Pgen_{0,d} - \frac{1-t^{d\N}}{1-t^d}, \quad d\in\Ztp;\qquad 
  \theta(\Pgen_{i,j})=\Pgen_{i,j}, \quad (i,j)\in\BZx.
\end{equation*}
Then $\tphi_\N:=\phi_\N\circ \theta$ is a representation of $\DASHAp_\N$ on $\LaqtN$, where $\phi_\N$ is defined in \cref{sec:DAHA_polyrep}. Letting $\rho_\N:\Laqt\to\LaqtN$ be the operator setting $x_{\N+1}=x_{\N+2}=\cdots=0$, we get $\tphi_{\N-1}(\Pgenm_{i,j})\circ \rho_{\N-1} = \rho_{\N-1}\circ \tphi_\N(\Pgen_{i,j})\circ \rho_\N$ for $(i,j)\in\BZp$. Thus, we may take the limit of $\tphi_\N$ as $\N\to\infty$. The resulting representation $\tphi_\infty$ of $\SHpf\cong\EHAp$ on $\Laqt$ may be described explicitly in terms of the \emph{Macdonald polynomials} $P_\la(q,t)\in\Laqt$: for $d\in\Ztp$, we have
\begin{equation}\label{eq:EHA_polyrep}
  \tphi_\infty(u_{0,d}) \cdot P_\la(q,t^{-1}) = \left(\sum_i \frac{q^{d\la_i}-1}{q^d-1} t^{d(i-1)} \right) P_\la(q,t^{-1}),
\end{equation}
and $\tphi_\infty(u_{1,0})$ is the operator of multiplication by $\frac1{q-1} p_1$; see~\cite[Corollary~1.5]{SV13}.

\begin{remark}\label{rmk:q_t_symm}
  Observe from~\eqref{eq:EHA_alpha_dfn} that the parameters $q$, $t$, $(qt)^{-1}$ enter symmetrically into the definition of $\EHA$. Even though it may appear that~\eqref{eq:EHA_polyrep} breaks the $q,t$-symmetry, note that the plethysm $\left[\frac{\XSym}{1-t}\right]$ from~\eqref{eq:intro_FC_dfn} turns Macdonald polynomials $P_\la$ into modified Macdonald polynomials $\Ht_\la$. Using the well-known $q,t$-symmetry $\Ht_\la(q,t)=\Ht_{\la'}(t,q)$ (where $\la'$ denotes the conjugate partition), one may deduce 
 that the symmetric function $\FC$ is $q,t$-symmetric.\footnote{An equivalent description of $\EHAp$ is given in~\cite{BGLX} (cf. \cref{ssec:BGLX}), where the formulas for the action on $\Laqt$ are manifestly $(q,t)$-symmetric.} Alternatively, $q,t$-symmetry of $\FC$ may be seen directly from~\eqref{eq:nabla_H_tilde}. See also~\cite[Section~6.2]{GN}.
\end{remark}

\section{Skein relation}\label{sec:skein_relation}
The goal of this section is to prove \cref{thm:intro:skein}. We will give an algebraic proof based on the diagrammatic formalism of Morton--Samuelson~\cite{MS21}; see \cref{sec:skein} and \cref{rmk:skein_diagrammatic}. Along the way, we will also prove \cref{thm:intro_limit_exists}. We start with a version of \cref{thm:intro:skein} for finite $\N$. 
\begin{theorem}
  Consider curves $\Curve_+,\Curve_-,\Curve_0$ which pass above, below, and through some lattice point $\point$ as in \cref{thm:intro:skein}. Then
  \begin{equation}\label{eq:skein_\N}
    \DXn_{\Curve_+}=qt\DXn_{\Curve_-}+\DXn_{\Curve_0}.
  \end{equation}
\end{theorem}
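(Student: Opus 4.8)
The plan is to reduce the finite-$\N$ identity \eqref{eq:skein_\N} to a local statement near $\point$ and then to prove that statement inside $\DASHA_\N$. Since $\Curve_+,\Curve_-,\Curve_0$ agree outside a small neighborhood of $\point$, the highest paths $\PathX_{\Curve_+},\PathX_{\Curve_-},\PathX_{\Curve_0}$ and their marked lattice points agree away from $\point$, so the first task is to pin down the local picture. Writing $\point=(a,b)$, the monotone curve has nonnegative slope, so the highest path weakly below $\Curve_0$ (and below $\Curve_+$) enters $\point$ by an up-step and leaves by a right-step — it traverses the upper-left corner of the unit square with corners $(a,b-1)$ and $(a+1,b)$ — whereas the path below $\Curve_-$ traverses the lower-right corner, through $(a+1,b-1)$. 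Writing $R:=\Y\X\Y^{-1}$ for the right-step operator and $\Y$ for the up-step operator, and cutting at the curve-lattice-points $\point_{\mathrm{prev}},\point_{\mathrm{next}}$ nearest to $\point$ (which carry symmetrizers), \cref{dfn:intro:DCn} yields a common prefix $A$ and suffix $B$, shared step-operators $u,u'$, and
$$\DXn_{\Curve_+}=A\,u\,\Y R\,u'\,B,\qquad \DXn_{\Curve_-}=A\,u\,R\Y\,u'\,B,\qquad \DXn_{\Curve_0}=A\,u\,\Y\,\gam\S\,R\,u'\,B.$$
Cancelling the common $A,B$ and the symmetrizer prefactor at $\point_{\mathrm{prev}}$, the theorem reduces to the single flanked local identity
$$\S\,u\,\Y R\,u'\,\S=qt\,\S\,u\,R\Y\,u'\,\S+\gam\,\S\,u\,\Y\,\S\,R\,u'\,\S.$$

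Next I would exploit covariance. Because the factor $qt$ is $\SL_2(\Z)$-invariant and every $g\in\BraidGroupP_3$ carries the triple $(\Curve_+,\Curve_-,\Curve_0)$ to another such triple at $g(\point)$, \cref{cor:g_DC=D_gC} shows the set of triples satisfying \eqref{eq:skein_\N} is $\BraidGroupP_3$-invariant; this lets me normalize $\point$ and the surrounding path to one convenient configuration. On the algebraic side the key inputs are \eqref{eq:YXYi}, which rewrites the right-step operator as $R=\Y\X\Y^{-1}=q\,\Xi\,\X$ with $\Xi:=T_1\cdots T_{\N-1}T_{\N-1}\cdots T_1$, together with the absorption rules \eqref{eq:ST=TS=S}. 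These already give the collapse relations $\S\,\Xi=t^{1-\N}\S$ and hence $\S R=q\,t^{1-\N}\S\X$, which allow me to push symmetrizers through right-steps.

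I would then prove the flanked local identity. The $qt$-coefficient and the extra $\gam\S$-term are exactly what the partial-symmetrizer recurrence \eqref{eq:symmetrizer_recurrence} produces: at $k=1$ it reads $\SX_2\,\Xi=t^{1-\N}\gam\,\S+t\,\SX_2$, i.e.\ $\SX_2(\Xi-t)=t^{1-\N}\gam\,\S$, which is the precise mechanism by which resolving the crossing encoded in $\Xi$ against a symmetrizer box splits off a $\gam\S$ and rescales the remaining term by $qt$. Feeding $R=q\Xi\X$ into the two routes $\Y R$ and $R\Y$, moving the factor $\Xi-t$ next to a symmetrizer via \eqref{eq:ST=TS=S}, and applying \eqref{eq:symmetrizer_recurrence} should collapse $\S\cdots\Y R\cdots\S-qt\,\S\cdots R\Y\cdots\S$ onto the claimed $\gam\,\S\cdots\Y\,\S\,R\cdots\S$ term; this is the algebraic shadow of the diagrammatic crossing-resolution underlying \eqref{eq:symmetrizer_recurrence}. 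As an independent check (and an alternative route) one can verify the identity directly on symmetric polynomials using the faithful polynomial representation invoked in the proof of \cref{prop:DASXn_vs_Pgen}; for $(\rectX,\rectY)=(2,2)$ this recovers $\S\Y\X\Y\X\S=qt\,\S\Y\X^2\Y\S+\gam\,\S\Y\X\S\Y\X\S$, consistent with the symmetric-function computation $\FX_{\Curve_+}=qt\FX_{\Curve_-}+\FX_{\Curve_0}$ of \cref{sec:intro_example}.

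The main obstacle will be the bookkeeping in this last step. The naive local identity $\Y R=qt\,R\Y+\Y\,\gam\S\,R$ is \emph{false} as a bare operator equation — the two sides differ in $Y_1$-degree and one carries a stray symmetrizer — so the argument genuinely requires the flanking symmetrizers, and since $X_1,Y_1$ do not commute with $\Xi$, maneuvering the factor $\Xi-t$ into contact with a symmetrizer so that \eqref{eq:symmetrizer_recurrence} can be applied is delicate. One must also dispatch the degenerate cases where $\point$ lies on the boundary of $[0,\rectX]\times[0,\rectY]$ or where the curve has a horizontal or vertical tangent at $\point$, checking that the local corner picture (and hence the flanked identity) still applies; once the flanked identity is established, reinstating $A,B,u,u'$ gives \eqref{eq:skein_\N}.
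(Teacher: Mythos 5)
Your outline is essentially the paper's proof: the local decomposition $\DXn_{\Curve_+}=A\,\Y R\,B$, $\DXn_{\Curve_-}=A\,R\Y\,B$, $\DXn_{\Curve_0}=A\,\Y\,\gam\S\,R\,B$ with $R=\Y\X\Y^{-1}$, the rewriting $R=q\,\Xi\,\X$ via \eqref{eq:YXYi}, and the $k=1$ case of \eqref{eq:symmetrizer_recurrence} in the form $\SX_2\,\Xi=t^{1-\N}\gam\,\S+t\,\SX_2$ are exactly the ingredients used there, and the identification of the two resulting terms with $\DXn_{\Curve_0}$ and $qt\,\DXn_{\Curve_-}$ goes through as you predict (the $\BraidGroupP_3$-normalization is unnecessary and in any case cannot standardize the surrounding words $u,u'$, since that action is global; and the boundary degeneracies you worry about do not arise, as a strictly increasing curve can pass above, below, \emph{and} through a lattice point only when that point is interior).

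The one step you leave open is also the one where your stated mechanism would fail as written. You propose ``moving the factor $\Xi-t$ next to a symmetrizer via \eqref{eq:ST=TS=S}'', but \eqref{eq:ST=TS=S} gives no way to commute $\Xi$ leftward past the up-step $\Y$ or the word $u$ --- indeed you yourself note that $\Xi$ does not commute with $X_1,Y_1$. The correct maneuver goes in the opposite direction: one moves the partial symmetrizer $\SX_2$, not $\Xi$. Since $\S=\S\,\SX_2$ and $\SX_2$ (built only from $T_2,\dots,T_{\N-1}$) commutes with $X_1$, $Y_1$, and $\S$, any $A$ beginning with $\gam\S$ satisfies $A\Y=A\Y\SX_2$; this plants $\SX_2$ immediately to the left of the $\Xi$ produced by the right-step at $\point$, after which \eqref{eq:symmetrizer_recurrence} applies verbatim and yields
\begin{equation*}
A\Y R\,B \;=\; q\,A\Y\,\SX_2\,\Xi\,\X\,B \;=\; qt^{1-\N}\gam\,A\Y\,\S\,\X\,B \;+\; qt\,A\Y\,\X\,B,
\end{equation*}
whose two terms are $\DXn_{\Curve_0}$ (by $\S\,\Xi=t^{1-\N}\S$) and $qt\,\DXn_{\Curve_-}$ (by $R\Y=\Y\X$). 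With this commutation observation supplied, your ``flanked local identity'' and hence \eqref{eq:skein_\N} follow exactly as you describe.
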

\begin{proof}%
 Consider the elements $\DXn_{\Curve_+},\DXn_{\Curve_-},\DXn_{\Curve_0}\in \DASHA_\N$ associated to $\Curve_+,\Curve_-,\Curve_0$ via \cref{dfn:intro:DCn}. The paths $\PathX_{\Curve_+}$, $\PathX_{\Curve_-}$, $\PathX_{\Curve_0}$ differ locally so that $\PathX_{\Curve_+}=\cdots UR\cdots$, $\PathX_{\Curve_-}=\cdots RU \cdots$, $\PathX_{\Curve_0}=\cdots U\ast R\cdots$, where $U$ and $R$ indicate up and right steps, respectively, and $\ast$ indicates that $\PathX_{\Curve_0}$ passes through the lattice point $\point$. In particular, the elements $\DXn_{\Curve_+},\DXn_{\Curve_-},\DXn_{\Curve_0}$ are given by 
  \begin{equation*}%
    \DXn_{\Curve_+}=A \Y\Y\X\Y^{-1}B,\quad     \DXn_{\Curve_-}=A \Y\X\Y^{-1}\Y B, \quad\text{and}\quad     \DXn_{\Curve_0}=A \Y \cdot \gam \S\cdot \Y\X\Y^{-1}B
  \end{equation*}
  for some $A,B\in\DASHA_\N$.
According to \cref{dfn:intro:DCn}, $A=\gam \S A'$ for some $A'\in\DASHA_\N$. We have $\S=\SX_1=\SX_1\SX_2$, and $\SX_2$ commutes with $\X$, $\Y$, and $\S$. Thus, we have $A\Y=A\Y\SX_2$. Applying~\eqref{eq:YXYi}, we find
\begin{equation*}%
  \DXn_{\Curve_+}=A \Y\Y\X\Y^{-1}B= A\Y\cdot \SX_2 \cdot  q T_1T_2\cdots T_{\N-1}  T_{\N-1}\cdots T_2T_1  \X\cdot B.
\end{equation*}
Applying~\eqref{eq:symmetrizer_recurrence} for $k=1$, we get
\begin{equation*}%
  \DXn_{\Curve_+}=qA\Y\cdot \left(-t(1-t^{-\N})\S+t \SX_2\right)  \X\cdot B  
=qt^{-\N+1}\gam A\Y\cdot \S\X\cdot B + qtA\Y \cdot \SX_2 \X\cdot B.
\end{equation*}
Since $A\Y=A\Y\SX_2$, the second term on the right-hand side becomes $qtA\Y \X B=qt\DXn_{\Curve_-}$. We claim that the first term equals $\DXn_{\Curve_0}$. Indeed, by~\eqref{eq:YXYi} and~\eqref{eq:ST=TS=S}, we get 
\begin{align*}%
\DXn_{\Curve_0}&=A \Y \cdot \gam \S\cdot \Y\X\Y^{-1}B 
 = A \Y \cdot \gam \S\cdot q T_1T_2\cdots T_{\N-1} T_{\N-1}\cdots T_2T_1  \X \cdot B \\
 &= qt^{-\N+1}\gam A \Y \cdot  \S \X \cdot B.
\end{align*}
This proves~\eqref{eq:skein_\N}. See \cref{fig:skein-diag} for a diagrammatic interpretation of the above argument.
\end{proof}

\begin{proof}[Proof of \cref{thm:intro_limit_exists,thm:intro:skein}]
As explained in \cref{rmk:intro:skein}, each element $\DXn_{\Curve}$ may be expressed via~\eqref{eq:skein_\N} in terms of products of almost linear elements $\DASXn_{\rectXYp}$ with coefficients not depending on $\N$. By \cref{cor:limit_almost_linear}, for each curve $\Curve$, we get a well-defined limit $\DC\in \SHpf$ of $\DXn_{\Curve}$ as $N\to\infty$. This shows \cref{thm:intro_limit_exists}. 
 The resulting limiting elements satisfy~\eqref{eq:intro:skein} by construction, which shows \cref{thm:intro:skein}.
\end{proof}

\begin{example}\label{ex:DC_and_Sym}
We calculate the elements $\DXlimEHA_{\Curve_+},\DXlimEHA_{\Curve_-},\DXlimEHA_{\Curve_0}$ for the three curves in \cref{fig:2x2}. By~\eqref{eq:intro:prod}, \eqref{eq:ex_DASXn_11}, and~\eqref{eq:Peha_dfn}, we have
\begin{equation*}%
  \ucbas_{\Curve_0}=\ucbas_{1,1}^2=(1-t)^2(q-1)^2\ehagen_{1,1}^2.
\end{equation*}
Next, by~\eqref{eq:ex_DASXn_22} and~\eqref{eq:Peha_dfn}, we find
\begin{equation*}%
  \ucbas_{\Curve_+}=\ucbas_{2,2}=\frac12(1-t^2)(q^2-1)\ehagen_{2,2}+\frac12 (1-t)^2(q-1)^2 \ehagen_{1,1}^2.
\end{equation*}
Therefore, by~\eqref{eq:intro:skein},
\begin{equation*}%
  \ucbas_{\Curve_-}=\frac1{2qt}(1-t^2)(q^2-1)\ehagen_{2,2}-\frac1{2qt} (1-t)^2(q-1)^2 \ehagen_{1,1}^2.
\end{equation*}
We may now compute the symmetric functions $\FX_{\Curve_+},\FX_{\Curve_-},\FX_{\Curve_0}$. One way to do that is by using the polynomial representation of the DAHA, i.e., by combining \cref{ex:intro_DXn} with \cref{sec:DAHA_polyrep}. Alternatively, we can use commutation relations, such as~\eqref{eq:EHA_comm_22}, together with \cref{sec:EHA_action}. Both computations lead to the following result:
\begin{align}%
\label{eq:DCp.1}
  \ucbas_{\Curve_+}\cdot 1 &= (1-t) \left( (1- t^2 -q t) s_{11} + q s_{2} \right);\\
\label{eq:DCm.1}
  \ucbas_{\Curve_-}\cdot 1 &= (1-t) \left( -ts_{11}+s_2 \right);\\
\label{eq:DC0.1}
  \ucbas_{\Curve_0}\cdot 1 &= (1-t)^2 \left( (1+t-q t) s_{11} + q s_{2} \right).
\end{align}
Applying the plethysm~\eqref{eq:intro_FC_dfn} yields~\eqref{eq:intro_ex_FC}, and applying~\eqref{eq:intro_PEHAC_dfn} yields~\eqref{eq:intro_ex_PEHAC}.
\end{example}

For a $\Z$-convex curve $\Curve$, \cref{rmk:intro:skein} may be used to obtain a non-recursive formula expressing $\DXlimEHA_{\Curve}$ in terms of \PAL convex curves.
\begin{corollary}\label{cor:non_recursive}
Let $\Curve$ be a $\Z$-convex curve passing through no lattice points other than $(0,0)$ and $(\rectXY)$. Then
\begin{equation*}%
  \DXlimEHA_{\Curve}=\sum_{\substack{\Curve'=[\Csegp_1\Csegp_2\cdots\Csegp_k] \\ \text{$\Z$-convex \PAL}}} \frac{(-1)^{k-1}}{(qt)^{k-1+a(\Curve,\Curve')}} \cdot  \DXlimEHA_{\Curve'_1}\cdot \DXlimEHA_{\Curve'_2}\cdots \DXlimEHA_{\Curve'_k},
\end{equation*}
where the summation is over all $\Z$-convex \PAL curves $\Curve'$ weakly above $\Curve$, $k=\nseg(\Curve')$ is arbitrary, and $a(\Curve,\Curve')$ is the number of lattice points strictly between $\Curve$ and~$\Curve'$. 
\end{corollary}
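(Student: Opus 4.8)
The plan is to prove the identity by induction, using the skein relation to push $\Curve$ upward one lattice point at a time while staying inside the class of $\Z$-convex primitive curves. Solving the skein relation of \cref{thm:intro:skein} for the lowest of the three curves gives
\[
  \DXlimEHA_{\Curve}=\frac{1}{qt}\left(\DXlimEHA_{\Curve_+}-\DXlimEHA_{\Curve_0}\right),
\]
valid whenever $\Curve$ passes below a lattice point $\point$ while $\Curve_+$ passes above $\point$ and $\Curve_0$ passes through $\point$, the three curves agreeing elsewhere. Here $\Curve_+$ is again primitive, whereas $\Curve_0$ acquires $\point$ as an interior lattice point and therefore splits as $\DXlimEHA_{\Curve_0}=\DXlimEHA_{\Curve_0^L}\cdot\DXlimEHA_{\Curve_0^R}$ by \eqref{eq:intro:prod}, where $\Curve_0^L,\Curve_0^R$ are the portions of $\Curve_0$ before and after $\point$.

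\emph{Choice of pivot.} Since $\Curve$ is $\Z$-convex, $\PtsAboveC$ is $\Z$-convex, and the almost linear hull of the lower boundary of $\Conv(\PtsAboveC)$ is the unique minimal $\Z$-convex \PAL curve $\Curve^{\min}$ weakly above $\Curve$; its above-set equals $\PtsAboveC$, so it lies weakly below every $\Z$-convex \PAL curve above $\Curve$. I would take $\point$ to be an interior vertex of $\Curve^{\min}$, equivalently an interior vertex of $\Conv(\PtsAboveC)$ that lies strictly above $\Curve$. The inclusion of the perturbed corners $\point_i+(-\eps,\eps)$ guarantees that when the lower boundary of $\Conv(\PtsAboveC)$ is a single segment it carries no interior lattice point, forcing $\Curve=\Curve^{\min}$; hence such a vertex $\point$ exists precisely when $\Curve$ is not already \PAL. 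Because $\point$ is an extreme point of $\Conv(\PtsAboveC)$, deleting it keeps the set $\Z$-convex, so $\Curve_+$ is $\Z$-convex, and splitting at the vertex $\point$ makes $\Curve_0^L,\Curve_0^R$ $\Z$-convex as well; each of $\Curve_+,\Curve_0^L,\Curve_0^R$ is primitive with strictly fewer lattice points above it, which grounds the induction.

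\emph{Bookkeeping.} The key combinatorial point is that, as $\point$ lies on $\Curve^{\min}$, it is weakly below every $\Z$-convex \PAL curve $\Curve'$ above $\Curve$, so these curves partition into those passing strictly above $\point$ (which are exactly the $\Z$-convex \PAL curves above $\Curve_+$) and those passing through $\point$ (which, since almost linear segments contain no interior lattice points, are exactly the curves having $\point$ as a vertex, i.e.\ the gluings of a $\Z$-convex \PAL curve above $\Curve_0^L$ with one above $\Curve_0^R$). Applying the inductive hypothesis to $\Curve_+$ reproduces the first class, with $a(\Curve_+,\Curve')=a(\Curve,\Curve')-1$ absorbing the prefactor $1/(qt)$; applying it to $\Curve_0^L,\Curve_0^R$ and multiplying reproduces the second class, where $\nseg$ adds and $a$ splits additively across $\point$. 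A direct check then yields the signs $-(-1)^{(k_L-1)+(k_R-1)}=(-1)^{k-1}$ and the exponent $1+(k_L-1)+(k_R-1)+a(\Curve,\Curve')=k-1+a(\Curve,\Curve')$, matching the claimed coefficient; the non-commutative product $\DXlimEHA_{\Curve_0^L}\cdot\DXlimEHA_{\Curve_0^R}$ expands in left-to-right order into $\DXlimEHA_{\Curve'_1}\cdots\DXlimEHA_{\Curve'_k}$. This telescoping is mechanical once the partition is established.

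The base case is $\Curve=\Curve^{\min}$, i.e.\ $\Curve$ primitive \PAL: here $\Curve$ is the unique $\Z$-convex \PAL curve weakly above itself (any higher one would have an interior vertex strictly above $\Curve$, forcing an interior vertex of $\Curve^{\min}$, a contradiction), so the right-hand side collapses to the single term $\DXlimEHA_{\Curve}$. I expect the main obstacle to be the convex-geometry input of the pivot step: showing that an interior vertex of $\Conv(\PtsAboveC)$ strictly above $\Curve$ exists whenever $\Curve$ is not \PAL, that vertex-deletion and splitting-at-a-vertex preserve $\Z$-convexity of the primitive sub-curves, and that every $\Z$-convex \PAL curve above $\Curve$ passes weakly above this vertex. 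These are elementary lattice-polygon statements, but the role of the perturbed corners in excluding non-vertex lattice points on the boundary is essential and must be handled carefully; once they are in place the algebraic induction closes.
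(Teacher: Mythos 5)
Your proof is correct and follows essentially the same route as the paper's: induction using the skein relation \eqref{eq:intro:skein} solved for the lowest of the three curves, with base case an almost linear curve (where the sum collapses to the single term $\DXlimEHA_{\Curve}$). The paper's own argument is only a few lines and leaves all of the convex-geometry bookkeeping — the choice of pivot point, preservation of $\Z$-convexity under the skein move, and the partition of the index set of \PAL curves — implicit, so your write-up is a more detailed rendering of the same induction rather than a different method.
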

\begin{proof}
By~\eqref{eq:intro:prod}, we have $\DXlimEHA_{\Curve'}=\DXlimEHA_{\Curve'_1}\cdot \DXlimEHA_{\Curve'_2}\cdots \DXlimEHA_{\Curve'_k}$. The result is obtained by induction on the number of lattice points between $\Curve$ and the almost linear curve from $(0,0)$ to $(\rectXY)$. For the base case, when $\Curve$ is itself almost linear, the result is vacuously true. The induction step consists of applying~\eqref{eq:skein_\N}.
\end{proof}

We now translate \cref{prop:DASXn_vs_Pgen} via~\eqref{eq:eha_daha_iso} to express the almost linear elements $\DASXlimEHA_{d\bx}$ in terms of the renormalized generators $\Peha_\bx\in\EHA$ introduced in~\eqref{eq:Peha_dfn}.
\begin{corollary}\label{cor:DASXn_vs_Pgen_EHA}
For all $d\geq1$ and primitive $\bx\in\BZp$, the element $\DASXlimEHA_{d\bx}$ is expressed in $(\Peha_{k\bx})_{k\geq0}$ with the same coefficients as 
$(-t)^d e_d\left[\XSym(1-\ti)\right]$
 is expressed in $(p_k)_{k\geq0}$.
\end{corollary}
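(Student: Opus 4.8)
The plan is to read the result off the explicit finite-$\N$ expansion established in the proof of \cref{thm:indep_on_n}, then pass to the limit $\N\to\infty$ and transport everything to $\EHAp$ through the isomorphism~\eqref{eq:eha_daha_iso}. Write $(\rectXY)=d\bx$ with $\bx=(\rectXYp)=\tfrac1d(\rectXY)$ primitive, so that $(\rectXp,\rectYp)=\bx$. For every $\N\geq1$, equation~\eqref{eq:DASXn_vs_Pgen_rectXY} reads
\begin{equation*}
  \DASXn_{d\bx}=\sum_{\la\vdash d}\expcoef^d_\la\,\Pgen_{\la_1\bx}\Pgen_{\la_2\bx}\cdots\Pgen_{\la_r\bx},
\end{equation*}
where the scalars $\expcoef^d_\la$ of~\eqref{eq:DASXn_vs_Pgen} do not depend on $\N$.

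Since $\bx\in\BZp$ is primitive, every $k\bx$ lies in $\BZp$, so each generator $\Pgen_{k\bx}$ has a well-defined image $\Pgenf_{k\bx}\in\SHpf$ in the projective limit. Because the coefficients $\expcoef^d_\la$ are independent of $\N$, \cref{cor:limit_almost_linear} lets me pass to the limit term by term, giving $\DASXnf_{d\bx}=\sum_{\la\vdash d}\expcoef^d_\la\,\Pgenf_{\la_1\bx}\cdots\Pgenf_{\la_r\bx}$ in $\SHpf$. Applying the $\SL_2(\Z)$-equivariant isomorphism $\SHpf\cong\EHAp$ of~\eqref{eq:eha_daha_iso}, under which $\Pgenf_{k\bx}$ corresponds to $\Peha_{k\bx}$ by the definition~\eqref{eq:Peha_dfn}, I obtain in $\EHAp$
\begin{equation*}
  \DASXlimEHA_{d\bx}=\sum_{\la\vdash d}\expcoef^d_\la\,\Peha_{\la_1\bx}\Peha_{\la_2\bx}\cdots\Peha_{\la_r\bx}.
\end{equation*}
These products are unambiguous: the lattice points $k\bx$ all lie on the line through $0$ and $\bx$, so the first EHA relation yields $[\ehagen_{k\bx},\ehagen_{k'\bx}]=0$ and hence $[\Peha_{k\bx},\Peha_{k'\bx}]=0$, exactly mirroring the commutativity of the power sums $p_k$.

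It then remains to match coefficients. By \cref{prop:DASXn_vs_Pgen} (which rests on \cref{prop:h_d_p_la_EC2}), the same scalars $\expcoef^d_\la$ are the coefficients in the power-sum expansion $(-t)^d e_d[\XSym(1-\ti)]=\sum_{\la\vdash d}\expcoef^d_\la\,p_{\la_1}p_{\la_2}\cdots p_{\la_r}$. Comparing this with the previous display, and using the commutativity just noted so that the correspondence $p_k\leftrightarrow\Peha_{k\bx}$ becomes a well-defined algebra statement, shows that $\DASXlimEHA_{d\bx}$ is expressed in $(\Peha_{k\bx})_{k\geq0}$ with precisely the coefficients expressing $(-t)^d e_d[\XSym(1-\ti)]$ in $(p_k)_{k\geq0}$, as claimed. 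There is no serious obstacle here; the one point to handle with care is that the $\N\to\infty$ limit is compatible with fixing the primitive direction $\bx$ and with the identification of generators, and this is ensured precisely by the $\N$-independence of the $\expcoef^d_\la$ together with the $\SL_2(\Z)$-equivariance of~\eqref{eq:eha_daha_iso}.
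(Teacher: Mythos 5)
Your proposal is correct and follows the same route the paper intends (the corollary is stated there as a direct translation of \cref{prop:DASXn_vs_Pgen} and~\eqref{eq:DASXn_vs_Pgen_rectXY} through the isomorphism~\eqref{eq:eha_daha_iso}, with no separate proof written out): you pass the $\N$-independent expansion to the limit and identify $\Pgenf_{k\bx}$ with $\Peha_{k\bx}$. Your added remarks on the commutativity of the $\Peha_{k\bx}$ for collinear lattice points and on the $\SL_2(\Z)$-equivariance of the identification are accurate and fill in the details the paper leaves implicit.
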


\section{Skein algebras and modules}\label{sec:skein}
We review the background on the skein-theoretic approach of~\cite{MS17,MS21} and use it to prove \cref{thm:intro_curve_to_DASHA}.

\subsection{Skein of a surface}\label{sec:skein_of_a_surface}

Let $\Surf$ be a surface. A \emph{framed (oriented) link} $L$ is an embedding of a disjoint union of several ribbons $S^1\times\Interval$ into $\Surf\times\Interval$, where each component of $L$ is the image of $S^1\times\{1/2\}$. If an unframed oriented link $L$ is drawn in the plane, we often equip it with the \emph{blackboard framing}, so that each ribbon is obtained by taking a thin neighborhood of the corresponding component of $L$ in the plane.

\begin{definition}
  The \emph{skein algebra} $\Sk(\Surf)$ is the algebra of $\Cvq$-linear combinations of isotopy classes of framed oriented links inside $\Surf\times \Interval$ subject to the relations
  \begin{equation}\label{eq:Sk_dfn}
    \LPlusfr-\LMinusfr=\left(\qqi-\qq\right) \LZerofr \quad\text{and}\quad \Lcclwfr=v^{-1} \Lstraightfr.
\end{equation}
\end{definition}

Since the relation~\eqref{eq:Sk_dfn} preserves homology classes of curves, we see that $\Sk(\Surf)$ is $H_1(\Surf)$-graded.

We will be interested in the surfaces $\Surf\in\{\Tor-\Disk,\Tor,\Ann,\Disk\}$, where $\Tor=\R^2/\Z^2$ is a torus, $\Ann=(\R/\Z)\times \Interval$ is an annulus, and $\Disk$ is a two-dimensional disk.

\subsubsection{Disk}
The \emph{\FLY polynomial} $\Pcal(L)=\PHOML(a,q)$ of an (oriented, unframed) link $L$ in a disk is defined by the skein recurrence
\begin{equation}\label{eq:FLY_dfn}
  a\Pcal(\LPlus)-a^{-1}\Pcal(\LMinus)=\left(\qqi-\qq\right) \Pcal(\LZero) \quad\text{and}\quad \Pcal(\unkn)=\frac{a-a^{-1}}{\qqi-\qq}.
\end{equation}
Here, $\unkn$ denotes the unknot, and this normalization is chosen so that the value of $\Pcal$ on the empty link is equal to $1$.

In order to relate the variable $v$ in~\eqref{eq:Sk_dfn} to the variable $a$ in~\eqref{eq:FLY_dfn}, we apply~\cite[Theorem~6.1]{MS17}. Given a framed link $L$ in $\Disk$, let $L'$ be a (link diagram of a framed) link isotopic to $L$ such that the framing of $L'$ coincides with the blackboard framing. Then define the \emph{writhe} $\wop(L)\in\Z$  of $L$ to be the number of positive crossings in $L'$ minus the number of negative crossings in $L'$. Applying~\cite[Theorem~6.1]{MS17} and using the fact that~\eqref{eq:FLY_dfn} gives rise to a well-defined link invariant, we find that the identity
\begin{equation}\label{eq:L=HOMFLY}
L|_{v=a^{-1}} = a^{\wop(L)} \PHOML(a,q)
\end{equation}
holds inside $\Sk(\Disk)$, where $L$ is an arbitrary framed link and the right-hand side is considered as a multiple of the empty link. 

\subsubsection{Annulus}\label{sec:skein_annulus}
The \emph{positive part} $\Skp(\Ann)\subset\Sk(\Ann)$ consists of linear combinations of links whose lift to the universal cover $\R\times \Interval$ only intersects vertical grid lines $x=k$, $k\in \Z$ from left to right.

The algebra $\Skp(\Ann)$ is easily seen to be commutative. One may view it as the algebra of linear combinations of closures of $\N$-strand braids, where $\N=0,1,2,\dots$\;.
\begin{theorem}[\cite{Turaev,AistonThesis,AiMo}]\label{thm:Tur}
  The algebra $\Skp(\Ann)$ is isomorphic to the algebra $\Laq$ of symmetric functions over $\Cq$.
\end{theorem}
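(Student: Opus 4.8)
The plan is to upgrade a dimension count in each winding degree to a genuine algebra isomorphism by exhibiting explicit generators. First I would record that $\Skp(\Ann)$ is graded by winding number, $\Skp(\Ann)=\bigoplus_{n\ge0}\Skp(\Ann)_n$, since the relation~\eqref{eq:Sk_dfn} preserves homology classes and multiplication (stacking in the $\Interval$ direction) adds winding numbers. The degree-$0$ piece is the ground ring $\Cq$, and the whole algebra is commutative, so it remains to understand each graded piece $\Skp(\Ann)_n$ together with the products between the pieces.

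The key structural input is the identification $\Skp(\Ann)_n\cong H_n/[H_n,H_n]$, the cocenter of the type-$A$ Hecke algebra $H_n$ on $n$ strands. Indeed, by the annular analogue of Alexander's theorem every positive link of winding number $n$ is isotopic to the closure in $\Ann\times\Interval$ of a positive $n$-braid, so the closure map is surjective onto $\Skp(\Ann)_n$; expanding~\eqref{eq:Sk_dfn} turns the braid group algebra into $H_n$ via the quadratic relation $\sigma_i^2=(\qqi-\qq)\sigma_i+1$ (matching~\eqref{eq:T_i_relns} after $t\mapsto q$), while the framing relation normalizes the $v$-dependence so that everything is defined over $\Cq$. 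The annular Markov theorem then says two braids have isotopic annular closures exactly when they are conjugate in the braid group, with no stabilization permitted because the winding number is fixed; hence the kernel of the closure map is precisely $[H_n,H_n]$. Since the cocenter of $H_n$ is free over $\Cq$ of rank equal to the number of conjugacy classes of the symmetric group, i.e.\ the number of partitions of $n$, we conclude that $\dim_{\Cq}\Skp(\Ann)_n$ equals $\dim_{\Cq}(\Laq)_n$.

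To pin down an actual isomorphism I would use Morton's power-sum elements: let $P_m\in\Skp(\Ann)_m$ be the closure of the $m$-braid $\sigma_{m-1}\sigma_{m-2}\cdots\sigma_1$, a single curve of winding number $m$. Because $\Laq=\Cq[p_1,p_2,\dots]$ is a free polynomial algebra on the power sums, the assignment $p_m\mapsto P_m$ extends to a graded $\Cq$-algebra homomorphism $\Phi\colon\Laq\to\Skp(\Ann)$. As each graded piece is finite-dimensional of equal dimension on both sides, it suffices to prove that $\Phi$ is surjective in each degree, i.e.\ that the products $P_\lambda:=P_{\lambda_1}\cdots P_{\lambda_r}$ for $\lambda\vdash n$ span $\Skp(\Ann)_n$.

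The hard part is exactly this span/independence statement, and it is the genuine content of the theorem of Turaev and Aiston--Morton. The cleanest route is to compare $\{P_\lambda\}_{\lambda\vdash n}$ with the standard cocenter basis of $H_n/[H_n,H_n]$ given by the classes of $T_w$ for $w$ a minimal-length representative of the conjugacy class of cycle type $\lambda$: one shows that, under the identification of the previous paragraph, $P_\lambda$ equals the class of the corresponding permutation braid plus a correction that is triangular with respect to a suitable partial order (e.g.\ dominance) on partitions, with invertible leading coefficient. This triangularity---where the Hecke-algebra trace/character theory genuinely enters---shows $\{P_\lambda\}$ is a basis, hence $\Phi$ is an isomorphism sending $p_m$ to $P_m$. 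Alternatively, one can bypass power sums and define Schur-type elements $Q_\lambda$ as closures of the Gyoja--Aiston idempotents of $H_n$, proving directly that they multiply by the Littlewood--Richardson rule so that $Q_\lambda\mapsto s_\lambda$; the difficulty then shifts to computing these structure constants, which is the substance of~\cite{AiMo}.
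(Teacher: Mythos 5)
First, note that the paper does not prove this statement: it is imported wholesale from \cite{Turaev,AistonThesis,AiMo}, and the text only records the resulting isomorphism explicitly, once via the power-sum elements $\PMS_\N$ of~\eqref{eq:Skq_Laq_iso_Z_A} and~\eqref{eq:Tur_iso1}, and once via the Hecke-algebra trace~\eqref{eq:Tr_Laq_dfn} together with~\eqref{eq:Skp_Ann_to_Hecke} (see \cref{prop:GoWe_coincide}). So the only meaningful comparison is between your sketch and the arguments in the cited sources, and there your sketch has one genuine gap. You deduce ``the kernel of the closure map is precisely $[\Heckeq_\N,\Heckeq_\N]$'' from the annular Markov theorem. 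What conjugation-invariance of the annular closure actually gives is that the closure map \emph{factors through} the cocenter $\Heckeqcomm_\N$, i.e.\ the kernel \emph{contains} the commutator subspace; and what the Markov theorem gives is that non-conjugate braids have non-isotopic closures. Neither implies that the induced map $\Heckeqcomm_\N\to\Skp(\Ann)_\N$ is injective: the skein module is a quotient of the free module on isotopy classes by the skein and framing relations, so pairwise distinct closures can perfectly well become linearly dependent there. Establishing that no such collapse occurs --- equivalently, that $\dim_{\Cq}\Skp(\Ann)_\N$ really equals $p(\N)$ rather than something smaller --- is precisely the content of Turaev's freeness theorem, and it is the step your argument silently assumes before the dimension count that drives the rest of the proof. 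You correctly flag that ``the hard part'' is an independence statement, but you locate it in the triangularity of the $P_\la$ against a cocenter basis (which, granted the cocenter identification, is routine for generic $q$); the real difficulty sits one step earlier.

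A secondary, factual point: the elements you call ``Morton's power-sum elements,'' namely the closures of the positive Coxeter braids $\sigma_{m-1}\cdots\sigma_1$, are Turaev's free polynomial generators, but they do \emph{not} map to $p_m$ under the paper's identification. The power sum $p_m$ corresponds to $\PMS_m$ in~\eqref{eq:Skq_Laq_iso_Z_A}, a normalized sum of the $m$ mixed-crossing closures $A_{i,m-1-i}$, whereas the single positive Coxeter closure corresponds (up to scalar) to $e_m[\XSym(\qq-\qqi)]$, as recorded in the paper via \cite[Proposition~2.6]{GoWe}. Since $e_m[\XSym(\qq-\qqi)]=\frac{(-1)^{m-1}}{m}(q^{m/2}-q^{-m/2})p_m+(\text{products of lower }p_k)$, the two families are related by a unitriangular-with-invertible-leading-coefficient change of generators, so your map $p_m\mapsto P_m$ would still be an isomorphism and the bare statement survives; but it is not the isomorphism~\eqref{eq:Tur_iso1} used throughout the paper, and the mislabeling would propagate if you tried to match your construction against \cref{prop:GoWe_coincide} or the computations in Section~6.
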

We describe this isomorphism explicitly in two different ways.
First, for each $\N=0,1,2,\dots$, let
\begin{equation}\label{eq:Skq_Laq_iso_Z_A}
  \PMS_\N:=\frac{\qq-\qqi}{q^{\N/2}-q^{-\N/2}}\sum_{i=0}^{\N-1} A_{i,\N-1-i},
\end{equation}
where $A_{i,\N-1-i}$ is the annular closure of the braid $\sigma_1\cdots\sigma_i\cdot \sigma_{i+1}^{-1}\cdots \sigma_{\N-1}^{-1}$; see the figure above~\cite[Remark~2.4]{MS17}.
\begin{notation}\label{not:sigma_positive}
In our figures, the generators $\sigma_i$ of $\BraidGroup_\N$ are depicted with \emph{positive} crossings $\LPlus$; cf. \cref{not:MS21}. The homomorphism $\BraidGroup_\N\to \Heckeq_\N$ sends $\sigma_i\mapsto T_i^{-1}$. 
\end{notation}
\begin{remark}
The braids $\beta_{i,\N-1-i}:=\sigma_1\cdots\sigma_i\cdot \sigma_{i+1}^{-1}\cdots \sigma_{\N-1}^{-1}$ and $\beta'_{\N-1-i,i}:=\sigma_1^{-1}\cdots\sigma_{\N-1-i}^{-1}\cdot \sigma_{\N-i}\cdots \sigma_{\N-1}$ are conjugate, and thus the element $\PMS_\N$ does not depend on whether one takes $A_{i,\N-1-i}$ to be the annular closure of $\beta_{i,\N-1-i}$ or of $\beta'_{\N-1-i,i}$.
\end{remark}
By definition, the isomorphism of \cref{thm:Tur} sends
\begin{equation}\label{eq:Tur_iso1}
  \PMS_\N\mapsto p_\N \quad\text{for $\N=0,1,\dots$\;.}
\end{equation}
(For $\N=0$, we set $\PMS_0:=1\in\Skp(\Ann)$ and $p_0:=1\in\Laq$.) 

Alternatively, let $\Heckeq_\N$ be the \emph{Hecke algebra} generated by $T_1,\dots,T_{\N-1}$ modulo relations
\begin{equation}\label{eq:Heckeq_dfn}
  T_iT_{i+1}T_i=T_{i+1}T_iT_{i+1} \quad\text{and}\quad (T_i-\qq)(T_i+\qqi)=0.
\end{equation}
Note that the second relation is equivalent to
$T_i^{-1}-T_i=(\qqi-\qq)$; cf.~\eqref{eq:Sk_dfn} and \cref{not:T_i}. Upon setting $q=\ti$, we recover the relation~\eqref{eq:T_i_relns}.

The algebra $\Sk(\Ann)$ is $H_1(\Ann)=\Z$-graded, and the graded piece of $\Skp(\Ann)$ of degree $\rectX\geq0$ is isomorphic (as a vector space over $\Cvq$) to $\Heckeqcomm_\rectX\otimes \Cv$, where $\Heckeqcomm_\rectX:=\Heckeq_\rectX/[\Heckeq_\rectX,\Heckeq_\rectX]$. We thus have an identification
\begin{equation}\label{eq:Skp_Ann_to_Hecke}
  \Skp(\Ann)\cong\bigoplus_{\rectX=0}^\infty \Heckeqcomm_\rectX\otimes \Cv.
\end{equation}

The irreducible representations $V_\la$ of $\Heckeq_\rectX$ are well known~\cite{GePf} to be indexed by Young diagrams $\la$ with $\rectX$ boxes. For instance, the \emph{trivial representation} $V_{(\rectX)}$ (resp., the \emph{sign representation} $V_{(1^\rectX)}$) is the one-dimensional representation where each $T_i$ acts by multiplication by $\qq$ (resp., by $-\qqi$). 

Define a map
\begin{equation}\label{eq:Tr_Laq_dfn}
  \Tr_{\Laq}:\Heckeq_\rectX\to\Laq,\quad x\mapsto \sum_{\la\vdash \rectX} \Tr(x;V_\la) s_\la,
\end{equation}
where $s_\la$ is a Schur function~\cite[Section~7.10]{EC2}. Clearly, this map factors through the quotient map $\Heckeq_\rectX\to \Heckeqcomm_\rectX$.

The map $\Tr_{\Laq}$ composed with~\eqref{eq:Skp_Ann_to_Hecke} gives another description of the isomorphism of \cref{thm:Tur}.
\begin{proposition}[{\cite[Corollary~2.22]{GoWe}}]\label{prop:GoWe_coincide}
The isomorphism~\eqref{eq:Tur_iso1} coincides with the composition of~\eqref{eq:Skp_Ann_to_Hecke} and ~\eqref{eq:Tr_Laq_dfn}.\footnote{Our conventions for $\Tr_{\Laq}$ differ from those of~\cite{GoWe} by an application of the $\omega$ automorphism, which explains the extra $(-1)^{n-1}$ sign in \cite[Corollary~2.22]{GoWe}.}
\end{proposition}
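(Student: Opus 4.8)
The plan is to check that the two algebra isomorphisms $\Skp(\Ann)\xrasim\Laq$ agree on a generating set. The first map~\eqref{eq:Tur_iso1} is an algebra isomorphism by \cref{thm:Tur} and sends $\PMS_\N\mapsto p_\N$. Since $p_1,p_2,\dots$ generate $\Laq$ freely over $\Cq$, the elements $\PMS_\N$ generate $\Skp(\Ann)$ as a $\Cq$-algebra; hence any two algebra homomorphisms out of $\Skp(\Ann)$ coincide once they agree on all $\PMS_\N$. So it suffices to show that the composite of~\eqref{eq:Skp_Ann_to_Hecke} and~\eqref{eq:Tr_Laq_dfn} is an algebra homomorphism sending $\PMS_\N\mapsto p_\N$.

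For multiplicativity, under~\eqref{eq:Skp_Ann_to_Hecke} the stacking product on $\Skp(\Ann)$ corresponds to the induction product on the tower $\bigoplus_{\rectX\ge0}\Heckeqcomm_\rectX$, and $\Tr_{\Laq}$ is, up to the $\omega$-twist recorded in the footnote, the Frobenius characteristic map, which carries this induction product to the product in $\Laq$. Thus the composite is an algebra homomorphism, and the problem reduces to computing its value on $\PMS_\N$.

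The heart of the matter is this value. By \cref{not:sigma_positive}, the braid $\sigma_1\cdots\sigma_i\sigma_{i+1}^{-1}\cdots\sigma_{\N-1}^{-1}$ maps to $T_1^{-1}\cdots T_i^{-1}T_{i+1}\cdots T_{\N-1}\in\Heckeq_\N$, so the class of $\PMS_\N$ in $\Heckeqcomm_\N$ is
\begin{equation*}
  \frac{\qq-\qqi}{q^{\N/2}-q^{-\N/2}}\sum_{i=0}^{\N-1}\bigl[T_1^{-1}\cdots T_i^{-1}T_{i+1}\cdots T_{\N-1}\bigr].
\end{equation*}
By~\eqref{eq:Tr_Laq_dfn} and the Frobenius expansion $p_\N=\sum_{\la\vdash\N}\chi^\la_{(\N)}s_\la$, the desired relation $\PMS_\N\mapsto p_\N$ reduces to the single-cycle character identity
\begin{equation*}
  \Tr\bigl([\PMS_\N];V_\la\bigr)=\chi^\la_{(\N)}\qquad\text{for all }\la\vdash\N,
\end{equation*}
where $\chi^\la_{(\N)}$ is the value of the symmetric-group character $\chi^\la$ on an $\N$-cycle.

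I expect this trace identity to be the main obstacle. A degeneration check confirms the expected value: as $q\to1$ the Hecke algebra degenerates to $\C[\Sn]$, each $T_j^{\pm1}$ to the transposition $s_j$, every braid $\sigma_1\cdots\sigma_i\sigma_{i+1}^{-1}\cdots\sigma_{\N-1}^{-1}$ to the single $\N$-cycle $s_1\cdots s_{\N-1}$, while the scalar $\tfrac{\qq-\qqi}{q^{\N/2}-q^{-\N/2}}$ tends to $1/\N$; thus $[\PMS_\N]$ degenerates to the class of an $\N$-cycle, whose character on the Specht module $S^\la$ is exactly $\chi^\la_{(\N)}$. Promoting this to arbitrary $q$ — that is, showing the normalized trace is independent of $q$ and hence equals its value at $q=1$ — is the technical core, and I would obtain it either by a direct $q$-independence argument for these quantum power-sum elements or, most economically, by transporting the cited computation~\cite[Corollary~2.22]{GoWe} through the $\omega$ automorphism, which accounts for the $(-1)^{\N-1}$ sign noted in the footnote.
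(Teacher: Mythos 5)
First, note that the paper does not actually prove this statement: it is quoted verbatim as \cite[Corollary~2.22]{GoWe}, with the footnote serving only to reconcile conventions (the $\omega$-twist and the resulting $(-1)^{\N-1}$ sign). So there is no in-paper argument to compare against, and what matters is whether your outline is self-contained. Its structure is sound: since \eqref{eq:Tur_iso1} is an isomorphism and the $p_\N$ freely generate $\Laq$, the $\PMS_\N$ generate $\Skp(\Ann)$; and once the composite of \eqref{eq:Skp_Ann_to_Hecke} and \eqref{eq:Tr_Laq_dfn} is known to be an algebra map (stacking in nested annuli corresponds to the induction product on $\bigoplus_\rectX\Heckeqcomm_\rectX$, which the Frobenius-type map \eqref{eq:Tr_Laq_dfn} carries to multiplication in $\Laq$), everything reduces to the single trace identity $\Tr([\PMS_\N];V_\la)=\chi^\la_{(\N)}$.

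That identity, however, is exactly the content of the cited corollary, and your argument for it has a genuine hole. The $q\to1$ degeneration is only a consistency check: it establishes the identity at $q=1$ and says nothing at generic $q$, since normalized traces of Hecke-algebra elements are emphatically not $q$-independent in general --- already $\Tr(T_1;V_{(2)})=\qq$ depends on $q$, and the $q$-Murnaghan--Nakayama rule shows that the traces of the individual summands $T_1^{-1}\cdots T_i^{-1}T_{i+1}\cdots T_{\N-1}$ do as well. The assertion that the particular normalized combination defining $\PMS_\N$ has $q$-independent character values is precisely the nontrivial statement to be proved, so ``show the normalized trace is independent of $q$ and evaluate at $q=1$'' is not an argument but a restatement of the goal, and the promised ``direct $q$-independence argument'' is never supplied. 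Your fallback --- transporting \cite[Corollary~2.22]{GoWe} through $\omega$ --- does close the gap, but then your proof collapses to the paper's own one-line citation, with the generator-and-multiplicativity framing as (correct but standard) scaffolding around it.
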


\begin{example}
For $\N=2$, the only two irreducible representations of the Hecke algebra $\Heckeq_2$ are $V_2$ (trivial) and $V_{11}$ (sign). Thus, 
\begin{equation}\label{eq:Tr_Laq(T1)}
  \Tr_{\Laq}(T_1^{r})=(-1)^r q^{-\frac r2} s_{11} + q^{\frac r2} s_2 \quad\text{for all $r\in\Z$.}
\end{equation}
On the other hand,~\eqref{eq:Skq_Laq_iso_Z_A} gives $\PMS_2=\frac1{\qq+\qqi} (T_1+T_1^{-1})$, under the identification~\eqref{eq:Skp_Ann_to_Hecke}. Thus, 
\begin{equation*}%
  \Tr_{\Laq}(\PMS_2)=-s_{11}+s_2 = p_2,
\end{equation*}
in agreement with \cref{prop:GoWe_coincide}.
\end{example}

We can also relate $\Skp(\Ann)$ to $\Skp(\Disk)$ as follows. Observe that any embedding $\Surf\to\Surf'$ of surfaces gives rise to a homomorphism
\begin{equation}\label{eq:Sk_emb_Sk}
  \Sk(\Surf)\to\Sk(\Surf')
\end{equation}
 of the associated skein algebras. In particular, identifying $\Ann$ with $\Disk - \eps\Disk$ for some $0<\eps<1$, we obtain an algebra homomorphism
\begin{equation}\label{eq:Skp_Ann_to_Disk}
  \Skp(\Ann)\to\Sk(\Disk).
\end{equation}

\subsubsection{Torus}
The algebra $\Sk(\Tor)$ is generated by certain elements $\WMSTor_\bx$, $\bx\in\BZast$, defined as follows. (See~\cite{MS17} for a beautiful description of relations between these elements.) When $\bx$ is primitive, we let $\WMSTor_\bx$ correspond to the embedded curve of homology class $\bx$. Suppose now that $\bx=k\bx_0$, where $k\geq1$ and $\bx_0$ is primitive. Consider a primitive curve $\Curve_{\bx_0}$ of homology class $\bx_0$. For $\N\geq1$ and a braid $\beta\in\BraidGroup_\N$, we may consider a \emph{$\beta$-decoration} of $\Curve_{\bx_0}$, which is obtained by taking the image of the annular closure of $\beta$ inside $\Ann$ under the homeomorphism from $\Ann$ to a small neighborhood of $\Curve_{\bx_0}$. We extend this construction to linear combinations of braids, and define $\WMSTor_\bx$ as the $\PMS_k$-decoration of $\Curve_{\bx_0}$.

The algebra $\Sk(\Tor)$ is $H_1(\Tor)=\Z^2$-graded. We let $\vrap:\Sk(\Tor)\xrasim\Sk(\Tor)$ be the graded automorphism multiplying each $(i,j)$-graded component by $v^j$. 

\begin{figure}
\begin{tabular}{cccc}
  \includegraphics[width=0.2\textwidth]{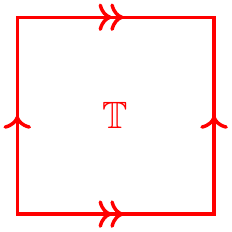}
&
  \includegraphics[width=0.2\textwidth]{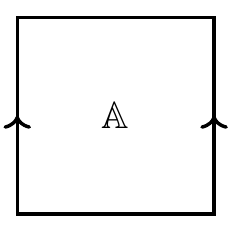}
&
\begin{tikzpicture}[baseline=(Z.base)]
\coordinate(Z) at (0,-1.5);
\node[scale=2](A) at (0,0){$\to$};
\end{tikzpicture}
&
\begin{tikzpicture}[baseline=(Z.base)]
\coordinate(Z) at (0,-1.4);
\node(A) at (0,0){\includegraphics[width=0.3\textwidth]{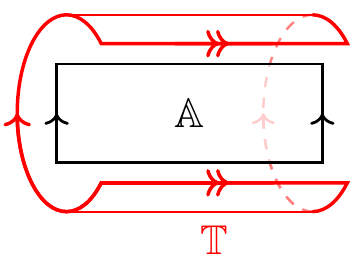}};
\end{tikzpicture}
\end{tabular}
  \caption{\label{fig:wrap} Wrapping the torus around the annulus; see \cref{dfn:Skp_Tor_act_on_Ann}.}
\end{figure}

\begin{definition}\label{dfn:Skp_Tor_act_on_Ann}
The action of $\Sk(\Tor)$ on $\Sk(\Ann)$ is obtained by first applying $\vrap$ and then ``wrapping $\Tor$ around $\Ann$'' as shown in \cref{fig:wrap}. More precisely, $\Sk(\Ann)$ consists of framed links inside $\Ann\times\Interval$, and $\Ann\times\Interval$ is homeomorphic to the solid torus $S^1\times \Disk$. Identifying $\Tor$ with the boundary of $S^1\times \Disk$, we obtain the desired action. See \cref{dfn:TC} for a more precise description.
\end{definition}

The \emph{positive part}  $\Skp(\Tor)\subset\Sk(\Tor)$ consists of linear combinations of elements $\WMSTor_\bx$ for $\bx\in\BZp$. In particular, the action of $\Sk(\Tor)$ on $\Sk(\Ann)$ restricts to an action of $\Skp(\Tor)$ on $\Skp(\Ann)$.

\subsubsection{Punctured torus}\label{sec:skein_PTor}

Let $\Tor-\Disk$ be the punctured torus, obtained from $\Tor$ by removing a small disk $\Disk$. We consider the algebra $\Skti(\PTor)$. Let $\bx\in\BZast$, and suppose that $\bx=k\bx_0$ with $\bx_0$ primitive and $k\geq1$. Let $\Curve_{\bx_0}$ be a primitive curve of homology class $\bx_0$ avoiding the puncture $\Disk$. As before, we let $\WMSPTorti_\bx\in\Skti(\Tor-\Disk)$ be obtained by decorating the curve $\Curve_{\bx_0}$ by the element $\PMSti_k$.  

Morton--Samuelson~\cite[Definition~5.6]{MS21} define the positive part $\SkptiMS(\Tor-\Disk)\subset\Skti(\Tor-\Disk)$ to be the subalgebra generated by $\WMSPTorti_\bx$, $\bx\in\BZp$. Recall the algebra $\SkPTor$ defined in~\eqref{eq:intro:SkPTor}.
We let $\SkpPTor\subset\SkPTor$ be the subalgebra generated by $\WC^{\PTor}$ for all monotone curves $\Curve$.
 One can check that $\SkptiMS(\Tor-\Disk)\otimes \C(\qq)\subset\SkpPTor$, but the containment is strict in general.\footnote{We that P.~Samuelson for pointing out this observation to us.}

Applying~\eqref{eq:Sk_emb_Sk} to the inclusion $\PTor\hookrightarrow \Tor$ and substituting $t=q^{-1}$, we get a homomorphism 
\begin{equation}\label{eq:Skp_PTor_to_Tor}
  \SkpPTor\to\Skpx_{\qi}(\Tor).
\end{equation}

\subsection{Braid and tangle skein algebras}
We recall the construction of~\cite[Section~3.1]{MS21} of the braid skein algebra $\BSkn(\Tor,\ast)$ of a torus $\Tor$. Let $\N\geq1$ and $0<\eps<1$. Place $\N$ points $\brpt_i:=\frac\eps{\N+1}(i,i)$, $i=1,2,\dots,\N$ in $\Tor$, and let $\brpt_\ast:=\left(\eps,\eps\right)$. The \emph{base string} is the line segment $\brpt_\ast\times\I \subset \Tor\times\I$. By definition, a \emph{braid} is a collection of $\N$ curves in $\Tor\times\I$ connecting $\{(\brpt_1,0),\dots,(\brpt_\N,0)\}$ to $\{(\brpt_1,1),\dots,(\brpt_\N,1)\}$ (in some order), so that the $\I$-coordinate of each curve is monotone increasing. The curves are required to be disjoint from each other and from the base string. See \cref{fig:toric-braids} for examples.

Let $\BSkn(\Tor,\ast)$ consist of $\C(\qq,\tt)$-linear combinations of such braids subject to the local relations
\begin{equation}\label{eq:FLY_skein_t}
  \LPlus-\LMinus=\left(\tt-\tti\right) \LZero \quad\text{and}\quad \BaseStringA=q^{-1}\BaseStringB,
\end{equation}
where $\ast$ denotes the base string. The first relation coincides with~\eqref{eq:intro_FLY_skein}. %
The multiplication comes from stacking in the $\I$ direction.

\begin{figure}
\def\wid{0.2}
  \setlength{\tabcolsep}{2pt}
\begin{tabular}{cc|cc}
\multicolumn{2}{c|}{\includegraphics[width=\wid\textwidth]{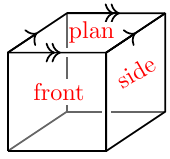}}
&
  \includegraphics[width=\wid\textwidth]{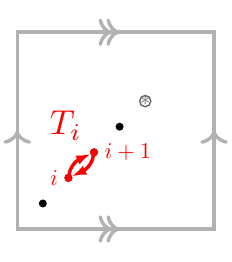}
&
  \includegraphics[width=\wid\textwidth]{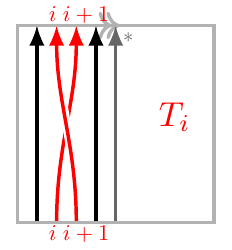}
\\
\multicolumn{2}{c|}{(a) plan/front/side view of $\Tor\times\Interval$} & (b) plan & (b) front/side \\ 
\hline
  \includegraphics[width=\wid\textwidth]{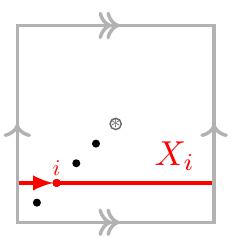}
&
  \includegraphics[width=\wid\textwidth]{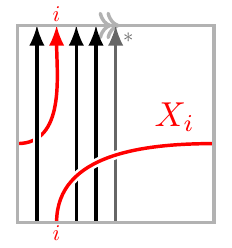}
&
  \includegraphics[width=\wid\textwidth]{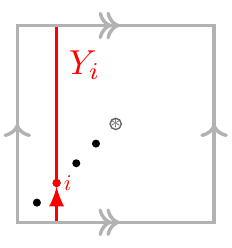}
&
  \includegraphics[width=\wid\textwidth]{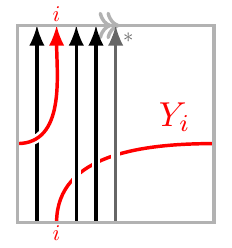}
\\
 (c) plan & (c) front & (d) plan & (d) side 
\end{tabular}
  \caption{\label{fig:toric-braids}The toric braids $\Tbraid_i$, $\Xbraid_i$, $\Ybraid_i$ from \cref{dfn:XYT_braids}.}
\end{figure}
\begin{definition}\label{dfn:XYT_braids}
We have braids $\Tbraid_i$ for $i=1,2,\dots,\N-1$ and $\Xbraid_i,\Ybraid_i$ for $i=1,2,\dots,\N$ shown in \cref{fig:toric-braids}:
\begin{itemize}
\item $\Tbraid_i$ rotates the points $\brpt_i$ and $\brpt_{i+1}$ clockwise; %
\item $\Xbraid_i$ moves the point $\brpt_i$ to the right by vector $(1,0)$;
\item $\Ybraid_i$ moves the point $\brpt_i$ up by vector $(0,1)$.
\end{itemize}
\end{definition}

\begin{notation}\label{not:MS21}
Our notation is obtained from that of~\cite{MS21} by substituting $(x_i,y_i,\sigma_i,c,s)\mapsto(\Xbraid_i,\Ybraid_i, \Tbraid_i^{-1},\qqi,\tt)$. In particular, their generators $\sigma_i$ are represented by positive crossings, in agreement with \cref{not:T_i}.
\end{notation}

\begin{theorem}[{\cite[Theorem~3.7]{MS21}}]
  The map $(\Tbraid_i,\Xbraid_i,\Ybraid_i)\mapsto (T_i,X_i,Y_i)$ induces an isomorphism 
\begin{equation}\label{eq:BSk_to_DAHA}
  \BSkn(\Tor,\ast)\xrasim \DAHA_\N.
\end{equation}
\end{theorem}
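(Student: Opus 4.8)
The plan is to construct the inverse homomorphism $\Phi\colon\DAHA_\N\to\BSkn(\Tor,\ast)$ sending $(T_i,X_i,Y_i)\mapsto(\Tbraid_i,\Xbraid_i,\Ybraid_i)$ and to prove that it is an isomorphism; the asserted map is then $\Phi^{-1}$. First I would record the generation statement: $\BSkn(\Tor,\ast)$ is generated as an algebra by the elementary braids $\Tbraid_i^{\pm1}$ ($1\le i\le\N-1$) together with $\Xbraid_i^{\pm1},\Ybraid_i^{\pm1}$ ($1\le i\le\N$). Indeed, any braid in $\Tor\times\Interval$ disjoint from the base string is isotopic to a product of simple crossings together with the two families of winding moves that push a single marked point once around the two generators of $H_1(\Tor)$, since these elements generate the surface braid groupoid of the torus relative to $\brpt_\ast$.

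Second, I would check that $\Phi$ is well defined, i.e.\ that the toric braids satisfy every defining relation of $\DAHA_\N$. Most are routine isotopies or Reidemeister moves: the braid relations, the commutations $\Xbraid_j\Xbraid_k=\Xbraid_k\Xbraid_j$ and $\Ybraid_j\Ybraid_k=\Ybraid_k\Ybraid_j$, the distant commutations with $\Tbraid_i$, and the conjugation relations $\Tbraid_i\Xbraid_i\Tbraid_i=\Xbraid_{i+1}$ and $\Tbraid_i^{-1}\Ybraid_i\Tbraid_i^{-1}=\Ybraid_{i+1}$. The quadratic Hecke relation \eqref{eq:T_i_relns} is exactly the \FLY skein relation \eqref{eq:FLY_skein_t} applied to a single crossing (cf.\ \cref{not:T_i}). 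The two genuinely topological relations are those encoding the base string: the identity $Y_1X_1X_2\cdots X_\N=qX_1\cdots X_\N Y_1$ arises from sliding the vertically winding strand once past the base string, which by the base-string relation $\BaseStringA=q^{-1}\BaseStringB$ of \eqref{eq:FLY_skein_t} produces the scalar $q$; and $X_1^{-1}Y_2=Y_2X_1^{-1}\Tbraid_1^{-2}$ records the crossing change incurred when the first two strands wind in opposite directions. These diagrammatic verifications yield a homomorphism $\Phi$, which is surjective by the generation statement.

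Third, I would establish a Poincar\'e--Birkhoff--Witt normal form for $\BSkn(\Tor,\ast)$: using the relations just verified, every product of generators rewrites as a $\C(\qq,\tt)$-linear combination of ordered monomials $\Xbraid^{\abf}\,\Tbraid_w\,\Ybraid^{\bbf}$ with $\abf,\bbf\in\Z^\N$ and $w\in\Sn$, obtained by moving all horizontal windings to the left, all vertical windings to the right, and resolving each remaining full crossing through the skein relation into the finite Hecke part $\Tbraid_w$. One must verify that this rewriting terminates, e.g.\ by induction on a complexity statistic counting crossings and total winding number; it then follows that the ordered monomials span $\BSkn(\Tor,\ast)$.

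Finally, recall that $\DAHA_\N$ has the PBW basis $\{X^{\abf}T_wY^{\bbf}\mid\abf,\bbf\in\Z^\N,\ w\in\Sn\}$ \cite{Cherednik_book}. Since $\Phi$ carries this basis onto the spanning set of the previous step, and since a spanning set mapping onto a linearly independent set is automatically a basis on which the map is bijective, $\Phi$ is an isomorphism, and $\Phi^{-1}$ is the map of the theorem. I expect the main obstacle to be the normal-form/spanning step: one must argue that the skein relations impose no collapse beyond the DAHA relations (equivalently, that $\Phi$ is injective), which demands careful control of the correction terms produced when commuting windings past crossings as well as a termination argument for the crossing-resolution procedure.
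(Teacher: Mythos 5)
First, note that the paper does not prove this statement: it is quoted verbatim from \cite[Theorem~3.7]{MS21}, so your proposal has to be measured against the argument in that reference. Your overall architecture --- define $\Phi\colon\DAHA_\N\to\BSkn(\Tor,\ast)$ on generators, verify the defining relations topologically, then show $\Phi$ is bijective --- matches the first half of what Morton--Samuelson do, and your relation checks are essentially right: the quadratic relation is the skein relation \eqref{eq:FLY_skein_t} at a single crossing, the relation $Y_1X_1\cdots X_\N=qX_1\cdots X_\N Y_1$ is the base-string relation applied to the commutator of one vertical winding with the full horizontal rotation, and the remaining relations are isotopies.

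The genuine gap is the final step. Everything you say about injectivity is a restatement of the problem rather than a solution: you assert that the ordered monomials $\Xbraid^{\abf}\,\Tbraid_w\,\Ybraid^{\bbf}$ span (which needs a termination argument you do not supply, and which is delicate because commuting an $\Xbraid$ past a $\Ybraid$ creates new crossings whose skein resolution creates new windings), and you then need these monomials to be \emph{linearly independent} in $\BSkn(\Tor,\ast)$, which is exactly the content of injectivity and is nowhere addressed; the sentence ``a spanning set mapping onto a linearly independent set is automatically a basis'' has the logic backwards, since what you know is that the preimages (the PBW basis of $\DAHA_\N$) are independent, not the images. The proof in \cite{MS21} avoids this entirely by working one level up: it first establishes a presentation of the group of based braids in $\Tor\times\Interval$ (the double affine braid group extended by a central loop around the base string), using known presentations of surface braid groups. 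With that presentation in hand, $\BSkn(\Tor,\ast)$ is by construction the quotient of the group algebra by the quadratic relation on the $\Tbraid_i$ and by setting the central loop equal to $q^{-1}$, and this quotient is literally the presentation of $\DAHA_\N$ --- no normal form or linear-independence argument inside the skein algebra is required. To complete your route you would either have to import that group presentation anyway (making your PBW step redundant) or carry out a genuine diamond-lemma or faithful-representation argument for the skein algebra, neither of which is sketched.
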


Next, we consider the algebra $\Skn(\Tor,\ast)$ of \emph{framed tangles}. By definition, a tangle is a collection of $\N$ oriented curves in $\Tor\times\I$ connecting $\{(\brpt_1,0),\dots,(\brpt_\N,0)\}$ to $\{(\brpt_1,1),\dots,(\brpt_\N,1)\}$, together with some closed curves in $\Tor\times\I$. As before, the curves are required to be disjoint from each other and from the base string. (However, they are no longer required to be monotone in the $\I$ direction.)
 In a framed tangle, each curve is additionally given together with a framing. The algebra $\Skn(\Tor,\ast)$ consists of $\Cqt$-linear combinations of such framed tangles modulo the relations~\eqref{eq:FLY_skein_t}, and the multiplication again comes from stacking in the $\I$ direction.

 By~\cite[Theorem~4.1]{MS21}, we have a homomorphism 
\begin{equation}\label{eq:BSkn_to_Skn}
  \BSk_\N(\Tor,\ast)\to\Skn(\Tor,\ast)
\end{equation}
 obtained by taking a braid and choosing its framing in a canonical way: for instance, one can let the framing be spanned by the braid strand and the vector $(1,0,0)$, which is never tangent to the strand. By~\cite[Theorem~4.2]{MS21}, this homomorphism is surjective. That is,  given a framed tangle, one can apply the skein relations~\eqref{eq:FLY_skein_t} to express it as a linear combination of braids equipped with canonical framing. It turns out that this map is actually an isomorphism.
\begin{theorem}[{\cite[Conjecture~1.5]{MS21} and \cite[Theorem~5.10]{BCMN}}]\label{thm:MS_conjecture_true}
  The map~\eqref{eq:BSkn_to_Skn} is an isomorphism.
\end{theorem}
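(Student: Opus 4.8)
The plan is to leverage the two inputs already available: the isomorphism $\BSk_\N(\Tor,\ast)\cong\DAHA_\N$ of~\eqref{eq:BSk_to_DAHA} and the surjectivity of~\eqref{eq:BSkn_to_Skn} established in~\cite{MS21}. Composing these exhibits $\Skn(\Tor,\ast)$ as a quotient of $\DAHA_\N$, so the entire content of the theorem is that the surjection $\pi\colon\DAHA_\N\twoheadrightarrow\Skn(\Tor,\ast)$ has trivial kernel. Thus I would reduce the statement to injectivity of $\pi$ and abandon any attempt to write down a diagrammatic inverse by hand.

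To prove injectivity I would produce a module on which $\Skn(\Tor,\ast)$ acts and whose pullback along $\pi$ is a \emph{faithful} representation of $\DAHA_\N$. The natural candidate is the relative HOMFLY skein module $M$ of the solid torus $\Ann\times\Interval$ with $\N$ marked points placed on its boundary torus. Stacking a framed tangle in $\Tor\times\Interval$ onto this boundary (and reabsorbing the result, which is again a marked solid torus) defines an action $\rho\colon\Skn(\Tor,\ast)\to\End_{\Cqt}(M)$. The key point is that $\rho$ is manifestly well defined: $M$ is itself a skein module, so the local relations~\eqref{eq:FLY_skein_t} already hold inside $M$, and hence the endomorphism attached to a tangle depends only on its class in $\Skn(\Tor,\ast)$. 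The generators $\Tbraid_i,\Xbraid_i,\Ybraid_i$ act on $M$ in the manner encoded by~\eqref{eq:BSk_to_DAHA}, so that the composite
\[
  \DAHA_\N\xrasim\BSk_\N(\Tor,\ast)\xrightarrow{\ \pi\ }\Skn(\Tor,\ast)\xrightarrow{\ \rho\ }\End_{\Cqt}(M)
\]
coincides with a $\DAHA_\N$-action on $M$.

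The argument would then be finished by identifying $M$ with the polynomial representation: one checks that $M\cong\Cqt[x_1^{\pm1},\dots,x_\N^{\pm1}]$ as a $\DAHA_\N$-module, with $X_i,Y_i,T_i$ acting by $\phi_\N$ as in~\eqref{eq:T_act}--\eqref{eq:Y_act}. Since the polynomial representation of $\DAHA_\N$ is faithful, any element of $\ker\pi$ would act as zero on $M$ while being a genuine element of $\DAHA_\N$, forcing $\ker\pi=0$. I expect the main obstacle to be precisely this last identification: computing the relative skein module $M$ of the marked solid torus and matching the resulting action with $\phi_\N$ on the nose, including the correct normalization of the loop value and of the base-string relation in~\eqref{eq:FLY_skein_t}. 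Establishing faithfulness of this concrete module---equivalently, ruling out any ``hidden'' skein relation among braids that becomes visible only after allowing non-monotone tangles---is the real heart of the matter and is what~\cite[Theorem~5.10]{BCMN} supplies.

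As an alternative that sidesteps representation theory, I would instead build an explicit splitting of $\pi$ by a normal-form procedure: resolve every crossing of a tangle via~\eqref{eq:FLY_skein_t}, absorb closed components into scalars using the loop value, and push the resulting strands into the triangular $(Y,T,X)$-order dictated by the PBW decomposition of $\DAHA_\N$. Linear independence of these normal forms in $\Skn(\Tor,\ast)$ would match the PBW basis of $\DAHA_\N$ and again force injectivity. Here the main difficulty migrates to proving that the straightening is \emph{confluent}, i.e.\ that the element of $\DAHA_\N$ produced is independent of the order in which crossings are resolved and loops absorbed; this is the diagrammatic counterpart of the faithfulness statement above.
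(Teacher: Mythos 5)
The first thing to note is that the paper does not prove this statement at all: it is imported as \cite[Conjecture~1.5]{MS21}, established in \cite[Theorem~5.10]{BCMN}, and no argument for it appears anywhere in the text. So there is no internal proof to compare your attempt against; the authors' ``proof'' is the citation.

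On its own terms, your proposal correctly identifies the shape of the known argument: reduce to injectivity of the surjection $\DAHA_\N\twoheadrightarrow\Skn(\Tor,\ast)$ and kill the kernel by realizing a faithful $\DAHA_\N$-module skein-theoretically in the marked solid torus. But as written it is not a proof, and the gap is exactly where you locate it. The step ``one checks that $M\cong\Cqt[x_1^{\pm1},\dots,x_\N^{\pm1}]$ with the $\phi_\N$-action'' cannot be done by citing faithfulness of the abstract polynomial representation: a priori the relative skein module $M$ of $\Ann\times\Interval$ with marked boundary points could be a \emph{proper quotient} of the polynomial representation, since any hidden relation in $\Skn(\Tor,\ast)$ would propagate to a relation in $M$. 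Showing that $M$ does not collapse --- i.e., producing a spanning set of diagrams in $M$ and proving it is linearly independent --- is essentially equivalent to the theorem itself, and your candid remark that this ``is what \cite[Theorem~5.10]{BCMN} supplies'' makes the argument circular as an independent proof. The same criticism applies to the alternative normal-form route: confluence of the straightening procedure is the entire content of the statement. In the context of this paper, citing \cite{BCMN} (as the authors do) is the correct move; a self-contained argument would require reproducing the BCMN construction of the skein module together with its explicit basis, not merely invoking its conclusion.
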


\subsection{From monotone curves to DAHA}
Let $\Curve$ be a primitive curve given as a plot of a monotone function $f:[0,\rectX]\to[0,\rectY]$. Recall from \cref{sec:intro:links} that we may consider a curve $\Curve_1$ in $\Tor\times\I$ parameterized by $\Curve_1(t)=(x,f(x),x/\rectX)$ for $x\in[0,\rectX]$. The curve $\Curve_1$ connects $(0,0,0)$ to $(\rectXY,1)=(0,0,1)$ in $\Tor\times\I$. Let $\Curve_2$ be a vertical line segment connecting $(0,0,1)$ to $(0,0,0)$. The union $\WC^\Tor:=\Curve_1\cup\Curve_2$ is a closed curve in $\Tor\times\I$.

Suppose that $\Tor-\Diske$ is a punctured torus such that the puncture $\Diske$ contains the line segment $[0,\brpt_\ast]$, where $\brpt_\ast=\left(\eps,\eps\right)$. For $\N\geq1$, we have a homomorphism 
\begin{equation}\label{eq:Skp_to_Skn}
  \Skpti(\Tor-\Diske)\to \Skn(\Tor,\ast)
\end{equation}
  obtained by inserting the identity $\N$-braid with a base string into $\Diske$; see \figref{fig:curve-PTor}(a). 

Let $\Curve$ be a primitive curve, and consider the curves $\Curve_\pm$ obtained by shifting $\Curve$ by $(\mp\eps,\pm\eps)$ as in \cref{sec:intro_skein}. Thus, we may consider the elements $\W^{\PTor}_{\Curve_\pm}\in\Skti(\PTor)$ and $\WC^\PTor\in\SkPTor$ defined in \cref{sec:intro_skein}. The map~\eqref{eq:intro_Sk_to_DASHA} is the composition
\begin{equation}\label{eq:composite_Skp_to_DASHA}
  \SkPTor\to \Skn(\Tor,\ast) \xrasim \BSkn(\Tor,\ast) \xrasim \DAHA_\N \to \DASHA_\N.
\end{equation}
of~\eqref{eq:Skp_to_Skn}, the inverse of~\eqref{eq:BSkn_to_Skn}, \eqref{eq:BSk_to_DAHA}, and the idempotent projection $\DAHA_\N\to\DASHA_\N$, $x\mapsto \S x\S$.

\begin{figure}
\def\wid{0.18}
\begin{tabular}{cc|cc}
  \includegraphics[width=\wid\textwidth]{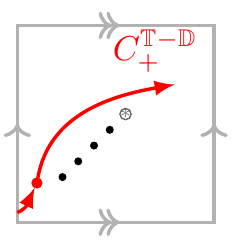}
&
  \includegraphics[width=\wid\textwidth]{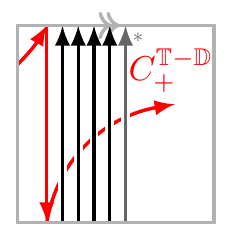}
&
  \includegraphics[width=\wid\textwidth]{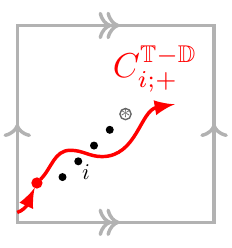}
&
  \includegraphics[width=\wid\textwidth]{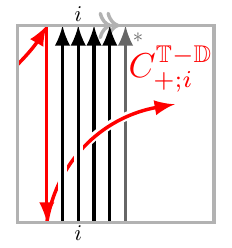}
\\
(a) plan & (a) front & (b) plan & (b) front
\\
\hline
  \includegraphics[width=\wid\textwidth]{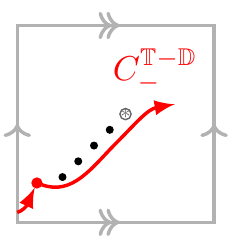}
&
  \includegraphics[width=\wid\textwidth]{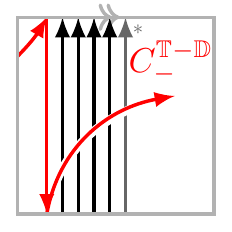}
&
  \includegraphics[width=\wid\textwidth]{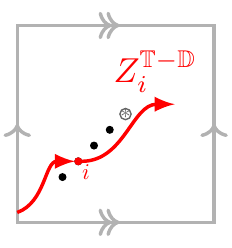}
&
  \includegraphics[width=\wid\textwidth]{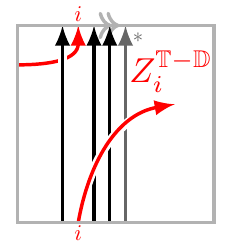}
\\
(c) plan & (c) front & (d) plan & (d) front
\end{tabular}
  \caption{\label{fig:curve-PTor} Toric braids and tangles in the proof of \cref{thm:intro_curve_to_DASHA}, each shown in plan view and front view; cf. \figref{fig:toric-braids}(a).}
\end{figure}

\begin{proof}[Proof of \cref{thm:intro_curve_to_DASHA}]
Let $\Curve$ be a primitive curve. Recall from \cref{dfn:intro:DCn} that $\PathC$ is the highest up-right lattice path  below $\Curve$. The neighborhood of the corner (i.e., the beginning and the end) of $\WC^\PTor_+$ (resp., $\WC^\PTor_-$) is shown in \figref{fig:curve-PTor}(a) (resp., \figref{fig:curve-PTor}(c)).

For $i=0,1,\dots,\N$, let $\WC^\PTor_{i;+}$ be obtained by replacing the leftmost $i$ undercrossings in \figref{fig:curve-PTor}(a), front view, with overcrossings; see \figref{fig:curve-PTor}(b) and~\eqref{eq:Skq_Laq_iso_Z_A}. For $i=1,\dots,\N$, let $Z^\PTor_i$ be obtained from $\WC^\PTor_{i;+}$ by uncrossing the $i$-th overcrossing; see \figref{fig:curve-PTor}(d).  Thus, $\WC^\PTor_{0;+}=\WC^\PTor_+$ 
 and $\WC^\PTor_{\N;+}=q\WC^\PTor_-$ by the second relation in~\eqref{eq:FLY_skein_t}. Applying the first relation in~\eqref{eq:FLY_skein_t}, we obtain
\begin{align*}%
  \WC^\PTor_+=\WC^\PTor_{0;+} 
&= \WC^\PTor_{1;+}+\left(\tti-\tt\right) Z_1^\PTor\\
&= \WC^\PTor_{2;+}+\left(\tti-\tt\right) (Z_1^\PTor+Z^\PTor_2)\\
&= \cdots\\
&= q\WC^\PTor_- + \left(\tti-\tt\right) (Z^\PTor_1+Z^\PTor_2+\dots+Z^\PTor_{\N}).
\end{align*}
Each tangle $Z^\PTor_i$ is isotopic to a braid (also denoted $Z^\PTor_i$). The braid $Z^\PTor_1$ is obtained as a product of: 
\begin{itemize}
\item $Y_1$ for each $U$ step of $\PathC$;
\item $Y_1X_1Y_1^{-1}$ for each $R$ step of $\PathC$.
\end{itemize}
For $i=1,2,\dots,\N$, let $Z_i\in\DAHA_\N$ be the image of the braid $Z^\PTor_i\in\BSkn(\Tor,\ast)$ under the map~\eqref{eq:BSk_to_DAHA}. We see that $Z_i=T_{i-1}^{-1}\cdots T_2^{-1}T_1^{-1} Z_1 T_1^{-1}T_2^{-1}\cdots T_{i-1}^{-1}$. Projecting to $\DASHA_\N$ and applying~\eqref{eq:ST=TS=S}, we find $\S Z_i\S=t^i\S Z_1\S$. Comparing the above description of $Z^\PTor_1$ to \cref{dfn:intro:DCn}, we find $\DCn=\gam\S Z_1\S$. Summarizing, the image of $\WC^\PTor_+-q\WC^\PTor_-$ in $\DASHA_\N$ under the composition~\eqref{eq:composite_Skp_to_DASHA} is given by 
\begin{equation*}%
  \frac{\left(\tti-\tt\right)}{\gam} (1+t+\cdots+t^{\N-1}) \DCn=\tti \DCn.
\end{equation*}
Thus, the image of $\WC^\PTor$ under~\eqref{eq:composite_Skp_to_DASHA} is indeed given by~\eqref{eq:intro_Sk_to_DASHA_image} when $\Curve$ is primitive. The case of arbitrary $\Curve$ follows by~\eqref{eq:intro_WC_PTor_product} and~\eqref{eq:intro:prod}. 
\end{proof}

\begin{figure}
\begin{tabular}{ccc}
 \includegraphics[width=0.2\textwidth]{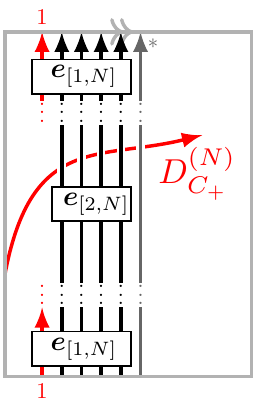} 
&
 \includegraphics[width=0.2\textwidth]{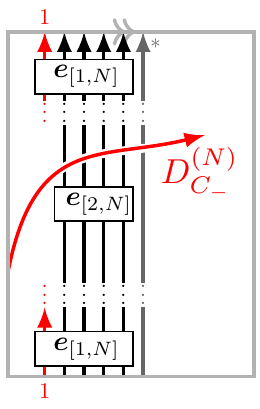}
&
 \includegraphics[width=0.2\textwidth]{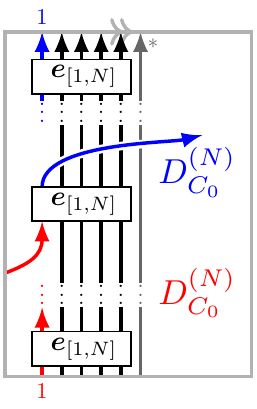}
\end{tabular}
  \caption{\label{fig:skein-diag}A diagrammatic proof of the skein relation~\eqref{eq:intro:skein} (front view); cf \cref{rmk:skein_diagrammatic}. Compare with~\cite{Hog} or~\cite[Equation~(1.2)]{ElHo}.} 
\end{figure}

\begin{remark}\label{rmk:skein_diagrammatic}
The recurrence~\eqref{eq:symmetrizer_recurrence} may be represented diagrammatically; see e.g.~\cite[Equation~(1.2)]{ElHo}. Similarly, our proof of the skein relation~\eqref{eq:intro:skein} given in \cref{sec:skein} may be interpreted as shown in \cref{fig:skein-diag}. Given that~\eqref{eq:symmetrizer_recurrence} admits a lift to the level of Rouquier complexes computing KR homology~\cite{Hog,ElHo,Mellit_torus}, it is natural to expect that our skein relation admits a similar lift to KR homology or, perhaps, to some other (yet to be constructed) homology theory associating a symmetric function to a link in a punctured torus.
\end{remark}

\section{A commutative diagram}\label{sec:diagram}

In this section, we explore the commutative diagram in \cref{fig:giant}, which is an expanded version of the diagram in \cref{fig:big}. The maps between the various algebras and modules are shown on the left, and the images of a curve $\Curve$ under these maps are shown on the right. 
For each of the squares \sqa, \sqb1, \sqb2, \sqc1, \sqc2, we will discuss the maps involved 
 and check that the square commutes. Throughout this section, we let $\Curvex$ be an almost linear curve from $(0,0)$ to $\bx:=(\rectXY)$. We set $d:=\gcd(\rectXY)$, $\bx_0:=\frac1d\bx$, and recall that  $\mul:=\ttonemqi$;  cf.~\eqref{eq:intro_Sk_to_DASHA_image}. In particular, $\mulqqi=\frac1{\qqi-\qq}$. 
\begin{figure}
\begin{adjustbox}{max width=\textwidth}
\begin{tabular}{cc}
$\displaystyle\begin{tikzcd}[column sep=0.2em,row sep=-0.2em]
\SkpPTor \arrow[rrrr,"\eqref{eq:composite_Skp_to_DASHA}"] 
\arrow[dd,"\eqref{eq:Skp_PTor_to_Tor}","t=\qi"']
&&
 && \DASHAp_\N \arrow[dd,"\N\to\infty"]\\
&& \SQA &&
 \\
\Skp(\Tor) \arrow[dd,"\text{Dfn.~\ref{dfn:Skp_Tor_act_on_Ann}}"] \arrow[rr,"\eqref{eq:MS17_iso}","\sim"'] 
&& \EHApqi\arrow[dd,"\acton",outer sep=-5pt] 
&& 
\EHAp \arrow[ll,"t=\qi"']\arrow[dd,"\acton",outer sep=-5pt]\\
& \SQB1 && \SQB2 & \\
\Skp(\Ann)  \arrow[rr,"\eqref{eq:Skp_Ann_to_Laq_twisted}","\sim"'] 
\arrow[dd,"\eqref{eq:Skp_Ann_to_Disk}"]
&& \Laq \arrow[dd,"\eqref{eq:Laq_to_Caq}"]
 && \Laqt\arrow[dd,"\eqref{eq:Laqt_to_Caqt}"] \arrow[ll,"t=\qi"'] 
\\ 
& \SQC1 && \SQC2 & \\
\Sk(\Disk)  \arrow[rr,"\eqref{eq:L=HOMFLY}","\sim"'] && \Caq && \Caqt \arrow[ll,"t=\qi"'] 
\end{tikzcd}$
&
$\displaystyle\begin{tikzcd}[column sep=-0.5em,row sep=-0.2em]
\WC^\PTor \arrow[rrrr,mapsto] 
\arrow[dd,mapsto]
&&
 && \mul^k \DCn \arrow[dd,mapsto] \arrow[dd,mapsto]\\
&& \SQA &&
 \\
\WC^\Tor \arrow[dd,mapsto] \arrow[rr,mapsto] 
&& \mul^k\DClimEHA|_{t=\qi} \arrow[dd,mapsto] 
&& 
\mul^k\DClimEHA \arrow[ll,mapsto]\arrow[dd,mapsto]\\
& \SQB1 && \SQB2 & \\
\WC^\Ann  \arrow[rr,mapsto] 
\arrow[dd,mapsto]
&&  
(-\qq)^{-\rectX}\Tro_{\Laq}(\TC)
\arrow[dd,mapsto]
 && 
\mul^k\DClimEHA\cdot 1
\arrow[dd,mapsto] \arrow[ll,mapsto] 
\\ 
& \SQC1 && \SQC2 & \\
\WC^\Disk  \arrow[rr,mapsto]
&& 
a^{\wC} \PHOML(a,q)
&& \mul^k\PEHAC(a,q,t) \arrow[ll,mapsto] 
\end{tikzcd}$
\end{tabular}
\end{adjustbox}
\caption{\label{fig:giant} An expanded version of \cref{fig:big}. Here, $\Curve$ is a curve from $(0,0)$ to $(\rectXY)$ with  $k:=\nseg(\Curve)$ lattice segments.}
\end{figure}

\subsection{Square \texorpdfstring{\sqa}{A}}\label{sec:SQA}

By \cref{thm:intro_curve_to_DASHA}, for any curve $\Curve$, the image of $\Curve^\PTor$ under~\eqref{eq:composite_Skp_to_DASHA} is $\mul^{\nseg(\Curve)}\DASXn_{\Curve}$, and by \cref{thm:intro_limit_exists}, its limit under $\N\to\infty$ is $\mul^{\nseg(\Curve)}\ehagenb_\Curve$. We now check that the isomorphism $\Skp(\Tor)\xrasim \EHApqi$ constructed in~\cite{MS17} sends $\WC^\Tor\mapsto \mul^{\nseg(\Curve)}\ehagenb_\Curve|_{t=\qi}$.

\begin{notation}
Our notation is obtained from that of~\cite{MS17} via\footnote{For instance, the substitution $(q,t)\mapsto(\qi,t)$ comes from comparing~\eqref{eq:EHA_alpha_dfn} to the definition of $\alpha_i$ in~\cite[Section~5]{MS17}.}
\begin{equation}\label{eq:MS17_conventions}
  (q,t,s,v,P_{\bx})\mapsto (\qi,t,\tt,v,\WMSTor_{\bx}).
\end{equation}
\end{notation}

We claim that the isomorphism of~\cite[Theorem~5.6]{MS17} is given by 
\begin{equation}\label{eq:MS17_iso}
  \Skp(\Tor)\xrasim \EHApqi,\quad \WMSTor_{k\bx_0}\mapsto
  -q^{-k/2}\Peha_{k\bx_0}|_{t=\qi}.
\end{equation}
Indeed, in the notation of~\cite[Theorem~5.6]{MS17}, the isomorphism $\Skp(\Tor)\xrasim \EHApqi$ sends $P_{\bx}\mapsto (q^{\dop(\bx)/2}-q^{-\dop(\bx)/2}) u_{\bx}$. Converting this to our notation and applying~\eqref{eq:Peha_dfn}, we obtain~\eqref{eq:MS17_iso}.

\begin{figure}
\begin{tabular}{ccc}
\includegraphics[width=0.13\textwidth]{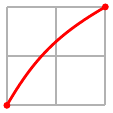}
&
\includegraphics[width=0.13\textwidth]{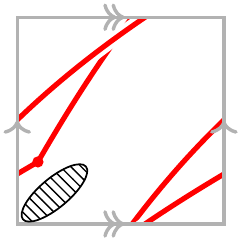}
&
\includegraphics[width=0.13\textwidth]{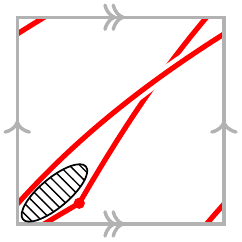}
\\
\textcolor{red}{$\Curve$} & \textcolor{red}{$\WC^\PTor_+$} & \textcolor{red}{$\WC^\PTor_-$}
\end{tabular}
  \caption{\label{fig:stuck}For an almost linear curve $\Curve$ from $(0,0)$ to $(2,2)$, $\WC^\PTor_+$ is a $\sigma_1$-decoration of a curve $\Curve_{(1,1)}$ in $\PTor$ of homology class $(1,1)$ while $\WC^\PTor_-$ is not. The shaded ellipse represents the puncture $\Disk$ in $\PTor$.}
\end{figure}

The element $\WCx^\Tor$ is obtained by taking a straight line segment $\Curvexo$ from $(0,0)$ to $\bx_0$ and decorating $\W^\Tor_{\Curvexo}$ by the Coxeter braid $\sigma_1\sigma_2\cdots\sigma_{d-1}$.\footnote{Letting $\Curve_{\bx,+}$ and $\Curve_{\bx,-}$ be the shifts of $\Curvex$ as in \cref{sec:intro_skein}, we see that $\W^\PTor_{\Curve_{\bx,+}}$ is a $\sigma_1\sigma_2\cdots\sigma_{d-1}$-decoration of $\W^\Tor_{\Curvexo}$. However, the same is not true for $\W^\PTor_{\Curve_{\bx,-}}$, since the puncture ``gets stuck'' inside the decoration; see \cref{fig:stuck}.} By \cref{not:sigma_positive} and~\cite[Proposition~2.6]{GoWe}, we have
\begin{equation*}%
  \Tr_{\Laq}(T_1^{-1}T_2^{-1}\cdots T_{d-1}^{-1})=\frac{(-1)^{d-1}}{\qq-\qqi} e_d[\XSym(\qq-\qqi)].
\end{equation*}
Embedding $\Ann$ into a small ribbon neighborhood of $\Curvexo$ and applying~\eqref{eq:Sk_emb_Sk}, we obtain a homomorphism $\Sk(\Ann)\to\Sk(\Tor)$. Combining this with \cref{prop:GoWe_coincide}, we obtain the following result.
\begin{corollary}
  The element $\WCx^\Tor$ is expressed in $(\WMSTor_{k\bx_0})_{k\geq0}$ with the same coefficients as
  $\frac{(-1)^{d-1}}{\qq-\qqi} e_d[\XSym(\qq-\qqi)]$
  is expressed in $(p_k)_{k\geq0}$.
\end{corollary}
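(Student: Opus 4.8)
The plan is to exhibit $\WCx^\Tor$ as the image of a single element of $\Skp(\Ann)$ under the decoration homomorphism, and then to transport the power-sum expansion of $f:=\frac{(-1)^{d-1}}{\qq-\qqi} e_d[\XSym(\qq-\qqi)]$ across the chain of isomorphisms $\Laq\cong\Skp(\Ann)\to\Sk(\Tor)$.

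First I would set up the decoration map precisely. Embedding $\Ann$ as a small ribbon neighborhood of the primitive curve $\Curvexo$ and applying \eqref{eq:Sk_emb_Sk} yields an algebra homomorphism $\delta\colon\Skp(\Ann)\to\Sk(\Tor)$, where multiplicativity holds because the embedding $\Ann\times\Interval\hookrightarrow\Tor\times\Interval$ respects stacking in the $\Interval$-direction. By the definition of $\WMSTor_\bx$ for $\bx=k\bx_0$ as the $\PMS_k$-decoration of $\Curvexo$, the map $\delta$ sends $\PMS_k\mapsto\WMSTor_{k\bx_0}$. Combining this with \eqref{eq:Tur_iso1}, which identifies $\PMS_k$ with $p_k$ under \cref{thm:Tur}, I obtain that the composite $\Laq\xrasim\Skp(\Ann)\xrightarrow{\delta}\Sk(\Tor)$ is precisely the algebra homomorphism $p_k\mapsto\WMSTor_{k\bx_0}$.

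Next I would identify the annular element being decorated. By construction $\WCx^\Tor=\delta(\beta)$, where $\beta\in\Skp(\Ann)$ is the annular closure of the Coxeter braid $\sigma_1\sigma_2\cdots\sigma_{d-1}$, lying in the degree-$d$ graded piece of $\Skp(\Ann)$. Under \eqref{eq:Skp_Ann_to_Hecke} and \cref{not:sigma_positive}, $\beta$ corresponds to the image of $T_1^{-1}T_2^{-1}\cdots T_{d-1}^{-1}$ in $\Heckeqcomm_d$, and by \cref{prop:GoWe_coincide} its image in $\Laq$ under \cref{thm:Tur} equals $\Tr_{\Laq}(T_1^{-1}\cdots T_{d-1}^{-1})$, which the displayed trace formula rewrites as $f$. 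Writing $f=\sum_{\la\vdash d}c_\la p_\la$ in the power-sum basis (only $\la\vdash d$ occur, as $f$ is homogeneous of degree $d$) and applying $\delta$, using multiplicativity together with $p_k\mapsto\WMSTor_{k\bx_0}$, then gives $\WCx^\Tor=\sum_{\la\vdash d}c_\la\prod_i\WMSTor_{\la_i\bx_0}$, which is exactly the claimed statement.

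The main obstacle I anticipate is the verification that $\delta$ is genuinely an algebra homomorphism under which decorations multiply correctly, i.e., that decorating $\Curvexo$ by a product $\prod_i\PMS_{\la_i}$ in $\Skp(\Ann)$ produces the product $\prod_i\WMSTor_{\la_i\bx_0}$ in $\Sk(\Tor)$. The image of $\delta$ lands in the commutative subalgebra generated by the $\WMSTor_{k\bx_0}$, all supported on the line $\R\bx_0$, so no ordering ambiguity arises; the remaining content is simply that the ribbon-neighborhood embedding carries the stacking product of $\Ann\times\Interval$ to that of $\Tor\times\Interval$, which is precisely what \eqref{eq:Sk_emb_Sk} provides. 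Everything else is bookkeeping through \cref{thm:Tur}, \cref{prop:GoWe_coincide}, and the trace formula.
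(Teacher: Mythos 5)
Your proposal is correct and follows essentially the same route as the paper: identify $\WCx^\Tor$ as the decoration of $\Curvexo$ by the Coxeter braid, use the trace formula $\Tr_{\Laq}(T_1^{-1}\cdots T_{d-1}^{-1})=\frac{(-1)^{d-1}}{\qq-\qqi} e_d[\XSym(\qq-\qqi)]$ together with \cref{prop:GoWe_coincide}, and transport the power-sum expansion through the ribbon-neighborhood embedding $\Sk(\Ann)\to\Sk(\Tor)$ sending $\PMS_k\mapsto\WMSTor_{k\bx_0}$. You spell out the multiplicativity of the decoration map more explicitly than the paper does, but the argument is the same.
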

By~\cite[Theorem~5.7]{MS21}, the map~\eqref{eq:composite_Skp_to_DASHA} sends $\WMSPTorti_{k\bx_0}\mapsto \frac{t^{k/2}-t^{-k/2}}{q^k-1}\Peha_{k\bx_0}$. For $t=\qi$, this recovers the isomorphism~\eqref{eq:MS17_iso} 
 of~\cite{MS17}; 
 see also~\cite[Remark~2.5]{MS21}. 
 Recall that the substitution $p_k\mapsto -q^{-k/2}p_k$ corresponds to the plethystic substitution $\Laq\to\Laq$, $F\mapsto F[-q^{-1/2}\XSym]$. We obtain the following.
\begin{corollary}\label{cor:MS17_iso_coefs}
  The image of $\WCx^\Tor$ under~\eqref{eq:MS17_iso} is expressed in $(\Peha_{k\bx_0})_{k\geq0}$ with the same coefficients as
 $\frac{(-1)^{d-1}}{\qq-\qqi} e_d[\XSym(\qi-1)]$  
is expressed in $(p_k)_{k\geq0}$.
\end{corollary}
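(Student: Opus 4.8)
The plan is to combine the corollary immediately preceding with the explicit form of the isomorphism~\eqref{eq:MS17_iso}, and then to read off the induced effect on coefficients as a single plethystic substitution. Since everything is handed to us — the expansion of $\WCx^\Tor$ in the generators $\WMSTor_{k\bx_0}$, the formula for~\eqref{eq:MS17_iso} on those generators, and the plethystic interpretation of the resulting rescaling — the proof is essentially a bookkeeping computation once the conventions are pinned down.

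First I would invoke the preceding corollary, which writes $\WCx^\Tor=\sum_{\la\vdash d}c_\la\,\WMSTor_{\la_1\bx_0}\WMSTor_{\la_2\bx_0}\cdots$, where the scalars $c_\la\in\Cq$ are precisely the coefficients of $G:=\frac{(-1)^{d-1}}{\qq-\qqi}e_d[\XSym(\qq-\qqi)]$ in the power-sum basis $(p_\la)_\la$. Because~\eqref{eq:MS17_iso} is an algebra isomorphism sending each generator $\WMSTor_{k\bx_0}$ to $-q^{-k/2}\Peha_{k\bx_0}|_{t=\qi}$, applying it term by term gives the image of $\WCx^\Tor$ as $\sum_\la c_\la\prod_i\bigl(-q^{-\la_i/2}\bigr)\Peha_{\la_i\bx_0}|_{t=\qi}$. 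Thus the coefficient of $\Peha_{\la\bx_0}$ in the image is obtained from $c_\la$ by the rescaling $p_k\mapsto -q^{-k/2}p_k$, which the text recalls is the plethysm $F\mapsto F[-q^{-1/2}\XSym]$.

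It then remains to compute this plethysm on $G$. The prefactor $\frac{(-1)^{d-1}}{\qq-\qqi}$ lies in $\Cq$ and is treated as a plethystic constant, hence unaffected; on the alphabet one has $p_k[\XSym(\qq-\qqi)]=(q^{k/2}-q^{-k/2})p_k$, and the substitution $p_k\mapsto -q^{-k/2}p_k$ turns this into $-q^{-k/2}(q^{k/2}-q^{-k/2})p_k=(q^{-k}-1)p_k=p_k[\XSym(\qi-1)]$. Equivalently, at the level of virtual alphabets this is the identity $-q^{-1/2}\cdot(\qq-\qqi)=\qi-1$. Since $e_d$ is a polynomial in the power sums and the substitution is an algebra map, this yields $G[-q^{-1/2}\XSym]=\frac{(-1)^{d-1}}{\qq-\qqi}e_d[\XSym(\qi-1)]$, which is exactly the claimed expression.

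The only point requiring care — and the main potential obstacle — is the bookkeeping of plethystic conventions: one must treat $\XSym$ as the alphabet being substituted while regarding $\frac{(-1)^{d-1}}{\qq-\qqi}$ and the coefficient field $\Cq$ as plethystic constants, and one must use the single-sign rule $p_k[-Y]=-p_k[Y]$ (rather than $(-1)^k$), consistent with the substitution $p_k\mapsto -q^{-k/2}p_k$ stated in the text. Once these are fixed, the identity $-q^{-1/2}(\qq-\qqi)=\qi-1$ makes the computation immediate.
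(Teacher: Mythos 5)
Your argument is correct and matches the paper's own derivation: the paper likewise combines the preceding corollary (the expansion of $\WCx^\Tor$ in the $\WMSTor_{k\bx_0}$ with $p_k$-coefficients of $\frac{(-1)^{d-1}}{\qq-\qqi}e_d[\XSym(\qq-\qqi)]$) with the generator formula~\eqref{eq:MS17_iso}, observes that the rescaling $p_k\mapsto -q^{-k/2}p_k$ is the plethysm $F\mapsto F[-q^{-1/2}\XSym]$, and concludes via $-q^{-1/2}(\qq-\qqi)=\qi-1$. Your explicit attention to the single-sign rule $p_k[-Y]=-p_k[Y]$ is exactly the convention the paper relies on.
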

\begin{proposition}
Square \sqa in \cref{fig:giant} is commutative.
\end{proposition}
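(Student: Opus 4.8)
The plan is to prove the equivalent assertion that the isomorphism \eqref{eq:MS17_iso} sends $\WC^\Tor$ to $\mul^{\nseg(\Curve)}\DClimEHA|_{t=\qi}$ for every curve $\Curve$. Indeed, by \cref{thm:intro_curve_to_DASHA} and \cref{thm:intro_limit_exists} the top edge of Square \sqa followed by the right edge sends $\WC^\PTor\mapsto \mul^{\nseg(\Curve)}\DClimEHA\mapsto \mul^{\nseg(\Curve)}\DClimEHA|_{t=\qi}$, while the left edge \eqref{eq:Skp_PTor_to_Tor} sends $\WC^\PTor\mapsto\WC^\Tor$. Write $F(\Curve):=\mul^{\nseg(\Curve)}\DClimEHA|_{t=\qi}$ and let $G(\Curve)$ be the image of $\WC^\Tor$ under \eqref{eq:MS17_iso}; the goal is $F(\Curve)=G(\Curve)$. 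Both arise by applying algebra homomorphisms to $\WC^\PTor$, so they are compatible with the factorizations \eqref{eq:intro_WC_PTor_product} and \eqref{eq:intro:prod}: for $\Curve=[\Cseg_1\cdots\Cseg_k]$ we get $F(\Curve)=\prod_i F(\Cseg_i)$ and $G(\Curve)=\prod_i G(\Cseg_i)$. Hence it suffices to treat primitive curves and then reduce further using the skein relation.

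The key structural input is that $F$ and $G$ obey the \emph{same} recurrence at $t=\qi$. Let $\Curve_+,\Curve_-,\Curve_0$ be as in \cref{thm:intro:skein}, so $\nseg(\Curve_+)=\nseg(\Curve_-)=k$ and $\nseg(\Curve_0)=k+1$. Multiplying \eqref{eq:intro:skein} by $\mul^{k}$ and setting $t=\qi$, where $qt=1$ and $\mul^{-1}=\tt(1-q)$ becomes $\qqi(1-q)=\qqi-\qq$, yields $F(\Curve_+)=F(\Curve_-)+(\qqi-\qq)F(\Curve_0)$. On the $G$-side, the torus diagrams of $\Curve_+,\Curve_-,\Curve_0$ differ only at the crossing near $\pi(\point)$, where passing above, below, and through $\point$ produces a positive crossing, a negative crossing, and the oriented resolution respectively; the \FLY skein relation in $\Sk(\Tor)$ (the first relation of \eqref{eq:Sk_dfn}, reading $\LPlus-\LMinus=(\qqi-\qq)\LZero$ at $t=\qi$) together with the homomorphism property of \eqref{eq:MS17_iso} gives $G(\Curve_+)=G(\Curve_-)+(\qqi-\qq)G(\Curve_0)$. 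Combined with multiplicativity, the reduction of \cref{rmk:intro:skein} now expresses both $F(\Curve)$ and $G(\Curve)$ as the same universal combination of products of values on almost linear curves, so it remains only to verify $F=G$ on almost linear curves.

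For an almost linear curve $\Curvex$ from $(0,0)$ to $\bx=d\bx_0$ we have $\nseg=1$ and $\DClimEHA=\DASXlimEHA_{d\bx_0}$. By \cref{cor:DASXn_vs_Pgen_EHA}, $F(\Curvex)=\mulqqi\,\DASXlimEHA_{d\bx_0}|_{t=\qi}$ is expressed in the generators $\Peha_{k\bx_0}$ exactly as $\mulqqi(-\qi)^d e_d[\XSym(1-q)]=\mulqqi(-1)^dq^{-d}e_d[\XSym(1-q)]$ is expressed in the $p_k$ (using $\ti=q$ at $t=\qi$). By \cref{cor:MS17_iso_coefs}, $G(\Curvex)$ is expressed in the same generators as $\tfrac{(-1)^{d-1}}{\qq-\qqi}\,e_d[\XSym(\qi-1)]$ is in the $p_k$. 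These coincide: since $\XSym(\qi-1)=\qi\cdot\XSym(1-q)$ plethystically, we have $e_d[\XSym(\qi-1)]=q^{-d}e_d[\XSym(1-q)]$, and the remaining scalars agree because $\tfrac{1}{\qq-\qqi}=-\mulqqi$, so $\tfrac{(-1)^{d-1}}{\qq-\qqi}=(-1)^d\mulqqi$. This establishes the base case and hence $F=G$ for all curves.

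The main obstacle is the geometric step in the second paragraph: confirming that passing above, below, and through $\point$ corresponds in the torus projection to a positive crossing, a negative crossing, and the oriented resolution at the single crossing near $\pi(\point)$, with the signs matching the conventions of \cref{not:T_i} and \cref{not:sigma_positive}. This is the same local picture underlying the diagrammatic proof in \cref{fig:skein-diag} and the tangle manipulations in the proof of \cref{thm:intro_curve_to_DASHA}, and it is precisely what makes the $F$-side recurrence (which originates from the symmetrizer identity \eqref{eq:symmetrizer_recurrence} and so carries the factor $qt$) match the genuinely topological \FLY recurrence governing $G$ after the $\nseg$-bookkeeping is carried out. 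Once this identification is in place, the remaining arguments are the routine scalar and plethystic manipulations indicated above.
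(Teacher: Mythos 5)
Your proof is correct and follows essentially the same route as the paper's: reduce to almost linear curves via multiplicativity and the compatibility of both sides with the skein relation at $t=\qi$ (where the algebraic recurrence~\eqref{eq:intro:skein} degenerates to the \FLY relation in $\Sk(\Tor)$), then match coefficients on almost linear curves by comparing \cref{cor:DASXn_vs_Pgen_EHA} with \cref{cor:MS17_iso_coefs}. Your explicit verification of the plethystic identity $e_d[\XSym(\qi-1)]=q^{-d}e_d[\XSym(1-q)]$ and the scalar $\tfrac{(-1)^{d-1}}{\qq-\qqi}=(-1)^d\mulqqi$ is exactly the coefficient agreement the paper asserts.
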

\begin{proof}
  Recall that by \cref{thm:intro_curve_to_DASHA,thm:intro_limit_exists}, the image of $\WCx^\PTor$ in $\EHAp$ is given by $\mul \DASXlimEHA_{\bx}$. Substituting $t=\qi$ into \cref{cor:DASXn_vs_Pgen_EHA}, we see that $\mul \DASXlimEHA_{\bx}|_{t=\qi}=\frac1{\qqi-\qq} \DASXlimEHA_{\bx}|_{t=\qi}$ is expressed in $(\Peha_{k,0})_{k\geq0}$ with the same coefficients as $\frac{(-\qi)^d}{\qqi-\qq} e_d\left[\XSym(1-q)\right]$ is expressed in $(p_k)_{k\geq0}$. Since these coefficients agree with those in \cref{cor:MS17_iso_coefs}, it follows that the image of $\WCx^\Tor$ under~\eqref{eq:MS17_iso} agrees with the $t=\qi$-specialization of the image of $\WCx^\PTor$ under~\eqref{eq:composite_Skp_to_DASHA}. In other words, square~\sqa in \cref{fig:giant} is commutative for almost linear curves.

Since all maps involved respect multiplication, we see that square~\sqa commutes for piecewise almost linear curves. We now check that the map~\eqref{eq:MS17_iso} also respects the skein relation~\eqref{eq:intro:skein}. (This is clear for all other maps involved in square~\sqa.) Substituting $\DCn\mapsto \mul^{-\nseg(\Curve)} \WC^\Tor$ (cf.~\eqref{eq:intro_Sk_to_DASHA_image}) into the skein relation~\eqref{eq:intro:skein} and letting $t=\qi$, we 
arrive at the identity
\begin{equation}\label{eq:skein_relation_qt1}
  \W^{\Tor}_{\Curve_+}=\W^{\Tor}_{\Curve_-}+(\qqi-\qq) \W^\Tor_{\Curve_0}
\end{equation}
for $\Curve_+,\Curve_-,\Curve_0$ as in \cref{thm:intro:skein}. This agrees with~\eqref{eq:Sk_dfn} since $\Curve_+$ contains a positive crossing $\LPlus$ where $\Curve_-$ contains a negative crossing $\LMinus$. Thus,~\eqref{eq:skein_relation_qt1} indeed holds inside $\Sk(\Tor)$, which by \cref{rmk:intro:skein} implies that square~\sqa in \cref{fig:giant} is commutative for arbitrary curves.
\end{proof}

\begin{figure}
  \setlength{\tabcolsep}{2pt}
\begin{tabular}{cc|cc|cc}
\includegraphics[width=0.15\textwidth]{figures/example-2x2-1-9-plus-proj}
&
\includegraphics[width=0.15\textwidth]{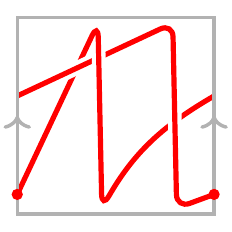}
&
\includegraphics[width=0.15\textwidth]{figures/example-2x2-1-9-minus-proj}
&
\includegraphics[width=0.15\textwidth]{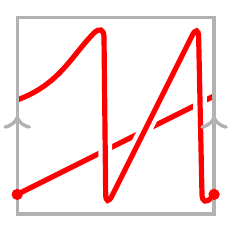}
&
\includegraphics[width=0.15\textwidth]{figures/example-2x2-1-9-zero-proj}
&
\includegraphics[width=0.15\textwidth]{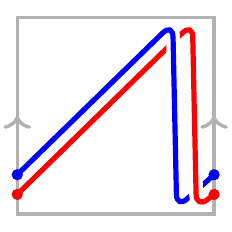}
\\
$\WC^{\Tor}_+$ & $\br^\Ann_{\Curve_+}=\sigma_1^3$ &
$\WC^{\Tor}_-$ & $\br^\Ann_{\Curve_-}=\sigma_1$ &
$\WC^{\Tor}_0$ & $\br^\Ann_{\Curve_0}=\sigma_1^2$ 
\end{tabular}
\caption{\label{fig:TC}Computing the braid $\br^\Ann_\Curve$ associated to a curve $\Curve$; see \cref{dfn:TC}.}
\end{figure}

\subsection{Squares \texorpdfstring{\sqb1 and \sqb2}{B1 and B2}}
Recall from \cref{dfn:Skp_Tor_act_on_Ann} that the vertical map $\Skp(\Tor)\to\Skp(\Ann)$ consists of first applying an automorphism $\vrap$  of $\Skp(\Tor)$ sending $\WC^\Tor\mapsto v^n\WC^\Tor$
 and then inserting a solid torus inside $\Tor$ as shown in \cref{fig:wrap}. 
\begin{definition}\label{dfn:TC}
  Let $\Curve$ be a curve from $(0,0)$ to $(\rectXY)$. Consider the link $\LC$ whose link diagram is drawn on $\Tor$ as in \cref{sec:intro:links}. For each $x\in\Interval$ such that the point $(x,1)$ (equivalently, $(x,0)$) belongs to the link diagram of $\LC$, draw a vertical line segment from $(x,1)$ to $(x,0)$ passing above all other strands of $\LC$. The result is a planar link diagram drawn on $\Ann$, where the link is obtained as the annular closure of an $\rectX$-strand braid denoted $\br^\Ann_\Curve$. We let $\TC\in\Heckeq_\rectX$ be the image of $\br^\Ann_\Curve$ under the standard homomorphism $\BraidGroup_\rectX\to\Heckeq_\rectX$, and let $\WC^\Ann\in\Skp(\Ann)$ be the image of $\TC$ under the identification~\eqref{eq:Skp_Ann_to_Hecke}. See \cref{fig:TC}.
\end{definition}
\begin{remark}
The ``wrapping'' procedure described in \cref{dfn:Skp_Tor_act_on_Ann} produces a framed link $\LC'$ which coincides with $\LC$ (with blackboard framing) except that the framing of $\LC'$ contains $\rectY$ extra full twists. This discrepancy is accounted for by the application of the automorphism $\vrap$ in \cref{dfn:Skp_Tor_act_on_Ann}.
\end{remark}

The horizontal map $\phi_{\Ann}:\Skp(\Ann)\xrasim\Laq$ is then defined as follows. For a braid $\br\in\BraidGroup_\rectX$, we set
\begin{equation}\label{eq:Skp_Ann_to_Laq_twisted}
  \phi_\Ann(\brh):=(-\qq)^{-\rectX}\omega\Tr_{\Laq}(T_\br),
\end{equation}
where $\omega$ is the omega involution~\cite[Section~7.6]{EC2}. It is clear that $\phi_{\Ann}$ is an algebra homomorphism.

\begin{proposition}
Squares \sqb1 and \sqb2 in \cref{fig:giant} are commutative.
\end{proposition}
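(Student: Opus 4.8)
The plan is to read both squares as statements about two module structures on the skein of the annulus: the geometric $\Skp(\Tor)$-action of \cref{dfn:Skp_Tor_act_on_Ann} and the algebraic action of the elliptic Hall algebra on symmetric functions. I would prove \sqb2 first, since it is purely formal, and then \sqb1, where the genuine identification of the two actions takes place. Throughout, I take the middle vertical map $\EHApqi\to\Laq$ to be, by definition, the $t=\qi$ specialization of the $\EHAp$-action on $\Laqt$ from \cref{sec:EHA_action}; \sqb2 then records that this specialization is well-behaved, and \sqb1 records that it agrees with the skein wrapping.

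For \sqb2, the key observation is that the $\EHAp$-action on $\Laqt$ is $\Cqt$-linear with matrix entries (in a fixed basis) that are regular at $t=\qi$. This is visible from \eqref{eq:EHA_polyrep}: on the basis $P_\la(q,t^{-1})$ the generator $u_{0,d}$ acts by the eigenvalue $\sum_i \frac{q^{d\la_i}-1}{q^d-1}t^{d(i-1)}$ and $u_{1,0}$ by multiplication by $\frac{1}{q-1}p_1$, neither of which has a pole at $t=\qi$; moreover $P_\la(q,t^{-1})|_{t=\qi}=P_\la(q,q)=s_\la$, so the whole action degenerates to a genuine $\Cq$-linear action on $\Laq$ that is diagonal in the Schur basis. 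Since the middle-column map is this specialization, functoriality of base change along the ring map $\Cqt\to\Cq$, $t\mapsto\qi$, gives $(\DClimEHA\cdot 1)|_{t=\qi}=(\DClimEHA|_{t=\qi})\cdot 1$ for every curve $\Curve$, which is exactly the commutativity of \sqb2.

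For \sqb1, I would prove the stronger statement that $\phi_\Ann$ of \eqref{eq:Skp_Ann_to_Laq_twisted} intertwines the $\Skp(\Tor)$-action on $\Skp(\Ann)$ with the $\EHApqi$-action on $\Laq$, the two algebras being identified by \eqref{eq:MS17_iso}. Evaluating this intertwiner at the empty link (which $\phi_\Ann$ sends to $1\in\Laq$) then yields \sqb1 for every curve, since $\WC^\Tor\cdot 1=\WC^\Ann$ by \cref{dfn:TC}. Because the action $\Skp(\Tor)\to\End_{\Cq}(\Skp(\Ann))$ is an algebra homomorphism and $\Skp(\Tor)$ is generated by the classes $\WMSTor_\bx$, it suffices to verify the intertwining relation on these generators (crucially, this step does \emph{not} use multiplicativity of ``act on $1$,'' which fails). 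Here I would first strip off the normalization twists: the factor $(-\qq)^{-\rectX}$ on the degree-$\rectX$ component of $\phi_\Ann$ implements the plethysm $F\mapsto F[-\qqi\XSym]$ and is precisely designed to absorb the factor $-q^{-k/2}$ in \eqref{eq:MS17_iso} (cf.\ the discussion preceding \cref{cor:MS17_iso_coefs}), while $\omega$ accounts for the sign conventions of \cref{prop:GoWe_coincide}. After removing these twists, the remaining content is that the untwisted Turaev isomorphism $\PMS_\N\mapsto p_\N$ of \eqref{eq:Tur_iso1} is a module map over $\Skp(\Tor)\cong\EHApqi$; concretely one checks that the vertical curves $\WMSTor_{0,d}$ act diagonally in the Schur basis with the $t=\qi$ eigenvalues of \eqref{eq:EHA_polyrep}, and that $\WMSTor_{1,0}$ acts as the multiplication operator matching $u_{1,0}$, the general $\WMSTor_\bx$ being a $\PMS_k$-decoration reducible to these.

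The main obstacle is the bookkeeping of the three normalizations — the twist $-q^{-k/2}$, the degree twist $(-\qq)^{-\rectX}$, and the involution $\omega$ — together with the plethystic substitutions relating the Macdonald, Schur, and power-sum bases; making all of these cancel consistently is where the real care lies. If the direct generator computation becomes unwieldy, a cleaner route is to leverage that Square~\sqa is already established: the identification of curves with EHA elements and the skein relation \eqref{eq:intro:skein} are then inherited, so that \sqb1 reduces to verifying the single scalar relating $\phi_\Ann$ to the trace-based Turaev isomorphism of \cref{prop:GoWe_coincide}, plus the module-map property on the vertical generators $\WMSTor_{0,d}$, with the module structure itself supplied by Morton--Samuelson \cite{MS17,MS21} and \cref{thm:MS_conjecture_true}.
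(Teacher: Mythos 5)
Your proposal is correct and takes essentially the same approach as the paper: \sqb2 is dispatched by noting the EHA action on $\Laqt$ has no poles at $t=\qi$ (the paper calls this obvious), and \sqb1 reduces to the statement that the wrapping action of $\Skp(\Tor)$ on $\Skp(\Ann)$ corresponds, under $\phi_\Ann$ and \eqref{eq:MS17_iso}, to the EHA action on symmetric functions — i.e.\ to the identity \eqref{eq:DClimEHA_vs_Tr_TC}. The paper deduces this directly from \cite[Proposition~7.17]{MS17} after matching conventions, which is exactly your fallback route; your primary route of re-verifying the intertwining on the generators $\WMSTor_{\bx}$ is just an unpacking of that same Morton--Samuelson result.
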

\begin{proof}
The result is obvious for \sqb2. To check the commutativity of \sqb1, we need to show that for any curve $\Curve$ from $(0,0)$ to $(\rectXY)$, we have
\begin{equation}\label{eq:DClimEHA_vs_Tr_TC}
  \left(\qqi-\qq\right)^{-\nseg(\Curve)} \DClimEHA\cdot 1|_{t=\qi} = (-\qq)^{-\rectX}\omega \Tr_{\Laq}(\TC).
\end{equation}
This may be deduced from~\cite[Proposition~7.17]{MS17} after matching our conventions to theirs; see also~\eqref{eq:MS17_conventions} and~\cite[Section~7.1.1 and Equation~(7.7)]{MS17}.
\end{proof}

\begin{example}
Consider the three curves in \cref{fig:2x2}. We compute the left-hand side of~\eqref{eq:DClimEHA_vs_Tr_TC} using~\eqref{eq:DCp.1}--\eqref{eq:DC0.1}. For the right-hand side, we first use \cref{fig:TC} to find $\TX_{\Curve_+}=T_1^{-3}$, $\TX_{\Curve_-}=T_1^{-1}$, $\TX_{\Curve_0}=T_1^{-2}$, and then apply~\eqref{eq:Tr_Laq(T1)}. We calculate that for $\Curve=\Curve_+,\Curve_-,\Curve_0$, both sides of~\eqref{eq:DClimEHA_vs_Tr_TC} are given by $q^{-\frac52}s_{11}-q^{\frac12}s_2$, $q^{-\frac32}s_{11}-q^{-\frac12}s_2$, $q^{-2}s_{11}+s_2$, respectively.
\end{example}

\subsection{Squares \texorpdfstring{\sqc1 and \sqc2}{C1 and C2}}
Before discussing the maps in squares \sqc1 and \sqc2, we revisit the symmetric function $\FC$ introduced in~\eqref{eq:intro_FC_dfn}. By definition, $\FC$ is obtained from $\mul^{\nseg(\Curve)} \DClimEHA\cdot 1$ via the map $F\mapsto \mul^{-\nseg(\Curve)}F \left[\frac{\XSym}{1-t}\right]$. Specializing $t=\qi$ and applying~\eqref{eq:DClimEHA_vs_Tr_TC}, we get
\begin{equation}\label{eq:*}
  \FC|_{t=\qi}= \left(\qqi-\qq\right)^{\nseg(\Curve)} \Tr_{\Laq}(\TC)\left[\frac{\XSym}{\qqi-\qq}\right].
\end{equation}
Next, let the right vertical map in square \sqc2 be given by 
\begin{equation}\label{eq:Laqt_to_Caqt}
  \Laqt\to\Caqt,\quad F\mapsto  F \left[\frac{a-a^{-1}}{1-t}\right].
\end{equation}
Specializing $t=\qi$, we define the left vertical map in square \sqc2 by
\begin{equation}\label{eq:Laq_to_Caq}
  \Laq\to\Caq,\quad F\mapsto  F \left[\frac{a-a^{-1}}{1-\qi}\right].
\end{equation}
The left vertical map in square \sqc1 is given by~\eqref{eq:Skp_Ann_to_Disk}.

Recall the braid $\br^\Ann_\Curve$ from \cref{dfn:TC}. Our goal is to compute the \emph{writhe} $\wC$ of $\br^\Ann_\Curve$, defined as the sum of exponents $e_1+e_2+\cdots+e_t$ of the expression $\br^\Ann_\Curve=\sigma_{i_1}^{e_1}\sigma_{i_2}^{e_2}\cdots \sigma_{i_t}^{e_t}$ of $\br^\Ann_\Curve$ in terms of the standard generators of $\BraidGroup_\rectX$.
\begin{proposition}
  For a curve $\Curve$ from $(0,0)$ to $(\rectXY)$,
  \begin{equation}\label{eq:wC}
    \wC=(\nseg(\Curve)-1) + (\rectX-1) + 2\bC,
  \end{equation}
  where $\bC$ is the number of lattice points in $[1,\rectX-1]\times[1,\rectY-1]$ strictly below $\Curve$.
\end{proposition}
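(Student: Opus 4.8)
The plan is to prove \eqref{eq:wC} by induction on the total number $N(\Curve)$ of lattice points of the open rectangle $[1,\rectX-1]\times[1,\rectY-1]$ lying on or strictly below $\Curve$, peeling off these points one at a time via the skein relation of \cref{thm:intro:skein}. Write $R(\Curve):=(\nseg(\Curve)-1)+(\rectX-1)+2\bop(\Curve)$ for the right-hand side. Since both $\wC$ and $R$ depend only on the lattice data of $\Curve$ (as recorded in \cref{sec:intro:links}), I may freely deform $\Curve$ within its lattice-data class. The base case $N(\Curve)=0$ is a primitive curve all of whose interior lattice points lie strictly above it; each inductive step lowers $\Curve$ past one interior lattice point.

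For the inductive step, fix an interior lattice point $\point$ lying on or strictly below $\Curve$ that is \emph{outermost}, meaning that after a lattice-data-preserving deformation $\Curve$ passes immediately above or through $\point$ with no other lattice point separating them. Let $\Curve_+,\Curve_-,\Curve_0$ be the three curves of \cref{thm:intro:skein} at $\point$, which agree with $\Curve$ away from a small neighborhood of $\point$. The key observation is that on the torus these three differ by a single local modification at one self-crossing of $\pi(\Curve)$: $\Curve_+$ carries a positive crossing, $\Curve_-$ a negative crossing, and $\Curve_0$ the oriented resolution, exactly the \FLY triple appearing in \eqref{eq:skein_relation_qt1} (this is the content of the diagrammatic argument summarized in \cref{fig:skein-diag}). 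Since the exponent sum $\wC$ of the braid equals the signed crossing count of its closure, and since all crossings other than the one at $\point$ are unaffected, I obtain $\wX(\Curve_+)=\wX(\Curve_-)+2$ and $\wX(\Curve_0)=\wX(\Curve_-)+1$. On the other side, $\Curve_+$ and $\Curve_-$ have equal $\nseg$ while $\bop(\Curve_+)=\bop(\Curve_-)+1$, and $\nseg(\Curve_0)=\nseg(\Curve_-)+1$ with $\bop(\Curve_0)=\bop(\Curve_-)$; hence $R(\Curve_+)=R(\Curve_-)+2$ and $R(\Curve_0)=R(\Curve_-)+1$. Thus $\wC-R$ is unchanged under each move, and since $N(\Curve_-)=N(\Curve)-1$ in both cases (the point $\point$ leaves the weakly-below set), the formula for $\Curve$ follows from the formula for the strictly-lower curve $\Curve_-$.

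It remains to treat the base case: a primitive curve with every interior lattice point strictly above it, where $R(\Curve)=\rectX-1$. Deforming within the lattice-data class, I may take $\Curve$ to be the ``low'' curve that stays below height $1$ on $[0,\rectX-1]$ and then ascends to $(\rectX,\rectY)$ on the last unit interval. Reading off $\br^\Ann_\Curve$ from \cref{dfn:TC}, the $\rectX-1$ nearly-horizontal arcs do not cross one another, the single ascending arc crosses each of them exactly once with the same (positive) sign, and the vertical closure segments contribute no net crossings, so $\br^\Ann_\Curve$ is the Coxeter braid $\sigma_1\sigma_2\cdots\sigma_{\rectX-1}$ of exponent sum $\rectX-1$. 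I expect the main obstacle to be precisely this base-case computation: one must verify that the $\rectY$-fold wrapping of the ascending arc around the $y$-direction of the torus produces no net generators in $\br^\Ann_\Curve$, so that the braid, and hence $\wC$, is independent of $\rectY$; this requires a careful isotopy of the diagram of \cref{dfn:TC} rather than a purely formal manipulation. The degenerate cases $\rectX=1$ (a one-strand braid with $\wC=0=\rectX-1$) and $\rectY=1$ (the $(\rectX,1)$-torus braid $\sigma_1\cdots\sigma_{\rectX-1}$) serve as consistency checks, and the crossing signs can be cross-checked against the trefoil, unknot, and Hopf link computations of \cref{fig:TC}.
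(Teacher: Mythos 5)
Your proof is correct and follows essentially the same route as the paper: induction using the skein triple $\Curve_+,\Curve_-,\Curve_0$ at a lattice point, with base case the primitive curve having no interior lattice points below it, whose braid is $\sigma_1\sigma_2\cdots\sigma_{\rectX-1}$. Your bookkeeping of how $\nseg$ and $\bop$ change across the triple, and your choice of induction parameter (points on or below $\Curve$ rather than just $\bC$), are slightly more explicit than the paper's two-line argument but amount to the same proof.
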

\begin{proof}
  The result is readily checked by induction on $\bC$. For the induction base, when $\bC=0$ and $\Curve$ is primitive, we have $\br^\Ann_\Curve=\sigma_1\sigma_2\cdots \sigma_{\rectX-1}$. Next, suppose that $\Curve_+$, $\Curve_-$, $\Curve_0$ are three curves passing above, below, and through some lattice point $\point$ as in \cref{thm:intro:skein}. The associated braids satisfy $\wop(\br^\Ann_{\Curve_+})-1=\wop(\br^\Ann_{\Curve_0})=\wop(\br^\Ann_{\Curve_-})+1$. Thus, if~\eqref{eq:wC} holds for $\Curve_-$ then it holds for $\Curve_0$ and $\Curve_+$, which gives the induction step.
\end{proof}

\begin{proposition}
Squares~\sqc1 and~\sqc2 in \cref{fig:giant} are commutative.
\end{proposition}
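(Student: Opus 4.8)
The plan is to treat the two squares separately, beginning with \sqc2, which is essentially formal. Both vertical maps \eqref{eq:Laq_to_Caq} and \eqref{eq:Laqt_to_Caqt} are plethystic substitutions of the form $F\mapsto F\!\left[\frac{a-a^{-1}}{1-t}\right]$, and \eqref{eq:Laq_to_Caq} is literally the $t=\qi$ specialization of \eqref{eq:Laqt_to_Caqt}; the two horizontal maps are also the specialization $t=\qi$. Since plethystic substitution of the $x$-variables leaves the scalar coefficients untouched and hence commutes with specialization of $t$, the square commutes as soon as we confirm that the top-right corner $\mul^k\DClimEHA\cdot1\in\Laqt$ maps to the bottom-right corner $\mul^k\PEHAC(a,q,t)$. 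This is immediate from the definitions: composing $\left[\frac{\XSym}{1-t}\right]$ from \eqref{eq:intro_FC_dfn} with the substitution $\XSym\mapsto a-a^{-1}$ from \eqref{eq:intro_PEHAC_dfn} produces exactly $p_n\mapsto\frac{a^n-a^{-n}}{1-t^n}$, i.e.\ the map \eqref{eq:Laqt_to_Caqt}, so $\DClimEHA\cdot1\mapsto\PEHAC(a,q,t)$ and \sqc2 commutes.

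For \sqc1 I would first record that all four maps are algebra homomorphisms: $\phi_\Ann$ (noted below \eqref{eq:Skp_Ann_to_Laq_twisted}), the plethysm \eqref{eq:Laq_to_Caq}, the embedding-induced map \eqref{eq:Skp_Ann_to_Disk}, and the identification \eqref{eq:L=HOMFLY} of $\Sk(\Disk)$ with $\Caq$ (which collapses each framed link to its normalized HOMFLY value after setting $v=a^{-1}$, and is multiplicative on split unions with the chosen normalization). It therefore suffices to compare the two composite maps $\Skp(\Ann)\to\Caq$ on the image $\WC^\Ann$ of an arbitrary curve. Going right-then-down sends $\WC^\Ann\mapsto(-\qq)^{-\rectX}\,\omega\Tr_\Laq(\TC)\mapsto(-\qq)^{-\rectX}\,\omega\Tr_\Laq(\TC)\!\left[\frac{a-a^{-1}}{1-\qi}\right]$, while going down-then-right sends $\WC^\Ann\mapsto\WC^\Disk$ (the disk closure of $\br^\Ann_\Curve$), and then to $a^{\wC}\PHOMLC(a,q)$ by \eqref{eq:L=HOMFLY}, using that the writhe of this closure is precisely the quantity $\wC$ computed in \eqref{eq:wC}. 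Thus \sqc1 reduces to the identity $(-\qq)^{-\rectX}\,\omega\Tr_\Laq(\TC)\!\left[\frac{a-a^{-1}}{1-\qi}\right]=a^{\wC}\PHOMLC(a,q)$.

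To prove this identity I would simplify the plethystic alphabet. A short computation gives $1-\qi=-\qqi(\qqi-\qq)$, hence $\frac{a-a^{-1}}{1-\qi}=(-\qq)\cdot\frac{a-a^{-1}}{\qqi-\qq}$; since every $s_\la$ with $\la\vdash\rectX$ is homogeneous of degree $\rectX$, rescaling the alphabet by $(-\qq)$ produces a factor $(-\qq)^{\rectX}$ that cancels the prefactor $(-\qq)^{-\rectX}$. Together with $\omega s_\la=s_{\la'}$, the target identity becomes $\sum_{\la\vdash\rectX}\Tr(\TC;V_\la)\,s_{\la'}\!\left[\frac{a-a^{-1}}{\qqi-\qq}\right]=a^{\wC}\PHOMLC(a,q)$, which is the classical expression of the HOMFLY polynomial of a braid closure as a sum of Hecke-algebra characters weighted by the colored-unknot values (quantum dimensions) $s_{\la'}\!\left[\frac{a-a^{-1}}{\qqi-\qq}\right]$. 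I would deduce it from the skein framework already in play: by \cref{thm:Tur} and \cref{prop:GoWe_coincide}, pushing $s_\la\in\Skp(\Ann)\cong\Laq$ into $\Sk(\Disk)$ and applying \eqref{eq:L=HOMFLY} yields exactly $s_\la\!\left[\frac{a-a^{-1}}{\qqi-\qq}\right]$, with the writhe factor $a^{\wC}$ being the framing normalization built into \eqref{eq:L=HOMFLY}. Equivalently, as both composites are algebra maps, it is enough to verify the identity on the algebra generators $\PMS_k$ of $\Skp(\Ann)$, where \eqref{eq:Tur_iso1} and the single power-sum colored-unknot value make the check routine.

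The main obstacle is bookkeeping rather than conceptual: reconciling the competing conventions — the $\omega$-twist and the $(-\qq)^{-\rectX}$ normalization in $\phi_\Ann$, the substitution $v=a^{-1}$ and writhe correction in \eqref{eq:L=HOMFLY}, the passage to conjugate partitions, and the discrepancy between the alphabets $\frac{a-a^{-1}}{1-\qi}$ and $\frac{a-a^{-1}}{\qqi-\qq}$ — and matching them against the normalization in the cited colored-HOMFLY/Markov-trace statement. Once these constants are aligned, both squares commute.
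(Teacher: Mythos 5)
Your proof follows the same route as the paper's (which simply records that \sqc2 commutes by construction and that \sqc1 follows from \eqref{eq:L=HOMFLY}, \eqref{eq:wC}, and the Markov-trace/character expansion of the HOMFLY polynomial of a braid closure, citing \cite[Proposition~2.3(b)]{GoWe} for the latter); you have just written out the reduction explicitly. One bookkeeping slip in that reduction: rescaling an alphabet by $-\qq$ does \emph{not} act on a degree-$\rectX$ homogeneous element by the scalar $(-\qq)^{\rectX}$ alone; rather $f[(-\qq)A]=(-\qq)^{\rectX}\,(\omega f)[A]$, since the sign acts by $\omega$ under plethysm. This extra $\omega$ cancels the one built into \eqref{eq:Skp_Ann_to_Laq_twisted}, so your reduced identity should read
\begin{equation*}
\sum_{\la\vdash\rectX}\Tr(\TC;V_\la)\,s_{\la}\!\left[\tfrac{a-a^{-1}}{\qqi-\qq}\right]=a^{\wC}\,\PHOMLC(a,q),
\end{equation*}
with $s_\la$ rather than $s_{\la'}$ (one can check on the unknot $\Curve_-$ of \cref{fig:2x2} that the $s_{\la'}$ version produces $-a^{-\wC}$ instead of $a^{\wC}$). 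This is exactly the convention-matching you flag at the end; with that correction your argument is complete and coincides with the paper's.
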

\begin{proof}
Square \sqc2 commutes by construction. The fact that square \sqc1 commutes follows by combining~\eqref{eq:L=HOMFLY}, \eqref{eq:wC}, and~\cite[Proposition~2.3(b)]{GoWe}.
\end{proof}

\begin{proof}[{Proof of \cref{prop:tqi}}] This follows from the commutative diagram \cref{fig:giant}.  The left-hand side of \eqref{eq:intro_PEHAC=HOMFLY} is the image of $\mul^k\PEHAC(a,q,t)$ under the bottom horizontal map ($t = \qi$) in square \sqc2.
\end{proof}

\section{Comparisons and specializations}\label{sec:relations}
In this section, we view the EHA element $\DXlimEHA_C$ as an operator on symmetric functions.  We first recall the formalism of \emph{plethysm}.  If $A$ is an expression in terms of some variables, we define $p_k[A]$ to be the result of substituting $a^k$ for each variable $a$, and define $f[A]$ for any $f \in \La$ by extending $p_k \mapsto p_k[A]$ to an algebra homomorphism $f \mapsto f[A]$.  Note that the symbols $q,t$ also count as variables.  By convention, the plethysm $f[\XSym]$ is obtained by taking $\XSym = x_1+x_2+ \cdots$.  Let $\phi: \Laqt \to \Laqt$ be the automorphism of symmetric functions given by the plethysm
$f \mapsto f\left[\frac{\XSym}{1-t}\right]$, and recall the involution $\omega: \Laqt \to \Laqt$.

For a monotone curve $C$, let $\DTXlimEHA_C := \phi \circ \DXlimEHA_C \circ \phi^{-1}$ denote the symmetric function operator obtained by conjugating $\DXlimEHA_C$ by $\phi$.  Thus by definition $F_C = \DTXlimEHA_C \cdot 1$.
\subsection{Shuffle conjectures}\label{sec:shuffle}
Let $C$ be the almost linear curve from $(0,0)$ to $(m,n)$; then $L_C$ is the $(m,n)$-almost torus knot.  The associated operator $\DTXlimEHA_{m,n}$ is well studied, going back to \cite{GN} (and \cite{Cherednik_Jones,AS15}) for the case $\gcd(m,n)=1$ of a torus knot.  In particular, the (compositional) rational shuffle conjecture \cite{BGLX,Mellit_rat} gives a positive, combinatorial expression for $\DTXlimEHA_{m,n} \cdot 1$:
\def\dinv{{\rm dinv}}
\def\ides{{\rm ides}}
\def\Park{{\rm Park}}
\begin{equation}\label{eq:shuffle}
\omega(\DTXlimEHA_{m,n} \cdot 1) = \sum_{\pi \in \Park_{n,m}} t^{\area(\pi)} q^{\dinv(\pi)} F_{\ides(\pi)}.
\end{equation}
Here, $\Park_{n,m}$ denotes a certain set of parking functions, and $F_{\ides(\pi)}$ denotes a fundamental quasisymmetric function.  We refer the reader to \cite{BGLX} for definitions of $\area,\dinv,\ides$.

\subsubsection{} The original shuffle conjecture \cite{HHLRU} proved in \cite{CaMe} is a combinatorial expression for the symmetric function $\nabla e_n$.  The operator $\nabla$ of~\cite{BeGa} can be interpreted in terms of the $\SL_2(\Z)$-action on $\EHA$ as the following identity of operators on $\Laqt$:
$$
\phi \circ u_{m,m+n}  \circ \phi^{-1} = (\omega \circ \nabla \circ \omega) (\phi  \circ u_{m,n} \circ \phi^{-1}) (\omega\circ\nabla \circ \omega)^{-1}.
$$

\subsubsection{} Up to a transformation $t \mapsto \ti$, and a factor of $(1-t)$ (see~\eqref{eq:ex_DASXn_11}), our $\DXlimEHA_{m,n}$ is equal to the element $P_{m,n}$ defined as a limit in \cite[Section 3]{GN} of the elements $P^N_{m,n}$ below \cite[Definition 2.5]{GN}.  

\begin{remark}
On the other hand, for an almost linear curve $\Curve$ from $(0,0)$ to $(n,n+1)$, we have $F_C = \DTXlimEHA_{n,n+1} \cdot 1 = \omega(\nabla e_n)$, in contrast to \cite[Corollary 6.5]{GN}, where $\nabla e_n$ is obtained.  This discrepancy appears to arise due to the use of \cite[Proposition 2.3]{GN}, which differs from our symmetric function actions by the automorphism $\omega$.  For instance, according to \cite[Proposition~6.4 and Corollary~6.5]{GN}, we have $\tilde P_{0,1} \cdot p_n=\tilde P_{n,1}\cdot1=e_n$. We believe that one should instead have $\tilde P_{0,1}\cdot p_n=\frac{q^n-1}{1-\ti}h_n$ and $\tilde P_{n,1}\cdot 1=\frac1{1-\ti}h_n$ in the notation of~\cite{GN}; see~\cite[Equations~(20) and~(31)]{GN}. The factor $q^n-1$ appears to be missing from~\cite[Equation~(22)]{GN}.
\end{remark}

\subsubsection{}\label{ssec:BGLX} The symmetric function operators $\mathbf{Q}_{n,m}: \Laqt \to \Laqt$ appearing in the shuffle conjectures of \cite{BGLX} are related to ours by
\begin{equation}\label{eq:Qnm}
\DTXlimEHA_{m,n} = (-1)^{m+\gcd(m,n)} \omega \circ \mathbf{Q}_{n,m} \circ \omega.
\end{equation}
Thus, \eqref{eq:shuffle} is a rephrasing of \cite[Conjecture 3.1]{BGLX}.

\subsubsection{}\label{ssec:BHMPS}

We give an explicit description of the operator $\DTXlimEHA_{m,1}$.  
Set ${M:=(1-t)(1-q)}$.  Following \cite{BHMPS}, define $\BD_k$, $k \in \Z$ by the identity 
\begin{equation}\label{eq:Dk}
\sum_{k\in\Z} (-z)^{-k}\BD_k  := \Omega[-z^{-1} \XSym]^\bullet \Omega[zM\XSym]^\perp 
\end{equation}
where for $f \in \La$, we denote by $f^\bullet$ the multiplication operator by $f$, and by $f^\perp$ the adjoint operator to $f^\bullet$ for the Hall inner product.\footnote{In~\cite{BHMPS}, the operator $\BD_k$ was denoted $D_k$. It differs from the operator denoted $\BD_k$ in~\cite[Equation~(2.1)]{BGLX} by a sign $(-1)^k$; cf. the discussion after~\cite[Equation~(35)]{BHMPS}.}
The following equality of symmetric function operators is a special case of \eqref{eq:Qnm}:
\begin{equation}\label{eq:Dm}
\DTXlimEHA_{m,1}
= \omega \circ \BD_m \circ \omega.
\end{equation}

\subsection{Positroid links}\label{sec:positroid_links}
In \cite{GL_qtcat}, we introduced positroid links $L_f$ associated to positroid varieties in the Grassmannian, or to  bounded affine permutations.  In \cite{GL_cat_combin}, we studied repetition-free positroids, which led us to the study of convex curves.  The following result follows from \cite[Section 2.1]{GL_plabic_links} where concave curves were used instead. Specifically, the curves of~\cite{GL_cat_combin,GL_plabic_links} are related to our curves by a $180^\circ$-rotation, and the associated links $\LC$ coincide. Note that in \cref{dfn:TC}, the vertical line segments are drawn above all other strands, while they are drawn below all other strands in~\cite{GL_cat_combin,GL_plabic_links} (see e.g.~\cite[Figure~1(d)]{GL_plabic_links}).

\begin{proposition}\label{prop:poslink}
Let $C$ be a convex monotone curve, possibly passing through some lattice points.  Then $L_C$ is a positroid link. 
\end{proposition}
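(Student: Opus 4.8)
The plan is to reduce the statement to the corresponding result of our earlier work~\cite{GL_plabic_links}, where the same family of links was produced from \emph{concave} curves, and then to transport that result across the $180^\circ$ rotation relating the two conventions.

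First I would recall from~\cite[Section~2.1]{GL_plabic_links} the recipe attaching to a concave curve $C'$ from $(0,0)$ to $(\rectX,\rectY)$ a link diagram on $\Tor$ closed up by vertical segments, together with the proof given there that the resulting link is the positroid link $L_f$ of the bounded affine permutation (equivalently, the positroid variety) read off from $C'$; recall also from~\cite{GL_cat_combin} that it is exactly the convex (equivalently, after rotation, concave) curves that arise in this way. This recipe agrees with the construction of $\LC$ in \cref{sec:intro:links} and \cref{dfn:TC} up to one recorded discrepancy: the closing vertical segments are drawn \emph{below} all other strands in~\cite{GL_plabic_links}, whereas in \cref{dfn:TC} they are drawn \emph{above}.

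Next I would set up the dictionary. For a convex curve $\Curve$, the rotation $\rho\colon(x,y)\mapsto(\rectX-x,\rectY-y)$ of the rectangle carries the graph of the convex function $f$ to the graph of the concave function $g(x)=\rectY-f(\rectX-x)$, that is, to a concave curve $C'=\rho(\Curve)$ passing through the $\rho$-images of the lattice points of $\Curve$. Since $\rho$ preserves the three sets of lattice points (strictly above, strictly below, on the curve) on which all constructions depend, and descends to a homeomorphism of $\Tor=\R^2/\Z^2$ fixing the corner, the natural candidate identification of the two links is the self-homeomorphism $\tilde\rho\colon(p,h)\mapsto(\rho(p),1-h)$ of $\Tor\times\I$, which I would check carries the parametrized link $\LC$ to the link of $C'$ with the height convention of \cref{sec:intro:links}.

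The main obstacle is the crossing-convention bookkeeping required to conclude that $\LC$ equals $L_f$ on the nose rather than its mirror image. The map $\tilde\rho$ is orientation-reversing on $\Tor\times\I$ (the rotation on $\Tor$ is orientation-preserving, while the height flip $h\mapsto 1-h$ is not), so a priori it reverses all crossings; this reversal must be absorbed by the difference between the ``below'' convention of~\cite{GL_plabic_links} and the ``above'' convention of \cref{dfn:TC} for the closing vertical segments, together with the self-crossing rule used there. I would carry out this verification crossing by crossing to confirm that the combined effect of the rotation and the two convention choices is the identity on link types, yielding $\LC=L_f$; the case in which $\Curve$ passes through interior lattice points then follows identically, once the components $\Link_{\Curve_i}$ are ordered compatibly with $\rho$.
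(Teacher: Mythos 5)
Your proposal is correct and follows essentially the same route as the paper, which likewise deduces the statement from~\cite[Section~2.1]{GL_plabic_links} by observing that the curves there are related to the ones here by a $180^\circ$ rotation and that the only remaining discrepancy is the above/below convention for the closing vertical segments in \cref{dfn:TC} versus~\cite{GL_cat_combin,GL_plabic_links}. The orientation and crossing-convention bookkeeping you propose to carry out is exactly the content the paper leaves implicit in asserting that the associated links coincide.
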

The positroid knots appearing in \cref{prop:poslink} coincide with the positroid knots associated to repetition-free permutations of \cite{GL_cat_combin}.  Outside of the convex class, we do not know the answer to the following question.

\begin{question}
Which monotone links are isotopic to positroid links?
\end{question}

For example, the monotone curve \figref{fig:convex-examples}(c) is not convex, however, it coincides with the positroid knot discussed in~\cite[Example~4.21]{GL_qtcat}. See \cref{ex:non_convex_positroid} for further discussion.

\subsection{Coxeter links}\label{sec:cox_links}
We relate our monotone links to the Coxeter links introduced in~\cite{ObRo}.  Consider a pair $(\AB,\EPS)$, where $\AB=(\ab_1,\ab_2,\dots,\ab_\rectX)$ is a sequence of nonnegative integers and $\EPS=(\eps_1,\eps_2,\dots,\eps_{\rectX-1})$ is a $0,1$-sequence.

 Let
 $\cox(\EPS):=\sigma_{1}^{\eps_1}\sigma_{2}^{\eps_2}\cdots \sigma_{\rectX-1}^{\eps_{\rectX-1}}$ 
be the \emph{quasi-Coxeter braid} associated to $\EPS$; cf. \cref{not:sigma_positive}. For $i=1,2,\dots,\rectX$, let $\JM_i:=\sigma_{i-1}\cdots \sigma_1 \sigma_1\cdots \sigma_{i-1}$ be a Jucys--Murphy element. By convention, $\JM_1$ is the identity braid. Finally, define 
\begin{equation}\label{eq:cox_braid_dfn}
  \betacox_{\AB,\EPS}:=\JM_1^{\ab_1}\JM_2^{\ab_2}\cdots\JM_{\rectX}^{\ab_{\rectX}} \cox(\EPS).
\end{equation}
 Braids of the form~\eqref{eq:cox_braid_dfn} are called \emph{Coxeter braids}, and their annular closures are called \emph{Coxeter links}. 

Let $\Curve$ be a curve from $(0,0)$ to $(\rectXY)$. For $i=0,1,\dots,\rectX$, let $\la_i$ be the maximal integer such that the point $(\rectX-i,\la_i)$ is weakly below $\Curve$. (In particular, $\la_0=\rectY$.) For $i=1,2,\dots,\rectX$, set $\ab_i:=\la_{i-1}-\la_{i}$ and let $\ABC:=(\ab_1,\ab_2,\dots,\ab_{\rectX})$ be the resulting sequence of nonnegative integers.  
 We also let $\SC$ be the set of $i\in\{1,2,\dots,\rectX-1\}$ such that $\Curve$ passes through a lattice point $(\rectX-i,j)$ for some $j\in\Z$. Let $\EPSC:=(\epsC_1,\epsC_2,\dots,\epsC_{\rectX-1})$ be given by $\epsC_i:=1$ if $i\notin \SC$ and $\epsC_i:=0$ otherwise.

We set $\coxC:=\cox(\EPSC)$ and $\betacoxC:=\betacox_{\ABC,\EPSC}$. 

\begin{example}
For the curve $\Curve$ in \figref{fig:cox}(a), we have $(\la_0,\la_1,\dots,\la_\rectX)=(4,3,3,2,0)$, $\ABC=(1,0,1,2)$, $\SC=\{3\}$, and $\EPSC=(1,1,0)$.
\end{example}

\begin{figure}
\begin{adjustbox}{max width=\textwidth}
\begin{tabular}{cccc}
\includegraphics[width=0.2\textwidth]{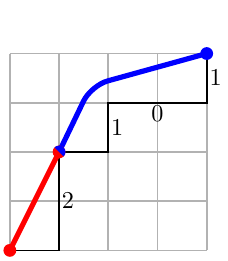}
&
\includegraphics[width=0.2\textwidth]{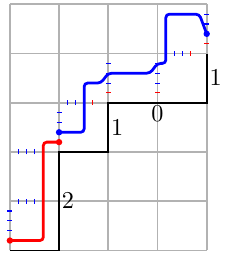}
&
\includegraphics[width=0.26\textwidth]{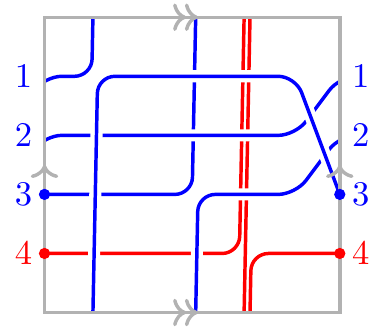}
&
\begin{tikzpicture}[baseline=(Z.base)]
\coordinate(Z) at (0,-2.15);
\node(A) at (0,0){\includegraphics[width=0.26\textwidth]{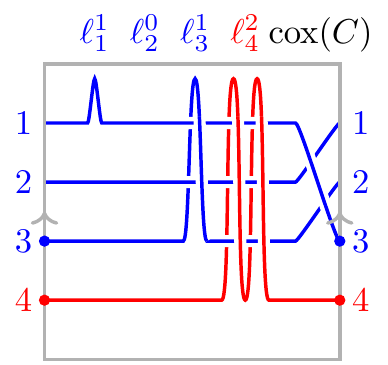}};
\end{tikzpicture}
\\
(a) 
\begin{tabular}{l}
$\ABC=(1,0,1,2)$\\
$\EPSC=(1,1,0)$
\end{tabular}
& (b) $\Curve'$ & (c) $(\Curve')^{\Tor}$ & (d) $\betacoxC:=\JM_1^{1}\JM_2^{0}\JM_3^1\JM_{4}^{2} \coxC$
\end{tabular}
\end{adjustbox}
\caption{\label{fig:cox} From monotone links to Coxeter links.}
\end{figure}
\begin{proposition}\label{prop:Coxeter=monotone}
The braid $\betacoxC$ is conjugate to the braid $\br^\Ann_\Curve$ constructed in \cref{dfn:TC}. In particular, any monotone link, when viewed as a link in $\Ann\times\Interval$, is a Coxeter link, and any Coxeter link can be obtained in this way.
\end{proposition}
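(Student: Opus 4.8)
The plan is to compute the annular braid $\br^\Ann_\Curve$ of \cref{dfn:TC} explicitly and match it with $\betacoxC$ up to conjugacy, following the pictorial recipe of \cref{fig:cox}. First I would replace $\Curve$ by a lattice-path ``staircase'' $\Curve'$ as in \figref{fig:cox}(b): a monotone path of unit up- and right-steps that passes through exactly the same lattice points as $\Curve$ and lies on the same side of every other lattice point in $[0,\rectX]\times[0,\rectY]$. Since the link diagram of \cref{sec:intro:links}, and hence $\br^\Ann_\Curve$, depends only on the partition of lattice points into those strictly above, strictly below, and on $\Curve$ (as does the Coxeter data $\ABC,\EPSC$), this replacement changes neither side of the claimed conjugacy. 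Thus it suffices to treat a staircase, and I may read its braid off the torus projection $(\Curve')^\Tor$ shown in \figref{fig:cox}(c).

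Next I would analyze $\br^\Ann_{\Curve'}$ by tracking the $\rectX$ strands, which are the $\rectX$ sheets of $\pi(\Curve')$ indexed by the columns $[i-1,i]\times\R$. As the annular coordinate sweeps one period, the sheet over the $i$-th column rises by exactly $\ab_i$ units; here $\ab_i=\la_{i-1}-\la_i$ is the number of up-steps of $\Curve'$ in that column, directly from the definition of $\la_i$ as the height of the staircase. A single full-unit rise of one sheet makes it wind once around the core and return to its slot, crossing the lower strands twice, which is precisely a Jucys--Murphy element $\JM_i$; so the vertical motion of the columns contributes the factor $\JM_1^{\ab_1}\JM_2^{\ab_2}\cdots\JM_{\rectX}^{\ab_{\rectX}}$. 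The remaining crossings come from the adjacencies between consecutive sheets as the diagram closes up: a single positive crossing $\sigma_i$ is created at column boundary $i$ unless $\Curve'$ passes through a lattice point there, i.e.\ unless $i\in\SC$. This is exactly the rule $\coxC=\sigma_1^{\epsC_1}\cdots\sigma_{\rectX-1}^{\epsC_{\rectX-1}}$ with $\epsC_i=1\iff i\notin\SC$. (I have checked against \cref{fig:TC} that this reproduces $\br^\Ann_{\Curve_+}=\sigma_1^3=\JM_2\,\sigma_1$, $\br^\Ann_{\Curve_-}=\sigma_1$, and $\br^\Ann_{\Curve_0}=\sigma_1^2=\JM_2$, matching $\betacoxC$ on the nose when $\rectX=2$.)

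Assembling these contributions, the braid read from a particular base point of the annulus equals $\betacoxC=\JM_1^{\ab_1}\cdots\JM_{\rectX}^{\ab_{\rectX}}\coxC$ \emph{up to conjugacy}. The conjugacy, rather than equality, is genuinely needed when $\rectX\ge3$: the underlying permutation of the annular braid is the cyclic shift $j\mapsto j+1$ of sheets, so the braid word depends on where one cuts the annulus open, and different admissible cut points yield conjugate words, of which the Coxeter normal form is one. For the converse ``any Coxeter link arises'' statement, given a pair $(\AB,\EPS)$ I would build the staircase with prescribed column rises $\ab_i$ that passes through a lattice point exactly at the columns $i$ with $\eps_i=0$; by the forward direction its associated braid is conjugate to $\betacox_{\AB,\EPS}$, and since $\br^\Ann_\Curve$ is the braid whose annular closure is $\LC$, this exhibits every Coxeter link as some $\LC$ viewed in $\Ann\times\Interval$.

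The main obstacle is the careful bookkeeping in the second step: correctly identifying each full-unit vertical rise with the Jucys--Murphy element $\JM_i$ (over- versus under-crossings, and the order in which a rising sheet passes the lower ones), isolating the residual Coxeter crossings, and tracking the cyclic shift of strand labels so that the final word is pinned down precisely as a conjugate of $\betacoxC$. Everything else—the reduction to staircases and the reverse construction—is routine once this braid-reading is made rigorous.
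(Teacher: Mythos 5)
Your proposal is correct and follows essentially the same route as the paper's proof: deform $\Curve$ into a perturbed staircase curve as in \figref{fig:cox}(b) and then read the resulting annular braid off as $\JM_1^{\ab_1}\cdots\JM_{\rectX}^{\ab_{\rectX}}\coxC$, with the Jucys--Murphy factors coming from the vertical rises and the quasi-Coxeter crossings from the column adjacencies. The only point to tighten is your first step: a genuine staircase of unit up- and right-steps hits lattice points at all its corners, so it must be pushed off the lattice (the paper shifts it by fractional amounts $\tfrac{i+1}{\rectX+1}$), and the reduction is justified exactly as you intend, by segment-wise endpoint-fixing isotopies avoiding lattice points together with vertical shifts of whole lattice segments.
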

\begin{proof}
This result was shown for primitive monotone curves in~\cite[Section~4.4]{GL_plabic_links}. We briefly reproduce the argument here, extending it to arbitrary curves. Let $\Curve=[\Cseg_1\Cseg_2\dots\Cseg_k]$ be a monotone curve from $(0,0)$ to $(\rectXY)$. The map $\Curve\mapsto\WC^\Tor$ described in \cref{sec:intro:links,sec:intro_skein} may be extended from monotone curves to curves from $(0,0)$ to $(\rectXY)$ whose first coordinate is strictly monotone increasing (dropping the monotonicity assumption on the second coordinate). Let us call such curves \emph{$x$-monotone}. It is easy to see that the map $\Curve\mapsto\WC^\Tor$ is invariant under applying an isotopy $\Cseg_i(t)$, $t\in\Interval$, to each segment $\Cseg_i$ of $\Curve$, such that the endpoints of $\Cseg_i(t)$ are fixed (i.e., do not depend on $t$) and for each $t$, $\Cseg_i(t)$ passes through no lattice points other than the endpoints; see also~\cite[Corollary~4.1]{GL_plabic_links}. Furthermore, we may shift each $\Cseg_i$ vertically by some vector $(0,y_i)$; the projected curve $\Csegproj_i$ will be shifted vertically; its diagram will be drawn above $\Csegproj_j$ for $j<i$ and below $\Csegproj_j$ for $j>i$, and thus such a vertical shift results in an isotopy inside $\Tor$.

Applying the above operations, we may transform each segment $\Cseg_i$ of $\Curve$ into a ``lattice path'' segment $\Cseg_i'$ as shown in \figref{fig:cox}(b). Namely, let $\piC\in\Sn$ be the permutation associated to the braid $\coxC$. For each $i=0,1,\dots,\rectX-1$, consider the part of $\Curve$ connecting $(i,y_i)$ to $(i+1,y_{i+1})$ for some $y_i<y_{i+1}$. Let $j_i:=\lfloor y_i \rfloor$ and $j_{i+1}:=\lfloor y_{i+1} \rfloor$. Let $\eps>0$ be small. Consider a new $x$-monotone curve passing through the following points: 
\begin{itemize}
\item start at $\left(i,j_i+\frac{i+1}{\rectX+1}\right)$;
\item proceed horizontally to $\left(i+1-\frac{i+1}{\rectX+1}-\eps,j_i+\frac{i+1}{\rectX+1}\right)$;
\item proceed (almost) vertically\footnote{Here we have inserted $\eps$ to keep the curve $x$-monotone.} to $\left(i+1-\frac{i+1}{\rectX+1}+\eps,j_{i+1}+\frac{i+1}{\rectX+1}\right)$;
\item proceed horizontally to $\left(i+1-\frac{1}{\rectX+1},j_{i+1}+\frac{i+1}{\rectX+1}\right)$;
\item end at $\left(i+1,j_{i+1}+\frac{\piC(i)+1}{\rectX+1}\right)$.
\end{itemize}
Let $\Curve'$ be the union of these curves for $i=0,1,\dots,\rectX-1$. By the above discussion, the projected curves $\WC^\Tor,(\Curve')^\Tor$ are related by isotopy in $\Tor$; the curve $(\Curve')^\Tor$ is shown in \figref{fig:cox}(c). Applying the map $(\Curve')^\Tor\mapsto \beta^\Ann_{\Curve'}$ from \cref{dfn:TC}, we obtain the braid $\beta^\Ann_{\Curve'}=\betacoxC$; see \figref{fig:cox}(d). Thus, the braids $\beta^\Ann_{\Curve}$ and $\betacoxC$ are conjugate.
\end{proof}

\subsection{Shuffle algebra}
We relate the operators $\DTXlimEHA_{C}$ to variants of \Negut's shuffle algebra elements.
Let $S$ denote the shuffle algebra, which we consider a subspace of $\Q(q,t)[x^{\pm 1}_1,x^{\pm 1}_2,\ldots]$, endowed with a noncommutative ``shuffle product", and an isomorphism of algebras $\psi: S \to \EHAp$; see \cite{SV13}.  The precise description of $S$ will not be important for us since we are only interested in certain distinguished elements.  The shuffle algebra has an action on $\Laqt$ compatible with $\psi$ and the action of $\EHAp$ on $\Laqt$ discussed in \cref{sec:EHA_action}, conjugated by the plethysm $\phi$.  Under this action,
$$
x_1^{a_1} \cdots x_k^{a_k} \text{ acts by } \BD_{a_1} \cdots \BD_{a_k},
$$
where $\BD_a$ is defined in \eqref{eq:Dk}.

Let $(\AB,\EPS)$ be as in \cref{sec:cox_links}.  Define the rational functions
\begin{equation*}%
  \eta'_{\AB,\EPS}=\frac{x_1^{b_1} \cdots x_k^{b_m}}{\prod_{i=2}^{m}(1-qtx_{i-1}/x_{i})^{\eps_{i-1}}}
\end{equation*}
The following result is a variant of \cite[Proposition 6.2]{Negut}.
\begin{proposition}
There are elements $\eta_{\AB,\EPS} \in S$ characterized by the equality
$$
\H(\eta_{\AB,\EPS}) = \H(\eta'_{\AB,\EPS}).
$$
where $\H$ denotes the $q,t$-symmetrization operator of \cite[Equation (46)]{BHMPS}. 
\end{proposition}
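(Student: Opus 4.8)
The plan is to deduce the statement from the structural properties of the shuffle realization together with Negut's residue calculus, adapting \cite[Proposition~6.2]{Negut} to the quasi-Coxeter weights $\EPS$ and to the normalization of the operator $\H$ from \cite{BHMPS}.

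First I would record the uniqueness, which is essentially formal. The symmetrization $\H$ is a linear map from rational functions in $x_1,\dots,x_m$ to $\Laqt$, and its restriction $\H|_S$ is injective: under $\psi\colon S\xrasim\EHAp$ and the resulting faithful action on $\Laqt$ (conjugated by $\phi$), a shuffle element is recovered from $\H$ of it, and in fact $\H|_S$ identifies each bigraded piece of $S$ with the corresponding graded piece of $\Laqt$. Consequently, once $\H(\eta'_{\AB,\EPS})$ is known to be a well-defined element of $\Laqt$, the element $\eta_{\AB,\EPS}:=(\H|_S)^{-1}\bigl(\H(\eta'_{\AB,\EPS})\bigr)$ exists and is the unique member of $S$ satisfying $\H(\eta_{\AB,\EPS})=\H(\eta'_{\AB,\EPS})$; here the bidegree of $\eta'_{\AB,\EPS}$ ($m$ variables, degree $b_1+\cdots+b_m$) lands $\H(\eta'_{\AB,\EPS})$ in the graded piece on which $\H|_S$ is onto. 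Thus the whole content is concentrated in the well-definedness of $\H(\eta'_{\AB,\EPS})$.

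Next I would address that well-definedness. The rational function $\eta'_{\AB,\EPS}$ is not itself a shuffle element: beyond the monomial numerator $x_1^{b_1}\cdots x_m^{b_m}$ it carries the ordered denominators $(1-qt\,x_{i-1}/x_i)^{\eps_{i-1}}$, which introduce apparent poles along the hyperplanes $x_i=qt\,x_{i-1}$ for the indices $i$ with $\eps_{i-1}=1$. The operator $\H$ symmetrizes $\eta'_{\AB,\EPS}$ against the Macdonald-type kernel $\prod_{i<j}\frac{x_i-qt\,x_j}{x_i-x_j}$ (up to the conventions of \cite[Equation~(46)]{BHMPS}), summing over $w\in S_m$. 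For each hyperplane $x_i=qt\,x_{i-1}$ responsible for a pole I would pair the permutation $w$ with $w s_{i-1}$ and check that their residues along that hyperplane cancel: the adjacent transposition exchanges the kernel factor $x_{i-1}-qt\,x_i$ with $x_i-qt\,x_{i-1}$, the latter being exactly the factor that cancels the offending denominator, and the surviving residue contributions are negatives of one another. Positions with $\eps_{i-1}=0$ contribute no denominator and hence no pole, so this pairing removes every spurious pole and $\H(\eta'_{\AB,\EPS})$ is a genuine symmetric function in $\Laqt$. This residue cancellation is precisely the content of \cite[Proposition~6.2]{Negut}, and I would verify that matching our conventions to his (the $qt$ in the denominators, and $\H$ in place of his symmetrizer) leaves the argument intact while permitting arbitrary $0,1$-patterns $\EPS$ rather than only the all-ones Coxeter case.

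The main obstacle is exactly this residue analysis: one must confirm, hyperplane by hyperplane, that the poles of $\eta'_{\AB,\EPS}$ are removable after symmetrization, which requires the careful pairing of permutations and the bookkeeping of kernel factors described above. Everything else—the uniqueness and the passage from the well-defined $\H(\eta'_{\AB,\EPS})$ to the shuffle element $\eta_{\AB,\EPS}$—is then formal, and a secondary, purely notational task is to reconcile the sign and parameter conventions of \cite{Negut}, \cite{BHMPS}, and the present paper so that \cite[Proposition~6.2]{Negut} applies.
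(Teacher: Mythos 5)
The paper gives no argument for this proposition beyond the remark that it is a variant of \cite[Proposition~6.2]{Negut}, so your sketch must be judged against that intended argument. Your central idea --- that the $q,t$-symmetrization removes the apparent poles of $\eta'_{\AB,\EPS}$ along the hyperplanes $x_i=qt\,x_{i-1}$ --- is the right mechanism and is indeed the heart of the \Negut-style proof. One correction to the bookkeeping, though: since the kernel of $\H$ carries the numerator factors $1-qt\,x_i/x_j$ for $i<j$, the pole of $\eta'_{\AB,\EPS}$ along $x_i=qt\,x_{i-1}$ is already cancelled \emph{inside each individual summand} by the factor $1-qt\,x_{i-1}/x_i$; the pairing of $w$ with $ws_{i-1}$ is what disposes of the poles along $x_i=x_j$ that the kernel itself introduces. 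The conclusion is unaffected, but your residue analysis attaches the cancellation to the wrong hyperplanes.

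The genuine gap is the passage from ``$\H(\eta'_{\AB,\EPS})$ is pole-free'' to the existence and uniqueness of $\eta_{\AB,\EPS}\in S$. You invoke injectivity of $\H|_S$ and surjectivity onto ``the corresponding graded piece of $\Laqt$,'' but neither is proved, and neither is automatic for the loosely specified $S$ of this section: on spaces of Laurent polynomials $\H$ has a large kernel (it annihilates most monomials --- which is precisely what makes the statement nontrivial), so ``characterized by'' is only meaningful after one identifies the exact subspace on which $\H$ is injective, and the surjectivity you assert is false as stated. The standard way to close this gap, and what ``a variant of \cite[Proposition~6.2]{Negut}'' amounts to, is constructive: expand each factor $(1-qt\,x_{i-1}/x_i)^{-\eps_{i-1}}$ as a geometric series, use the vanishing of $\H$ on all but finitely many of the resulting monomials to see that the series truncates (this truncation is exactly why $\eta_{\AB,\EPS}$ is a Laurent polynomial, as the paper notes immediately after the statement), define $\eta_{\AB,\EPS}$ as the surviving finite sum, and then verify membership in $S$ (e.g.\ via the wheel conditions) together with uniqueness at the level of $\EHAp$. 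Without some version of this step your argument only produces the symmetric expression $\H(\eta'_{\AB,\EPS})$, not an element of the shuffle algebra.
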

\noindent In particular, $\eta_{\AB,\EPS}$ are Laurent polynomials.

The following skein relation is a generalization of the second displayed equation in \cite[Proof of Proposition 6.13]{Negut}.\begin{proposition}\label{prop:shufskein}
The skein relation holds for the elements $\eta_{\AB,\EPS}$:
$$
  \eta_{\AB_{\Curve_+},\EPS_{\Curve_+}}=qt \eta_{\AB_{\Curve_-},\EPS_{\Curve_-}}+ \eta_{\AB_{\Curve_0},\EPS_{\Curve_0}}.
$$
\end{proposition}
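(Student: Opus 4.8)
The plan is to reduce the claimed skein relation for the shuffle-algebra elements $\eta_{\AB,\EPS}$ to the already-established skein relation~\eqref{eq:intro:skein} for the EHA elements $\DXlimEHA_{\Curve}$, by passing through the correspondence between monotone curves and the data $(\AB,\EPS)$ set up in \cref{sec:cox_links} and the identification of $\eta_{\AB,\EPS}$ as a shuffle-algebra lift of the operator $\DTXlimEHA_{\Curve}$. The key observation is that the three curves $\Curve_+,\Curve_-,\Curve_0$ of \cref{thm:intro:skein} produce three pairs $(\AB_{\Curve_\pm},\EPS_{\Curve_\pm})$ and $(\AB_{\Curve_0},\EPS_{\Curve_0})$ via the recipe of \cref{sec:cox_links}, and I want to show that the three elements $\eta_{\AB_{\Curve_\bullet},\EPS_{\Curve_\bullet}}\in S$ satisfy the same linear relation under $\psi:S\xrasim\EHAp$ as the $\DXlimEHA_{\Curve_\bullet}$.

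First I would make explicit what the data $(\AB,\EPS)$ looks like for the three curves near the lattice point $\point$. Since $\Curve_+,\Curve_-,\Curve_0$ agree outside a small neighborhood of $\point$, the sequences $\AB$ and $\EPS$ agree in all coordinates except those corresponding to the column(s) of $\point$. Concretely, in the local picture, $\Curve_+$ takes a $UR$ corner (passing above $\point$), $\Curve_-$ takes an $RU$ corner (passing below $\point$), and $\Curve_0$ passes through $\point$, so that the relevant exponents $b_i$ shift by one and the sign $\eps_i$ toggles exactly at the index $i$ recording the column of $\point$. I would translate this into the explicit rational functions $\eta'_{\AB,\EPS}$ of the proposition preceding this one, using the formula $\eta'_{\AB,\EPS}=\frac{x_1^{b_1}\cdots x_m^{b_m}}{\prod_{i=2}^m(1-qtx_{i-1}/x_i)^{\eps_{i-1}}}$. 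The heart of the computation is then a local identity among three such rational functions differing only in the $i$-th and $(i-1)$-th factors: the passage from $\Curve_+$ to $\Curve_-$ moves a box (changing a monomial factor $x_{i-1}/x_i$), while inserting or removing the factor $(1-qtx_{i-1}/x_i)$ accounts for $\Curve_0$. The target identity is
\begin{equation*}
  \eta'_{\AB_{\Curve_+},\EPS_{\Curve_+}} = qt\,\eta'_{\AB_{\Curve_-},\EPS_{\Curve_-}} + \eta'_{\AB_{\Curve_0},\EPS_{\Curve_0}},
\end{equation*}
which should follow from the single algebraic manipulation $x_{i-1}\cdot(1-qtx_{i-1}/x_i)^{-1} = qt\,x_i\cdot x_{i-1}/x_i\cdot(1-qtx_{i-1}/x_i)^{-1} + x_{i-1}$, i.e.\ the elementary fact that multiplying by $x_{i-1}$ versus $x_i$ is intertwined by the geometric-series factor; this is exactly the mechanism behind the ``second displayed equation'' in \cite[Proof of Proposition 6.13]{Negut} that the statement cites as its model.

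Having established the relation for the $\eta'$ representatives, I would apply the $q,t$-symmetrization operator $\H$ from \cite[Equation (46)]{BHMPS}. Since $\H$ is linear in its argument and, by the preceding proposition, satisfies $\H(\eta_{\AB,\EPS})=\H(\eta'_{\AB,\EPS})$, applying $\H$ to both sides of the $\eta'$ identity yields $\H(\eta_{\AB_{\Curve_+},\EPS_{\Curve_+}})=qt\,\H(\eta_{\AB_{\Curve_-},\EPS_{\Curve_-}})+\H(\eta_{\AB_{\Curve_0},\EPS_{\Curve_0}})$, and since $\eta_{\AB,\EPS}$ is characterized by its image under $\H$, the same relation holds for the $\eta_{\AB,\EPS}$ themselves in $S$. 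The main obstacle I anticipate is bookkeeping: correctly matching the combinatorial recipe of \cref{sec:cox_links} (which reads off $\ab_i$ from the heights $\la_i$ and $\eps_i$ from whether $\Curve$ passes through a lattice point in column $\rectX-i$) with the local modification of the curves near $\point$, so that the toggling of a single $\eps_i$ and the unit shift in the corresponding $b$-exponents are correctly aligned with the indices appearing in the denominator of $\eta'$. Once that indexing is pinned down, the algebra is the elementary partial-fractions-type identity above, and the verification that $\H$ respects the linear relation is immediate from its linearity together with the characterizing property of $\eta_{\AB,\EPS}$.
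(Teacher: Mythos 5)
Your proof follows essentially the same route as the paper's: the paper likewise reduces the claim to the identity $\eta'_{\AB_{\Curve_+},\EPS_{\Curve_+}}=qt\,\eta'_{\AB_{\Curve_-},\EPS_{\Curve_-}}+\eta'_{\AB_{\Curve_0},\EPS_{\Curve_0}}$ among the explicit rational functions and then invokes linearity of $\H$ together with the characterization of $\eta_{\AB,\EPS}$ by its $\H$-image. (Your displayed ``single algebraic manipulation'' is garbled as written—the correct local identity is $\tfrac{1}{1-u}=\tfrac{u}{1-u}+1$ with $u=qt\,x_{i-1}/x_i$, applied after noting that passing from $\Curve_+$ to $\Curve_-$ multiplies the monomial by $x_{i-1}/x_i$ and that $\Curve_0$ deletes the corresponding denominator factor—but your surrounding description of the mechanism and of the bookkeeping for $(\AB,\EPS)$ is correct.)
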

\begin{proof}
This follows from linearity of $\H$ and the equality
\begin{equation*}%
 \eta'_{\AB_{\Curve_+},\EPS_{\Curve_+}}=qt \eta'_{\AB_{\Curve_-},\EPS_{\Curve_-}}+ \eta'_{\AB_{\Curve_0},\EPS_{\Curve_0}}. \qedhere
\end{equation*}
\end{proof}

\begin{proposition}\label{prop:DCshuffle}
For a curve $C$, we have the equality of symmetric function operators
\begin{equation}\label{eq:compnegut}
\DTXlimEHA_{C} = \omega \circ \eta_{\AB_C,\EPS_C} \circ \omega.
\end{equation}
Furthermore, we have (cf.~\cite[Equation~(146)]{BHMPS})
\begin{equation*}%
F_C = \DTXlimEHA_{C}\cdot 1  = \omega (D_{\ABC,\EPSC}\cdot 1) = \H\left(\frac{x_1^{b_1} \cdots x_k^{b_m}}{\prod_{i=2}^{m}(1-qtx_{i-1}/x_{i})^{\varepsilon_{i-1}}}\right)_{\pol},
\end{equation*}
where $(\cdots)_{\pol}$ denotes the ``polynomial part" operator, removing all Laurent monomials in $x_i$ that are not polynomials; see \cite[Section 2.3]{BHMPS}.
\end{proposition}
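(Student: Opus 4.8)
The plan is to prove \eqref{eq:compnegut} by showing that both of its sides, viewed as operators on $\Laqt$, are determined by the \emph{same} two structural properties --- a skein recursion and multiplicativity under concatenation --- and then to verify the equality on the base case of almost linear curves. Write $E_\Curve:=\omega\circ\eta_{\AB_C,\EPS_C}\circ\omega$ for the operator on the right-hand side of \eqref{eq:compnegut}, where the shuffle element $\eta_{\AB_C,\EPS_C}\in S$ acts through $\psi\colon S\to\EHAp$ and the action of \cref{sec:EHA_action}.

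First I would record that $\DTXlimEHA_\Curve$ and $E_\Curve$ obey the same skein relation. Conjugating the EHA-level identity in \eqref{eq:intro:skein} by $\phi$ gives $\DTXlimEHA_{\Curve_+}=qt\,\DTXlimEHA_{\Curve_-}+\DTXlimEHA_{\Curve_0}$, while \cref{prop:shufskein}, after conjugating by the (linear) involution $\omega$, yields the identical relation $E_{\Curve_+}=qt\,E_{\Curve_-}+E_{\Curve_0}$. Next I would check multiplicativity over concatenation. For $\Curve=[\Cseg_1\cdots\Cseg_k]$, conjugating \eqref{eq:intro:prod} by $\phi$ and using that products in $\EHAp$ act as composition gives $\DTXlimEHA_\Curve=\DTXlimEHA_{\Cseg_1}\circ\cdots\circ\DTXlimEHA_{\Cseg_k}$. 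On the other side, each junction between $\Cseg_i$ and $\Cseg_{i+1}$ occurs at a lattice point, so the corresponding entry of $\EPS_C$ vanishes; hence the factor $(1-qtx_{i-1}/x_i)^{\eps_{i-1}}$ across the junction is trivial and $\eta'_{\AB_C,\EPS_C}$ splits as a product of the rational functions attached to the individual segments in disjoint sets of variables. Since such disjoint products correspond to shuffle products and $\psi$ is an algebra map, $\eta_{\AB_C,\EPS_C}=\eta_{\AB_{\Cseg_1},\EPS_{\Cseg_1}}\ast\cdots\ast\eta_{\AB_{\Cseg_k},\EPS_{\Cseg_k}}$, so $E_\Curve=E_{\Cseg_1}\circ\cdots\circ E_{\Cseg_k}$. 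By \cref{rmk:intro:skein}, the skein relation reduces any $\DTXlimEHA_\Curve$ to \PAL curves and multiplicativity reduces these to almost linear curves, so it suffices to establish \eqref{eq:compnegut} when $\Curve$ is almost linear.

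For this base case I would take $\Curve$ to be the almost linear curve from $(0,0)$ to $(m,n)$, so that $\EPS_C$ is the all-ones sequence, identify $\eta_{\AB_C,\EPS_C}$ with the \Negut shuffle element attached to $\mathbf{Q}_{n,m}$ provided by \cite[Proposition~6.2]{Negut}, and then invoke \eqref{eq:Qnm} to match $\omega\circ\eta_{\AB_C,\EPS_C}\circ\omega$ with $\DTXlimEHA_{m,n}$. I expect this to be the main obstacle: it requires carefully reconciling the conventions of \cite{Negut} (the normalization of the shuffle product, the placement of $\omega$, and the sign $(-1)^{m+\gcd(m,n)}$ in \eqref{eq:Qnm}) with ours, and matching the exponent vector $\AB_C$ read off the almost linear curve to the exponents appearing in \Negut's rational function; the special case $(m,1)$ can be cross-checked directly against \eqref{eq:Dm}.

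Finally, the ``Furthermore'' chain follows formally once \eqref{eq:compnegut} is known. Since $\omega(1)=1$, applying \eqref{eq:compnegut} to $1$ gives $F_\Curve=\DTXlimEHA_\Curve\cdot 1=\omega(\eta_{\AB_C,\EPS_C}\cdot 1)$. By the description of the shuffle action, a Laurent polynomial $\sum_{\ba}c_{\ba}\,x_1^{a_1}\cdots x_k^{a_k}$ acts as $\sum_{\ba}c_{\ba}\,\BD_{a_1}\cdots\BD_{a_k}$, so $\eta_{\AB_C,\EPS_C}$ acts on $1$ exactly as the operator $D_{\ABC,\EPSC}$, yielding the middle equality $F_\Curve=\omega(D_{\ABC,\EPSC}\cdot 1)$. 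The last equality $D_{\ABC,\EPSC}\cdot 1=\H\!\left(\eta'_{\AB_C,\EPS_C}\right)_{\pol}$ is then the content of \cite[Equation~(146)]{BHMPS}: the defining property $\H(\eta_{\AB_C,\EPS_C})=\H(\eta'_{\AB_C,\EPS_C})$ together with the \textsc{bhmps} evaluation of a product of $\BD$-operators on $1$ as the polynomial part of the symmetrized rational function.
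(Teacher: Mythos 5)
Your proposal is correct and follows essentially the same route as the paper's proof sketch: verify the base case for almost linear curves via the known shuffle-algebra/EHA comparison of \Negut and Gorsky--\Negut, observe that both sides of \eqref{eq:compnegut} satisfy the skein relation (\cref{prop:shufskein} and \eqref{eq:intro:skein}) and are multiplicative under concatenation of lattice segments, and deduce the general case; the ``Furthermore'' part is likewise obtained by acting on $1$ and invoking \cite{BHMPS}. Your write-up is somewhat more explicit than the paper's sketch (e.g.\ on why $\eta'_{\AB_C,\EPS_C}$ factors across junctions), but there is no substantive difference in strategy.
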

\begin{proof}[Proof sketch]
When $C$ is an almost linear curve, the result follows from the discussion in \cref{sec:shuffle} and known relations between the shuffle algebra and $\EHAp$; see \cite{Negut, GN}.   By \cref{prop:shufskein} and \eqref{eq:intro:skein} both sides of \eqref{eq:compnegut} satisfy the skein relation.  Finally, both sides of the stated equality are compatible with composition of operators, or concatenation of monotone curves.  It follows that \eqref{eq:compnegut} holds for all monotone curves.

The second statement follows from \cite[Proposition 3.5.2]{BHMPS}.
\end{proof}

\begin{remark}\label{rem:BHMPS}
\cref{prop:DCshuffle} implies that the operators $D_{b_1,\ldots,b_l}$ of \cite{BHMPS} are special cases of our operators $\DTXlimEHA_C$.
\end{remark}

\begin{remark}
In \cite[Proposition 6.4]{Negut}, for each $(m,n)$, certain elements in the shuffle algebra, and thus the EHA, are constructed, depending on a binary string $\delta = (\delta_1,\ldots,\delta_{d-1})$ where $d = \gcd(m,n)$.  Let $C_\delta$ be the primitive monotone curve from $(0,0)$ to $(m,n)$, staying close to the diagonal, and passing above or below a diagonal lattice point depending on whether $\delta_i = 0$ or $1$.  Then the image of \Negut's $X^\delta$ in $\EHA$ coincides with our operator $\DXlimEHA_{C_{\delta}}$.  Specifically, for $(m,n) = (d,0)$, using $\DASXn_{C_\delta}$ instead of $\DASXn_{d,0}$, we would obtain $s_{R_\delta}\left[\XSym(1-\ti)\right]$ instead of $e_d\left[\XSym(1-\ti)\right]$ in \cref{prop:DASXn_vs_Pgen}, where $R_\delta$ is a ribbon skew shape with $d$ boxes.   Note however that the operators $\DXlimEHA_{C_{\delta}}$ do not in general satisfy the positivity \cref{conj:intro:Schur_pos}.
\end{remark}

\subsection{Magic formula}

We state a slight generalization of the \emph{magic formula} of~\cite{Negut,GN} expressing the symmetric function $\FC$ as a sum over \emph{standard Young tableaux}. We follow the exposition of~\cite{GHSR}. We draw Young diagrams in French notation, so that the boxes are located in the nonnegative orthant with the bottom left box having coordinates $(1,1)$. Let $T$ be a standard Young tableau with $\rectX$ boxes. For $i=1,2,\dots,\rectX$, we set $z_i:=z_i(T):=q^{c-1}t^{r-1}$, where the box of $T$ containing $i$ has coordinates $(r,c)$. For a $\{0,1\}$-sequence $\EPS=(\eps_1,\eps_2,\dots,\eps_{\rectX-1})$ and a standard Young tableau $T$, we set
\begin{equation*}%
  \wt(T;\EPS):=\prod_{i=2}^{\rectX} \frac{1}{(1-z_i^{-1})(1-qtz_{i-1}/z_i)^{\eps_{i-1}}} 
\prod_{i<j} \frac{(1-z_i/z_j)(1-qtz_i/z_j)}{(1-qz_i/z_j)(1-tz_i/z_j)},
\end{equation*}
where the zero factors in the numerator and denominator are omitted by convention. Given a sequence $\AB=(\ab_1,\ab_2,\dots,\ab_{\rectX})$ of integers, introduce a symmetric function
\begin{equation}\label{eq:nabla_H_tilde}
  F_{\AB,\EPS}:=\sum_{T\in\SYT(\rectX)} \wt(T;\EPS) z_1^{\ab_1}z_2^{\ab_2}\cdots z_\rectX^{\ab_\rectX}  \nabla^{-1} \Ht_{\sh(T)},
\end{equation}
where the summation is over all standard Young tableaux $T$ with $\rectX$ boxes, $\sh(T)$ is the shape of $T$, $\Ht_\la$ are the modified Macdonald polynomials, and $\nabla$ is the nabla operator of~\cite{BeGa}.

When $\EPS=(1,1,\dots,1)$, Equation~\eqref{eq:nabla_H_tilde} is obtained from~\cite[Equation~(3)]{GHSR} by inserting the term $\nabla^{-1} \Ht_{\sh(T)}$. We thank E.~Gorsky for suggesting this modification to us. 

The proof of the following formula (also known as the \emph{magic formula}; see~\cite{ElHo}) may be obtained by adapting the methods in the proof of~\cite[Theorem~1.1]{GN}.
\begin{proposition}
For any curve $\Curve$, we have
\begin{equation*}%
  \FC=\omega F_{\ABC,\EPSC}.
\end{equation*}
\end{proposition}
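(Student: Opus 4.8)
The plan is to reduce to a symmetric-function identity via \cref{prop:DCshuffle} and then to evaluate it by a residue expansion in the modified Macdonald basis, adapting the argument of~\cite[Theorem~1.1]{GN}. By \cref{prop:DCshuffle} we have $\FC=\omega(\eta_{\ABC,\EPSC}\cdot 1)$; since $\omega$ is an involution, the asserted equality $\FC=\omega F_{\ABC,\EPSC}$ is equivalent to
\begin{equation*}
  \eta_{\AB,\EPS}\cdot 1=\sum_{T\in\SYT(\rectX)}\wt(T;\EPS)\,z_1^{\ab_1}\cdots z_\rectX^{\ab_\rectX}\,\nabla^{-1}\Ht_{\sh(T)},
\end{equation*}
which I would prove for an arbitrary pair $(\AB,\EPS)$ consisting of a sequence $\AB$ of nonnegative integers and a $\{0,1\}$-sequence $\EPS$ (every such pair is realized by a curve). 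The left-hand side may be computed either directly from the shuffle action of $\eta_{\AB,\EPS}$ on $1\in\Laqt$ described in \cref{sec:shuffle}, or, again by \cref{prop:DCshuffle}, as $\omega\,\H(\eta'_{\AB,\EPS})_{\pol}$.

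The core of the argument is the residue computation. Recall that the monomial $x_1^{\ab_1}\cdots x_\rectX^{\ab_\rectX}$ acts on symmetric functions by the product $\BD_{\ab_1}\cdots\BD_{\ab_\rectX}$ of the operators from~\eqref{eq:Dk}. Starting from $1=\Ht_\emptyset$ and applying these operators one at a time builds a partition box by box: the result is a sum over chains $\emptyset=\la^{(0)}\subset\la^{(1)}\subset\cdots\subset\la^{(\rectX)}$ with $|\la^{(i)}/\la^{(i-1)}|=1$, i.e.\ over standard Young tableaux $T$, where the box added at the $i$-th step records the content $z_i=q^{c-1}t^{r-1}$. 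The monomial part contributes the eigenvalue factor $z_1^{\ab_1}\cdots z_\rectX^{\ab_\rectX}$, while the transition coefficients between consecutive $\Ht$'s, after the $q,t$-symmetrization $\H$ and the extraction of the polynomial part, assemble into the single-box factors $\prod_{i\ge2}(1-z_i^{-1})^{-1}$ and the pairwise interaction product $\prod_{i<j}\frac{(1-z_i/z_j)(1-qtz_i/z_j)}{(1-qz_i/z_j)(1-tz_i/z_j)}$; this is exactly the localization that appears in~\cite{GN}.

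The only feature beyond~\cite{GN} is the denominator $\prod_{i\ge2}(1-qtx_{i-1}/x_i)^{\eps_{i-1}}$ of $\eta'_{\AB,\EPS}$. These factors are inert under the box-adding dynamics and survive the residue evaluation under the substitution $x_i\mapsto z_i$, producing exactly the factors $(1-qtz_{i-1}/z_i)^{-\eps_{i-1}}$ in $\wt(T;\EPS)$. When $\EPS=(1,\dots,1)$ the resulting identity is~\cite[Equation~(3)]{GHSR}, up to the normalization $\nabla^{-1}\Ht_{\sh(T)}$, which reconciles the $\nabla$-twist in the conventions of~\cite{GHSR} with ours (cf.\ \cref{rem:BHMPS}); the general $\EPS$ case is the stated refinement. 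Note that the skein reduction of \cref{rmk:intro:skein} does not by itself suffice here, since it reaches only piecewise almost linear curves, whose associated pairs $(\AB,\EPS)$ still carry zeros in $\EPS$ at the breakpoints; this is precisely why the general $\EPS$ must be treated by the direct residue argument rather than by reduction to the almost linear base case.

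I expect the main obstacle to be the precise bookkeeping in the residue computation: showing that the polynomial-part operator $(\,\cdot\,)_{\pol}$ is compatible with the residue expansion, verifying that the $\nabla^{-1}$ lands on the correct side of the equality, and confirming that the transition coefficients produce exactly $\wt(T;\EPS)$ (in particular the pairwise product, and no spurious hook factors). As independent consistency checks I would verify that the right-hand side is $q,t$-symmetric, using $\Ht_\la(q,t)=\Ht_{\la'}(t,q)$ as in \cref{rmk:q_t_symm}, and that it obeys the skein relation~\eqref{eq:intro:skein}, matching \cref{prop:shufskein}; the latter pins down any remaining normalization and confirms the answer for arbitrary curves once the almost linear case is in hand.
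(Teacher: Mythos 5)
Your proposal follows essentially the same route as the paper: the paper's entire proof of this proposition is the remark that the formula ``may be obtained by adapting the methods in the proof of~\cite[Theorem~1.1]{GN}'', and your reduction via \cref{prop:DCshuffle} to an iterated-residue computation in the modified Macdonald basis is precisely that adaptation. Your sketch is if anything more detailed than what the paper records, and it correctly isolates the only genuinely new ingredient, namely that the factors $(1-qtx_{i-1}/x_i)^{-\eps_{i-1}}$ pass through the symmetrization and evaluate to $(1-qtz_{i-1}/z_i)^{-\eps_{i-1}}$ under $x_i\mapsto z_i(T)$.
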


\subsection{The specialization $t = 1$.}\label{sec:specialization_t=1}
\def\coarea{{\rm coarea}}
\def\area{{\rm area}}
Let $\Pcal$ denote an up-right lattice path from $(0,0)$ to $(m,n)$.  We let $\area(\Pcal)$ denote the number of unit squares within the rectangle and below $\Pcal$.  Recall that $\PathC$ denotes the highest lattice path staying weakly below a monotone curve $C$.  Define $h_\Pcal:= h_{a_0} h_{a_1}  \cdots h_{a_n} \in \Laqt$, where the maximal horizontal portions of the path $\Pcal$ are of sizes $a_0,a_1,\ldots,a_n$.  Note that $h_\Pcal$ always has degree $m$.  The following result is a variant of \cite[Equation (148)]{BHMPS}.

\begin{proposition}\label{prop:t1}
For a monotone curve $C$, we have
\begin{equation}\label{eq:FC}
F_C|_{t = 1} = \sum_{\Pcal} q^{\area(\PathC) - \area(\Pcal)} h_{\Pcal}
\end{equation}
summed over lattice paths $\Pcal$ weakly below $C$, such that $\Pcal$ passes through all the lattice points that $C$ passes through.
\end{proposition}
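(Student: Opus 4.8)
The plan is to prove the $t=1$ specialization by combining the skein recurrence of \cref{thm:intro:skein} with the explicit computation of $\FC$ for almost linear curves, reducing everything to a combinatorial identity about lattice paths. First I would check the base case where $\Curve$ is primitive almost linear. By \cref{prop:Dd0} and the plethystic definition~\eqref{eq:intro_FC_dfn}, at $t=1$ the almost linear element produces $e_d$ evaluated in a way that, after setting $t=1$, collapses the power-sum expansion of \cref{prop:h_d_p_la_EC2} to a single complete homogeneous symmetric function. Concretely, I expect $\FX_{\Curvexo}|_{t=1}$ to equal $h_{\rectX}$-type terms summed with $q$-powers recording the area defect between the highest path $\PathC$ and each sub-path. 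This matches the right-hand side of~\eqref{eq:FC} since for a primitive curve the constraint ``$\Pcal$ passes through all lattice points on $C$'' is vacuous and the sum ranges over all up-right paths weakly below $\Curve$.

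Next I would set up the induction using the skein relation~\eqref{eq:intro:skein_F}, which at $t=1$ reads $\FX_{\Curve_+}=q\FX_{\Curve_-}+\FX_{\Curve_0}$. The key is to verify that the proposed right-hand side of~\eqref{eq:FC} satisfies the same recurrence. Fix curves $\Curve_+,\Curve_-,\Curve_0$ passing above, below, and through a lattice point $\point$, agreeing elsewhere. I would partition the set of paths $\Pcal$ weakly below $\Curve_+$ according to their behavior near $\point$: those passing strictly below $\point$ are exactly the paths weakly below $\Curve_-$, and those passing through $\point$ are exactly the paths counted for $\Curve_0$ (since $\Curve_0$ passes through $\point$, the constraint forces $\Pcal$ through $\point$). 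The remaining task is to match the $q$-weights: because $\area(\PathX_{\Curve_+})=\area(\PathX_{\Curve_0})+1$ while $\area(\PathX_{\Curve_-})=\area(\PathX_{\Curve_0})$, the normalization $q^{\area(\PathC)-\area(\Pcal)}$ produces exactly the factor of $q$ on the $\FX_{\Curve_-}$ term and leaves the $\FX_{\Curve_0}$ term unweighted, reproducing $q\FX_{\Curve_-}+\FX_{\Curve_0}$.

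For the product structure, I would invoke~\eqref{eq:intro:prod}: since $h_\Pcal$ is multiplicative over concatenation of lattice paths and $\DClimEHA$ factors as a product over lattice segments, the generating function over paths weakly below $\Curve$ respecting all lattice points of $\Curve$ factors as a product over the segments $\Cseg_i$. This requires checking that at $t=1$ the operator $\DTXlimEHA_{\Cseg_i}$ acts on a complete homogeneous symmetric function $h_r$ simply by multiplication by the corresponding path generating function — a computation I would extract from the $t=1$ degeneration of the polynomial representation of \cref{sec:DAHA_polyrep}, where the Hecke-algebra deformation trivializes and $T_i$ acts as the bare transposition $s_i$.

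The main obstacle will be the base case bookkeeping: carefully identifying the $q$-statistic $\area(\PathC)-\area(\Pcal)$ with the power of $q$ arising from $e_d[\XSym(1-\ti)]|_{t=1}$ after the plethysm $\left[\frac{\XSym}{1-t}\right]$, which is itself singular at $t=1$ and must be handled as the formal limit producing $\frac1{1-t}\to$ geometric series before specializing. I would resolve this by working with the $t=1$ limit of the unnormalized operator acting on $h_r$ directly (bypassing the singular plethysm), using that multiplication by $\Peha_{d,0}$ degenerates at $t=1$ to the multiplication-by-$p_d$-plus-lower-order structure whose iterated action on $1$ yields precisely the $q$-counting of horizontal runs of lattice paths. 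Once this degeneration is pinned down, the induction and multiplicativity are routine, and the result follows.
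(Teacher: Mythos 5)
Your overall architecture --- an explicit base case plus induction via the skein relation \cref{thm:intro:skein}, with the inductive step reduced to checking that the right-hand side of \eqref{eq:FC} satisfies the same recurrence --- is the same engine the paper uses, but you organize the induction differently. The paper takes as base case \emph{all} primitive monotone curves, quoting the rational shuffle theorem \eqref{eq:shuffle} and \cite[Equation~(148)]{BHMPS} via \cref{rem:BHMPS}, and then inducts \emph{upward} on the number of lattice points $C$ passes through: $F_{\Curve_0}$ is solved for in terms of $F_{\Curve_\pm}$, so no product formula for $F_C$ is ever needed. You instead take only almost linear curves as the base case and build up PAL curves by multiplicativity before running the skein reduction downward. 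That forces you to supply an extra ingredient the paper's route avoids: \eqref{eq:intro:prod} is an identity of \emph{operators}, and the paper explicitly warns that it does not descend to the symmetric functions $\ucb\cdot 1$; to get $F_{[\Cseg_1\cdots\Cseg_k]}|_{t=1}=\prod_i F_{\Cseg_i}|_{t=1}$ you need that $\DTXlimEHA_{C}|_{t=1}$ acts by multiplication by $F_C|_{t=1}$ (the paper cites \cite[Proposition~4.9]{KiTs} for this, in a remark). Your sketch of that fact (``$T_i$ becomes $s_i$'') is not yet a proof, precisely because of the $0\cdot\infty$ cancellation between $\gam=1-t^\N$ and the plethysm $\left[\XSym/(1-t)\right]$ that you yourself flag; cite the reference or carry out the limit carefully. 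You should also state the small combinatorial fact that makes $h_\Pcal$ multiplicative over concatenation: any admissible path must enter each interior lattice point $\point_i$ of $C$ by an up step (the lattice point immediately to its left lies strictly above the segment ending at $\point_i$), so horizontal runs never merge across segment boundaries; otherwise $h_{\Pcal_1\Pcal_2}\neq h_{\Pcal_1}h_{\Pcal_2}$ and the product formula would fail.

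There is also a concrete error in your area bookkeeping for the inductive step. The highest paths satisfy $\PathX_{\Curve_+}=\PathX_{\Curve_0}$ as lattice paths (both do $U$ then $R$ at $\point$), while $\PathX_{\Curve_-}$ replaces that $UR$ by $RU$; hence $\area(\PathX_{\Curve_+})=\area(\PathX_{\Curve_0})=\area(\PathX_{\Curve_-})+1$, not $\area(\PathX_{\Curve_+})=\area(\PathX_{\Curve_0})+1$ and $\area(\PathX_{\Curve_-})=\area(\PathX_{\Curve_0})$ as you wrote. With your stated relations the splitting of the sum would yield $q\,F_{\Curve_0}+q\,F_{\Curve_-}$, contradicting the skein relation; with the correct relations it yields $F_{\Curve_0}+q\,F_{\Curve_-}$, which is the conclusion you assert. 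The step is salvageable by a one-line correction, but as written the justification contradicts the claim it is meant to establish.
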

\begin{proof}
When $C$ is an almost linear curve, this follows from the shuffle conjecture (i.e., the right-hand side of \eqref{eq:shuffle}).  More generally, when $C$ is a primitive monotone curve, the statement follows from \cite[Equation (148)]{BHMPS} and \cref{rem:BHMPS}, after translating their notation into ours.  Finally, it is straightforward to see that if \eqref{eq:FC} holds for two of the curves in \eqref{eq:intro:skein_F}, then the skein relation implies that it holds for the third.  This proves \eqref{eq:FC} for all monotone curves by induction on the number of lattice points that $C$ passes through.
\end{proof}

\begin{remark}
It follows from \cite[Proposition 4.9]{KiTs} that $\DTXlimEHA_C|_{t=1}$ acts on $\Laqt$ by multiplication by $F_C|_{t=1}$.  We thank Eugene Gorsky for pointing out this reference.
\end{remark}
%
%
%
%
%
%
%

\begin{corollary}
The coefficient of $s_m$ in $F_C|_{t=q=1}$ is equal to the number of lattice paths $\Pcal$ weakly below $C$, such that $\Pcal$ passes through all the lattice points that $C$ passes through.
\end{corollary}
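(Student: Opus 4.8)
The plan is to obtain this as an immediate consequence of \cref{prop:t1} by specializing $q=1$ and then extracting the coefficient of $s_m$. First I would set $q=1$ in \eqref{eq:FC}. Since every summand carries the factor $q^{\area(\PathC)-\area(\Pcal)}$, which becomes $1$ at $q=1$ regardless of the exponent, this collapses the formula to $F_C|_{t=q=1}=\sum_{\Pcal} h_{\Pcal}$, summed over exactly the same family of lattice paths $\Pcal$ weakly below $C$ that pass through every lattice point of $C$.

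Next I would analyze the coefficient of $s_m=s_{(m)}$ in a single summand $h_{\Pcal}=h_{a_0}h_{a_1}\cdots h_{a_n}$, where $a_0,\dots,a_n$ are the sizes of the maximal horizontal portions of $\Pcal$. Because $h_0=1$ and the $h_k$ pairwise commute, I can rewrite $h_{\Pcal}=h_\la$, where $\la\vdash m$ is the partition obtained by discarding the zero parts among the $a_i$ and sorting the remaining ones into decreasing order. I would then invoke the standard fact that, under the Hall inner product, $\langle h_\la, s_\mu\rangle=K_{\mu\la}$, the Kostka number counting semistandard Young tableaux of shape $\mu$ and content $\la$; hence the coefficient of $s_{(m)}$ in $h_\la$ equals $K_{(m),\la}$.

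The decisive observation is that $K_{(m),\la}=1$ for every partition $\la$ of $m$: a semistandard filling of the single row of length $m$ with content $\la$ is forced to be weakly increasing, so it is the unique arrangement $1^{\la_1}2^{\la_2}\cdots$. Consequently each admissible path $\Pcal$ contributes exactly $1$ to the coefficient of $s_m$ in $\sum_{\Pcal} h_{\Pcal}$, and summing over all such paths shows that this coefficient equals the number of lattice paths $\Pcal$ weakly below $C$ passing through all the lattice points of $C$, as claimed.

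I do not expect a genuine obstacle here, as the statement is essentially a direct corollary of \cref{prop:t1}. The only points requiring minor care are the reduction of the composition $(a_0,\dots,a_n)$ to a genuine partition before applying the Kostka expansion (using $h_0=1$ to drop the empty horizontal runs), and confirming that $s_m$ denotes the one-row Schur function $s_{(m)}$ so that the relevant Kostka numbers are indeed $K_{(m),\la}$.
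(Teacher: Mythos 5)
Your proof is correct and is exactly the intended (and essentially only) argument: the paper states this as an immediate consequence of \cref{prop:t1} without a written proof, and your route --- set $q=1$, reduce each $h_{\Pcal}$ to $h_\la$ for a partition $\la\vdash m$, and use $\langle h_\la, s_{(m)}\rangle = K_{(m),\la}=1$ --- is the standard way to make that implication explicit.
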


In view of \cref{conj:intro:Schur_pos}, we have the following problem.
\begin{problem}
For a primitive $\Z$-convex curve $C$, find a statistic $\dinv$ on lattice paths so that the coefficient of $s_m$ in $F_C$ is given by
$$
\langle s_m, F_C \rangle = 
\sum_\Pcal q^{\area(\PathC) - \area(\Pcal)} t^{\dinv(\Pcal)}
$$
summed over lattice paths $\Pcal$ weakly below $C$.
\end{problem}

\subsection{The specialization \texorpdfstring{$t = \qi$}{t=1/q}.}

At $t = \qi$, we do not have a combinatorial formula for $F_C$.  However, we record the following consequence of \cref{fig:giant}, giving the precise meaning of $F_C$ in terms of the skein of the annulus.  

\begin{proposition}
Under the isomorphism $\phi_{\Ann}:\Skp(\Ann)\xrasim\Laq$ defined in~\eqref{eq:Skp_Ann_to_Laq_twisted}, the image of $\WC^\Ann$ is equal to $\left(\qqi-\qq\right)^{-\nseg(\Curve)} F_C|_{t = \qi}[\XSym(1-\qi)]$.
\end{proposition}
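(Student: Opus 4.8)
The plan is to reduce the statement to the already-established commutativity of square \sqb1 (equation~\eqref{eq:DClimEHA_vs_Tr_TC}) together with an elementary computation in the plethystic calculus. First I would unwind the definitions: since $\WC^\Ann$ is, by \cref{dfn:TC}, the image of $\TC=T_{\br^\Ann_\Curve}\in\Heckeq_\rectX$ under the identification~\eqref{eq:Skp_Ann_to_Hecke}, and $\br^\Ann_\Curve$ is an $\rectX$-strand braid, the definition~\eqref{eq:Skp_Ann_to_Laq_twisted} of $\phi_\Ann$ gives at once
\[
\phi_\Ann(\WC^\Ann)=(-\qq)^{-\rectX}\,\omega\Tr_{\Laq}(\TC).
\]
Next I would invoke~\eqref{eq:DClimEHA_vs_Tr_TC}, which identifies the right-hand side with $\left(\qqi-\qq\right)^{-\nseg(\Curve)}\DClimEHA\cdot1|_{t=\qi}$. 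Thus the proposition is reduced to the purely plethystic identity
\[
\FC|_{t=\qi}[\XSym(1-\qi)]=\DClimEHA\cdot1|_{t=\qi}.
\]

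For this last step I would argue as follows. By the definition~\eqref{eq:intro_FC_dfn} we have $\FC=(\DClimEHA\cdot1)\!\left[\frac{\XSym}{1-t}\right]$, so specializing $t=\qi$ yields $\FC|_{t=\qi}=(\DClimEHA\cdot1)|_{t=\qi}\!\left[\frac{\XSym}{1-\qi}\right]$. Viewing the two substitutions as $\C(\qq)$-algebra endomorphisms of $\Laq$, the map $\left[\frac{\XSym}{1-\qi}\right]$ sends $p_k\mapsto\frac{p_k}{1-q^{-k}}$ while $[\XSym(1-\qi)]$ sends $p_k\mapsto(1-q^{-k})p_k$; since each fixes the coefficient field $\C(\qq)$, their composite acts as the identity on every power sum, so the two are mutually inverse automorphisms of $\Laq$. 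Applying $[\XSym(1-\qi)]$ therefore cancels $\left[\frac{\XSym}{1-\qi}\right]$ and returns $\DClimEHA\cdot1|_{t=\qi}$, which is exactly the required identity. Combining the three displays completes the proof.

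The place I would be most careful—and the only genuine subtlety—is the bookkeeping of the plethystic normalizations. One must check that the specialization $t=\qi$ commutes with the plethysm $\left[\frac{\XSym}{1-t}\right]$ (it does, because $q,t$ enter only through the scalar factors $\frac{1}{1-t^k}$, which are unaffected as power sums are transformed), and that the variable $q$ appearing in the alphabet $\XSym(1-\qi)$ is consistently the same $q$ occurring in the coefficients of $\DClimEHA\cdot1|_{t=\qi}$, so that the cancellation $\frac{1-q^{-k}}{1-q^{-k}}=1$ is legitimate. As an independent verification that the factor $\omega$ is correctly accounted for, one can instead run the computation directly from~\eqref{eq:*}: composing $\left[\frac{\XSym}{\qqi-\qq}\right]$ with $[\XSym(1-\qi)]$ sends $p_k\mapsto -\qqi^{\,k}p_k$, and on the degree-$\rectX$ element $\Tr_{\Laq}(\TC)$ this equals $(-\qqi)^{\rectX}g[-\XSym]=(-\qq)^{-\rectX}\omega\Tr_{\Laq}(\TC)$ via the identity $\omega g=(-1)^{\deg g}g[-\XSym]$, reproducing $\phi_\Ann(\WC^\Ann)$ and matching the first computation.
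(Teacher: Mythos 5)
Your proposal is correct and is essentially the argument the paper intends: the proposition is recorded as a direct consequence of the commutative diagram in \cref{fig:giant}, and your proof simply unwinds that diagram by combining the definition~\eqref{eq:Skp_Ann_to_Laq_twisted} of $\phi_\Ann$, the square-\sqb1 identity~\eqref{eq:DClimEHA_vs_Tr_TC}, and the mutual inverseness of the plethysms $\left[\frac{\XSym}{1-\qi}\right]$ and $[\XSym(1-\qi)]$. Your cross-check via~\eqref{eq:*}, including the bookkeeping $p_k\mapsto -q^{-k/2}p_k$ and the identity $\omega g=(-1)^{\deg g}g[-\XSym]$ on the degree-$\rectX$ element $\Tr_{\Laq}(\TC)$, is also correct.
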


\section{$\Z$-convex \coaxial links are algebraic}\label{sec:multitorus}
\def\top{{\rm top}}
\def\lk{{\rm lk}}
\def\alg{{\rm alg}}
We shall use the language of iterated torus cables and splice diagrams, and refer the reader to \cite{EN} for a thorough treatment.  
The \emph{$(\rectX,\rectY )$-torus cable} of a knot $K$ (or of a component $K$ of a link $L$) in $\R^3$ is defined as follows.  The boundary of a tubular neighborhood $N(K)$ of $K$ in $\R^3$ is a torus $S = \partial N(K)$. The \emph{topological longitude} $L_\top$ and \emph{meridian} $M$ are the simple closed curves on $S$ characterized (up to isotopy) by the following relations (see \cite[p.21]{EN}):
\begin{alignat*}{4}
M &\sim 0 &\quad\text{ and }\quad&& L_\top &\sim K  &&\quad\text{ in } H_1(N(K));\\
\lk(M,K)&=1  &\quad\text{ and }\quad&& \lk(K,L_\top)&=0,
\end{alignat*}
where $\lk(A,A')$ denotes the linking number of the knots $A,A'$.  By definition, the \emph{$(\rectX,\rectY )$-cable} of $K$ is obtained by replacing $K$ by the simple closed curve $\gamma$ on $S$ satisfying $\gamma \sim \rectX L_\top + \rectY M$.

Let $(\rectX ,\rectY )$ be a pair of positive integers and with $d = \gcd(\rectX ,\rectY )$, we write $\rectX  = dp$ and $\rectY  = dq$.  Let $T'_{(\rectX ,\rectY )}$ be the $(d,dpq+1)$-cable of the torus knot $T_{p,q}$.  It has the \emph{splice diagram}:
\def\ooplus(#1,#2){
\node[scale=0.8,draw,circle,fill=white,inner sep=0.5pt](OP#1#2) at (#1,#2) {$+$};
}

\tikzset{TStyle/.style={inner sep=1pt}}

$$
\begin{tikzpicture}
\draw (0,0)--(2,0)--(4,0);
\draw[-latex] (4,0)--(6,0);
\draw (2,0)--(2,-1);
\draw (4,0)--(4,-1);
\draw[black,fill=black] (0,0) circle (.4ex);
\draw[black,fill=black] (2,-1) circle (.4ex);
\draw[black,fill=black] (4,-1) circle (.4ex);
\ooplus(2,0)
\ooplus(4,0)
\node[TStyle,anchor=south east] at (OP20.180) {$\scriptstyle q$};
\node[TStyle,anchor=south west] at (OP20.0) {$\scriptstyle 1$};
\node[TStyle,anchor=north west] at (OP20.-90) {$\scriptstyle p$};
\node[TStyle,anchor=south east] at (OP40.180) {$\scriptstyle dpq+1$};
\node[TStyle,anchor=south west] at (OP40.0) {$\scriptstyle 1$};
\node[TStyle,anchor=north west] at (OP40.-90) {$\scriptstyle d$};
\end{tikzpicture}
$$
The $\begin{tikzpicture}[baseline=(Z.base)]
\coordinate(Z) at (0,-0.1);
\ooplus(0,0)
\end{tikzpicture}$ vertices are called \emph{nodes}; the remaining vertices are called \emph{leaves}.  Roughly speaking, each node represents a 3-manifold, and each edge incident to a node represents a 2-torus boundary component.  An arrowhead leaf represents a component of a link, and the edge incident to it represents the 2-torus obtained as the boundary of its tubular neighborhood.  A bulletpoint leaf represents a 2-torus boundary component that has been filled in with a solid torus.

Recall the definition of an almost torus knot from \cref{rmk:intro_torus_link_knot}.
\begin{lemma}\label{lemma:cabling}
Let $C$ be an almost linear curve from $(0,0)$ to $(\rectX ,\rectY )$.  Then the $(\rectX ,\rectY )$-almost torus knot $L_C$ is isotopic to the knot $T'_{(\rectX ,\rectY )}$.
\end{lemma}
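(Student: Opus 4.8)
The plan is to realize $L_C$ explicitly as a simple closed curve on the boundary torus of a tubular neighborhood of the torus knot $K := T_{p,q}$ and to identify its class there as $d[L_\top] + (dpq+1)[M]$; this is exactly the $(d,dpq+1)$--cable $T'_{(\rectX,\rectY)}$ whose splice diagram is displayed above. Everything will reduce to a single framing computation, namely that the meridional coefficient equals $dpq+1$.

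First I would set up the companion. Since $C$ is a primitive almost linear curve (\cref{dfn:intro_AL}), it lies in a thin tubular neighborhood of the diagonal segment $D$ from $(0,0)$ to $(\rectX,\rectY) = d\,(p,q)$. Projecting to the standardly embedded torus $\Tor \subset \R^3$, the segment $D$ covers the $(p,q)$--torus knot $K$ exactly $d$ times, and the neighborhood of $D$ maps into an annular neighborhood of $K$ in $\Tor$; thickening and taking the solid torus $N(K)$, we get $L_C \subset N(K)$. By construction $L_C$ is connected and runs $d$ times longitudinally, so after an isotopy inside $N(K)$ pushing it onto $S := \partial N(K)$ it becomes a $(d,e)$--torus curve, i.e.\ $[L_C] = d[L_\top] + e[M] \in H_1(S)$, and it remains to compute $e$. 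On the braid side the same satellite is visible through \cref{prop:Coxeter=monotone}: for primitive $C$ we have $\SC = \emptyset$, so $\br^\Ann_\Curve$ is conjugate to $\betacoxC = \JM_1^{\ab_1}\cdots\JM_\rectX^{\ab_\rectX}\,\sigma_1\cdots\sigma_{\rectX-1}$, a description I would use to fix signs.

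The framing computation is the crux. The baseline is the straight segment $D$, whose projection is the $(\rectX,\rectY)$--torus link: $d$ surface--parallel copies of $K$, sitting at the $d$ distinct levels of the $\Interval$--direction used to stack the components. Since the surface framing of the $(p,q)$--torus knot equals $pq$, these $d$ copies contribute $dpq$ meridians. It remains to show the almost linear perturbation adds exactly one more. I would argue this locally at the corner $\pi(0,0)=\pi(\rectX,\rectY)$, where all diagonal lattice points $(kp,kq)$ project: the $d$ strands meet a meridian disk of $N(K)$ in $d$ points equally spaced in the meridional direction, and the rule ``pass just above the diagonal'' (the shift $(-\eps,+\eps)$ of \eqref{eq:intro_WC_PTor}) reconnects them by a cyclic shift of one position. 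This is a meridional advance of $1/d$ per longitudinal pass, accumulating to $+1$ over the $d$ passes, so $e = dpq+1$; note $\gcd(d,dpq+1)=1$, consistent with the cyclic shift generating all $d$ positions and with $L_C$ being a single curve. A rigorous version of this count can be extracted from the writhe $\wop(\br^\Ann_\Curve) = (\rectX-1) + 2\bC$ of \eqref{eq:wC} by subtracting the framing contribution of the Coxeter factor $\sigma_1\cdots\sigma_{\rectX-1}$.

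The main obstacle is precisely this last step: making the ``$+1$'' rigorous while tracking orientation and embedding conventions, so that one lands on $T_{p,q}$ (and not $T_{q,p}$) with the correct sign of the framing. As a check of conventions, take $(\rectX,\rectY)=(2,2)$, so $d=2$, $p=q=1$ and $K=T_{1,1}$ is an unknot; the formula gives the $(2,\,2\cdot1\cdot1+1)=(2,3)$--cable of the unknot, i.e.\ the trefoil, matching $L_{\Curve_+}$ in the running example of \cref{sec:intro_example}. Once $e = dpq+1$ is established, the identification $L_C \cong T'_{(\rectX,\rectY)}$ is immediate, since a $(d,e)$--curve on $\partial N(T_{p,q})$ is by definition the $(d,e)$--cable of $T_{p,q}$.
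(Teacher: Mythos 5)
Your proposal is correct and follows essentially the same route as the paper: both realize $L_C$ as a simple closed curve on $S=\partial N(T_{p,q})$ and compute its class as $dL_\top+(dpq+1)M$, with your ``$d$ surface-parallel copies contribute $dpq$ meridians'' matching the paper's $L_\alg\sim L_\top+pqM$ and your ``cyclic shift by one position at the corner'' matching the paper's observation that $C$ is a $\sigma_1\cdots\sigma_{d-1}$-decoration of $\Curve_{p,q}$ whose closure is a $(d,1)$-torus knot, giving $dL_\alg+M$.
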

\begin{proof}
  Let $S = \partial N(T_{p,q})$ denote the boundary of a tubular neighborhood of the torus knot $T_{p,q}$.  Recall that we have defined curves $L_\top$ and $M$ on $S$. Consider the torus $\Tor=\R^2/\Z^2$ equipped with a standard embedding $\Tor\hookrightarrow\R^3$. Viewing $T_{p,q}\subset\Tor$ as the projection of a straight line segment $\Curve_{p,q}$ from $(0,0)$ to $(p,q)$, the projection of $\Curve_{p,q}+(-\eps,\eps)$ can also be viewed as a longitude $L_\alg$ on $S$, called the \emph{algebraic longitude}.  The linking number of $L_\alg$ and $S$ is given by $\lk(S, L_\alg) = pq$, and thus $L_\alg \sim L_\top + pq M$ (cf. \cite[Proof of Proposition 1A.1 on p.52]{EN}).  %
By definition, $T'_{\rectX ,\rectY }$ is (isotopic to) the simple closed curve on $S$ satisfying $T'_{\rectX ,\rectY } \sim dL_\top + (dpq+1)M$.

Recall from \cref{sec:SQA} that $\Curve$ is a $\sigma_1\sigma_2\cdots\sigma_{d-1}$-decoration of $\Curve_{p,q}$. The annular closure of the braid $\sigma_1\sigma_2\cdots\sigma_{d-1}$ is a $(d,1)$-torus knot, which we may view as embedded in $S$. 
 Thus, the projection of $C$ to $S$ is isotopic to a curve on $S$ with homology class $C \sim dL_\alg + M \sim dL_\top + (dpq+1)M$, as required.
\end{proof}

Using \cref{lemma:cabling}, we can construct a splice diagram of $\LC$ for an arbitrary \PAL curve $\Curve$. Let $(\rectX _1,\rectY _1),\ldots,(\rectX _r,\rectY _r)$ be a sequence of pairs of positive integers.  Let $\Tor_1,\Tor_2,\ldots,\Tor_r$ be tori in $\R^3$ with the same central circle $S^1$, and strictly decreasing radii.  Place the almost torus knot $T_{(\rectX _i,\rectY _i)}$ on $\Tor_i$.  The \coaxial link $T_{(\rectX _1,\rectY _1),\ldots,(\rectX _r,\rectY _r)}$ is the $r$-component link in $\R^3$ obtained as the union of all these almost torus knots.  In particular, if $\gcd(p,q)=1$, the $r$-component link $T_{(p,q),\ldots,(p,q)}$ is the torus link $T_{rp,rq}$.  Note that if $\rectX _i/\rectY _i = \rectX _{i+1}/\rectY _{i+1}$ then swapping the order of $(\rectX _i,\rectY _i)$ and $(\rectX _{i+1},\rectY _{i+1})$ results in an isotopic link.

\begin{figure}
\begin{tabular}{ccc}

\begin{tikzpicture}[baseline=(Z.base)]
\coordinate(Z) at (0,-5);
\draw (2,1)--(2,0)--(4,0);
\draw[-latex] (4,0)--(6,0);
\draw[-latex] (4,-2)--(6,-2);
\draw[-latex] (2,0)--(4,2);
\node[TStyle,anchor=west,inner sep=10pt] (A) at (4,2) {$\scriptstyle(d_1=1)$};
\draw (2,0)--(2,-1);
\draw (2,0)--(4,-2);
\draw (4,0)--(4,-1);
\draw (4,-2)--(4,-3);
\draw[black,fill=black] (4,-1) circle (.4ex);
\draw[black,fill=black] (4,-3) circle (.4ex);
\ooplus(2,0)
\ooplus(4,0)
\ooplus(4,-2)
\def\oneisp{5pt}
\node[TStyle,anchor=south east] at (OP20.90)  {$\scriptstyle q$};
\node[TStyle,anchor=north east] at (OP20.-90)  {$\scriptstyle p$};
\node[TStyle,anchor=-120,inner sep=\oneisp] at (OP20.60)  {$\scriptstyle 1$};
\node[TStyle,anchor=-150,inner sep=\oneisp] at (OP20.0)  {$\scriptstyle 1$};
\node[TStyle,anchor=165,inner sep=\oneisp] at (OP20.-45)  {$\scriptstyle 1$};

\node[TStyle,anchor=125,inner sep=5pt,scale=1.2] at (OP20.-90)  {\rotatebox{30}{$\scriptstyle \dots$}};

\node[TStyle,anchor=south east] at (OP40.180)  {$\scriptstyle d_2pq+1$};
\node[TStyle,anchor=south west] at (OP40.0)   {$\scriptstyle 1$};
\node[TStyle,anchor=north west] at (OP40.-90)  {$\scriptstyle d_2$};

\node[TStyle,anchor=-5,inner sep=5pt] at (OP4-2.135)  {$\scriptstyle d_3pq+1$};
\node[TStyle,anchor=south west] at (OP4-2.0)  {$\scriptstyle 1$};
\node[TStyle,anchor=north west] at (OP4-2.-90)  {$\scriptstyle d_3$};

\end{tikzpicture}
&
\hspace{0.2in}
&
\begin{tikzpicture}
\draw (2,1)--(2,0)--(4,0);
\draw[-latex] (4,0)--(6,0);
\draw[-latex] (4,-2)--(6,-2);
\draw[-latex] (2,0)--(4,2);
\draw[-latex] (4,-4)--(6,-4);
\draw[-latex] (2,-4)--(4,-6);
\draw[-latex] (2,-4)--(4,-7);
\draw (2,0)--(2,-4);
\draw (2,-4)--(2,-5);
\draw (2,0)--(4,-2);
\draw (4,0)--(4,-1);
\draw (4,-2)--(4,-3);
\draw (2,-4)--(4,-4);
\draw (4,-4)--(4,-5);
\draw[black,fill=black] (2,-5) circle (.4ex);
\draw[black,fill=black] (2,1) circle (.4ex);
\draw[black,fill=black] (4,-1) circle (.4ex);
\draw[black,fill=black] (4,-3) circle (.4ex);
\draw[black,fill=black] (4,-5) circle (.4ex);
\ooplus(2,0)
\ooplus(4,0)
\ooplus(4,-2)
\ooplus(2,-4)
\ooplus(4,-4)
\def\oneisp{5pt}
\node[TStyle,anchor=south east] at (OP20.90)  {$\scriptstyle 2$};
\node[TStyle,anchor=north east] at (OP20.-90)  {$\scriptstyle 3$};
\node[TStyle,anchor=-120,inner sep=\oneisp] at (OP20.60)  {$\scriptstyle 1$};
\node[TStyle,anchor=-150,inner sep=\oneisp] at (OP20.0)  {$\scriptstyle 1$};
\node[TStyle,anchor=165,inner sep=\oneisp] at (OP20.-45)  {$\scriptstyle 1$};

\node[TStyle,anchor=south east] at (OP40.180)  {$\scriptstyle 13$};
\node[TStyle,anchor=south west] at (OP40.0)   {$\scriptstyle 1$};
\node[TStyle,anchor=north west] at (OP40.-90)  {$\scriptstyle 2$};

\node[TStyle,anchor=south east] at (OP4-2.180)  {$\scriptstyle 13$};
\node[TStyle,anchor=south west] at (OP4-2.0)   {$\scriptstyle 1$};
\node[TStyle,anchor=north west] at (OP4-2.-90)  {$\scriptstyle 2$};

\node[TStyle,anchor=south east] at (OP2-4.90)  {$\scriptstyle 5$};
\node[TStyle,anchor=north east] at (OP2-4.-90)  {$\scriptstyle 2$};
\node[TStyle,anchor=-150,inner sep=\oneisp] at (OP2-4.0)  {$\scriptstyle 1$};
\node[TStyle,anchor=165,inner sep=\oneisp] at (OP2-4.-45)  {$\scriptstyle 1$};
\node[TStyle,anchor=115,inner sep=\oneisp] at (OP2-4.-80)  {$\scriptstyle 1$};

\node[TStyle,anchor=south east] at (OP4-4.180)  {$\scriptstyle 31$};
\node[TStyle,anchor=south west] at (OP4-4.0)   {$\scriptstyle 1$};
\node[TStyle,anchor=north west] at (OP4-4.-90)  {$\scriptstyle 3$};

\end{tikzpicture}
\\
(a) $(\rectX _{i+j},\rectY _{i+j})=(d_jp,d_jq)$, $j=1,2,\dots,s$ & &(b) $T_{(3,2),(6,4),(6,4),(6,15),(2,5),(2,5)}$
\end{tabular}
  \caption{\label{fig:splice}Constructing the splice diagram of a \coaxial link.}
\end{figure}

The splice diagram of a \coaxial link $T_{(\rectX _1,\rectY _1),\ldots,(\rectX _r,\rectY _r)}$ is given as follows.  Suppose that $\rectX _{i+1}/\rectY _{i+1} = \rectX _{i+2}/\rectY _{i+2} = \cdots = \rectX _{i+s}/\rectY _{i+s} = p/q$, where $i,s$ are chosen so that $s\geq1$ is maximal and $\gcd(p,q)=1$.  Define $d_1,\ldots,d_s$ so that $(\rectX _{i+j},\rectY _{i+j}) = (d_jp,d_jq)$ for $j=1,2,\dots,s$.
Then the splice diagram will contain $1+ \#\{j  \mid d_j > 1\} $ nodes, arranged as in \figref{fig:splice}(a). 
We place a node $N$
 in the left column incident to $s+2$ edges. For each $j=1,2,\dots,s$, if $d_j=1$ then we add an edge connecting $N$ to an arrowhead leaf in the right column. Otherwise, we draw an edge connecting $N$ to another node in the right column with three edges labeled $(d_jpq+1,1,d_j)$ connecting it to $N$, an arrowhead leaf, and a bulletpoint leaf, respectively. 

We also create two vertical edges labeled $q$ and $p$ emanating from $N$ up and down, respectively.
The resulting pieces are then connected to each other using these vertical edges, ordered from top to bottom according to decreasing radii of the tori.
 For example, the splice diagram of $T_{(3,2),(6,4),(6,4),(6,15),(2,5),(2,5)}$ is shown in \figref{fig:splice}(b), where the two slopes are $p/q=3/2$ and $p/q=2/5$.

\begin{proposition}
The \coaxial link $T_{(\rectX _1,\rectY _1),\ldots,(\rectX _r,\rectY _r)}$ is isotopic to the link $L_C$, where $C = C_{(\rectX _1,\rectY _1),\ldots,(\rectX _r,\rectY _r)} = [C_1C_2\cdots C_r]$ is a piecewise almost linear curve, and each $C_i$ is an $(\rectX _{i},\rectY _{i})$-almost linear curve.
\end{proposition}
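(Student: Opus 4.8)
The plan is to match the two descriptions of the link directly, identifying the $\Interval$-direction of the thickened torus with the radial (tube) direction of a standardly embedded torus in $\R^3$. By the construction in \cref{sec:intro:links}, the link $\LC$ associated to $C=[C_1C_2\cdots C_r]$ has $r$ components $L_{C_1},\dots,L_{C_r}$, where $L_{C_i}$ is the link of the $(\rectX_i,\rectY_i)$-almost linear curve $C_i$, i.e.\ the $(\rectX_i,\rectY_i)$-almost torus knot (cf.\ \cref{rmk:intro_torus_link_knot}), and these are stacked in the $\Interval$-direction so that $L_{C_i}$ lies below $L_{C_j}$ whenever $i<j$. So both $\LC$ and the \coaxial link $T_{(\rectX_1,\rectY_1),\dots,(\rectX_r,\rectY_r)}$ are, morally, ``place the $i$-th almost torus knot on the $i$-th of a nested family of coaxial tori''; the task is to make the identification of the tori and the nesting order precise.

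First I would fix a standard embedding $\iota\colon\Tor\times\Interval\hookrightarrow\R^3$ extending the standard embedding $\Tor\hookrightarrow\R^3$ used in the proof of \cref{lemma:cabling}, chosen so that each slice $\iota(\Tor\times\{c\})$ is a torus coaxial with the central circle $S^1$, with tube radius strictly decreasing in $c$. Then the slices $\iota(\Tor\times\{c\})$ form exactly a family of coaxial tori of the type appearing in the definition of a \coaxial link. Next, since for $i<j$ every crossing between $L_{C_i}$ and $L_{C_j}$ has the strand of $L_{C_i}$ passing underneath, a standard separating isotopy lets me arrange $L_{C_i}\subset\iota(\Tor\times I_i)$ for pairwise disjoint subintervals $I_1<I_2<\cdots<I_r$ of $\Interval$, ordered increasingly. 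Within the slab $\iota(\Tor\times I_i)$---itself a tubular neighborhood of the coaxial torus $T_{c_i}:=\iota(\Tor\times\{c_i\})$ for a chosen $c_i\in I_i$---the component $L_{C_i}$ is by construction the projection of $C_i$ to $T_{c_i}$ with its self-crossings resolved in the $I_i$-direction, which is precisely what it means to place the $(\rectX_i,\rectY_i)$-almost torus knot on $T_{c_i}$.

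Finally, because $c_1<c_2<\cdots<c_r$ and the tube radius decreases in $c$, the tori $T_{c_1},\dots,T_{c_r}$ are coaxial with strictly decreasing radii, with $T_{c_i}$ carrying the $(\rectX_i,\rectY_i)$-almost torus knot; comparing with the definition of the \coaxial link shows $\iota(\LC)$ is isotopic to $T_{(\rectX_1,\rectY_1),\dots,(\rectX_r,\rectY_r)}$. The main obstacle I expect is the middle step: verifying carefully that the over/under stacking convention of \cref{sec:intro:links} really does permit pushing each component into a neighborhood of a single coaxial slice while (a) matching the decreasing-radius order to the increasing-height order via the choice of $\iota$, and (b) ensuring the handedness of the projection crossings agrees with the standard torus-knot convention fixed in \cref{lemma:cabling}. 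These are bookkeeping points about orientations and framings rather than substantive topology, so once the embedding $\iota$ is set up consistently the identification should follow.
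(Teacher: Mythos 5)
The paper states this proposition without proof, treating it as an immediate consequence of the construction of $\LC$ and of \cref{lemma:cabling}; your argument supplies essentially the justification the authors leave implicit, and it is correct. Decomposing $\LC$ into slabs $\Tor\times I_i$ — which is legitimate precisely because every crossing between $\Link_{\Cseg_i}$ and $\Link_{\Cseg_j}$ for $i<j$ has the $i$-th component underneath, so the components can be separated in the $\Interval$-direction — and then identifying each slab component with the $(\rectX_i,\rectY_i)$-almost torus knot placed in a neighborhood of a coaxial slice torus is the intended reading. The one point you flag, matching the decreasing-radius order of the tori $\Tor_1,\dots,\Tor_r$ to the stacking order of the components, is genuinely load-bearing rather than cosmetic (interchanging two components of different slopes changes pairwise linking numbers, so the correspondence $\Tor_i\leftrightarrow \Cseg_i$ cannot be fixed arbitrarily), but your choice of embedding $\iota$ with tube radius decreasing in the $\Interval$-coordinate handles it consistently with the convention that $\Link_{\Cseg_i}$ is drawn below $\Link_{\Cseg_j}$ for $i<j$.
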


The \coaxial link $T_{(\rectX _1,\rectY _1),\ldots,(\rectX _r,\rectY _r)}$ is called \emph{$\Z$-convex} if $C_{(\rectX _1,\rectY _1),\ldots,(\rectX _r,\rectY _r)}$ is $\Z$-convex.  This is the case if and only if $\frac{\rectX _1}{\rectY _1} \geq \frac{\rectX _2}{\rectY _2} \geq \cdots \geq \frac{\rectX _r}{\rectY _r}$; see~\eqref{eq:Z_convex_geq}.
\begin{proposition}
$\Z$-convex \coaxial links are algebraic.
\end{proposition}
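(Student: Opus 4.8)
The plan is to apply the Eisenbud--Neumann criterion for algebraicity directly to the explicit splice diagram of a $\Z$-convex \coaxial link constructed in \figref{fig:splice}(a): a link in $S^3$ whose splice diagram has positive edge weights is the link of a plane curve singularity if and only if every edge determinant is positive (see \cite{EN}). Since we have already identified $\LC$ with the \coaxial link $T_{(\rectX_1,\rectY_1),\dots,(\rectX_r,\rectY_r)}$ for the \PAL curve $\Curve=[\Cseg_1\cdots\Cseg_r]$, and since $\Z$-convexity is equivalent to $\frac{\rectX_1}{\rectY_1}\geq\cdots\geq\frac{\rectX_r}{\rectY_r}$ by~\eqref{eq:Z_convex_geq}, it remains only to compute the edge determinants. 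Recall that the edge determinant of an edge $e$ joining two nodes $v,w$ is the product of the two weights on $e$ (one at each end) minus the product of the remaining weights at $v$ times the product of the remaining weights at $w$; edges incident to an arrowhead or bulletpoint leaf carry no such condition, so only the node-to-node edges need to be examined.

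First I would group the segments into maximal blocks of equal slope, so that each spine node $N$ corresponds to a reduced slope $p/q$ and carries an up-edge of weight $q$, a down-edge of weight $p$, and $s$ horizontal edges each of weight $1$ at the $N$-end (one per segment $(\rectX_{i+j},\rectY_{i+j})=(d_jp,d_jq)$ of the block). There are two kinds of node-to-node edges. The edges joining a spine node $N$ to an auxiliary cable node (present when $d_j>1$, with weights $d_jpq+1$, $1$, $d_j$) have weight $1$ at the $N$-end and $d_jpq+1$ at the other end; the remaining weights multiply to $pq$ at $N$ and to $d_j$ at the auxiliary node, so the edge determinant equals $(d_jpq+1)-pq\cdot d_j=1>0$, exactly as in the single-cable case underlying \cref{lemma:cabling}.

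The edges joining two consecutive spine nodes are the only place where convexity enters. Ordering the spine so that slopes decrease from top to bottom, such an edge joins an upper node $N$ of slope $p/q$ to a lower node $N'$ of slope $p'/q'$ with $p/q>p'/q'$; it carries weight $p$ at the $N$-end (its down-edge) and $q'$ at the $N'$-end (its up-edge), while the remaining weights multiply to $q$ at $N$ and to $p'$ at $N'$. Hence the edge determinant is $pq'-qp'$, which is positive precisely because $p/q>p'/q'$, i.e.\ precisely because of $\Z$-convexity. (One checks this against \figref{fig:splice}(b), where the spine edge between the slope-$3/2$ and slope-$2/5$ blocks has determinant $3\cdot5-2\cdot2=11>0$.) With both families of edge determinants positive and all edge weights positive, the Eisenbud--Neumann criterion yields that the link is algebraic.

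The main obstacle is bookkeeping rather than depth. One must correctly read off, for each node of \figref{fig:splice}(a), which incident edges run between two nodes (and so contribute a determinant) and which run to leaves; one must confirm that the spine can be oriented so that the larger-slope block contributes its $p$-weight to each spine edge (so that $pq'-qp'$, rather than its negative, is the determinant), which is exactly what the decreasing-slope ordering of the nested tori provides; and one should note that the EN positivity criterion is being invoked in the correct multicomponent form, with the ambient $S^3$ automatic since the links live in $\R^3$ and positivity of weights forces the pairwise linking numbers of components to be positive. Once these points are settled, the entire statement reduces to the two one-line determinant computations above, whose content is the single chain of equivalences ``$\Z$-convex $\Longleftrightarrow$ decreasing slopes $\Longleftrightarrow$ positive spine edge determinants.''
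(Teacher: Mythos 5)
Your proposal is correct and follows essentially the same route as the paper: identify $\LC$ with the coaxial link, form the splice diagram of \figref{fig:splice}(a), and verify the Eisenbud--Neumann edge-determinant positivity criterion of \cite[Theorem 9.4]{EN}, with $\Z$-convexity (equivalently, decreasing slopes) supplying positivity on the spine edges. The only difference is that you carry out the two determinant computations ($1\cdot(d_jpq+1)-pq\cdot d_j=1$ and $pq'-qp'>0$) explicitly, which the paper leaves to the reader; both calculations, and the check against \figref{fig:splice}(b), are correct.
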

\begin{proof}
According to \cite[Theorem 9.4]{EN}, for each edge connecting two nodes $A$ and $B$, we need to check the inequality $xy > \prod_i z_i$, where $x$ and $y$ are the two labels on the edge, and $z_i$-s are the other labels on edges incident to either $A$ or $B$.  This inequality follows from the $\Z$-convexity condition.
\end{proof}

\appendix
\section{Examples}\label{sec:examples}

\begin{example}
\Cref{tab:small_curves} lists $\FC$ for all curves $\Curve$ from $(0,0)$ to $(\rectXY)$ with $1\leq\rectXY\leq3$. 
\end{example}

\begin{figure}
\includegraphics[width=0.35\textwidth]{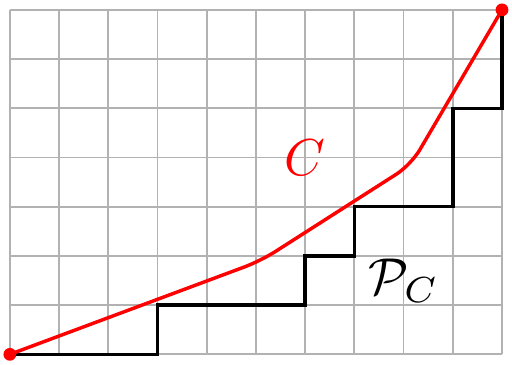}
  \caption{\label{fig:mellit-example}The smallest\protect\footnotemark\  primitive convex curve $\Curve$ (shown together with the lattice path $\PathC$) such that the EHA superpolynomial $\PEHAC$ is not $q,t$-unimodal; see \cref{ex:non_unimodal}.}
\end{figure}
\footnotetext{Here $(\rectXY)=(10,7)$, and one can check that for any primitive convex curve with $\rectX+\rectY<17$, the EHA superpolynomial is $q,t$-unimodal.}

\begin{example}\label{ex:non_convex_positroid}
Let $\rectX=\rectY=3$ and let $\Curve$ be the curve in \figref{fig:convex-examples}(c). It is the unique primitive curve from $(0,0)$ to $(3,3)$ which is not $\Z$-convex; cf. \cref{tab:small_curves}. Moreover, the associated symmetric function $\FC$ is not Schur positive. Even though $\Curve$ is not convex, it turns out that the associated link $\LC$ is still a positroid link. Namely, it is the smallest known positroid link (associated to a positroid variety in the Grassmannian $\Gr(7,14)$) whose odd KR homology does not vanish; see~\cite[Example~4.21]{GL_qtcat}. The coefficient of $s_3$ in $\FC$ is 
\begin{equation*}%
  q^{4} + q^{3} t + q^{2} t^{2} + q t^{3} + t^{4} + q^{2} t + q t^{2} - q t.
\end{equation*}
 By contrast, the top $a$-degree coefficient of $\PKRLC$, computed in~\cite[Example~4.21]{GL_qtcat}, is 
\begin{equation*}%
  q^{4} + q^{3} t + q^{2} t^{2} + q t^{3} + t^{4} + q^{2} t + q t^{2} - (q t)^{\frac32}.
\end{equation*}
\end{example}

\begin{example}\label{ex:non_unimodal}
The following example was discovered by A.~Mellit and communicated to us by M.~Mazin. Consider the primitive curve $\Curve$ in \cref{fig:mellit-example}, shown together with the highest lattice path 
\begin{equation*}%
  \PathC=RRRURRRURURRUURUU
\end{equation*}
 below $\Curve$; cf. \cref{dfn:intro:DCn}. The coefficient of $s_\rectX$ in $\FC$ is given by 
\begin{equation}\label{eq:Mellit_PEHAC}
  \begin{split}
&q^{16} + q^{15} t + q^{14} t^{2} + q^{13} t^{3} + q^{12} t^{4} + q^{11} t^{5} + q^{10} t^{6} + q^{9} t^{7} + q^{8} t^{8} + q^{7} t^{9} + q^{6} t^{10} + q^{5} t^{11} \\
&+ q^{4} t^{12} + q^{3} t^{13} + q^{2} t^{14} + q t^{15} + t^{16} + q^{14} t + q^{13} t^{2} + q^{12} t^{3} + q^{11} t^{4} + q^{10} t^{5} + q^{9} t^{6} + q^{8} t^{7} \\
&+ q^{7} t^{8} + q^{6} t^{9} + q^{5} t^{10} + q^{4} t^{11} + q^{3} t^{12} + q^{2} t^{13} + q t^{14} + q^{13} t + 2 \, q^{12} t^{2} + 2 \, q^{11} t^{3} + 2 \, q^{10} t^{4}\\
& + 2 \, q^{9} t^{5} + 2 \, q^{8} t^{6} + 2 \, q^{7} t^{7} + 2 \, q^{6} t^{8} + 2 \, q^{5} t^{9} + 2 \, q^{4} t^{10} + 2 \, q^{3} t^{11} + 2 \, q^{2} t^{12} + q t^{13} \\
&+ q^{12} t + 3 \, q^{11} t^{2} + 4 \, q^{10} t^{3} + 4 \, q^{9} t^{4} + 4 \, q^{8} t^{5} + 4 \, q^{7} t^{6} + 4 \, q^{6} t^{7} + 4 \, q^{5} t^{8} + 4 \, q^{4} t^{9} \\
&+ 4 \, q^{3} t^{10} + 3 \, q^{2} t^{11} + q t^{12} + 2 \, q^{10} t^{2}+ 5 \, q^{9} t^{3} + 6 \, q^{8} t^{4} + \textcolor{red}{6 \, q^{7} t^{5} + 5 \, q^{6} t^{6} + 6 \, q^{5} t^{7}} \\
&+ 6 \, q^{4} t^{8} + 5 \, q^{3} t^{9} + 2 \, q^{2} t^{10} + 2 \, q^{8} t^{3} + 5 \, q^{7} t^{4} + 8 \, q^{6} t^{5} + 8 \, q^{5} t^{6} + 5 \, q^{4} t^{7} + 2 \, q^{3} t^{8}.
  \end{split}
\end{equation}
The terms $6 \, q^{7} t^{5} + 5 \, q^{6} t^{6} + 6 \, q^{5} t^{7}$ highlighted in red violate the \emph{$q,t$-unimodality} property of~\cite[Theorem~1.5]{GL_qtcat}. Since $\Curve$ is convex, by the results of~\cite{GL_cat_combin,GL_plabic_links}, we see that $\LC$ is a positroid knot. The right-hand side  of~\eqref{eq:intro:KR} was shown to be $q,t$-unimodal in~\cite{GL_qtcat}. Therefore,~\eqref{eq:intro:KR} cannot hold for $\Curve$. Moreover, the top $a$-degree coefficient of the \FLY polynomial $\PHOMLC$ of the link $\LC$ is not \emph{parity-unimodal}:
\begin{align*}%
 \PHOMLC= \big(&q^{32} + q^{30} + q^{29} + 2 \, q^{28} + 2 \, q^{27} + 3 \, q^{26} + 4 \, q^{25} + 5 \, q^{24} + 5 \, q^{23} + 8 \, q^{22} + 7 \, q^{21} + 9 \, q^{20}  \\
& + 10 \, q^{19} + \textcolor{red}{9 \, q^{18}} + 13 \, q^{17} + \textcolor{red}{8 \, q^{16}} + 13 \, q^{15} + \textcolor{red}{9 \, q^{14}} + 10 \, q^{13} + 9 \, q^{12} + 7 \, q^{11} + 8 \, q^{10} \\
&+ 5 \, q^{9} + 5 \, q^{8} + 4 \, q^{7} + 3 \, q^{6} + 2 \, q^{5} + 2 \, q^{4} + q^{3} + q^{2} + 1\big) \frac1{a^{32}}+\cdots\;.
\end{align*}
(By~\eqref{prop:tqi}, this coefficient is obtained by plugging $t=\qi$ into~\eqref{eq:Mellit_PEHAC} and multiplying the result by $q^{16}/a^{32}$.) Combining this with the results of~\cite{GL_qtcat}, we see that the odd KR homology of $\LC$ cannot vanish. This disproves~\cite[Conjecture~7.1(ii,iv)]{GL_cat_combin}. 
\end{example}

\begin{table}
\def\FCscl{0.805}
  \setlength{\tabcolsep}{3pt}
\hspace{-0.1in}
\makebox[1.0\textwidth]{
\begin{tabular}{cc}
  \setlength{\tabcolsep}{2pt}
\scalebox{0.75}{
\begin{tabular}[t]{|c|c|c|c|}\hline
$\Curve$ & \rotatebox{90}{s-positive?} & \rotatebox{90}{$\Z$-convex?} & $\FC$ \\\hline
\scalebox{\FCscl}{\begin{tikzpicture}[scale=0.50, baseline=(Z.base)]
\coordinate(Z) at (0,0.00);
\draw[line width=0.50pt,draw=black!30] (0,0) grid (1.00,1.00);
\draw[line width=1.50pt, draw=red, rounded corners=5] (0.00,0.00) -- (1.00,1.00) ;
\begin{scope}[xshift=0.00cm,yshift=0.00cm]
\fill[red] (0,0) circle (0.13);
\end{scope}
\begin{scope}[xshift=1.00cm,yshift=1.00cm]
\fill[red] (0,0) circle (0.13);
\end{scope}
\end{tikzpicture}
 } & Y & Y & \scalebox{\FCscl}{$s_{1}$} \\\hline 
\scalebox{\FCscl}{\begin{tikzpicture}[scale=0.50, baseline=(Z.base)]
\coordinate(Z) at (0,1.00);
\draw[line width=0.50pt,draw=black!30] (0,0) grid (1.00,2.00);
\draw[line width=1.50pt, draw=red, rounded corners=5] (0.00,0.00) -- (1.00,2.00) ;
\begin{scope}[xshift=0.00cm,yshift=0.00cm]
\fill[red] (0,0) circle (0.13);
\end{scope}
\begin{scope}[xshift=1.00cm,yshift=2.00cm]
\fill[red] (0,0) circle (0.13);
\end{scope}
\end{tikzpicture}
 } & Y & Y & \scalebox{\FCscl}{$s_{1}$} \\\hline 
\scalebox{\FCscl}{\begin{tikzpicture}[scale=0.50, baseline=(Z.base)]
\coordinate(Z) at (0,0.00);
\draw[line width=0.50pt,draw=black!30] (0,0) grid (2.00,1.00);
\draw[line width=1.50pt, draw=red, rounded corners=5] (0.00,0.00) -- (2.00,1.00) ;
\begin{scope}[xshift=0.00cm,yshift=0.00cm]
\fill[red] (0,0) circle (0.13);
\end{scope}
\begin{scope}[xshift=2.00cm,yshift=1.00cm]
\fill[red] (0,0) circle (0.13);
\end{scope}
\end{tikzpicture}
 } & Y & Y & \scalebox{\FCscl}{$s_{2}$} \\\hline 
\scalebox{\FCscl}{\begin{tikzpicture}[scale=0.50, baseline=(Z.base)]
\coordinate(Z) at (0,1.00);
\draw[line width=0.50pt,draw=black!30] (0,0) grid (2.00,2.00);
\draw[line width=1.50pt, draw=red, rounded corners=5] (0.00,0.00) -- (1.35,0.65) -- (2.00,2.00) ;
\begin{scope}[xshift=0.00cm,yshift=0.00cm]
\fill[red] (0,0) circle (0.13);
\end{scope}
\begin{scope}[xshift=2.00cm,yshift=2.00cm]
\fill[red] (0,0) circle (0.13);
\end{scope}
\end{tikzpicture}
 } & Y & Y & \scalebox{\FCscl}{$s_{2}$} \\\hline 
\scalebox{\FCscl}{\begin{tikzpicture}[scale=0.50, baseline=(Z.base)]
\coordinate(Z) at (0,1.00);
\draw[line width=0.50pt,draw=black!30] (0,0) grid (2.00,2.00);
\draw[line width=1.50pt, draw=red, rounded corners=5] (0.00,0.00) -- (0.65,1.35) -- (2.00,2.00) ;
\begin{scope}[xshift=0.00cm,yshift=0.00cm]
\fill[red] (0,0) circle (0.13);
\end{scope}
\begin{scope}[xshift=2.00cm,yshift=2.00cm]
\fill[red] (0,0) circle (0.13);
\end{scope}
\end{tikzpicture}
 } & Y & Y & \scalebox{\FCscl}{$s_{11} + \left(q + t\right)s_{2}$} \\\hline 
\scalebox{\FCscl}{\begin{tikzpicture}[scale=0.50, baseline=(Z.base)]
\coordinate(Z) at (0,1.00);
\draw[line width=0.50pt,draw=black!30] (0,0) grid (2.00,2.00);
\draw[line width=1.50pt, draw=red, rounded corners=5] (0.00,0.00) -- (1.00,1.00) ;
\draw[line width=1.50pt, draw=red, rounded corners=5] (1.00,1.00) -- (2.00,2.00) ;
\begin{scope}[xshift=1.00cm,yshift=1.00cm]
\fill[red] (0,0) circle (0.13);
\fill[red] (0,0) -- (-45:0.13) arc (-45:135:0.13)--cycle;
\end{scope}
\begin{scope}[xshift=0.00cm,yshift=0.00cm]
\fill[red] (0,0) circle (0.13);
\end{scope}
\begin{scope}[xshift=2.00cm,yshift=2.00cm]
\fill[red] (0,0) circle (0.13);
\end{scope}
\end{tikzpicture}
 } & Y & Y & \scalebox{\FCscl}{$s_{11} + \left(q + t-qt\right)s_{2}$} \\\hline 
\scalebox{\FCscl}{\begin{tikzpicture}[scale=0.50, baseline=(Z.base)]
\coordinate(Z) at (0,1.00);
\draw[line width=0.50pt,draw=black!30] (0,0) grid (2.00,3.00);
\draw[line width=1.50pt, draw=red, rounded corners=5] (0.00,0.00) -- (1.35,0.65) -- (2.00,3.00) ;
\begin{scope}[xshift=0.00cm,yshift=0.00cm]
\fill[red] (0,0) circle (0.13);
\end{scope}
\begin{scope}[xshift=2.00cm,yshift=3.00cm]
\fill[red] (0,0) circle (0.13);
\end{scope}
\end{tikzpicture}
 } & Y & Y & \scalebox{\FCscl}{$s_{2}$} \\\hline 
\scalebox{\FCscl}{\begin{tikzpicture}[scale=0.50, baseline=(Z.base)]
\coordinate(Z) at (0,1.00);
\draw[line width=0.50pt,draw=black!30] (0,0) grid (2.00,3.00);
\draw[line width=1.50pt, draw=red, rounded corners=5] (0.00,0.00) -- (2.00,3.00) ;
\begin{scope}[xshift=0.00cm,yshift=0.00cm]
\fill[red] (0,0) circle (0.13);
\end{scope}
\begin{scope}[xshift=2.00cm,yshift=3.00cm]
\fill[red] (0,0) circle (0.13);
\end{scope}
\end{tikzpicture}
 } & Y & Y & \scalebox{\FCscl}{$s_{11} + \left(q + t\right)s_{2}$} \\\hline 
\scalebox{\FCscl}{\begin{tikzpicture}[scale=0.50, baseline=(Z.base)]
\coordinate(Z) at (0,1.00);
\draw[line width=0.50pt,draw=black!30] (0,0) grid (2.00,3.00);
\draw[line width=1.50pt, draw=red, rounded corners=5] (0.00,0.00) -- (0.65,2.35) -- (2.00,3.00) ;
\begin{scope}[xshift=0.00cm,yshift=0.00cm]
\fill[red] (0,0) circle (0.13);
\end{scope}
\begin{scope}[xshift=2.00cm,yshift=3.00cm]
\fill[red] (0,0) circle (0.13);
\end{scope}
\end{tikzpicture}
 } & Y & \textcolor{red}{N} & \scalebox{\FCscl}{$\left(q + t\right)s_{11} + \left(q^{2} + q t + t^{2}\right)s_{2}$} \\\hline 
\scalebox{\FCscl}{\begin{tikzpicture}[scale=0.50, baseline=(Z.base)]
\coordinate(Z) at (0,1.00);
\draw[line width=0.50pt,draw=black!30] (0,0) grid (2.00,3.00);
\draw[line width=1.50pt, draw=red, rounded corners=5] (0.00,0.00) -- (1.00,2.00) ;
\draw[line width=1.50pt, draw=red, rounded corners=5] (1.00,2.00) -- (2.00,3.00) ;
\begin{scope}[xshift=1.00cm,yshift=2.00cm]
\fill[red] (0,0) circle (0.13);
\fill[red] (0,0) -- (-45:0.13) arc (-45:135:0.13)--cycle;
\end{scope}
\begin{scope}[xshift=0.00cm,yshift=0.00cm]
\fill[red] (0,0) circle (0.13);
\end{scope}
\begin{scope}[xshift=2.00cm,yshift=3.00cm]
\fill[red] (0,0) circle (0.13);
\end{scope}
\end{tikzpicture}
 } & Y & \textcolor{red}{N} & \scalebox{\FCscl}{\begin{tabular}{c} $\left(q + t-qt\right)s_{11}$ \\ $+\left( q^{2} + q t + t^{2}-q^{2} t - q t^{2}\right)s_{2}$ \end{tabular}} \\\hline 
\scalebox{\FCscl}{\begin{tikzpicture}[scale=0.50, baseline=(Z.base)]
\coordinate(Z) at (0,1.00);
\draw[line width=0.50pt,draw=black!30] (0,0) grid (2.00,3.00);
\draw[line width=1.50pt, draw=red, rounded corners=5] (0.00,0.00) -- (1.00,1.00) ;
\draw[line width=1.50pt, draw=red, rounded corners=5] (1.00,1.00) -- (2.00,3.00) ;
\begin{scope}[xshift=1.00cm,yshift=1.00cm]
\fill[red] (0,0) circle (0.13);
\fill[red] (0,0) -- (-45:0.13) arc (-45:135:0.13)--cycle;
\end{scope}
\begin{scope}[xshift=0.00cm,yshift=0.00cm]
\fill[red] (0,0) circle (0.13);
\end{scope}
\begin{scope}[xshift=2.00cm,yshift=3.00cm]
\fill[red] (0,0) circle (0.13);
\end{scope}
\end{tikzpicture}
 } & Y & Y & \scalebox{\FCscl}{$s_{11} + \left(q + t-qt\right)s_{2}$} \\\hline 
\scalebox{\FCscl}{\begin{tikzpicture}[scale=0.50, baseline=(Z.base)]
\coordinate(Z) at (0,1.00);
\draw[line width=0.50pt,draw=black!30] (0,0) grid (3.00,2.00);
\draw[line width=1.50pt, draw=red, rounded corners=5] (0.00,0.00) -- (2.35,0.65) -- (3.00,2.00) ;
\begin{scope}[xshift=0.00cm,yshift=0.00cm]
\fill[red] (0,0) circle (0.13);
\end{scope}
\begin{scope}[xshift=3.00cm,yshift=2.00cm]
\fill[red] (0,0) circle (0.13);
\end{scope}
\end{tikzpicture}
 } & Y & Y & \scalebox{\FCscl}{$s_{3}$} \\\hline 
\scalebox{\FCscl}{\begin{tikzpicture}[scale=0.50, baseline=(Z.base)]
\coordinate(Z) at (0,1.00);
\draw[line width=0.50pt,draw=black!30] (0,0) grid (3.00,2.00);
\draw[line width=1.50pt, draw=red, rounded corners=5] (0.00,0.00) -- (3.00,2.00) ;
\begin{scope}[xshift=0.00cm,yshift=0.00cm]
\fill[red] (0,0) circle (0.13);
\end{scope}
\begin{scope}[xshift=3.00cm,yshift=2.00cm]
\fill[red] (0,0) circle (0.13);
\end{scope}
\end{tikzpicture}
 } & Y & Y & \scalebox{\FCscl}{$s_{21} + \left(q + t\right)s_{3}$} \\\hline 
\scalebox{\FCscl}{\begin{tikzpicture}[scale=0.50, baseline=(Z.base)]
\coordinate(Z) at (0,1.00);
\draw[line width=0.50pt,draw=black!30] (0,0) grid (3.00,2.00);
\draw[line width=1.50pt, draw=red, rounded corners=5] (0.00,0.00) -- (0.65,1.35) -- (3.00,2.00) ;
\begin{scope}[xshift=0.00cm,yshift=0.00cm]
\fill[red] (0,0) circle (0.13);
\end{scope}
\begin{scope}[xshift=3.00cm,yshift=2.00cm]
\fill[red] (0,0) circle (0.13);
\end{scope}
\end{tikzpicture}
 } & Y & \textcolor{red}{N} & \scalebox{\FCscl}{$\left(q + t\right)s_{21} + \left(q^{2} + q t + t^{2}\right)s_{3}$} \\\hline 
\scalebox{\FCscl}{\begin{tikzpicture}[scale=0.50, baseline=(Z.base)]
\coordinate(Z) at (0,1.00);
\draw[line width=0.50pt,draw=black!30] (0,0) grid (3.00,2.00);
\draw[line width=1.50pt, draw=red, rounded corners=5] (0.00,0.00) -- (2.00,1.00) ;
\draw[line width=1.50pt, draw=red, rounded corners=5] (2.00,1.00) -- (3.00,2.00) ;
\begin{scope}[xshift=2.00cm,yshift=1.00cm]
\fill[red] (0,0) circle (0.13);
\fill[red] (0,0) -- (-45:0.13) arc (-45:135:0.13)--cycle;
\end{scope}
\begin{scope}[xshift=0.00cm,yshift=0.00cm]
\fill[red] (0,0) circle (0.13);
\end{scope}
\begin{scope}[xshift=3.00cm,yshift=2.00cm]
\fill[red] (0,0) circle (0.13);
\end{scope}
\end{tikzpicture}
 } & Y & Y & \scalebox{\FCscl}{$s_{21} + \left(q + t-qt\right)s_{3}$} \\\hline 
\scalebox{\FCscl}{\begin{tikzpicture}[scale=0.50, baseline=(Z.base)]
\coordinate(Z) at (0,1.00);
\draw[line width=0.50pt,draw=black!30] (0,0) grid (3.00,2.00);
\draw[line width=1.50pt, draw=red, rounded corners=5] (0.00,0.00) -- (1.00,1.00) ;
\draw[line width=1.50pt, draw=red, rounded corners=5] (1.00,1.00) -- (3.00,2.00) ;
\begin{scope}[xshift=1.00cm,yshift=1.00cm]
\fill[red] (0,0) circle (0.13);
\fill[red] (0,0) -- (-45:0.13) arc (-45:135:0.13)--cycle;
\end{scope}
\begin{scope}[xshift=0.00cm,yshift=0.00cm]
\fill[red] (0,0) circle (0.13);
\end{scope}
\begin{scope}[xshift=3.00cm,yshift=2.00cm]
\fill[red] (0,0) circle (0.13);
\end{scope}
\end{tikzpicture}
 } & Y & \textcolor{red}{N} & \scalebox{\FCscl}{\begin{tabular}{c} $\left(q + t-qt\right)s_{21}$ \\ 
$+\left( q^{2} + q t + t^{2}-q^{2} t - q t^{2}\right)s_{3}$ \end{tabular}} \\\hline 
\end{tabular}
}

 & \hspace{-0.2in}
  \setlength{\tabcolsep}{2pt}
\scalebox{0.75}{
\begin{tabular}[t]{|c|c|c|c|}\hline
$\Curve$ & \rotatebox{90}{s-positive?} & \rotatebox{90}{$\Z$-convex?} & $\FC$ \\\hline
\scalebox{\FCscl}{\begin{tikzpicture}[scale=0.50, baseline=(Z.base)]
\coordinate(Z) at (0,1.00);
\draw[line width=0.50pt,draw=black!30] (0,0) grid (3.00,3.00);
\draw[line width=1.50pt, draw=red, rounded corners=5] (0.00,0.00) -- (2.35,0.65) -- (3.00,3.00) ;
\begin{scope}[xshift=0.00cm,yshift=0.00cm]
\fill[red] (0,0) circle (0.13);
\end{scope}
\begin{scope}[xshift=3.00cm,yshift=3.00cm]
\fill[red] (0,0) circle (0.13);
\end{scope}
\end{tikzpicture}
 } & Y & Y & \scalebox{\FCscl}{$s_{3}$} \\\hline 
\scalebox{\FCscl}{\begin{tikzpicture}[scale=0.50, baseline=(Z.base)]
\coordinate(Z) at (0,1.00);
\draw[line width=0.50pt,draw=black!30] (0,0) grid (3.00,3.00);
\draw[line width=1.50pt, draw=red, rounded corners=5] (0.00,0.00) -- (1.35,0.65) -- (3.00,3.00) ;
\begin{scope}[xshift=0.00cm,yshift=0.00cm]
\fill[red] (0,0) circle (0.13);
\end{scope}
\begin{scope}[xshift=3.00cm,yshift=3.00cm]
\fill[red] (0,0) circle (0.13);
\end{scope}
\end{tikzpicture}
 } & Y & Y & \scalebox{\FCscl}{$s_{21} + \left(q + t\right)s_{3}$} \\\hline 
\scalebox{\FCscl}{\begin{tikzpicture}[scale=0.50, baseline=(Z.base)]
\coordinate(Z) at (0,1.00);
\draw[line width=0.50pt,draw=black!30] (0,0) grid (3.00,3.00);
\draw[line width=1.50pt, draw=red, rounded corners=5] (0.00,0.00) -- (1.35,0.65) -- (1.65,2.35) -- (3.00,3.00) ;
\begin{scope}[xshift=0.00cm,yshift=0.00cm]
\fill[red] (0,0) circle (0.13);
\end{scope}
\begin{scope}[xshift=3.00cm,yshift=3.00cm]
\fill[red] (0,0) circle (0.13);
\end{scope}
\end{tikzpicture}
 } & Y & Y & \scalebox{\FCscl}{$\left(q + t\right)s_{21} + \left(q^{2} + q t + t^{2}\right)s_{3}$} \\\hline 
\scalebox{\FCscl}{\begin{tikzpicture}[scale=0.50, baseline=(Z.base)]
\coordinate(Z) at (0,1.00);
\draw[line width=0.50pt,draw=black!30] (0,0) grid (3.00,3.00);
\draw[line width=1.50pt, draw=red, rounded corners=5] (0.00,0.00) -- (0.65,1.35) -- (2.35,1.65) -- (3.00,3.00) ;
\begin{scope}[xshift=0.00cm,yshift=0.00cm]
\fill[red] (0,0) circle (0.13);
\end{scope}
\begin{scope}[xshift=3.00cm,yshift=3.00cm]
\fill[red] (0,0) circle (0.13);
\end{scope}
\end{tikzpicture}
 } & Y & Y & \scalebox{\FCscl}{$\left(q + t\right)s_{21} + \left(q^{2} + q t + t^{2}\right)s_{3}$} \\\hline 
\scalebox{\FCscl}{\begin{tikzpicture}[scale=0.50, baseline=(Z.base)]
\coordinate(Z) at (0,1.00);
\draw[line width=0.50pt,draw=black!30] (0,0) grid (3.00,3.00);
\draw[line width=1.50pt, draw=red] (0.00,0.00) to [bend left=15] (3.00,3.00) ;
\begin{scope}[xshift=0.00cm,yshift=0.00cm]
\fill[red] (0,0) circle (0.13);
\end{scope}
\begin{scope}[xshift=3.00cm,yshift=3.00cm]
\fill[red] (0,0) circle (0.13);
\end{scope}
\end{tikzpicture}
 } & Y & Y & \scalebox{\FCscl}{\begin{tabular}{c} $s_{111} + \left(q^{2} + q t + t^{2} + q + t\right)s_{21}$ \\ $+\left(q^{3} + q^{2} t + q t^{2} + t^{3} + q t\right)s_{3}$ \end{tabular}} \\\hline 
\scalebox{\FCscl}{\begin{tikzpicture}[scale=0.50, baseline=(Z.base)]
\coordinate(Z) at (0,1.00);
\draw[line width=0.50pt,draw=black!30] (0,0) grid (3.00,3.00);
\draw[line width=1.50pt, draw=red, rounded corners=5] (0.00,0.00) -- (0.65,2.35) -- (3.00,3.00) ;
\begin{scope}[xshift=0.00cm,yshift=0.00cm]
\fill[red] (0,0) circle (0.13);
\end{scope}
\begin{scope}[xshift=3.00cm,yshift=3.00cm]
\fill[red] (0,0) circle (0.13);
\end{scope}
\end{tikzpicture}
 } & \textcolor{red}{N} & \textcolor{red}{N} & \scalebox{\FCscl}{\begin{tabular}{c} $\left(q + t - 1\right)s_{111} + \left(q^{3} + q^{2} t + q t^{2} + t^{3} + q^{2} + 2 q t + t^{2} - q - t\right)s_{21}$ \\ $+\left(q^{4} + q^{3} t + q^{2} t^{2} + q t^{3} + t^{4} + q^{2} t + q t^{2} - q t\right)s_{3}$ \end{tabular}} \\\hline 
\scalebox{\FCscl}{\begin{tikzpicture}[scale=0.50, baseline=(Z.base)]
\coordinate(Z) at (0,1.00);
\draw[line width=0.50pt,draw=black!30] (0,0) grid (3.00,3.00);
\draw[line width=1.50pt, draw=red, rounded corners=5] (0.00,0.00) -- (2.00,1.00) ;
\draw[line width=1.50pt, draw=red, rounded corners=5] (2.00,1.00) -- (3.00,3.00) ;
\begin{scope}[xshift=2.00cm,yshift=1.00cm]
\fill[red] (0,0) circle (0.13);
\fill[red] (0,0) -- (-45:0.13) arc (-45:135:0.13)--cycle;
\end{scope}
\begin{scope}[xshift=0.00cm,yshift=0.00cm]
\fill[red] (0,0) circle (0.13);
\end{scope}
\begin{scope}[xshift=3.00cm,yshift=3.00cm]
\fill[red] (0,0) circle (0.13);
\end{scope}
\end{tikzpicture}
 } & Y & Y & \scalebox{\FCscl}{$s_{21} + \left(q + t-qt\right)s_{3}$} \\\hline 
\scalebox{\FCscl}{\begin{tikzpicture}[scale=0.50, baseline=(Z.base)]
\coordinate(Z) at (0,1.00);
\draw[line width=0.50pt,draw=black!30] (0,0) grid (3.00,3.00);
\draw[line width=1.50pt, draw=red, rounded corners=5] (0.00,0.00) -- (1.00,2.00) ;
\draw[line width=1.50pt, draw=red, rounded corners=5] (1.00,2.00) -- (3.00,3.00) ;
\begin{scope}[xshift=1.00cm,yshift=2.00cm]
\fill[red] (0,0) circle (0.13);
\fill[red] (0,0) -- (-45:0.13) arc (-45:135:0.13)--cycle;
\end{scope}
\begin{scope}[xshift=0.00cm,yshift=0.00cm]
\fill[red] (0,0) circle (0.13);
\end{scope}
\begin{scope}[xshift=3.00cm,yshift=3.00cm]
\fill[red] (0,0) circle (0.13);
\end{scope}
\end{tikzpicture}
 } & \textcolor{red}{N} & \textcolor{red}{N} & \scalebox{\FCscl}{\begin{tabular}{c} $\left(q + t - 1-qt \right)s_{111} + \left( q^{3} + t^{3} + q^{2} + 2 q t + t^{2} - q - t -q^{3} t - q^{2} t^{2} - q t^{3}\right)s_{21}$ \\ $+\left( q^{4} + q^{3} t + q t^{3} + t^{4} + q^{2} t + q t^{2} - q t -q^{4} t - q^{3} t^{2} - q^{2} t^{3} - q t^{4}\right)s_{3}$ \end{tabular}} \\\hline 
\scalebox{\FCscl}{\begin{tikzpicture}[scale=0.50, baseline=(Z.base)]
\coordinate(Z) at (0,1.00);
\draw[line width=0.50pt,draw=black!30] (0,0) grid (3.00,3.00);
\draw[line width=1.50pt, draw=red, rounded corners=5] (0.00,0.00) -- (1.00,1.00) ;
\draw[line width=1.50pt, draw=red, rounded corners=5] (1.00,1.00) -- (2.35,1.65) -- (3.00,3.00) ;
\begin{scope}[xshift=1.00cm,yshift=1.00cm]
\fill[red] (0,0) circle (0.13);
\fill[red] (0,0) -- (-45:0.13) arc (-45:135:0.13)--cycle;
\end{scope}
\begin{scope}[xshift=0.00cm,yshift=0.00cm]
\fill[red] (0,0) circle (0.13);
\end{scope}
\begin{scope}[xshift=3.00cm,yshift=3.00cm]
\fill[red] (0,0) circle (0.13);
\end{scope}
\end{tikzpicture}
 } & Y & Y & \scalebox{\FCscl}{\begin{tabular}{c} $\left(q + t-qt\right)s_{21}$ \\ $+\left( q^{2} + q t + t^{2} - q^{2} t - q t^{2} \right)s_{3}$ \end{tabular}} \\\hline 
\scalebox{\FCscl}{\begin{tikzpicture}[scale=0.50, baseline=(Z.base)]
\coordinate(Z) at (0,1.00);
\draw[line width=0.50pt,draw=black!30] (0,0) grid (3.00,3.00);
\draw[line width=1.50pt, draw=red, rounded corners=5] (0.00,0.00) -- (1.00,1.00) ;
\draw[line width=1.50pt, draw=red, rounded corners=5] (1.00,1.00) -- (1.65,2.35) -- (3.00,3.00) ;
\begin{scope}[xshift=1.00cm,yshift=1.00cm]
\fill[red] (0,0) circle (0.13);
\fill[red] (0,0) -- (-45:0.13) arc (-45:135:0.13)--cycle;
\end{scope}
\begin{scope}[xshift=0.00cm,yshift=0.00cm]
\fill[red] (0,0) circle (0.13);
\end{scope}
\begin{scope}[xshift=3.00cm,yshift=3.00cm]
\fill[red] (0,0) circle (0.13);
\end{scope}
\end{tikzpicture}
 } & Y & Y & \scalebox{\FCscl}{\begin{tabular}{c} $s_{111} + \left( q^{2} + q t + t^{2} + q + t -q^{2} t - q t^{2} \right)s_{21}$ \\ $+\left( q^{3} + q^{2} t + q t^{2} + t^{3} + q t -q^{3} t - q^{2} t^{2} - q t^{3} \right)s_{3}$ \end{tabular}} \\\hline 
\scalebox{\FCscl}{\begin{tikzpicture}[scale=0.50, baseline=(Z.base)]
\coordinate(Z) at (0,1.00);
\draw[line width=0.50pt,draw=black!30] (0,0) grid (3.00,3.00);
\draw[line width=1.50pt, draw=red, rounded corners=5] (0.00,0.00) -- (1.00,1.00) ;
\draw[line width=1.50pt, draw=red, rounded corners=5] (1.00,1.00) -- (2.00,2.00) ;
\draw[line width=1.50pt, draw=red, rounded corners=5] (2.00,2.00) -- (3.00,3.00) ;
\begin{scope}[xshift=1.00cm,yshift=1.00cm]
\fill[red] (0,0) circle (0.13);
\fill[red] (0,0) -- (-45:0.13) arc (-45:135:0.13)--cycle;
\end{scope}
\begin{scope}[xshift=2.00cm,yshift=2.00cm]
\fill[red] (0,0) circle (0.13);
\fill[red] (0,0) -- (-45:0.13) arc (-45:135:0.13)--cycle;
\end{scope}
\begin{scope}[xshift=0.00cm,yshift=0.00cm]
\fill[red] (0,0) circle (0.13);
\end{scope}
\begin{scope}[xshift=3.00cm,yshift=3.00cm]
\fill[red] (0,0) circle (0.13);
\end{scope}
\end{tikzpicture}
 } & Y & Y & \scalebox{\FCscl}{\begin{tabular}{c} $s_{111} + \left(q^{2} t^{2} + q^{2} + q t + t^{2} + q + t  - 2 q^{2} t - 2 q t^{2}\right)s_{21}$ \\ $+\left(q^{3} t^{2} + q^{2} t^{3}  + q^{3} + q^{2} t + q t^{2} + t^{3} + q t - 2 q^{3} t - 2 q^{2} t^{2} - 2 q t^{3}\right)s_{3}$ \end{tabular}} \\\hline 
\scalebox{\FCscl}{\begin{tikzpicture}[scale=0.50, baseline=(Z.base)]
\coordinate(Z) at (0,1.00);
\draw[line width=0.50pt,draw=black!30] (0,0) grid (3.00,3.00);
\draw[line width=1.50pt, draw=red, rounded corners=5] (0.00,0.00) -- (1.35,0.65) -- (2.00,2.00) ;
\draw[line width=1.50pt, draw=red, rounded corners=5] (2.00,2.00) -- (3.00,3.00) ;
\begin{scope}[xshift=2.00cm,yshift=2.00cm]
\fill[red] (0,0) circle (0.13);
\fill[red] (0,0) -- (-45:0.13) arc (-45:135:0.13)--cycle;
\end{scope}
\begin{scope}[xshift=0.00cm,yshift=0.00cm]
\fill[red] (0,0) circle (0.13);
\end{scope}
\begin{scope}[xshift=3.00cm,yshift=3.00cm]
\fill[red] (0,0) circle (0.13);
\end{scope}
\end{tikzpicture}
 } & Y & Y & \scalebox{\FCscl}{\begin{tabular}{c} $\left(q + t - qt\right)s_{21}$ \\ $+\left(q^{2} + q t + t^{2} -q^{2} t - q t^{2} \right)s_{3}$ \end{tabular}} \\\hline 
\scalebox{\FCscl}{\begin{tikzpicture}[scale=0.50, baseline=(Z.base)]
\coordinate(Z) at (0,1.00);
\draw[line width=0.50pt,draw=black!30] (0,0) grid (3.00,3.00);
\draw[line width=1.50pt, draw=red, rounded corners=5] (0.00,0.00) -- (0.65,1.35) -- (2.00,2.00) ;
\draw[line width=1.50pt, draw=red, rounded corners=5] (2.00,2.00) -- (3.00,3.00) ;
\begin{scope}[xshift=2.00cm,yshift=2.00cm]
\fill[red] (0,0) circle (0.13);
\fill[red] (0,0) -- (-45:0.13) arc (-45:135:0.13)--cycle;
\end{scope}
\begin{scope}[xshift=0.00cm,yshift=0.00cm]
\fill[red] (0,0) circle (0.13);
\end{scope}
\begin{scope}[xshift=3.00cm,yshift=3.00cm]
\fill[red] (0,0) circle (0.13);
\end{scope}
\end{tikzpicture}
 } & Y & Y & \scalebox{\FCscl}{\begin{tabular}{c} $s_{111} + \left( q^{2} + q t + t^{2} + q + t -q^{2} t - q t^{2} \right)s_{21}$ \\ $+\left( q^{3} + q^{2} t + q t^{2} + t^{3} + q t -q^{3} t - q^{2} t^{2} - q t^{3}\right)s_{3}$ \end{tabular}} \\\hline 
\end{tabular}
}
\end{tabular}
}
  \caption{\label{tab:small_curves}The symmetric functions $\FC$ for all monotone curves $\Curve$ with $1\leq \rectXY\leq 3$. Here, Y/N indicates whether each curve is \emph{s-positive} (i.e., has $(q,t)$-Schur positive series~\eqref{eq:intro_Schur_pos}) and \emph{$\Z$-convex} (\cref{dfn:intro_Z_convex}).}
\end{table}

\bibliographystyle{alpha}
\bibliography{EHA}
\end{document}